\documentclass{amsart}
\usepackage{latexsym}
\usepackage{amssymb}
\usepackage{amsmath}
\newtheorem{lemma}{Lemma}[section]
\newtheorem{proposition}[lemma]{Proposition}
\newtheorem{corollary}[lemma]{Corollary}

\newtheorem{theorem}[lemma]{Theorem}
\newtheorem{examples}[lemma]{Examples}

\begin{document}

\author[M.~V.~Lawson]{Mark~V.~Lawson$^1$}
\address{$^1$Department of Mathematics
and the
Maxwell Institute for Mathematical Sciences,
Heriot-Watt University,
Riccarton,
Edinburgh~EH14~4AS,
Scotland}
\email{markl@ma.hw.ac.uk }

\author[ D.~H.~Lenz]{Daniel~H.~Lenz$^2$}
\address{$^2$ Mathematisches Institut, Friedrich-Schiller Universit\"at Jena, Ernst-Abb\'{e} Platz~2, 07743 Jena, Germany}
\email{daniel.lenz@uni-jena.de }

\thanks{The first author was partially supported by an EPSRC grant EP/I033203/1. 
He would also like to thank the Mathematics Department of the University of Ottawa where some of this work was carried out.
Both authors would like to thank Ganna Kudryavtseva and Pedro Resende for conversations on parts of this paper.}

\title[Distributive inverse semigroups]{Distributive inverse semigroups and non-commutative Stone dualities}

\keywords{Inverse semigroups, distributive lattices, Boolean algebras, \'etale groupoids}

\subjclass{20M18, 18B40, 06E15}

\begin{abstract}
We develop the theory of distributive inverse semigroups as the analogue of distributive lattices without top element
and prove that they are in a duality with those \'etale groupoids having a spectral space of identities, where our spectral spaces are
not necessarily compact.
We prove that Boolean inverse semigroups can be characterized as those distributive inverse semigroups
in which every prime filter is an ultrafilter; we also provide a topological characterization in terms of Hausdorffness.
We extend the notion of the patch topology to distributive inverse semigroups and prove that every distributive inverse semigroup
has a Booleanization. 
As applications of this result, we give a new interpretation of Paterson's universal groupoid of an inverse semigroup
and by developing the theory of what we call tight coverages, we also provide a conceptual foundation for Exel's tight groupoid. 
\end{abstract}

\maketitle

\section{Introduction}
 
This paper is a contribution to the developing field of non-commutative frame theory.
This has its roots in the work of Ehresmann \cite{E} going back to the 1950's 
but has received its most recent impetus through a deeper understanding 
of the role played by inverse semigroups in the theory of $C^{\ast}$-algebras \cite{Exel1,Exel2,Kel1,Kel2,L,LMS,P,Ren,R1,R2}.
This paper develops an approach to non-commutative frame theory initiated by the first author in the papers \cite{Law2,Law3,Law4} 
motivated by the second author's paper \cite{L}.
Here we generalize the classical theory of distributive lattices to what we call distributive inverse semigroups.
We then show the importance of this class of inverse semigroups for understanding the way in which
inverse semigroup theory interacts with $C^{\ast}$-algebra theory.
This introduction consists of three parts: 
the first consists of an historical survey, 
the second describes the background results needed to properly understand our work, 
and the third outlines our main results.

\subsection{Historical background}

The lattice of open sets of a topological space is a complete infinitely distributive lattice or {\em frame}.
The theory of frames can be viewed as the theory of topological spaces in which open sets and not points are taken as basic.
As well as being an interesting theory in its own right \cite{J} with important applications, it is also a key ingredient in topos theory \cite{MM}. 
Johnstone discusses the origins of frame theory in his notes to Chapter~II of his classic book on this subject \cite{J}.
One sentence is significant for the goals of this paper.
He writes on page~76
\begin{quote}
It was Ehresmann \ldots and his student B\'enabou \ldots who first took the decisive step in regarding complete
Heyting algebras as `generalized topological spaces'.
\end{quote}
However, Johnstone does not say {\em why} Ehresmann was led to his frame-theoretic viewpoint of 
topological spaces.\footnote{Ehresmann's paper on pseudogroups and local structures that Johnstone refers to can be found collected 
in Partie~II-1 of \cite{E} as paper 47 and, incidently,
is the only paper Ehresmann wrote in his native German.
Its title is {\em Gattungen von lokalen Strukturen}; that is, `Species of local structures'.}.
This we shall now explain.

Differential manifolds of various complexions
are defined by means of atlases whose changes of charts are required to satisfy certain conditions depending on the type of manifold
being defined. For the constructions to work, the changes of charts need to belong to a so-called {\em pseudogroup of transformations}.
Differential geometers, such as Ehresmann, were well aware in the 1950's that a whole range of local structures important in differential geometry,
could be defined in the same way, fibrations and foliations being good examples.
Furthermore, category theory, developed in the 1940's, pointed the way to describing classes of structures in general.
Ehresmann's motivation was to develop an abstract theory of local structures in geometry based on the theory of pseudogroups,
but to realize this goal, Ehresmann needed an abstract formulation of a pseudogroup of transformations.
To do this, he used ordered groupoids but it subsequently became clear that a simpler, but equivalent, formulation was possible using semigroups.

A {\em complete abstract pseudogroup}, to use the terminology of Resende \cite{R1,R2}, is a complete, infinitely distributive inverse monoid.
In this paper, we shall simply call them {\em pseudogroups}. 
Transformation pseudogroups are just pseudogroups of partial homeomorphisms of a topological space
and the idempotents of a transformation pseudogroup are just the partial identities on the open subsets of the space.

{\em Thus the partially ordered sets of idempotents of pseudogroups are frames 
and so frames arose in Ehresmann's work as the lattices of idempotents of pseudogroups.}

It was perhaps natural to disentangle frames from their roots and study them on their own terms.
But the premise of this paper is that we now need to return and generalize the foundations of the classical theory of frames to pseudogroups.
This is not an empty exercise because it has become clear that the resulting theory provides the setting for the significant applications 
of inverse semigroup theory to $C^{\ast}$-algebras.
In fact, the theory in this paper arose out of a detailed analysis of the relationships that exist between inverse semigroups, 
topological groupoids and $C^{\ast}$-algebras \cite{Exel1,Exel2,Kel1,Kel2,Law4,LMS,L,P,Ren}.

Although pseudogroups have frames of idempotents,
frame theory turned its back on pseudogroups and they do not occur at all in either \cite{J} or \cite{MM}.
Ehresmann's work on pseudogroups was generally neglected except in East Germany \cite{R}
and within inverse semigroup theory where his ideas
turned out to be extremely fertile: they form the basis of the book \cite{Law2}.

Inverse semigroups arise naturally as algebraic models of pseudogroups of transformations.
They also arise naturally in the theory of $C^{\ast}$-algebras.
This was first observed by Renault \cite{Ren} and was later developed more explicitly by Paterson \cite{P}. 
The salient idea in this context is that  $C^{\ast}$-algebras can be constructed from topological groupoids 
which in turn can be constructed from inverse semigroups.
Exel's work \cite{Exel1} is  a prime example of this fruitful approach to constructing $C^{\ast}$-algebras. 
In this way many interesting $C^{\ast}$-algebras, such as graph algebras and tiling algebras, can be constructed from inverse semigroups.
This  raises the question of the nature of the relationship between inverse semigroups and topological groupoids 
which is the central question addrsssed in this paper.

The authors' interest in this question was aroused by Kellendonk's work \cite{Kel1,Kel2} on aperiodic tilings as models of quasi-crystals.
The second author's paper \cite{L} reanalysed Paterson's work in the light of Kellendonk's and presented a general order-based approach 
to the construction of groupoids from inverse semigroups. 
This approach yields  an alternative description of Paterson's universal groupoid and at the same  time  
features a certain reduction of the universal groupoid. 
This reduction is the tiling groupoid in the case of tiling semigroups and the graph groupoid in the case of graphs. 
In this way a unified treatment of certain basic properties concerning, for example, the ideal theory  of  tiling groupoids 
and graph groupoids becomes possible.

The order-based approach  to the construction of groupoids from inverse semigroups was then developed  further by the first author 
in collaboration with Stuart Margolis and Ben Steinberg in terms of filters \cite{LMS}.
Thus the topological groupoids arising in the theory of $C^{\ast}$-algebras were groupoids of filters.
This set the stage for \cite{Law3}, where the first author showed that the topological groupoids arising in the case of the Cuntz $C^{\ast}$-algebras
could be constructed from a class of inverse monoids, called Boolean inverse monoids, in a way generalising the classical Stone duality
between unital Boolean algebras and Boolean spaces.
This was subsequently generalized to Boolean inverse semigroup in \cite{Law4} where
the Thompson groups and the Cuntz-Krieger $C^{\ast}$-algebras were shown to arise naturally from
this more general duality.

\subsection{Preliminaries}

This paper unites two areas of mathematics: inverse semigroup theory and frame theory.
For background on inverse semigroups we refer the reader to \cite{Law1} and for frame theory \cite{J}.

We shall be viewing inverse semigroups very much from an order-theoretic perspective
and the order used will always be the natural partial order.
The semilattice of idempotents of the inverse semigroup $S$ is denoted $E(S)$;
more generally, if $X \subseteq S$ then $E(X) = X \cap E(S)$.
An inverse semigroup is said to be a {\em $\wedge$-semigroup} if it has binary meets of all pairs of elements.
Such semigroups were first investigated in detail by Leech \cite{Leech1,Leech2}.
The existence of joins in inverse semigroups is more subtle.
A necessary condition for a subset of an inverse semigroup to have a join is that
the elements in the set be pairwise compatible where the elements $s$ and $t$ are {\em compatible}, 
written $s \sim t$, 
if $s^{-1}t$ and $st^{-1}$ are idempotents.
We write $\mathbf{d}(s) = s^{-1}s$ and $\mathbf{r}(s) = ss^{-1}$.

An inverse semigroup is said to be {\em distributive} if it has binary joins of all compatible pairs of elements and multiplication 
distributes over the binary joins that exist.
An inverse semigroup is said to be a {\em pseudogroup} if it has joins of all non-empty compatible subsets and mutiplication
distributes over the joins that exist.
Thus pseudogroups are the complete versions of distributive inverse semigroups.
The goal of this paper is to extend the classical theory of distributive lattices \cite{J} to
distributive inverse semigroups with a view to applying them in developing a non-commutative generalization of Stone duality. 
Distributive inverse semigroups have also recently played a central role in solving a problem in universal algebra \cite{KM}.

Throughout this paper, lattices will not be assumed to have top elements,
but  if we do want to say that a lattice has a top element we shall say that it is {\em unital}.
For us, a {\em Boolean algebra} is what is often called a {\em generalized Boolean algebra}.
What are usually referred to as Boolean algebras are what we call {\em unital Boolean algebras}.
A {\em frame} is a complete infinitely distributive lattice.
The semilattice of idempotents of a distributive inverse semigroup is a distributive lattice
and that of a pseudogroup is a frame.
An inverse semigroup is said to be {\em Boolean} if it is distributive and has a Boolean algebra of idempotents.
We shall also be interested in {\em Boolean $\wedge$-semigroups} which play an important role in applications
and were the subject of \cite{Law4}.

\begin{lemma}\label{le: relative_complement} Let $S$ be a Boolean inverse semigroup.
Let $a,b \in S$ such that $b \leq a$.
Then we may construct a unique element, denoted by $a \setminus b$, such that $b$ and $a \setminus b$ are orthogonal and $a = b \vee (a \setminus b)$.
\end{lemma}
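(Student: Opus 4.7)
The plan is to lift the relative complement from the generalized Boolean algebra $E(S)$ to the inverse semigroup $S$ by multiplying on the right by the appropriate idempotent.

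First, since $b \leq a$ we have $\mathbf{d}(b) \leq \mathbf{d}(a)$ in $E(S)$, which is a generalized Boolean algebra by the definition of Boolean inverse semigroup. Hence there is a unique idempotent $e$ with $\mathbf{d}(b) \wedge e = 0$ and $\mathbf{d}(b) \vee e = \mathbf{d}(a)$. I would define
\[
a \setminus b := ae.
\]
Because $b = a\mathbf{d}(b)$ and $ae$ both lie below $a$, they are automatically compatible, so their join exists in the distributive inverse semigroup $S$. Distributivity then gives
\[
b \vee (a \setminus b) \;=\; a\mathbf{d}(b) \vee ae \;=\; a\bigl(\mathbf{d}(b) \vee e\bigr) \;=\; a\mathbf{d}(a) \;=\; a,
\]
which handles existence of the join condition.

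Next I would verify orthogonality. A direct computation shows $\mathbf{d}(ae) = ea^{-1}ae = e\mathbf{d}(a)e = e$ (using $e \leq \mathbf{d}(a)$), so $\mathbf{d}(b)\wedge \mathbf{d}(ae) = \mathbf{d}(b) \wedge e = 0$. For the range side, $\mathbf{r}(b) = a\mathbf{d}(b)a^{-1}$ and $\mathbf{r}(ae) = aea^{-1}$, so their product is $a\mathbf{d}(b)\mathbf{d}(a)e a^{-1} = a\mathbf{d}(b)e a^{-1} = 0$ since $\mathbf{d}(b)e = 0$. Thus $b$ and $a\setminus b$ are orthogonal. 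This range computation is the step I expect to be the most delicate, because one has to see that the conjugation by $a$ (and $a^{-1}$) preserves the orthogonality that was already established at the level of the domain idempotents.

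For uniqueness, suppose $c \in S$ is orthogonal to $b$ with $b \vee c = a$. Then $c \leq a$, so $c = a\mathbf{d}(c)$ and $\mathbf{d}(c) \leq \mathbf{d}(a)$; orthogonality forces $\mathbf{d}(b) \wedge \mathbf{d}(c) = 0$. Taking domains of $b \vee c = a$ (compatible joins of elements preserve domains) gives $\mathbf{d}(b) \vee \mathbf{d}(c) = \mathbf{d}(a)$. The uniqueness of the relative complement in the generalized Boolean algebra $E(S)$ now forces $\mathbf{d}(c) = e$, whence $c = ae = a \setminus b$.
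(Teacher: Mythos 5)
Your proposal is correct and follows essentially the same route as the paper: define $a\setminus b = ae$ where $e$ is the relative complement of $\mathbf{d}(b)$ in $\mathbf{d}(a)$ inside the Boolean algebra $E(S)$, use distributivity for the join, and pull uniqueness back to uniqueness of complements in $E(S)$ via $x = a\mathbf{d}(x)$. The only difference is that you verify the orthogonality explicitly at both the domain and range level, where the paper simply asserts $b \wedge ae = 0$; this is a harmless (and arguably welcome) elaboration.
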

\begin{proof}
We have that $\mathbf{d}(b) \leq \mathbf{d}(a)$.
But the semilattice of idempotents of $S$ is a Boolean algebra.
Thus there exists $e \leq \mathbf{d}(a)$ such that $\mathbf{d}(a) = e \vee \mathbf{d}(b)$ and $e \wedge \mathbf{d}(b) = 0$.
Since we are working in a distributive inverse semigroup,
it follows that $a = b \vee ae$ and $b \wedge ae = 0$.
Suppose that $x \leq a$ is such that $a = b \vee x$ and $b \wedge x = 0$.
Then because we are working inside a principal order ideal we have that $\mathbf{d}(a) = \mathbf{d}(b) \vee \mathbf{d}(x)$ and
$\mathbf{d}(b) \wedge \mathbf{d}(x) = 0$.
But by uniqueness of relative complements in Booleans algebras we have that $e = \mathbf{d}(x)$ and so $x = ae$.
\end{proof}

We denote the element $ae$ by $a \setminus b$, the {\em relative complement of $b$ in $a$}.

Let $(P,\leq)$ be a poset.
A minimum element in $P$ is called  {\em zero} denoted by 0.
We shall usually assume that our posets have zeros because our inverse semigroups will always have zeros.
For $x \in P$ define
$$x^{\downarrow} = \{y \in E \colon y \leq x \},$$
the {\em principal order ideal generated by $x$},
and
$$x^{\uparrow} = \{y \in E \colon y \geq x \},$$
the {\em principal filter generated by $x$}.
We extend this notation to subsets $A \subseteq P$
and define $A^{\downarrow}$ and $A^{\uparrow}$ accordingly.
If $A = A^{\downarrow}$ is called an {\em order ideal}.
Observe that the intersection of order ideals is always an order ideal.
If $A$ is a finite set then $A^{\downarrow}$ is said to be a {\em finitely generated} order ideal.
A subset $A$ of $P$ is said to be {\em directed} if for each $a,b \in A$ there exists $c \in A$ such that $c \leq a,b$.
A {\em filter} in $P$ is a directed subset $A$ such that $A = A^{\uparrow}$; that is, it is directed and {\em closed upwards}.
A filter is called {\em proper} if it does not contain the zero.
All our filters will be assumed proper.
If $A$ is any directed subset then $A^{\uparrow}$ is a filter.
If $a$ and $b$ are elements of $P$ we write $a^{\downarrow} \cap b^{\downarrow} \neq 0$
to mean that there is some non-zero element below both $a$ and $b$.
A {\em filter} in an inverse subsemigroup $S$ is a subset $F$ that is closed upwards under the natural partial order and directed.
An {\em ultrafilter} is a maximal proper filter.
In a distributive inverse semigroup, a filter $F$ is said to be {\em prime} if
whenever $a \vee b$ exists then $a \vee b \in F$ implies that either $a \in F$ or $b \in F$.
The following characterization of ultrafilters, which is Lemma~12.3 of \cite{Exel2}, is very useful.

\begin{lemma}\label{le: ultrafilter_result} Let $F$ be a proper filter in the meet semilattice $X$.
Then $F$ is an ultrafilter iff $a \wedge x \neq 0$ for all $x \in F$ 
implies that $a \in F$.
\end{lemma}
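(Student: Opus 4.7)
The plan is to prove each direction separately, with the forward direction being the more delicate one.

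For the forward direction, suppose $F$ is an ultrafilter and that $a \wedge x \neq 0$ for all $x \in F$. I would build the smallest filter containing $F \cup \{a\}$ and show it is proper; maximality of $F$ then forces $a \in F$. Concretely, I would define
\[
G = \{y \in X : y \geq a \wedge x \text{ for some } x \in F\}.
\]
Then $G$ is upward closed by construction. For directedness, given $y_1 \geq a \wedge x_1$ and $y_2 \geq a \wedge x_2$ with $x_1,x_2 \in F$, pick $x_3 \in F$ below both $x_1,x_2$; then $a \wedge x_3$ lies below both $y_1$ and $y_2$ and belongs to $G$. Properness of $G$ is exactly the hypothesis: $0 \in G$ would force $a \wedge x = 0$ for some $x \in F$. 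Clearly $F \subseteq G$ (via $x \geq a \wedge x$) and $a \in G$ (via $a \geq a \wedge x$ for any chosen $x \in F$). Maximality of $F$ then gives $F = G$, so $a \in F$.

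For the reverse direction, assume $F$ satisfies the stated closure property and let $F'$ be a proper filter with $F \subseteq F'$. Given $a \in F'$ and any $x \in F \subseteq F'$, the directedness of $F'$ produces some $y \in F'$ with $y \leq a$ and $y \leq x$; since $F'$ is proper, $y \neq 0$, so $a \wedge x \geq y \neq 0$. The hypothesis then yields $a \in F$, so $F' \subseteq F$ and $F$ is an ultrafilter.

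The only point requiring a little care is the construction of $G$ in the forward direction, in particular verifying that it really is directed and proper; the rest is a routine unpacking of definitions. One should also note that the argument quietly uses non-emptiness of $F$ (to pick a witness $x$ for $a \in G$), which is standard for filters in this paper.
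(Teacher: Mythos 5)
Your proof is correct. Note that the paper itself gives no proof of this lemma --- it is quoted as Lemma~12.3 of Exel's paper on tight representations --- and your argument (generate the filter $G = (F \cup \{a\})^{\uparrow}$ via the elements $a \wedge x$, check it is proper and directed, and invoke maximality; then for the converse show any proper filter containing $F$ is forced back into $F$) is exactly the standard one found there. The small points you flag, non-emptiness of $F$ and properness of $G$, are handled correctly.
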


The following is proved using Zorn's lemma.

\begin{lemma}\label{le: ultrafilters} 
Every non-zero element of an inverse semigroup belongs to an ultrafilter.
\end{lemma}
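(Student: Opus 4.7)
The plan is to apply Zorn's lemma to the poset of proper filters containing the given non-zero element, ordered by inclusion. First I would fix a non-zero $s \in S$ and consider the principal filter $s^{\uparrow}$. Since the natural partial order on an inverse semigroup with zero has $0$ as its minimum (using that $0$ is an idempotent and $0 = s \cdot 0$ for every $s$), the assumption $s \neq 0$ guarantees that $0 \notin s^{\uparrow}$, so $s^{\uparrow}$ is a proper filter. Thus the set
\[
\mathcal{F}_s = \{\, F \subseteq S : F \text{ is a proper filter with } s \in F \,\}
\]
is non-empty when ordered by inclusion.

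The core step is to verify that every chain in $\mathcal{F}_s$ has an upper bound, namely its union. If $\{F_i\}_{i \in I}$ is such a chain, set $F = \bigcup_i F_i$. Closure upwards is immediate: any $x \in F$ lies in some $F_i$, and if $y \geq x$ then $y \in F_i \subseteq F$. Directedness uses the chain condition: given $a,b \in F$, pick $i,j$ with $a \in F_i$, $b \in F_j$; one of $F_i, F_j$ contains the other, say $F_i \subseteq F_j$, and then $a,b \in F_j$, so directedness of $F_j$ yields $c \in F_j \subseteq F$ with $c \leq a,b$. Finally, $0 \notin F_i$ for any $i$, so $0 \notin F$, and $s \in F$ trivially. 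Hence $F \in \mathcal{F}_s$.

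By Zorn's lemma, $\mathcal{F}_s$ has a maximal element $M$. Since $M$ is a proper filter maximal among proper filters containing $s$, and any proper filter strictly containing $M$ would in particular contain $s$ and so contradict maximality, $M$ is a maximal proper filter of $S$ in the sense defined in the paper. Therefore $M$ is an ultrafilter containing $s$, completing the proof. There is no real obstacle here beyond the standard bookkeeping for Zorn's lemma; the only subtle point is the verification that the union of a chain of proper filters is itself directed and proper, which is where the chain hypothesis (as opposed to merely a directed family of filters) is essential.
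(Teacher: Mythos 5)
Your argument is correct and is exactly the Zorn's lemma argument the paper has in mind (the paper states the lemma "is proved using Zorn's lemma" without supplying details). The one subtlety — that a maximal element of the poset of proper filters containing $s$ is in fact maximal among all proper filters — is handled correctly in your final paragraph.
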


The inverse semigroup $S$ is said to be a {\em weak semilattice} 
if the intersection of any two principal order ideals is finitely generated as an order ideal.
This condition was formally introduced by Steinberg \cite{Stei}.
If $S$ is an inverse $\wedge$-semigroup then in fact $a^{\downarrow} \cap b^{\downarrow} = (a \wedge b)^{\downarrow}$.
Thus inverse semigroups that are weak semilattices generalize inverse $\wedge$-semigroups.
In a weak semilattice, the intersection of any finite number of principal order ideals is finitely generated as an order ideal.

Let $S$ and $T$ be distributive inverse semigroups.
Every homomorphism $\theta \colon S \rightarrow T$ has the property that $a \sim b$ implies that $\theta (a) \sim \theta (b)$.
Such a homomorphism is called a {\em morphism} of distributive inverse semigroups
if $\theta (a \vee b) = \theta (a) \vee \theta (b)$ whenever $a \sim b$.
{\em Morphisms of pseudogroups} are defined analogously.

We now describe a construction due to Boris Schein \cite{S,Law2} and its finitary analogue.
Let $S$ be an inverse semigroup.
Define $\mathsf{C}(S)$ to be the set of all compatible order ideals of $S$ with subset multiplication as the operation.
Then $\mathsf{C}(S)$ is a pseudogroup and the map $\iota \colon S \rightarrow \mathsf{C}(S)$, 
given by $s \mapsto s^{\downarrow}$, is a semigroup homomorphism.

We denote by $\mathsf{D}(S)$ the set of all finitely-generated compatible order ideals of $S$.
Under subset multiplication, $\mathsf{D}(S)$ is a distributive inverse semigroup.
The function $\iota \colon S \rightarrow \mathsf{D}(S)$
defined by $\iota (s) = s^{\downarrow}$ is a semigroup homomorphism.

The following theorem tells us that we can manufacture many examples of distributive inverse semigroups and pseudogroups.

\begin{theorem}[Distributive and Schein completions]\label{the: distributive_completions} \mbox{}

\begin{enumerate}

\item If $\theta \colon S \rightarrow T$ 
is any homomorphism from an inverse semigroup $S$ to a
distributive inverse semigroup $T$ then there is a unique morphism of distributive inverse semigroups
$\theta^{\ast} \colon \mathsf{D}(S) \rightarrow T$ such that $\theta^{\ast} \iota = \theta$.
In addition, $\mathsf{D}(S)$ is an inverse $\wedge$-semigroup if and only if $S$ is a weak semilattice.

\item If $\theta \colon S \rightarrow T$ 
is any homomorphism from an inverse semigroup $S$ to a pseudogroup $T$ 
then there is a unique morphism of pseudogroups
$\theta^{\ast} \colon \mathsf{C}(S) \rightarrow T$ such that $\theta^{\ast} \iota = \theta$.

\end{enumerate}
\end{theorem}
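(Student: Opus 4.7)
The plan is to prove both parts in parallel, since the structure is essentially the same. I would begin with part (1). An element of $\mathsf{D}(S)$ has the form $\{s_1,\ldots,s_n\}^{\downarrow}$ where $\{s_1,\ldots,s_n\}$ is a compatible set, so the obvious definition is
\[
\theta^{\ast}\!\left(\{s_1,\ldots,s_n\}^{\downarrow}\right) = \theta(s_1) \vee \cdots \vee \theta(s_n),
\]
which makes sense because $\theta$ preserves compatibility (a fact noted just before the theorem) and $T$ is distributive. For well-definedness, if $\{s_i\}^{\downarrow} = \{t_j\}^{\downarrow}$, then each $s_i$ lies below some $t_j$ and vice versa; applying $\theta$ and taking joins in $T$ shows the two candidate values agree. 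In particular this also shows $\theta^{\ast}\iota = \theta$.

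Next I would verify that $\theta^{\ast}$ is a morphism of distributive inverse semigroups. For multiplicativity, I would use that $A \cdot B = \{s_i t_j\}^{\downarrow}$ when $A = \{s_i\}^{\downarrow}$ and $B = \{t_j\}^{\downarrow}$ (the inclusion $\supseteq$ is clear, and for $\subseteq$ one uses the standard fact that if $x \leq ab$ in an inverse semigroup then $x = a'b'$ with $a' \leq a$, $b' \leq b$). Then
\[
\theta^{\ast}(A \cdot B) = \bigvee_{i,j} \theta(s_i)\theta(t_j) = \Bigl(\bigvee_i \theta(s_i)\Bigr)\Bigl(\bigvee_j \theta(t_j)\Bigr) = \theta^{\ast}(A)\theta^{\ast}(B)
\]
by distributivity in $T$. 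For joins, when $A \sim B$ in $\mathsf{D}(S)$ one has $A \vee B = \{s_i,t_j\}^{\downarrow}$, so preservation is immediate from the definition. Uniqueness is forced: any morphism $\phi$ with $\phi\iota = \theta$ must send $\{s_1,\ldots,s_n\}^{\downarrow} = \iota(s_1) \vee \cdots \vee \iota(s_n)$ to $\bigvee_i \theta(s_i)$.

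For the $\wedge$-semigroup characterization, observe that in $\mathsf{D}(S)$ the meet of two elements, when it exists, must be their set-theoretic intersection. Hence $\mathsf{D}(S)$ has binary meets iff for all $s,t \in S$ the intersection $s^{\downarrow} \cap t^{\downarrow}$ is finitely generated as an order ideal (the case of general finitely generated ideals reduces to this by writing $A \cap B$ as a finite union of the intersections $s_i^{\downarrow} \cap t_j^{\downarrow}$ and using that intersections of finitely generated order ideals are finitely generated when pairwise intersections are). This is exactly the definition of weak semilattice.

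Part (2) follows the identical template, replacing finite generating sets by arbitrary compatible order ideals and defining $\theta^{\ast}(A) = \bigvee \theta(A)$, which exists in the pseudogroup $T$ since $T$ admits all nonempty compatible joins; the multiplicativity argument now invokes the full infinite distributivity in $T$. The main obstacle I anticipate is the multiplicativity step, specifically the identification $A \cdot B = \{s_i t_j\}^{\downarrow}$ in $\mathsf{D}(S)$ and its analogue for arbitrary compatible order ideals in $\mathsf{C}(S)$, since this is where the interaction between subset multiplication, the natural partial order, and the distributivity of $T$ has to be correctly orchestrated.
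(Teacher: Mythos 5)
Your proof is correct and follows exactly the standard construction that the paper relies on: the paper states this theorem without proof, citing Schein and \cite{Law2}, and the argument there is precisely the one you give (extend $\theta$ by sending a compatible order ideal to the join of the image of a generating set, check well-definedness via mutual domination of generators, and use distributivity of $T$ for multiplicativity). Your identification of the meet in $\mathsf{D}(S)$ with set-theoretic intersection, and hence of the $\wedge$-semigroup condition with the weak semilattice condition, is also the intended argument.
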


We call $\mathsf{C}(S)$ the {\em Schein completion} of $S$ and $\mathsf{D}(S)$
the {\em distributive completion} of $S$.

In distributive inverse semigroups, 
we assume that binary joins exist but we make no assumption about the existence of meets
in general.
However, for those finite subsets where meets do exist, 
the following lemma shows that they behave as expected with respect to joins.
In pseudogroups, binary meets, in fact arbitrary non-empty meets, always exist by Proposition~2.10(2) of \cite{R2}.
Part (2) of the following is from \cite{R3} and part (1) is just the finitary analogue.

\begin{lemma}\label{le: meets_and_joins} \mbox{}
\begin{enumerate}

\item Let $S$ be a distributive inverse semigroup.
Suppose that $a \vee b$ and $c \wedge (a \vee b)$ both exist.
Then $c \wedge a$ and $c \wedge b$ both exist, 
the join $(c \wedge a) \vee (c \wedge b)$ exists and
$$c \wedge (a \vee b)
=
(c \wedge a) \vee (c \wedge b).$$

\item Let $S$ be a pseudogroup.
Suppose that $\bigvee_{i} a_{i}$ and $b \wedge \left( \bigvee_{i} a_{i}  \right)$ both exist.
Then $\bigvee_{i} b \wedge a_{i}$ exists and 
$$b \wedge \left( \bigvee_{i} a_{i}  \right)
=
\bigvee_{i} b \wedge a_{i}.$$

\end{enumerate}
\end{lemma}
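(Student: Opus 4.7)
The plan is to identify an explicit candidate for $c \wedge a$ in terms of the given meet $m := c \wedge (a \vee b)$, and then verify directly both that it is a lower bound and that it dominates every other lower bound. The natural guess, suggested by the topological picture where one restricts a partial homeomorphism to a smaller domain, is
\[
c \wedge a = m \cdot \mathbf{d}(a),
\]
and symmetrically $c \wedge b = m \cdot \mathbf{d}(b)$. Since both proposed elements lie below $c$, they are automatically compatible and their join exists, so the equality $(c \wedge a) \vee (c \wedge b) = c \wedge (a \vee b)$ will drop out of the distributive law $m\mathbf{d}(a) \vee m\mathbf{d}(b) = m(\mathbf{d}(a) \vee \mathbf{d}(b)) = m \mathbf{d}(a \vee b) = m$, using that $m \leq a \vee b$.

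The key algebraic input that makes the candidate work is a standard consequence of compatibility that I would state and use freely: if $a \sim b$ then $a \wedge b$ exists and equals $b\mathbf{d}(a) = a\mathbf{d}(b)$; in particular $b\mathbf{d}(a) \leq a$. Granted this, showing $m\mathbf{d}(a) \leq a$ is a short computation: from $m \leq a \vee b$ we have $m = (a \vee b)\mathbf{d}(m)$, whence
\[
m\mathbf{d}(a) = (a \vee b)\mathbf{d}(a)\mathbf{d}(m) = (a \vee b\mathbf{d}(a))\mathbf{d}(m) = a \cdot \mathbf{d}(m),
\]
using distributivity of multiplication over joins and absorbing $b\mathbf{d}(a) \leq a$ into $a$. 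Together with the trivial inequality $m\mathbf{d}(a) \leq m \leq c$, this gives one direction.

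For the reverse, suppose $y \leq c$ and $y \leq a$. Then $y \leq a \leq a \vee b$ combined with $y \leq c$ forces $y \leq m$, so $y = m\mathbf{d}(y)$. From $y \leq a$ we get $\mathbf{d}(y) \leq \mathbf{d}(a)$, hence $\mathbf{d}(y) = \mathbf{d}(a)\mathbf{d}(y)$, and therefore
\[
y = m\mathbf{d}(y) = m\mathbf{d}(a)\mathbf{d}(y) \leq m\mathbf{d}(a).
\]
This establishes $c \wedge a = m\mathbf{d}(a)$; an identical argument (interchanging the roles of $a$ and $b$) gives $c \wedge b = m\mathbf{d}(b)$, and the join computation in the first paragraph finishes part (1).

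For part (2), one can either cite Resende \cite{R3} directly as the statement indicates, or observe that the same argument goes through verbatim: with $m := b \wedge \bigvee_i a_i$ and the pseudogroup's infinite distributive law $m = (\bigvee_i a_i)\mathbf{d}(m)$, one shows $b \wedge a_i = m\mathbf{d}(a_i)$ by exactly the two-sided argument above, and then $\bigvee_i m\mathbf{d}(a_i) = m\bigvee_i \mathbf{d}(a_i) = m\mathbf{d}(\bigvee_i a_i) = m$. The only step that might be delicate is the lower-bound inequality $b\mathbf{d}(a_i) \leq $ something manageable; but since compatibility is inherited by subsets of a compatible family, the same formula $b\wedge a_j = a_j\mathbf{d}(b)$ keeps working and no new obstacle appears. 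The main conceptual point in both parts is therefore the single observation that multiplying by $\mathbf{d}(a)$ cleanly projects $m$ onto the ``$a$-part'' of the join, and this is the only step that really uses distributivity in an essential way.
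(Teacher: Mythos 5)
Your argument is correct, and it supplies a complete proof of a statement the paper leaves unproved (the paper simply cites Resende \cite{R3} for part (2) and declares part (1) to be the finitary analogue). The key computation $m\mathbf{d}(a) = (a \vee b\mathbf{d}(a))\mathbf{d}(m) = a\mathbf{d}(m) \leq a$, together with the greatest-lower-bound check via $y \leq m$ and $\mathbf{d}(y) \leq \mathbf{d}(a)$, is exactly the projection-by-idempotent argument one expects here, and the join computation $m\mathbf{d}(a) \vee m\mathbf{d}(b) = m\mathbf{d}(a\vee b) = m$ closes part (1) cleanly. One small notational slip in your discussion of part (2): the inequality actually needed there is $a_i\mathbf{d}(a_j) \leq a_j$, which follows from the pairwise compatibility of the $a_i$ (the outer element $b$, playing the role of $c$ from part (1), need not be compatible with any $a_j$); your phrase ``$b \wedge a_j = a_j\mathbf{d}(b)$'' conflates the two roles of $b$, but the argument you actually run does not depend on it.
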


A  {\em groupoid} $G$ is a category in which every morphism is an isomorphism.
Our groupoids will be sets and the set of identities of $G$ will be denoted by $G_{o}$.
Each groupoid comes with the maps
$\mathbf{d} \colon G\rightarrow G_{o}$ and $\mathbf{r} \colon G\rightarrow G_{o}$ defined by $\mathbf{d}(x) = x^{-1}x$ and $\mathbf{r}(x) = xx^{-1}$.
A {\em local bisection} $A$ of a groupoid $G$ is a subset such that $A^{-1}A,AA^{-1} \subseteq G_{o}$.
The product of two local bisections is a local bisection.
In fact, we have the following well-known result.

\begin{lemma} 
The set of local bisections of a discrete groupoid $G$ forms a Boolean inverse $\wedge$-semigroup.
\end{lemma}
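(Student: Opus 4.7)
The plan is to unpack what it means for a subset $A \subseteq G$ to be a local bisection and then verify each piece of the required structure directly from properties of the groupoid operations. Since $A^{-1}A, AA^{-1} \subseteq G_{o}$, saying that $A$ is a local bisection is equivalent to saying that both $\mathbf{d}$ and $\mathbf{r}$ are injective on $A$: if $\mathbf{d}(a)=\mathbf{d}(b)$ with $a,b\in A$, then $ab^{-1}\in AA^{-1}\subseteq G_{o}$ forces $a=b$, and dually for $\mathbf{r}$. This reformulation is the lever I would use throughout.

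First I would confirm that $\mathcal{B}(G)$, the set of local bisections, is closed under product (stated in the excerpt) and under $A\mapsto A^{-1}$ (immediate, since the roles of $\mathbf{d}$ and $\mathbf{r}$ are swapped). The identities $AA^{-1}A=A$ and $(A^{-1})^{-1}=A$ are just pointwise statements in the groupoid, and uniqueness of the inverse in the semigroup sense is forced because subsets of $G_{o}$ are precisely the idempotents of $\mathcal{B}(G)$ (an idempotent subset satisfies $A^{2}=A$, which together with local-bisection-ness forces $A\subseteq G_{o}$, and every subset of $G_{o}$ is plainly idempotent). At this point $\mathcal{B}(G)$ is an inverse semigroup with $E(\mathcal{B}(G))=\mathcal{P}(G_{o})$, a (unital) Boolean algebra under the set-theoretic operations.

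Next I would identify the natural partial order: $A\leq B$ iff $A=(AA^{-1})B$, which for local bisections is equivalent to $A\subseteq B$ once one notes $(AA^{-1})B=\{b\in B:\mathbf{r}(b)\in\mathbf{r}(A)\} = A$. Consequently binary meets are just intersections: $A\cap B$ is a local bisection as a subset of $A$, and it is clearly the greatest subset contained in both $A$ and $B$, giving the $\wedge$-semigroup structure. For compatibility I would show that $A\sim B$ iff $A\cup B$ is a local bisection. Indeed $A^{-1}B\subseteq G_{o}$ says exactly that whenever $a\in A,b\in B$ have $\mathbf{r}(a)=\mathbf{r}(b)$ then $a=b$, and dually for $AB^{-1}\subseteq G_{o}$; combining these with injectivity of $\mathbf{d},\mathbf{r}$ on $A$ and on $B$ gives injectivity on the union. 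The join in the natural order is then $A\vee B=A\cup B$, and distributivity of multiplication over such joins reduces to the elementary identity $(A\cup B)C = AC\cup BC$.

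None of the steps is truly difficult; the one that deserves a careful look is the compatibility step, because it is the bridge between the algebraic condition ($A^{-1}B,AB^{-1}\subseteq G_{o}$) and the set-theoretic realisation of joins as unions. Once that equivalence is in hand, distributivity and the Boolean structure on the idempotents $\mathcal{P}(G_{o})$ are automatic, so $\mathcal{B}(G)$ is a distributive inverse $\wedge$-semigroup whose semilattice of idempotents is a Boolean algebra, which is exactly the definition of a Boolean inverse $\wedge$-semigroup.
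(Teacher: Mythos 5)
Your proof is correct. Note that the paper does not actually prove this lemma --- it is stated as a well-known fact with no argument supplied --- so there is no proof of record to compare against; your write-up is a complete and standard verification. The key reformulation (a subset $A$ is a local bisection iff $\mathbf{d}$ and $\mathbf{r}$ are both injective on $A$) is exactly the right lever, and every step checks out: the idempotents are precisely the subsets of $G_{o}$ (from $A^{2}=A$ and injectivity of $\mathbf{d},\mathbf{r}$ one gets $a=a^{2}$ for each $a\in A$, hence $a\in G_{o}$), these commute since their products are intersections, so regularity via $AA^{-1}A=A$ gives an inverse semigroup; the natural partial order is inclusion, meets are intersections, compatibility is equivalent to the union being a local bisection, joins are unions, and distributivity reduces to $(A\cup B)C=AC\cup BC$. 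Two cosmetic remarks only: the identity $(AA^{-1})B=\{b\in B:\mathbf{r}(b)\in\mathbf{r}(A)\}$ equals $A$ only under the hypothesis $A\subseteq B$, which is what you intend but should be said; and the passage from ``regular with commuting idempotents'' to ``inverse'' deserves an explicit citation of that standard characterization rather than the phrase ``uniqueness of the inverse is forced.''
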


If $G$ carries a topology making the multiplication and inversion continuous, it is called a \textit{topological groupoid}. 
The most important class of topological groupoids are the {\em \'etale groupoids}.
Classically, an \'etale groupoid is a topological groupoid in which the domain map is a local homeomorphism.
However, we shall use the following results due to Resende in our dealings with \'etale groupoids.
Part (1) below is Theorem~5.18 of \cite{R2} and part (2) from Exercise~I.1.8 of \cite{R1}.

\begin{proposition}\label{prop: etale_groupoids}\mbox{}
\begin{enumerate}

\item A topological groupoid $G$ is \'etale if $G_{o}$ is an open set and the product of any two open sets is an open set.

\item In an \'etale groupoid, the open local bisections form a basis. 

\end{enumerate}
\end{proposition}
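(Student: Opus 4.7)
The plan for part (1) is to show that under the stated hypotheses the domain map $\mathbf{d}$ is a local homeomorphism. The key step is, for each $g\in G$, to produce an open neighborhood $U$ of $g$ that is a local bisection; once we have this, $\mathbf{d}|_U$ will be the desired local homeomorphism. Since $\mathbf{d}$ and $\mathbf{r}$ are continuous (being compositions of the continuous multiplication and inversion maps of the topological groupoid $G$) and $G_o$ is open by hypothesis, the set
\[
W=\mathbf{d}^{-1}(G_o)\cap\mathbf{r}^{-1}(G_o)
\]
is open and equals $G$ itself. More usefully, we apply continuity of $(a,b)\mapsto a^{-1}b$ at $(g,g)$: there is an open neighborhood $U$ of $g$ with $U^{-1}U\subseteq G_o$, and shrinking further we also arrange $UU^{-1}\subseteq G_o$. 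Thus $U$ is an open local bisection containing $g$.

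Next I would verify that $\mathbf{d}|_U\colon U\to\mathbf{d}(U)$ is a homeomorphism. Injectivity is a standard local-bisection argument: if $a,b\in U$ satisfy $\mathbf{d}(a)=\mathbf{d}(b)$ then $ab^{-1}\in UU^{-1}\subseteq G_o$ is an identity, forcing $\mathbf{r}(a)=\mathbf{r}(b)$ and hence $a=\mathbf{r}(a)b=\mathbf{r}(b)b=b$. Continuity of $\mathbf{d}|_U$ is immediate. For openness, observe that for any open $V\subseteq U$ we have $\mathbf{d}(V)=V^{-1}V$, and inversion being a self-homeomorphism together with the hypothesis that the product of two open sets is open gives that $V^{-1}V$ is open. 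In particular $\mathbf{d}(U)$ itself is open, so $\mathbf{d}|_U$ is an open continuous bijection onto an open set, hence a homeomorphism. This makes $\mathbf{d}$ a local homeomorphism, so $G$ is étale.

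Part (2) then falls out of the construction in part (1). Classically, an étale groupoid satisfies the hypotheses of part (1) (the domain map being a local homeomorphism implies $G_o$ is open and that multiplication is an open map, hence sends products of open sets to open sets), so given an étale $G$ we can invoke the construction above: for each $g\in G$ there is an open local bisection $U_g$ with $g\in U_g$. To show that open local bisections form a basis, take any open $W\subseteq G$ and any $g\in W$; then $U_g\cap W$ is open, contains $g$, and is contained in $W$, and since every subset of a local bisection is again a local bisection, $U_g\cap W$ is an open local bisection. This exhibits $W$ as a union of open local bisections, establishing the basis property.

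The main obstacle is the technical step of verifying that $\mathbf{d}|_U$ is an open map using only the hypothesis on products of open sets; the trick is to express the image as $V^{-1}V$ inside the local bisection $U$, which cleanly reduces openness of $\mathbf{d}|_U$ to the openness hypothesis on products. Everything else is a routine combination of continuity, the algebraic identities $a=ab^{-1}b$ for composable pairs, and the observation that the subset-closedness of the local-bisection property allows one to intersect with arbitrary opens without losing it.
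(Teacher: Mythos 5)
Your argument is correct, but note that the paper does not prove this proposition at all: it is imported from Resende, with part (1) cited as Theorem~5.18 of \cite{R2} and part (2) as Exercise~I.1.8 of \cite{R1}, so there is no in-paper proof to compare against and what you have supplied is a self-contained justification of a quoted result. Your route is the standard direct one: continuity of $(a,b)\mapsto a^{-1}b$ together with openness of $G_{o}$ produces, around each $g$, an open local bisection $U$, and the hypothesis that products of open sets are open is then used exactly once, to see that $\mathbf{d}(V)=V^{-1}V$ is open for every open $V\subseteq U$, making $\mathbf{d}|_{U}$ a continuous open bijection onto an open set; the identity $\mathbf{d}(V)=V^{-1}V$ inside a local bisection is indeed the crux and you isolate it correctly. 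Two small points to tighten. First, the continuity argument for $(a,b)\mapsto a^{-1}b$ takes place on the subspace of composable pairs, so ``shrinking'' should be spelled out as intersecting the two factors of a basic box neighbourhood whose trace on the composable pairs lands in $G_{o}$. Second, in part (2) you assert without proof that the classical \'etale condition forces $G_{o}$ to be open; this is true but deserves its one-line argument: if $U\ni e$ is open with $\mathbf{d}|_{U}$ injective, then $N=U\cap\mathbf{d}^{-1}(U)$ is an open neighbourhood of $e$, and for $x\in N$ both $x$ and $\mathbf{d}(x)$ lie in $U$ and have the same $\mathbf{d}$-image, whence $x=\mathbf{d}(x)\in G_{o}$. (Your parenthetical claim that multiplication is open in an \'etale groupoid is not needed for part (2), since the construction of $U_{g}$ uses only openness of $G_{o}$ and continuity, and can safely be dropped.)
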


For more background on topological groupoids, we refer the reader to \cite{R1,R2}.
Let $G$ be a topological groupoid.
We denote by $\mathsf{B}(G)$ the set of all open local bisections of $G$.
The proof of the following is straightforward.

\begin{proposition}\label{prop: B} Let $G$ be an \'etale groupoid.
Then $\mathsf{B}(G)$ is a pseudogroup.
\end{proposition}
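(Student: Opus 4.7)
The plan is to establish the three defining features of a pseudogroup in turn: $\mathsf{B}(G)$ is an inverse subsemigroup of the Boolean inverse $\wedge$-semigroup of all local bisections of $G$ (from the preceding lemma); it has joins of all non-empty compatible subsets; and multiplication distributes over these joins.

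For closure, I would invoke Proposition~\ref{prop: etale_groupoids}(1): the product of two open sets in $G$ is open, so $AB \in \mathsf{B}(G)$ whenever $A,B \in \mathsf{B}(G)$. Inversion on a topological groupoid is continuous and involutive, hence a homeomorphism, so $A^{-1}$ is open whenever $A$ is. Combined with the fact that local bisections are closed under products and inverses, this shows $\mathsf{B}(G)$ is an inverse subsemigroup.

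For joins, I would first observe that the natural partial order on $\mathsf{B}(G)$ coincides with set inclusion: if $A \subseteq B$ then $A = B \cdot (A^{-1}A)$ with $A^{-1}A \subseteq G_o$ an idempotent, while conversely $A = Be$ with $e \in E$ forces $A \subseteq B$. Now let $\{A_i\}_{i \in I}$ be a non-empty compatible family in $\mathsf{B}(G)$. Set $A = \bigcup_i A_i$. The set $A$ is open, and compatibility of $A_i$ and $A_j$ translates to $A_i^{-1} A_j, A_i A_j^{-1} \subseteq G_o$ for all $i,j$, so
$$A^{-1}A = \bigcup_{i,j} A_i^{-1} A_j \subseteq G_o \quad \text{and} \quad AA^{-1} = \bigcup_{i,j} A_i A_j^{-1} \subseteq G_o,$$
which shows $A$ is a local bisection. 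Hence $A \in \mathsf{B}(G)$, and since the order is inclusion, $A$ is the join of the $A_i$.

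For distributivity, set-theoretic multiplication distributes over arbitrary unions: $C \cdot \bigcup_i A_i = \bigcup_i CA_i$ and $\bigl(\bigcup_i A_i\bigr)\cdot C = \bigcup_i A_i C$. Since joins in $\mathsf{B}(G)$ are unions, multiplication distributes over any existing join. The one step that requires genuine verification (as opposed to being formal) is that the union of a compatible family of open local bisections is again a local bisection, which is the computation displayed above; the rest of the argument is bookkeeping.
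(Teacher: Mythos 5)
Your proof is correct, and it is exactly the argument the paper has in mind: the paper omits the proof entirely (``The proof of the following is straightforward''), and the standard verification is the one you give --- closure under products and inverses via Proposition~\ref{prop: etale_groupoids}, identification of the natural partial order with inclusion, joins of compatible families as unions (with the one genuinely non-formal check being that such a union is again a local bisection, which your computation of $A^{-1}A$ and $AA^{-1}$ handles), and distributivity from the fact that subset multiplication distributes over arbitrary unions. The only cosmetic addition one might make is to note that $G_o \in \mathsf{B}(G)$ is an identity element and that $E(\mathsf{B}(G))$ is the frame of open subsets of $G_o$, but neither is required by the paper's working definition of pseudogroup.
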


We shall now recall some results that are well-known in the unital case;
however, we shall also need them in the non-unital case.

\begin{lemma}[Non-unital results]\label{le: lattice_filters} \mbox{} 
\begin{enumerate}

\item In a distributive lattice every ultrafilter is prime.

\item A distributive lattice is Boolean if and only if every prime filter is an ultrafilter.

\end{enumerate}
\end{lemma}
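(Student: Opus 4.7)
Both parts hinge on Lemma~\ref{le: ultrafilter_result}.

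For (1): if $F$ is an ultrafilter with $a \vee b \in F$ but $a, b \notin F$, Lemma~\ref{le: ultrafilter_result} gives $x, y \in F$ with $a \wedge x = 0 = b \wedge y$. Then $z := x \wedge y \in F$ and distributivity yields
\[z \wedge (a \vee b) = (z \wedge a) \vee (z \wedge b) = 0,\]
contradicting $z \wedge (a \vee b) \in F$.

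For (2) $(\Rightarrow)$: let $L$ be Boolean and $F$ prime. I verify the criterion of Lemma~\ref{le: ultrafilter_result}. Given $a$ meeting every element of $F$, pick any $x_0 \in F$ and form $a' := (a \vee x_0) \setminus a$ inside the Boolean principal ideal $(a \vee x_0)^{\downarrow}$ via Lemma~\ref{le: relative_complement}. Then $a \vee a' = a \vee x_0 \in F$, so primeness gives $a \in F$ or $a' \in F$; the second option is ruled out because $a \wedge a' = 0$ would contradict the assumption on $a$.

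For (2) $(\Leftarrow)$: assume every prime filter of $L$ is an ultrafilter and fix $a \leq b$ in $L$; the plan is to construct the relative complement of $a$ in $b^{\downarrow}$ in two steps. \emph{Step 1 (transfer to $b^{\downarrow}$).} Show that every prime filter $Q$ of the bounded distributive lattice $b^{\downarrow}$ is an ultrafilter. The upward closure $P := Q^{\uparrow}$ in $L$ is a prime filter (the key identity is $p = (p \wedge x) \vee (p \wedge y)$ when $p \in Q$ lies below $x \vee y$), hence an ultrafilter in $L$. If $Q$ were not maximal, Lemma~\ref{le: ultrafilter_result} applied in $b^{\downarrow}$ would produce $q' \in b^{\downarrow} \setminus Q$ meeting every element of $Q$; this $q'$ lies outside $P$ by upward closure inside $b^{\downarrow}$, so Lemma~\ref{le: ultrafilter_result} applied in $L$ supplies $z \in P$ with $z \wedge q' = 0$, and any $q \in Q$ with $q \leq z$ then satisfies $q \wedge q' = 0$---contradiction. \emph{Step 2 (classical spectral argument).} In the prime spectrum of $b^{\downarrow}$, Lemma~\ref{le: ultrafilter_result} shows $\{P : a \notin P\}$ equals the open set $\bigcup \{D(c) : c \wedge a = 0\}$, so each basic open $D(a)$ is clopen; since $b^{\downarrow}$ has top $b$, the spectrum is compact, and this cover collapses to a finite join $c$ with $c \wedge a = 0$ and $D(a \vee c)$ everything, forcing $a \vee c = b$ and giving the required complement.

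The main obstacle is the $(\Leftarrow)$ direction of (2). While classical, it requires a careful reduction to the bounded case---the subtlety being that filter closure inside the principal ideal $b^{\downarrow}$ is not the same as filter closure inside $L$---and then the spectral compactness step, where the hypothesis ``prime $=$ ultrafilter'' does its real work by making every basic open $D(a)$ clopen in the prime spectrum.
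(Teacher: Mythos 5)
Your proof is correct, and while part (1) reproduces the paper's argument essentially verbatim, part (2) takes a genuinely different route in two places. For the forward direction the paper reduces to the unital Boolean algebra $e^{\downarrow}$ (for an arbitrary $e \in P$), forms the auxiliary prime filter $P' = \{e \wedge p \colon p \in P\}$, and invokes the unital prime-equals-ultrafilter fact there; you argue directly in $L$ by forming the relative complement $a' = (a \vee x_0) \setminus a$ and letting primeness of $F$ together with $a \wedge a' = 0$ force $a \in F$ --- shorter, and it uses the Boolean hypothesis only through the existence of one relative complement. For the reverse direction, both you and the paper reduce to the principal ideal $b^{\downarrow}$ by transferring filters upward (your Step~1 is the paper's chain of observations about $F \mapsto F^{\uparrow}$, except that you derive maximality from the meet criterion of Lemma~\ref{le: ultrafilter_result} rather than from the order-embedding of filters); the real divergence is that the paper then simply cites the unital case as Corollary~4.9(ii) of \cite{J}, whereas you prove that case from scratch via the spectral argument that each $D(a)$ is clopen and compactness yields a finite $c$ with $c \wedge a = 0$ and $a \vee c = b$. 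Your version is self-contained at the cost of importing two standard facts --- compactness of the prime spectrum of a bounded distributive lattice, and the prime-filter separation property needed to conclude $a \vee c = b$ from $D(a \vee c)$ exhausting the spectrum --- both of which are available from the paper's preliminary discussion of Stone duality, so the argument closes.
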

\begin{proof} (1) Let $F$ be an ultrafilter in a distributive lattice $D$.
Suppose that $c = a \vee b \in F$ but $a,b \notin F$.
By Lemma~\ref{le: ultrafilter_result}, there exists $f_{a} \in F$ such that $a \wedge f_{a} = 0$ and $f_{b} \in F$ such that $f_{b} \wedge b = 0$.
But $F$ is a filter and so $f = f_{a} \wedge f_{b} \in F$ which means that $f \wedge a = 0 = f \wedge b$.
But $f \wedge c = (f \wedge a) \vee (f \wedge b)$.
The lefthand-side is non-zero since it is a meet of elements in a filter,
but the righthand-side is zero.
This is a contradiction and so either $a \in F$ or $b \in F$.


(2). The result holds in the unital case by part (ii) of Corollary~4.9 of \cite{J}.
Let $D$ be a distributive lattice in which every prime filter is maximal.
We prove that $D$ is a Boolean algebra.
To do this we have to prove that $e^{\downarrow}$ is a Boolean algebra for each $e \in D$.
This can be achieved by showing that every prime filter in $e^{\downarrow}$ is an ultrafilter.
We make some observations first.
Let $F \subseteq e^{\downarrow}$ be a filter in $e^{\downarrow}$.
Then $F^{\uparrow}$ is a filter in $D$.
In addition, if $F \subseteq G \subseteq e^{\downarrow}$ are filters then $F^{\uparrow} \subseteq G^{\uparrow}$,
and if $F^{\uparrow} = G^{\uparrow}$ then $F = G$.
Let $F \subseteq e^{\downarrow}$ be a prime filter in $e^{\uparrow}$.
Then $F^{\uparrow}$ is a prime filter in $D$.
If $F$ is not an ultrafilter in $e^{\downarrow}$ then there is a filter $G$ such that $F \subseteq G \subseteq e^{\downarrow}$.
But $F^{\uparrow}$ a prime filter implies by assumption that $F^{\uparrow}$ is an ultrafilter and so $F^{\uparrow} = G^{\uparrow}$ giving $F = G$.
We have therefore proved that in the unital distributive lattice $e^{\downarrow}$ every prime filter is an ultrafilter and so $e^{\downarrow}$
is a unital Boolean algebra, as required. 

Conversely, let $D$ be a Boolean algebra.
We prove that every prime filter is an ultrafilter.
Let $P$ be a prime filter and let $a \in D$ be an element such that $a \wedge p \neq 0$ for all $p \in P$.
We shall prove that $a \in P$ from which it follows that $P$ is an ultrafilter by Lemma~\ref{le: ultrafilter_result}.
Choose $e \in P$ arbitrarily and put $P' = \{e \wedge p \colon p \in P \}$.
Then $P'$ is a prime filter in the unital Boolean algebra $e^{\downarrow}$.
Observe that the element $a \wedge e$ has a non-empty meet with every element of $P'$.
But in a unital Boolean algebra we have seen that every prime filter is an ultrafilter and so $P'$ is an ultrafilter
and thus by Lemma~\ref{le: ultrafilter_result} we have that $a \wedge e \in P'$.
It follows that $a \in P$, as required. 
\end{proof}

We shall need some basic results from \cite{J} though we shall phrase them slightly differently.
Let $X$ be a topological space.
For each $x \in X$ define $O_{x}$ to be the set of all open subsets of $X$ that contain $x$.
Clearly, $O_{x}$ is a filter; it is in fact a {\em completely prime} filter meaning that
if $\bigcup_{i \in I} U \in O_{x}$ then $U_{i} \in O_{x}$ for some $i$.
The space $X$ is said to be {\em sober} if it satisfies two conditions:
first, every completely prime filter is of the form $O_{x}$ for some point $x$ and  $O_{x} = O_{y}$ if and only if $x = y$.
In other words, each point of the space is uniquely determined by the open sets that contain it.

A topological space $X$ is said to be {\em spectral} if it is sober and has a basis of compact-open sets
that is closed under finite non-empty intersections.
{\em Observe that we do not assume that $X$ is compact as, for example, in \cite{J}.}

Let $L$ be a distributive lattice.
Denote by $\mathsf{S}(L)$ the set of all prime filters of $L$.
For each $a \in L$ denote by $X_{a}$ the set of all prime filters that contain $a$.
The set $\tau = \{X_{a} \colon a \in X \}$ forms the basis of a topology on $\mathsf{S}(L)$. 
The sets $X_{a}$ are compact-open, the set $\tau$ is closed under interesections, and $\mathsf{S}(L)$ is sober.
It follows that $\mathsf{S}(L)$ is a spectral space.
Let $X$ be a spectral space.
Then the set $\mathsf{L}(X)$ of compact-open sets is a distributive lattice.

A homomomorphism of distributive lattices is called {\em proper} if every element of the codomain lies
below an element of the image.
A continuous map is {\em coherent} if the inverse images of compact-open sets are compact-open.

\begin{theorem}[Stone duality for non-unital distributive lattices]\label{the: stone_duality_distributive}
The category of distributive lattices and their proper homomorphisms is dually equivalent to the category of spectral spaces
and their coherent continuous maps.
Under this duality, Boolean algebras correspond to Hausdorff spectral spaces.
\end{theorem}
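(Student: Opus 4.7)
The plan is to construct contravariant functors $\mathsf{S}$ and $\mathsf{L}$ between the two categories, verify they are mutually inverse up to natural isomorphism, and then read off the Boolean/Hausdorff refinement from Lemma~\ref{le: lattice_filters}(2). The functors have already been defined on objects in the material preceding the theorem, so the substance of the argument is to extend them to morphisms, to exhibit the unit and counit, and to show these are isomorphisms.

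On morphisms, a proper homomorphism $\varphi\colon L\to M$ should induce $\mathsf{S}(\varphi)(F)=\varphi^{-1}(F)$. The only non-routine point is that $\varphi^{-1}(F)$ must be non-empty, and this is precisely where properness of $\varphi$ enters: given any $a\in F$, properness yields $b\in L$ with $a\le\varphi(b)$, whence $b\in\varphi^{-1}(F)$. Continuity and coherence of $\mathsf{S}(\varphi)$ both follow from $\mathsf{S}(\varphi)^{-1}(X_a)=X_{\varphi(a)}$. In the other direction, a coherent continuous map $f\colon X\to Y$ induces $\mathsf{L}(f)(U)=f^{-1}(U)$, and properness of $\mathsf{L}(f)$ follows by covering $f(V)$, for $V\in\mathsf{L}(X)$, by finitely many basic compact-opens of $Y$.

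For the unit and counit, the map $a\mapsto X_a$ gives $L\to\mathsf{L}(\mathsf{S}(L))$. Surjectivity holds because an arbitrary compact-open in $\mathsf{S}(L)$ is a finite union of basic opens, and primeness of filters forces $X_{a_1}\cup\cdots\cup X_{a_n}=X_{a_1\vee\cdots\vee a_n}$. Injectivity is the heart of the matter: if $a\not\le b$, one must produce a prime filter containing $a$ but not $b$. The reduction I would use is to pass to the principal order ideal $a^{\downarrow}$, which is a unital distributive lattice, and apply classical unital Stone duality there to obtain a prime filter $P'\subseteq a^{\downarrow}$ with $a\in P'$ and $a\wedge b\notin P'$; checking that $P'^{\uparrow}$ is a prime filter of $L$ still avoiding $b$ is a short distributivity calculation. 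On the other side, the map $x\mapsto\{U\in\mathsf{L}(X):x\in U\}$ gives $X\to\mathsf{S}(\mathsf{L}(X))$ and is a homeomorphism by sobriety, using that the compact-open basis is closed under finite non-empty intersections to match the prime-filter condition exactly.

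For the Boolean/Hausdorff refinement I would invoke Lemma~\ref{le: lattice_filters}(2) and reduce to showing that $\mathsf{S}(L)$ is Hausdorff if and only if every prime filter of $L$ is an ultrafilter. If $L$ is Boolean and $P\ne Q$ are ultrafilters with $a\in P\setminus Q$, then Lemma~\ref{le: ultrafilter_result} supplies $b\in Q$ with $a\wedge b=0$, so $X_a$ and $X_b$ are disjoint basic opens separating $P$ and $Q$. Conversely, if $\mathsf{S}(L)$ is Hausdorff and $P\subsetneq Q$ were distinct prime filters, every basic open $X_a$ containing $P$ would automatically contain $Q$, making separation impossible. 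The main obstacle throughout is the missing top element: the existence of prime filters separating two specified elements, which in the unital case is a direct Zorn argument, here has to be bootstrapped through the unital lattices $a^{\downarrow}$, and it is this same lack of a top that forces morphisms to be required proper and coherent rather than merely operation-preserving.
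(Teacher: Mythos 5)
The paper does not actually prove this theorem: it is stated in the preliminaries as background, with only the constructions $\mathsf{S}(L)$, $\mathsf{L}(X)$ and the auxiliary Lemma~\ref{le: lattice_filters} supplied. So there is no internal proof to match your proposal against; what can be said is that your route is sound and sits naturally alongside the paper's toolkit. Your key separation step --- producing, for $a \nleq b$, a prime filter containing $a$ and omitting $b$ by passing to the unital lattice $a^{\downarrow}$, invoking classical unital Stone duality there, and checking that $P'^{\uparrow}$ remains prime in $L$ --- is exactly the bootstrapping-through-principal-ideals technique the paper uses to prove Lemma~\ref{le: lattice_filters}(2). It contrasts with the machinery the paper builds in Section~2 for the inverse-semigroup generalization (Lemmas~\ref{le: maximal}, \ref{le: ideal_filter}, \ref{le: maximal_prime} and Proposition~\ref{prop: key_result}), where the same separation statement is obtained by a direct Zorn's lemma argument on $\vee$-closed order ideals maximal disjoint from a filter. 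Your version buys a quick reduction to the classical unital theorem; the paper's version buys a self-contained argument that generalizes verbatim beyond lattices. The functorial bookkeeping (properness giving non-emptiness of $\varphi^{-1}(F)$, the identity $\mathsf{S}(\varphi)^{-1}(X_a) = X_{\varphi(a)}$, coherence via finite covers by basic compact-opens, and the sobriety argument identifying prime filters of $\mathsf{L}(X)$ with points) is all correct as sketched.

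There is one genuine, though easily repaired, gap in the Hausdorff $\Rightarrow$ Boolean direction. You argue that if $P \subsetneq Q$ are distinct prime filters then they cannot be separated, hence Hausdorffness forbids comparable prime filters. But to conclude that every prime filter is an ultrafilter you need to know that a non-maximal prime filter $P$ is properly contained in another \emph{prime} filter; an arbitrary proper filter strictly containing $P$ is not a point of $\mathsf{S}(L)$ and so yields no contradiction with Hausdorffness. The fix is one line: extend $P$ by Zorn's lemma to an ultrafilter $Q \supsetneq P$ (as in Lemma~\ref{le: ultrafilters}) and observe that $Q$ is prime by Lemma~\ref{le: lattice_filters}(1); then your non-separation argument applies to the pair $P \subsetneq Q$ and the contradiction goes through. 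With that sentence added, the Boolean/Hausdorff refinement, and hence the whole proposal, is complete.
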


A Hausdorff spectral space is called a {\em Boolean space}.

\subsection{Outline of paper}

We describe each section in turn highlighting the key results proved.\\

\noindent
{\bf 2. Prime filters and ultrafilters } This deals with the theory of prime filters and ultrafilters in distributive inverse semigroups
and generalizes well-known results for distributive lattices such as those to be found in \cite{J}. 
In Proposition~\ref{prop: semigroup_filters} we characterize Boolean inverse semigroups
as those distributive inverse semigroups in which every prime filter is an ultrafilter.
In Proposition~\ref{prop: key_result}, we prove that prime filters can be used to separate elements in the following sense. 
If $a \neq b$ then either $a \nleq b$ or $b \nleq a$.
We prove that if $b \nleq a$ then there is a prime filter that contains $b$ and omits $a$.\\

\noindent
{\bf 3. Non-commutative Stone duality }This section and the next form the core of the paper.
We begin by defining the topological objects that take part in our dualties.
These are classes of \'etale groupoids that have a spectral space of identities and so we refer to them simply as {\em spectral groupoids}.
Theorem~\ref{the: duality_theorem} states that a suitable category of distributive inverse semigroups 
is dually equivalent to a suitable category of spectral groupoids, where `suitable' refers to a choice of morphisms.
Two further theorems are proved Theorem~\ref{the: Big_Theorem_Boolean} and Theorem~\ref{the: Big_Theorem_Meet_Boolean}
which specialize the above duality to Boolean inverse semigroups and Boolean inverse $\wedge$-semigroups, respectively.
The above theorems are proved directly but also from a more general perspective via
an adjunction between pseudogroups and \'etale groupoids that shows how our work connects with that of Resende \cite{R1,R2}.\\

\noindent
{\bf 4.~Adjunctions and duality }We set up an adjunction between pseudogroups and \'etale groupoids and rederive
our main duality theorem for distributive inverse semigroups obtained in Section~3 by extending the notion of coherence from frames to pseudogroups.
This connects our work with that of Pedro Resende \cite{R1,R2} and shows that our work is complementary to his.
However, Resende's theory does not include any morphisms whereas ours does.\\ 

\noindent
{\bf 5.~Booleanizations and Paterson's universal groupoid }This section was inspired by Paterson's book \cite{P}.
The question we answer here is how {\em Paterson's universal groupoid} $\mathsf{G}_{u}(S)$ of an inverse semigroup $S$
can be understood within our framework. We prove that it is obtained as part of the process of {\em Booleanizing}
an inverse semigroup; that is, of finding the freest Boolean inverse semigroup generated by an inverse semigroup.
As such, it requires a generalization of what is called the {\em patch topology} in \cite{J}.
The work of this section was directly inspired by the calculations on pp~190--191 of \cite{P}.\\

\noindent
{\bf 6.~Tight completions and Exel's tight groupoid }Our duality theory applies to distributive and Boolean inverse semigroups.
This raises the question of how arbitrary inverse semigroups may be completed to distributive inverse semigroups.
To do this we generalize the concept of a covering \cite{J} to inverse semigroups.
The general idea is that an inverse semigroup equipped with a coverage can be regarded as a presentation of a pseudogroup
and, under suitable conditions on the coverage, as a presentation of a distributive inverse semigroup.
Although we describe elements of the general theory, we concentrate on one particular coverage that we call the
{\em tight coverage} which has its roots in the second author's paper \cite{L} and the work of Exel \cite{Exel1, Exel2}.
This coverage seems particularly important in dealing with the connections between inverse semigroups and $C^{\ast}$-algebras.
Our main theorem is Theorem~\ref{the: tight_completions} that shows that every inverse semigroup equipped with its tight coverage can be completed,
in a suitable sense, to a distribuitve inverse semigroup called the {\em tight completion}.
We say that an inverse semigroup is {\em pre-Boolean} if its tight completion is actually Boolean.
Many naturally occurring examples of inverse semigroups are pre-Boolean, such as the polycyclic inverse monoids.
Their tight completions are what we call the Cuntz inverse monoids and the groups of units of such monoids are Thompson groups
which shows that the theory of this paper has a wider significance.
If the tight completion of an inverse semigroup is not Boolean one can still construct from it the
associated spectral groupoid. When this is Booleanized via the patch-topology one obtains Exels' tight groupoid.\\

\begin{center}
{\bf Notation }
\end{center}

\begin{description}

\item[$\mathsf{D}(S)$] The distributive completion of the inverse semigroup $S$.

\item[$\mathsf{C}(S)$] The pseudogroup completion of the inverse semigroup $S$; the {\em Schein completion}.

\item[$\mathsf{B}(G)$] The pseudogroup of all open local bisections of the \'etale groupoid $G$.

\item[$\mathsf{KB}(G)$] The ordered groupoid of all compact-open local bisections of the \'etale groupoid $G$.

\item[$\mathsf{G}(S)$] The \'etale groupoid of all proper filters on the inverse semigroup $S$.

\item[$\mathsf{G}_{u}(S)$] Paterson's universal groupoid of the inverse semigroup $S$.

\item[$\mathsf{G}_{P}(S)$] The spectral groupoid of the distributive inverse semigroup $S$; the prime spectrum of $S$.

\item[$\mathsf{G}_{P}(S)^{\dagger}$] The above groupoid with the patch topology.

\item[$\mathsf{G}_{CP}(S)$] The \'etale groupoid of the pseudogroup $S$; the completely prime spectrum of $S$.

\item[$\mbox{\rm Idl}(S)$] The pseudogroup of all $\vee$-closed compatible order ideals of the distributive inverse semigroup $S$.

\item[$\mathsf{K}(S)$] The set of finite elements of the pseudogroup $S$.

\end{description}

\section{Prime filters and ultrafilters}

The goal of this section is to generalize the classical theory of prime filters on distributive lattices to distributive inverse semigroups.

Let $F$ be a filter in $S$.
Define $\mathbf{d}(F) = (A^{-1}A)^{\uparrow}$ and $\mathbf{r}(A) = (AA^{-1})^{\uparrow}$;
both of these sets are filters.

\begin{lemma}\label{le: idempotent_filters} Let $F$ be a filter in an inverse semigroup $S$.
Then $x \in \mathbf{d}(F)$ if and only if $a^{-1}a \leq x$ for some $a \in A$.
\end{lemma}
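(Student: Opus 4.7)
The plan is to prove both implications, treating the ``if'' direction as essentially immediate and using directedness of $F$ to handle the ``only if'' direction. (I read the ``$a \in A$'' in the statement as a typo for ``$a \in F$'', in keeping with the definition just given.)

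For the ``if'' direction, suppose $a \in F$ with $a^{-1}a \leq x$. Then $a^{-1}a \in F^{-1}F$, and since $\mathbf{d}(F) = (F^{-1}F)^{\uparrow}$ is closed upwards, $x \in \mathbf{d}(F)$. This needs no further input.

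For the ``only if'' direction, assume $x \in \mathbf{d}(F)$, so $x \geq a^{-1}b$ for some $a, b \in F$. The key step is to exploit the fact that $F$ is directed: choose $c \in F$ with $c \leq a$ and $c \leq b$. I would then invoke the standard facts that in an inverse semigroup the natural partial order is preserved by inversion and by multiplication: from $c \leq a$ we get $c^{-1} \leq a^{-1}$, and combining with $c \leq b$ we get $c^{-1}c \leq a^{-1}b \leq x$. Since $c \in F$, this exhibits an element of $F$ whose associated idempotent lies below $x$, as required.

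There is no real obstacle here; the only thing to verify carefully is the compatibility of the natural partial order with the semigroup operations, which is a basic property of inverse semigroups. The argument is short enough that it should occupy only a few lines in the final write-up.
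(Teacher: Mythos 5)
Your proof is correct and takes essentially the same route as the paper: the ``if'' direction is immediate from upward closure, and for the converse the paper likewise picks $c \in F$ with $c \leq a,b$ by directedness and concludes $c^{-1}c \leq a^{-1}b \leq x$ via compatibility of the natural partial order with inversion and multiplication. You are also right that the ``$a \in A$'' in the statement should read ``$a \in F$''.
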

\begin{proof} By assumption, $a^{-1}b \leq x$ for some $a,b \in A$.
But $A$ is down directed and so there is $c \in A$ such that $c \leq a,b$.
It follows that $c^{-1}c \leq x$, as required.
\end{proof}

The following result summarizes some useful properties of filters;
part (1) is Lemma~3.3 of \cite{LMS},
part (2) is Proposition~1.4 of \cite{Law0},
part (3) is Lemma~2.11 of \cite{Law2},
part (4) is Proposition~1.5 of \cite{Law0} and Lemma~3.4 of \cite{LMS}.

\begin{lemma}\label{le: filter_properties} Let $S$ be an inverse semigroup.
\begin{enumerate}

\item For every filter $F = FF^{-1}F$.

\item Let $F$ be a filter. Then $F = (a \mathbf{d}(F))^{\uparrow}$ for any $a \in F$.

\item If $F$ and $G$ are filters such that $\mathbf{d}(F) = \mathbf{d}(G)$ and $F \cap G \neq \emptyset$ then $F = G$,
and dually.

\item The filter $F$ contains an idempotent if and only if it is also an inverse subsemigroup, 
in which case $F = E(F)^{\uparrow}$.
\end{enumerate}
\end{lemma}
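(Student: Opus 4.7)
The plan is to prove the four parts in sequence, as each one builds on the previous. Throughout I will rely on two basic features of the natural partial order on an inverse semigroup: multiplication is monotone (so $x \leq x'$ and $y \leq y'$ give $xy \leq x'y'$, and $x \leq y$ gives $x^{-1} \leq y^{-1}$), and any element below an idempotent is itself an idempotent. Directedness of $F$ will be used repeatedly to pull finitely many elements of $F$ down to a common lower bound lying in $F$.

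For part (1), the inclusion $F \subseteq FF^{-1}F$ is immediate from $a = aa^{-1}a$. For the reverse, given $ab^{-1}c$ with $a,b,c \in F$, directedness yields $d \in F$ with $d \leq a,b,c$; monotonicity then gives $d = dd^{-1}d \leq ab^{-1}c$, so $ab^{-1}c \in F^{\uparrow} = F$.

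For part (2), to see $(a\mathbf{d}(F))^{\uparrow} \subseteq F$, take $ay$ with $y \in \mathbf{d}(F)$. Lemma~\ref{le: idempotent_filters} provides $b \in F$ with $b^{-1}b \leq y$; picking $c \in F$ below both $a$ and $b$ and using the standard fact $c \leq b \Rightarrow c^{-1}c \leq b^{-1}b$, we get $c = c \cdot c^{-1}c \leq ay$, so $ay \in F$. For the reverse inclusion, given $z \in F$, take $w \in F$ with $w \leq z,a$, and note that $w \leq a$ means $w = a \cdot w^{-1}w$, with $w^{-1}w \in \mathbf{d}(F)$ by Lemma~\ref{le: idempotent_filters}. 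Part (3) then follows at once from (2): if $a \in F \cap G$, then $F = (a\mathbf{d}(F))^{\uparrow} = (a\mathbf{d}(G))^{\uparrow} = G$.

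For part (4), the backward direction is trivial (take $a^{-1}a$). For the forward direction, fix an idempotent $e \in F$. For any $a \in F$, directedness furnishes $b \in F$ with $b \leq a,e$; since $b$ lies below the idempotent $e$, it is itself an idempotent. This single device simultaneously yields $F = E(F)^{\uparrow}$, closure under inversion (from $b = b^{-1} \leq a^{-1}$ so that $a^{-1} \in F^{\uparrow} = F$), and closure under multiplication (given $a_1,a_2 \in F$, pull down to a common idempotent $b \leq a_1, a_2, e$, so $b = b^2 \leq a_1 a_2$). None of the individual steps is hard; the only delicate point anywhere is the order-theoretic implication $c \leq b \Rightarrow c^{-1}c \leq b^{-1}b$, which is a standard consequence of the characterisation $c \leq b \iff c^{-1}c = c^{-1}b$.
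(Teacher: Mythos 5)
Your proof is correct. Note that the paper itself offers no argument for this lemma: it simply cites Lemma~3.3 and Lemma~3.4 of \cite{LMS}, Propositions~1.4 and 1.5 of \cite{Law0}, and Lemma~2.11 of \cite{Law2}, so there is no in-paper proof to compare against. What you have written is the standard direct verification, and all the steps check out: each part reduces to iterated downward directedness plus the two order-theoretic facts you isolate (monotonicity of multiplication and inversion under the natural partial order, and the fact that anything below an idempotent is an idempotent). In particular the inclusion $(a\mathbf{d}(F))^{\uparrow} \subseteq F$ in part (2) correctly invokes Lemma~\ref{le: idempotent_filters} to replace an arbitrary $y \in \mathbf{d}(F)$ by some $b^{-1}b \leq y$ with $b \in F$, which is the one place where the definition $\mathbf{d}(F) = (F^{-1}F)^{\uparrow}$ genuinely matters; and your derivation of part (3) as an immediate corollary of part (2) is exactly how these results are organised in the cited sources. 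The only presentational remark is that in part (2) you prove $a\mathbf{d}(F) \subseteq F$ and should say explicitly that upward closure of $F$ then gives $(a\mathbf{d}(F))^{\uparrow} \subseteq F$, but that is a one-line observation, not a gap.
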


Filters that contain idempotents will be called {\em idempotent filters}.
Both $\mathbf{d}(A)$ and $\mathbf{r}(A)$ are idempotent filters.

\begin{lemma}\label{le: filter} Let $A$ be a filter in a distributive inverse semigroup $S$.
\begin{enumerate}

\item $A$ is prime (respectively, an ultrafilter) if and only if $A^{-1}$ is prime (respectively, an ultrafilter).

\item $A$ is prime (respectively, an ultrafilter) if and only if $\mathbf{d}(A)$ is prime (respectively, an ultrafilter).

\end{enumerate}
\end{lemma}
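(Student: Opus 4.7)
Plan of proof.

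For part (1), I would observe that inversion $s \mapsto s^{-1}$ is an order automorphism of $(S,\leq)$ which preserves compatibility and satisfies $(a\vee b)^{-1} = a^{-1}\vee b^{-1}$. From these three facts alone it follows that $A \mapsto A^{-1}$ is an inclusion-preserving involution on the set of proper filters of $S$. Primality and maximality are both defined purely in terms of inclusion and of joins, so they transfer across this involution automatically; no real work is needed.

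For part (2) in the ultrafilter direction, the key tools are Lemma~\ref{le: filter_properties}(2) and (3), which say that a filter is recovered from its image under $\mathbf{d}$ together with any one of its elements. Suppose first that $\mathbf{d}(A)$ is an ultrafilter and $A \subseteq F$ with $F$ proper. Then $\mathbf{d}(A) \subseteq \mathbf{d}(F)$ with $\mathbf{d}(F)$ proper, so $\mathbf{d}(A) = \mathbf{d}(F)$; since $A \cap F \neq \emptyset$, Lemma~\ref{le: filter_properties}(3) forces $A = F$. Conversely, suppose $A$ is an ultrafilter and let $H \supseteq \mathbf{d}(A)$ be a proper filter; necessarily $H$ is idempotent. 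Fix $a \in A$ and set $F = (aH)^{\uparrow}$. Routine checks (directedness of $aH$; properness of $F$, using that $\mathbf{d}(a) \in H$ so $\mathbf{d}(a)h \in H$ for $h \in H$) show that $F$ is a proper filter, and Lemma~\ref{le: filter_properties}(2) gives $A = (a\mathbf{d}(A))^{\uparrow} \subseteq (aH)^{\uparrow} = F$. One then computes $\mathbf{d}(F) = H$: the inclusion $\mathbf{d}(F) \subseteq H$ uses that any $s \in F$ has $\mathbf{d}(s) \geq \mathbf{d}(ah) = \mathbf{d}(a)h \in H$, and $H \subseteq \mathbf{d}(F)$ uses $h \geq \mathbf{d}(a)h = \mathbf{d}(ah) \in \mathbf{d}(F)$. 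Maximality of $A$ gives $F = A$, hence $H = \mathbf{d}(A)$.

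For part (2) in the prime direction, I would rely on the identity $\mathbf{d}(a \vee b) = \mathbf{d}(a) \vee \mathbf{d}(b)$ for compatible $a, b$. Assuming $\mathbf{d}(A)$ is prime and $a \vee b \in A$, apply primality to $\mathbf{d}(a) \vee \mathbf{d}(b) \in \mathbf{d}(A)$ to get, say, $\mathbf{d}(a) \in \mathbf{d}(A)$. Using $a = (a \vee b)\mathbf{d}(a)$ (since $a \leq a \vee b$) together with $b\mathbf{d}(a) = (a\vee b)\mathbf{d}(b)\mathbf{d}(a) = a\mathbf{d}(b) \leq a$, we get $(a \vee b)\mathbf{d}(a) = a$, and then Lemma~\ref{le: filter_properties}(2) gives $a \in ((a \vee b)\mathbf{d}(A))^{\uparrow} = A$. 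Conversely, assume $A$ is prime and $e \vee f \in \mathbf{d}(A)$, so $e \vee f \geq \mathbf{d}(a)$ for some $a \in A$. A short calculation shows $a(e \vee f) = ae \vee af$ contains $a$ (hence lies in $A$), so primality of $A$ gives, say, $ae \in A$; then $\mathbf{d}(ae) = \mathbf{d}(a)e \in \mathbf{d}(A)$, and upward closure yields $e \in \mathbf{d}(A)$.

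The main obstacle, as far as I can see, is bookkeeping rather than conceptual: one must repeatedly manipulate expressions like $b\mathbf{d}(a)$, $ah$, and $\mathbf{d}(s) \geq \mathbf{d}(ah)$, being careful that joins $a \vee b$ distribute through multiplication and that idempotent meets coincide with idempotent products. The step I would triple-check is the identity $\mathbf{d}(a \vee b) = \mathbf{d}(a) \vee \mathbf{d}(b)$ and the consequent argument that $b\mathbf{d}(a) \leq a$ for compatible $a,b$; everything else is a direct application of Lemma~\ref{le: filter_properties}.
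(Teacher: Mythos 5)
Your proof follows the paper's own argument very closely: part (1) is dismissed as straightforward there too, the ultrafilter half of part (2) uses exactly the same two observations (monotonicity of $F \mapsto \mathbf{d}(F)$ for one implication, and the filter $(aH)^{\uparrow}$ built from a fixed $a \in A$ for the other, with Lemma~\ref{le: filter_properties} closing both), and your forward prime direction via $\mathbf{d}(a \vee b) = \mathbf{d}(a) \vee \mathbf{d}(b)$ is the paper's computation.

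The one step you should repair is the converse prime direction, where you assume $A$ is prime and take $e \vee f \in \mathbf{d}(A)$ with $e,f$ apparently idempotents. The set $\mathbf{d}(A) = (A^{-1}A)^{\uparrow}$ is an idempotent filter but, being upward closed, it contains non-idempotent elements, and the definition of a prime filter quantifies over \emph{all} compatible pairs; so you must treat a general join $x_{1} \vee x_{2} \in \mathbf{d}(A)$. Fortunately your computation survives essentially unchanged: pick $a \in A$ with $\mathbf{d}(a) \leq x_{1} \vee x_{2}$ (Lemma~\ref{le: idempotent_filters}), so that $a \leq a(x_{1} \vee x_{2}) = ax_{1} \vee ax_{2} \in A$; primality of $A$ gives, say, $ax_{1} \in A$, and then $a^{-1}(ax_{1}) = \mathbf{d}(a)x_{1}$ lies in $A^{-1}A$ and is below $x_{1}$, whence $x_{1} \in \mathbf{d}(A)$. (Note that for non-idempotent $x_{1}$ the element to use is $a^{-1}(ax_{1})$ rather than $\mathbf{d}(ax_{1})$, which equals $x_{1}^{-1}\mathbf{d}(a)x_{1}$.) With that adjustment your argument coincides with the paper's.
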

\begin{proof} (1) This is straightforward in both cases.

(2) Suppose that $A$ is prime.
We prove that $\mathbf{d}(A) = (A^{-1}A)^{\uparrow}$ is prime.
Let $x = x_{1} \vee x_{2} \in \mathbf{d}(A)$.
Then $a^{-1}a \leq x$ for some $a \in A$ by Lemma~\ref{le: idempotent_filters}.
Clearly $a^{-1}a = xa^{-1}a$ and so by distributivity we have that $a^{-1}a = x_{1}a^{-1}a \vee x_{2}a^{-1}a$.
Thus again by distributivity,
$a = ax_{1}a^{-1}a \vee ax_{2}a^{-1}a$.
By assumption, $A$ is prime and so without loss of generality $ax_{1}a^{-1}a \in A$.
Thus $ax_{1} \in A$ since $A$ is upwardly closed.
However $a^{-1}ax_{1} \in \mathbf{d}(A)$.
Thus $x_{1} \in \mathbf{d}(A)$, again by upward closure, as required.
Suppose now that $\mathbf{d}(A)$ is prime.
We prove that $A$ is prime.
Let $a = a_{1} \vee a_{2} \in A$.
Then $\mathbf{d}(a) = \mathbf{d}(a_{1}) \vee \mathbf{d}(a_{2}) \in \mathbf{d}(A)$.
Where we use standard properties of compatible joins \cite{Law2}.
Without loss of generality, we have that $\mathbf{d}(a_{1}) \in \mathbf{d}(A)$.
It follows that $a_{1} = a\mathbf{d}(a_{1}) \in A$, as required.

Suppose that $A$ is an ultrafilter in $S$.
We prove that $\mathbf{d}(A)$ is an ultrafilter.
Let $\mathbf{d}(A) \subseteq H$.
Necessarily, $H$ is an idempotent filter.
Let $a \in A$ be arbitrary.
Then $B = (aH )^{\uparrow}$ is a filter and $A \subseteq B$. 
By assumption, $A = B$ and so $H = \mathbf{d}(A)$, as required.
Suppose now that $A$ is a filter such that $\mathbf{d}(A)$ is an ultrafilter.
Let $A \subseteq B$.
Then $\mathbf{d}(A) \subseteq \mathbf{d}(B)$.
By assumption, $\mathbf{d}(A) = \mathbf{d}(B)$ and so $A = B$, as required.
\end{proof}

We may now generalize Lemma~\ref{le: lattice_filters} to distributive inverse semigroups.

\begin{proposition}\label{prop: semigroup_filters} Let $S$ be a distributive inverse semigroup.
\begin{enumerate}

\item Every ultrafilter in $S$ is a prime filter. 

\item The semigroup $S$ is Boolean if and only if every prime filter is an ultrafilter.
\end{enumerate}
\end{proposition}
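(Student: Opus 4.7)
The plan is to reduce both parts to the corresponding statements for the distributive lattice $E(S)$, which are handled by Lemma~\ref{le: lattice_filters}. The bridge is Lemma~\ref{le: filter}(2): a filter $A$ in $S$ is prime (resp.\ an ultrafilter) iff the idempotent filter $\mathbf{d}(A)$ has the same property in $S$. So everything comes down to matching primeness/ultrafilterness of idempotent filters in $S$ with primeness/ultrafilterness of their restrictions to $E(S)$.

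First I would record an auxiliary observation: the assignment $A \mapsto E(A)$ is an inclusion-preserving bijection from idempotent filters in $S$ onto filters in $E(S)$, with inverse $F \mapsto F^{\uparrow S}$. This uses Lemma~\ref{le: filter_properties}(4). Moreover, any filter of $S$ that extends an idempotent filter automatically contains an idempotent and is therefore itself idempotent, so maximality in $S$ and maximality in $E(S)$ agree for idempotent filters; this matches ultrafilters on both sides. For the prime correspondence: if $A$ is idempotent and prime in $S$, then for idempotents $e,f$ with $e \vee f \in E(A)$ we have $e \sim f$, so $e \in A$ or $f \in A$, hence in $E(A)$. Conversely, suppose $E(A)$ is prime in $E(S)$ and $a \vee b \in A$ with $a \sim b$. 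Then $\mathbf{d}(a \vee b) = \mathbf{d}(a) \vee \mathbf{d}(b)$ is an idempotent in $A$, hence in $E(A)$, so WLOG $\mathbf{d}(a) \in E(A) \subseteq A$. Since $A$ is an inverse subsemigroup by Lemma~\ref{le: filter_properties}(4) and $a = (a \vee b)\mathbf{d}(a)$ (the standard identity $a = s\mathbf{d}(a)$ whenever $a \leq s$), we obtain $a \in A$.

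With this correspondence in hand, all three implications are just chains. For (1), starting from an ultrafilter $A$ in $S$: Lemma~\ref{le: filter}(2) gives $\mathbf{d}(A)$ is an ultrafilter in $S$; the correspondence gives $E(\mathbf{d}(A))$ is an ultrafilter in $E(S)$; Lemma~\ref{le: lattice_filters}(1) yields that it is prime in $E(S)$; the correspondence gives $\mathbf{d}(A)$ is prime in $S$; and Lemma~\ref{le: filter}(2) gives that $A$ is prime. The forward direction of (2) is identical except Lemma~\ref{le: lattice_filters}(2) replaces (1), since $E(S)$ is now Boolean. For the reverse direction of (2), assume every prime filter of $S$ is an ultrafilter. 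To prove $E(S)$ Boolean it suffices, by Lemma~\ref{le: lattice_filters}(2), to show every prime filter $F$ of $E(S)$ is an ultrafilter. Lift $F$ to the idempotent filter $F^{\uparrow S}$ in $S$; by the correspondence it is prime in $S$, hence an ultrafilter in $S$ by hypothesis, hence $F$ is an ultrafilter in $E(S)$ by the correspondence.

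The main obstacle is the auxiliary correspondence, and within it the less obvious direction of prime-transfer, namely $E(A)$ prime in $E(S)$ $\Rightarrow$ $A$ prime in $S$. The identity $a = (a \vee b)\mathbf{d}(a)$ together with the fact that idempotent filters are closed under multiplication is what makes this go through; without distributivity and the compatible-join formulas for $\mathbf{d}$ recorded during the proof of Lemma~\ref{le: filter}, this step would fail.
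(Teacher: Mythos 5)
Your proposal is correct and follows essentially the same route as the paper: reduce via Lemma~\ref{le: filter}(2) to idempotent filters, transfer to $E(S)$ through the correspondence $A \mapsto E(A)$, $F \mapsto F^{\uparrow}$ of Lemma~\ref{le: filter_properties}(4), and invoke Lemma~\ref{le: lattice_filters}. The only difference is that you spell out the prime/ultrafilter transfer between idempotent filters in $S$ and filters in $E(S)$, which the paper merely asserts with ``Next observe that''; your verification of that step (in particular the direction using $a = (a \vee b)\mathbf{d}(a)$) is sound.
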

\begin{proof}
(1) By Lemma~\ref{le: filter}, $F$ is a prime filter (respectively, ultrafilter) in $S$ 
if and only if $\mathbf{d}(F) $ is a prime idempotent filter
(respectively, idempotent ultrafilter).
Next observe that $G$ is an idempotent prime filter (respectively, ultrafilter) in $S$ 
if and only if $E(G)$ is a prime filter in $E(S)$ (respectively, ultrafilter).
We now apply part (1) of Lemma~\ref{le: lattice_filters}

(2) This follows by the argument in (1) above combined with part (2) of Lemma~\ref{le: lattice_filters}.
\end{proof}

An order ideal in a distributive inverse semigroup is said to be {\em $\vee$-closed} 
if it is closed under joins of its finite non-empty compatible subsets.
We say that an order ideal of $S$ is {\em proper} if it is not the whole of $S$.
Let $A$ be an order ideal.
Denote by $A^{\vee}$ the set of all joins of non-empty finite compatible subsets of $A$.
Clearly, $A \subseteq A^{\vee}$, and if $A \subseteq B$ then $A^{\vee} \subseteq B^{\vee}$.

\begin{lemma} Let $A$ be an order ideal.
Then $A^{\vee}$ is an order ideal which is $\vee$-closed.
\end{lemma}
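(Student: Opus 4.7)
The plan is to verify the two required properties separately, handling downward closure first and then closure under finite compatible joins.

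For downward closure, I would take $x \in A^{\vee}$, say $x = a_1 \vee \cdots \vee a_n$ with $a_1, \ldots, a_n$ a compatible family in $A$, and let $y \leq x$. The standard identity $y = x \mathbf{d}(y)$ in an inverse semigroup, combined with distributivity of multiplication over the existing finite compatible joins, yields
$$y = (a_1 \vee \cdots \vee a_n)\mathbf{d}(y) = a_1 \mathbf{d}(y) \vee \cdots \vee a_n \mathbf{d}(y).$$
Each $a_i \mathbf{d}(y) \leq a_i$ belongs to $A$ because $A$ is an order ideal. The family $\{a_i \mathbf{d}(y)\}$ is pairwise compatible since compatibility is inherited by elements below pairwise compatible elements. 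Hence $y \in A^{\vee}$.

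For $\vee$-closedness, I would take a non-empty finite compatible family $x_1, \ldots, x_m \in A^{\vee}$ and write each $x_i = \bigvee_{j} a_{ij}$ as a join of finitely many pairwise compatible elements of $A$. I claim $x_1 \vee \cdots \vee x_m = \bigvee_{i,j} a_{ij}$ with the doubly-indexed family compatible in $A$. Compatibility holds because for any two indices $(i,j)$ and $(i',j')$ one has $a_{ij} \leq x_i$ and $a_{i'j'} \leq x_{i'}$, and $x_i \sim x_{i'}$ by hypothesis, so the smaller elements are compatible. Associativity of compatible joins (a standard fact about inverse semigroups, cf.\ \cite{Law2}) then gives $\bigvee_{i,j} a_{ij} = \bigvee_i \bigvee_j a_{ij} = \bigvee_i x_i$, so the join lies in $A^{\vee}$.

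Neither step should present a genuine obstacle; the only subtlety is keeping track of why compatibility is preserved when one passes to elements below, and why the doubly-indexed join in the second step exists and evaluates to the iterated join. Both follow from the standard order-theoretic behaviour of compatible joins in distributive inverse semigroups, and distributivity of multiplication over existing compatible joins is exactly what the definition provides.
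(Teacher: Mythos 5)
Your proof is correct and follows essentially the same route as the paper: downward closure via $y = x\mathbf{d}(y)$ and distributivity over the finite compatible join, and $\vee$-closure by amalgamating the underlying compatible subsets of $A$ (the paper treats the binary case $a \vee b$, which suffices, while you handle a general finite family directly; the key observation that compatibility is inherited by elements below compatible elements is the same in both).
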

\begin{proof} Let $b \leq a \in A^{\vee}$.
By assumption, $a = \bigvee_{i=1}^{n} a_{i}$ for some non-empty finite compatible subset $\{a_{1}, \ldots, a_{n}\} \subseteq A$.
By distributivity, it follows that 
$b =  \bigvee_{i=1}^{n} a_{i} \mathbf{d}(b)$.
But $a_{i} \mathbf{d}(b) \in A$ for each $i$ since $A$ is an order ideal.
It follows that $b \in A^{\vee}$ and $A^{\vee}$ is itself an order ideal.

Let $a,b \in A^{\vee}$ be a compatible pair.
By assumption, $a = \bigvee_{i=1}^{m}a_{i}$ and $b = \bigvee_{j=1}^{n}b_{j}$
where $\{a_{1}, \ldots, a_{m}\}, \{b_{1}, \ldots, b_{n} \} \subseteq A$ are finite compatible non-empty subsets.
But $a \vee b =  ( \bigvee_{i=1}^{m}a_{i} ) \vee ( \bigvee_{j=1}^{n}b_{j} )$.
It follows that $\{a_{1}, \ldots, a_{m}, b_{1}, \ldots, b_{n} \}$ is a finite compatible non-empty subset of $A$ and so
$a \vee b \in A^{\vee}$.
\end{proof}

We call $A^{\vee}$ the {\em $\vee$-closure of $A$}.

\begin{lemma} Let $I$ be a $\vee$-closed order ideal and $a$ an arbitrary element.
Then $I \cup a^{\downarrow}$ is an order ideal and
$$(I \cup a^{\downarrow})^{\vee} = \{ x \vee b \colon x \in I, b \leq a \mbox{ and } x \sim b \}.$$ 
\end{lemma}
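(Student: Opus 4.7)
The plan is to establish the two parts separately. That $I \cup a^{\downarrow}$ is an order ideal is routine: if $y \leq z$ with $z \in I \cup a^{\downarrow}$, then either $z \in I$ (forcing $y \in I$) or $z \in a^{\downarrow}$ (forcing $y \leq a$, so $y \in a^{\downarrow}$); in either case $y \in I \cup a^{\downarrow}$.

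For the stated equality, I would prove both inclusions. The $(\supseteq)$ direction is immediate: given $x \in I$ and $b \leq a$ with $x \sim b$, the pair $\{x,b\}$ is a finite non-empty compatible subset of $I \cup a^{\downarrow}$, hence $x \vee b \in (I \cup a^{\downarrow})^{\vee}$ by definition of the $\vee$-closure.

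For $(\subseteq)$, take $z \in (I \cup a^{\downarrow})^{\vee}$ and write $z = \bigvee_{k=1}^{n} c_{k}$ where $\{c_{1}, \ldots, c_{n}\}$ is a finite non-empty compatible subset of $I \cup a^{\downarrow}$. I would partition the indices so that each $c_{k}$ is labelled as coming from $I$ or from $a^{\downarrow}$ (choosing one side arbitrarily if both apply). Let $x$ be the join of the $c_{k}$ labelled $I$ and $b$ the join of those labelled $a^{\downarrow}$, setting the missing piece equal to $0$ when one group is empty (using that $0$ lies in both order ideals). Then $x \in I$ because $I$ is $\vee$-closed, while $b \leq a$ since $a$ is an upper bound of a compatible family whose join is $b$. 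Moreover $z = x \vee b$ by the associativity and commutativity of compatible joins established in \cite{Law2}.

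The main obstacle is verifying that $x \sim b$, without which the expression $x \vee b$ would not be legitimate. This uses the standard fact that compatibility is preserved by joins drawn from a pairwise compatible family: expanding $x^{-1} b$ via distributivity yields $\bigvee c_{i}^{-1} c_{j}$, indexed over pairs where $c_{i}$ is a summand of $x$ and $c_{j}$ a summand of $b$, and each $c_{i}^{-1} c_{j}$ is an idempotent by pairwise compatibility; a join of idempotents remains an idempotent. The symmetric computation for $x b^{-1}$ completes the verification and so the proof.
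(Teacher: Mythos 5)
Your proof is correct and follows essentially the same route as the paper's: split the generating compatible family into its $I$-part and its $a^{\downarrow}$-part, join each group, and use $\vee$-closedness of $I$ together with standard properties of compatible joins. The only difference is that you spell out the verification of $x \sim b$ and the degenerate case where one group is empty, both of which the paper leaves implicit.
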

\begin{proof}
Clearly, $I \cup a^{\downarrow}$ is an order ideal.
It is also clear that 
$$\{ x \vee b \colon x \in I, b \leq a \mbox{ and } x \sim b \}
\subseteq 
(I \cup a^{\downarrow})^{\vee}.$$
It remains to prove the reverse inclusion.
Let $\{x_{1}, \ldots, x_{m},b_{1}, \ldots, b_{n} \}$ be a compatible subset of $I \cup a^{\downarrow}$
where $\{x_{1}, \ldots, x_{m}\} \subseteq I$ and $\{b_{1}, \ldots, b_{n} \} \subseteq a^{\downarrow}$. 
Let $x = \bigvee_{i=1}^{m}x_{i}$ and $b = \bigvee_{j=1}^{n} b_{j}$.
Then $x$ and $b$ are compatible, $x \in I$ since $I$ is a $\vee$-closed order ideal, $b \leq a$
and $x \vee b = ( \bigvee_{i=1}^{m} x_{i} ) \vee ( \bigvee_{j=1}^{n} b_{j} )$. 
\end{proof}

It can easily be verified that the union of a totally ordered set of $\vee$-closed order ideals
of an inverse semigroup is again a $\vee$-closed order ideal.
The proof of the following result now follows from Zorn's Lemma.

\begin{lemma}\label{le: maximal} Let $S$ be an inverse semigroup.
Let $I$ be a $\vee$-closed order ideal of $S$ and let $F$ be a filter disjoint from $I$.
Then there is a $\vee$-closed order ideal $J$ maximal with respect to the two conditions:
(1) $I \subseteq J$ and (2) $J \cap F = \emptyset$.
\end{lemma}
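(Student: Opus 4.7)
The plan is a routine application of Zorn's Lemma. I would introduce the poset
\[
\mathcal{P} = \{ J \subseteq S \colon J \text{ is a } \vee\text{-closed order ideal},\ I \subseteq J,\ J \cap F = \emptyset \},
\]
ordered by inclusion, and exhibit a maximal element. The set $\mathcal{P}$ is non-empty because $I$ itself is a $\vee$-closed order ideal containing $I$ and disjoint from $F$ by hypothesis.

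Next I would verify the chain condition. Let $\{J_\lambda\}_{\lambda \in \Lambda}$ be a non-empty totally ordered subset of $\mathcal{P}$, and set $J^\ast = \bigcup_{\lambda \in \Lambda} J_\lambda$. It is immediate that $J^\ast$ is downward closed and contains $I$. For $\vee$-closure, I would appeal to the remark preceding the lemma: given a finite compatible subset $\{a_1, \ldots, a_n\} \subseteq J^\ast$, each $a_i$ lies in some $J_{\lambda_i}$, and because the family is totally ordered there is a single index $\mu$ with $a_1, \ldots, a_n \in J_\mu$; then $\bigvee_{i=1}^{n} a_i \in J_\mu \subseteq J^\ast$ because $J_\mu$ is $\vee$-closed. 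Disjointness from $F$ is equally clear: if $x \in J^\ast \cap F$ then $x \in J_\lambda \cap F$ for some $\lambda$, contradicting $J_\lambda \in \mathcal{P}$. Hence $J^\ast \in \mathcal{P}$ is an upper bound for the chain.

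Zorn's Lemma then yields a maximal element $J$ of $\mathcal{P}$, which is exactly a $\vee$-closed order ideal maximal with respect to the conditions $I \subseteq J$ and $J \cap F = \emptyset$. There is no real obstacle in this argument; the only point requiring a moment of thought is $\vee$-closure of the union of a chain, and this is precisely the content of the preceding remark in the paper.
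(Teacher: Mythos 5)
Your proposal is correct and is exactly the argument the paper intends: the paper notes that the union of a totally ordered family of $\vee$-closed order ideals is again a $\vee$-closed order ideal and then invokes Zorn's Lemma, which is precisely the chain-condition verification you carry out. No differences worth remarking on.
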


An order ideal $P$ of an inverse semigroup $S$ is said to be {\em prime}
if $a^{\downarrow} \cap b^{\downarrow} \subseteq P$ implies that either $a \in P$ or $b \in P$.

\begin{lemma}\label{le: ideal_filter} Let $S$ be an inverse semigroup.
Then a subset $F$ is a proper prime filter if and only if $S \setminus F$ is a proper $\vee$-closed prime order ideal.
\end{lemma}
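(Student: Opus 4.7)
The plan is to verify this by unpacking the two defining properties on each side and translating them through complementation. The proof splits cleanly into ($\Rightarrow$) and ($\Leftarrow$), and on each side the four bullet-point checks (non-triviality, downward/upward closure, directedness/$\vee$-closure, primeness) mirror each other; so I would simply list them as a table of dual conditions.

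For the forward direction, assume $F$ is a proper prime filter and set $I = S \setminus F$. Properness of $I$ amounts to $F \neq \emptyset$, which is implicit in our convention on filters. The fact that $I$ is an order ideal is a direct dualization of $F$ being upward closed: if $a \in I$ and $b \leq a$ had $b \in F$, upward closure would force $a \in F$. For $\vee$-closure, take a finite compatible family in $I$ with join $a$; if $a$ were in $F$, primeness (applied inductively to $a = a_1 \vee (a_2 \vee \cdots \vee a_n)$) would pull one of the $a_i$ into $F$. Finally, $I$ is prime as an order ideal: if $a, b \in F$, directedness gives $c \leq a,b$ with $c \in F$, so $c \in a^{\downarrow} \cap b^{\downarrow}$ is not in $I$, forcing $a^{\downarrow} \cap b^{\downarrow} \not\subseteq I$ whenever $a,b \in F$.

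For the converse, assume $I = S \setminus F$ is a proper $\vee$-closed prime order ideal and set out to verify the four filter axioms for $F$. Non-emptiness of $F$ is the properness of $I$; and $F$ contains no zero because the zero lies below everything in $I$ (which is non-empty) and $I$ is downward closed. Upward closure of $F$ follows from downward closure of $I$. Directedness is the step that genuinely uses primeness of the order ideal: for $a,b \in F$, the assumption $a^{\downarrow} \cap b^{\downarrow} \subseteq I$ would, by primeness of $I$, force $a$ or $b$ into $I$; hence there is some $c \leq a,b$ with $c \notin I$, i.e.\ $c \in F$. Primeness of $F$ is the dual of $\vee$-closure of $I$: if $a \vee b \in F$ but $a,b \in I$, then the compatible join $a \vee b$ would lie in $I$ by $\vee$-closure, a contradiction.

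I do not anticipate any genuine obstacle; the content is entirely bookkeeping about complements once one has noted the correspondence directedness$\leftrightarrow$prime-order-ideal, upward-closed$\leftrightarrow$order-ideal, and prime-filter$\leftrightarrow\vee$-closed. The only mild subtlety is recording why $F$ is automatically \emph{proper} (i.e.\ avoids $0$) on the backward side, which follows because any order ideal containing an element contains $0$, so $0 \in I$.
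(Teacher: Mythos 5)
Your proposal is correct and follows essentially the same route as the paper's proof: a direct complementation argument matching upward closure with order-ideal, directedness with primeness of the ideal, and primeness of the filter with $\vee$-closure. The only differences are cosmetic — you spell out the induction from binary joins to finite compatible families and the reason $F$ omits $0$, both of which the paper leaves implicit.
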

\begin{proof}
Suppose that $F$ is a prime filter.
We prove that $P = S \setminus F$ is a $\vee$-closed order ideal.
Let $a \in P$ and $b \leq a$.
Suppose that $b \notin P$.
Then $b \in F$ and so $a \in F$, which is a contradiction.
Thus $P$ is an order ideal.
Suppose that $a^{\downarrow} \cap b^{\downarrow} \subseteq P$ and that $a,b \in F$.
Then since $F$ is a filter there exists $c \in F$ such that $c \leq a,b$.
But $c \in P$ which is a contradiction.
Finally, suppose that $a,b \in P$ and that $a$ and $b$ are compatible.
If $a \vee b \in F$ then either $a$ or $b$ is in $F$.
It follows that $a \vee b \in P$ and so $P$ is a $\vee$-closed prime ideal.

Conversely, suppose that $P$ is a $\vee$-closed prime ideal.
We prove that $F = S \setminus P$ is a prime filter.
Let $a \in F$ and $a \leq b$.
If $b \in P$ then $a \in P$ and so $b \in F$.
Let $a,b \in F$.
If $a^{\downarrow} \cap b^{\downarrow} \subseteq P$ then either $a$ or $b$ is in $P$.
It follows that there must exist $c \leq a,b$ such that $c \in F$.
Finally, suppose that $a \vee b \in F$.
If $a,b \in P$ then $a \vee b \in P$ so at least one of $a$ or $b$ belongs to $F$.
Thus $F$ is a prime filter.
\end{proof}

\begin{lemma}\label{le: maximal_prime} Let $S$ be a distributive inverse semigroup.
Let $F$ be a filter in $S$ and let $P$ be a $\vee$-closed order ideal of $S$ maximal amongst all $\vee$-closed order ideals disjoint from $F$.
Then $P$ is a prime $\vee$-closed order ideal.
\end{lemma}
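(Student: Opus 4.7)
The plan is to argue by contradiction, using the maximality of $P$ twice (once for $a$, once for $b$) and then exploiting the preceding lemma's formula $(I \cup a^{\downarrow})^{\vee} = \{x \vee b : x \in I,\ b \leq a,\ x \sim b\}$ to pull witnesses into $F$. Suppose that $a,b \notin P$ yet $a^{\downarrow} \cap b^{\downarrow} \subseteq P$. By maximality of $P$ among $\vee$-closed order ideals disjoint from $F$, the strictly larger ideal $(P \cup a^{\downarrow})^{\vee}$ must meet $F$ in some element $f$; by the formula, $f = x \vee \alpha$ with $x \in P$, $\alpha \leq a$, and $x \sim \alpha$. Similarly pick $g = y \vee \beta \in F$ with $y \in P$, $\beta \leq b$, and $y \sim \beta$. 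Since $F$ is down-directed, choose $h \in F$ with $h \leq f$ and $h \leq g$.

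Now I would push the distributive law through the natural order. From $h \leq f = x \vee \alpha$ one has $h = (x \vee \alpha)\mathbf{d}(h) = x\mathbf{d}(h) \vee \alpha\mathbf{d}(h)$, so write $h = u \vee v$ where $u := x\mathbf{d}(h) \leq x$ lies in $P$ and $v := \alpha\mathbf{d}(h) \leq a$. In the same way, using $h \leq g$, write $h = u' \vee v'$ with $u' \in P$ and $v' \leq b$. Since $v$ and $v'$ are both $\leq h$, they share an upper bound, hence are compatible; consequently $v \wedge v'$ exists and equals $v' \mathbf{d}(v)$. Because $v \leq a$ and $v' \leq b$, the meet $v \wedge v'$ lies in $a^{\downarrow} \cap b^{\downarrow} \subseteq P$.

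The concluding move is to rewrite $v$ itself as a join inside $P$. Using $v \leq h = u' \vee v'$ and distributivity once more,
\[
v = (u' \vee v')\mathbf{d}(v) = u'\mathbf{d}(v) \vee v'\mathbf{d}(v) = u'\mathbf{d}(v) \vee (v \wedge v').
\]
The first summand is in $P$ since $u'\mathbf{d}(v) \leq u' \in P$, and the second is in $P$ by the previous step; the two summands are compatible since they both lie below $v$. By $\vee$-closedness of $P$, we get $v \in P$, and then $h = u \vee v$ exhibits $h$ as a compatible join of elements of $P$, so $h \in P$. This contradicts $h \in F$ and $P \cap F = \emptyset$, finishing the proof.

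The only delicate point I anticipate is the compatibility/meet manipulation in the middle paragraph: one must recognize that in an inverse semigroup compatible elements possess a meet given explicitly by the product $v'\mathbf{d}(v)$, so that the symbolic meet $v \wedge v'$ can be produced by a purely algebraic distributivity computation rather than by invoking any unavailable lattice structure. Everything else is bookkeeping against the formula from the previous lemma and the hypothesis $a^{\downarrow} \cap b^{\downarrow} \subseteq P$.
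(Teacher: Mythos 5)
Your proof is correct and follows essentially the same route as the paper's: maximality forces both $(P \cup a^{\downarrow})^{\vee}$ and $(P \cup b^{\downarrow})^{\vee}$ to meet $F$, a common lower bound $h \in F$ of the two witnesses is taken, and distributivity places the cross term $v \wedge v'$ in $a^{\downarrow} \cap b^{\downarrow} \subseteq P$, forcing $h \in P$ and a contradiction. The only cosmetic difference is that the paper expands $f = f \wedge f$ into a join of four meets in one step via Lemma~\ref{le: meets_and_joins}, whereas you reach the same conclusion by two successive single distributions together with the identity $v \wedge v' = v'\mathbf{d}(v)$ for compatible elements.
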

\begin{proof}
 Assume that $a^{\downarrow} \cap b^{\downarrow} \subseteq P$.
Define
$$P_{1} = [P \cup a^{\downarrow}]^{\vee}
\text{ and }
P_{2} = [P \cup b^{\downarrow}]^{\vee}.$$
Both are well-defined $\vee$-closed order ideals that contain $P$.
Assume, for the sake of argument, that both intersect the filter $F$ in the elements $f_{1}$ and $f_{2}$ respectively.
We may write
$$f_{1} = p_{1} \vee x_{1}
\text{ and }
f_{2} = p_{2} \vee y_{1}$$
where $p_{1},p_{2} \in P$ and $x_{1} \leq a$ and $y_{1} \leq b$.
Since $F$ is a filter there is an element $f \in F$ such that $f \leq f_{1},f_{2}$.
Thus we may write
$$f = (p_{1} \vee x_{1}) \mathbf{d}(f)
\text{ and }
f = (p_{2} \vee y_{1}) \mathbf{d}(f).$$
By distributivity
$$f = p_{1} \mathbf{d}(f) \vee x_{1}\mathbf{d}(f)
\text{ and }
f = p_{2} \mathbf{d}(f)  \vee y_{1}\mathbf{d}(f).$$
Now $f = f \wedge f$.
Thus by Lemma~\ref{le: meets_and_joins}, 
we have that
$$f =
(p_{1} \mathbf{d}(f) \wedge p_{2} \mathbf{d}(f))
\vee
(p_{1}\mathbf{d}(f) \wedge y_{1} \mathbf{d}(f))
\vee
(x_{1}\mathbf{d}(f) \wedge p_{2} \mathbf{d}(f))
\vee
(x_{1} \mathbf{d}(f) \wedge y_{1} \mathbf{d}(f)).
$$
Each term belongs to $P$, the final term by assumption.
Hence $f \in P$ which is a contradiction.
Thus either $P_{1}$ or $P_{2}$ is disjoint from $F$.
Without loss of generality we may assume that $P_{1}$ is disjoint from $F$.
But then we must have that $P_{1} = P$ and so $a \in P$.
It follows that $P$ is a prime $\vee$-closed order ideal.
\end{proof}

We now come to the key result about the properties of prime filters in distributive inverse semigroups.

\begin{proposition}\label{prop: key_result}
Let $a,b \in S$, a distributive inverse semigroup, such that $b \nleq a$.
Then there exists a prime filter that contains $b$ and omits $a$. 
\end{proposition}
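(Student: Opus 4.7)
My plan is to run the standard Stone-type separation argument, now powered by the lemmas on $\vee$-closed prime order ideals already proved in this section. The idea is to start from the principal filter generated by $b$ and the principal order ideal generated by $a$, note that they are disjoint precisely because $b \nleq a$, inflate the order ideal to a maximal one disjoint from the filter, and then read off the desired prime filter as the complement.

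In detail, I would first observe that $b \neq 0$: otherwise $b = 0 \leq a$, contradicting the hypothesis. Hence $F := b^{\uparrow}$ is a proper filter containing $b$. Next I would take $I := a^{\downarrow}$ and check that it is a $\vee$-closed order ideal: any compatible join of elements bounded above by $a$ is itself bounded above by $a$, since $a$ is an upper bound and the join is the least upper bound. The key observation is that $I \cap F = \emptyset$: any $x$ in the intersection would satisfy $b \leq x \leq a$, forcing $b \leq a$, which is ruled out.

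With these preliminaries in place, Lemma~\ref{le: maximal} supplies a $\vee$-closed order ideal $J$ which is maximal subject to $I \subseteq J$ and $J \cap F = \emptyset$. Lemma~\ref{le: maximal_prime} then promotes $J$ to a prime $\vee$-closed order ideal, and Lemma~\ref{le: ideal_filter} translates this back into the statement that $P := S \setminus J$ is a proper prime filter (properness of $P$ is automatic, since $F \subseteq P$ shows $P$ is nonempty and in fact avoids $0 \in J$ because $0 \leq a$ puts $0$ into $I \subseteq J$). By construction, $b \in F \subseteq P$ and $a \in I \subseteq J$, so $a \notin P$, as required.

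There is no real obstacle here beyond assembling the ingredients; the only point that needs a moment of care is verifying that $a^{\downarrow}$ really is $\vee$-closed and that $F$ is proper, both of which reduce to the observation that $b \nleq a$ forces $b \neq 0$ and prevents $a^{\downarrow}$ from meeting $b^{\uparrow}$. All the genuine work — namely the maximal-ideal/Zorn argument and the passage from maximality to primeness — has already been carried out in Lemmas~\ref{le: maximal} and \ref{le: maximal_prime}, so this proposition is essentially a bookkeeping corollary of them together with Lemma~\ref{le: ideal_filter}.
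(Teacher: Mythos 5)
Your proof is correct and follows exactly the paper's own argument: separate $b^{\uparrow}$ from the $\vee$-closed order ideal $a^{\downarrow}$, enlarge the latter to a maximal disjoint $\vee$-closed order ideal via Lemma~\ref{le: maximal}, invoke Lemma~\ref{le: maximal_prime} for primeness, and pass to the complement using Lemma~\ref{le: ideal_filter}. The extra checks you record (that $b \neq 0$, that $a^{\downarrow}$ is $\vee$-closed, and that the resulting filter is proper) are all sound and merely make explicit what the paper leaves implicit.
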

\begin{proof}
Consider the filter $b^{\uparrow}$ and the order ideal $a^{\downarrow}$ which is clearly a $\vee$-closed order ideal.
By assumption, $b^{\uparrow} \cap a^{\downarrow} = \emptyset$.
By Lemma~\ref{le: maximal}, we may find a $\vee$-closed order ideal $J$ such that $a^{\downarrow} \subseteq J$ and $J \cap b^{\uparrow} = \emptyset$
and maximal with respect to these properties.
By Lemma~\ref{le: maximal_prime} above, $J$ is a prime $\vee$-closed order ideal.
Thus by Lemma~\ref{le: ideal_filter}, $S \setminus J$ is a prime filter in $S$.
By construction this prime filter contains $b$ and omits $a$, as required.
\end{proof}

\section{Non-commutative Stone duality}

Classical Stone dualities link order-theoretic structures to topological ones.
In our generalization, the order-theoretic structures are replaced by appropriate inverse semigroups
and the topological structures by suitable topological groupoids.
Special cases were the subject of \cite{Law2,Law3} 
where the inverse semigroup was a Boolean inverse $\wedge$-semigroup.
In this section, we deal with the general case.

Let $G$ be an \'etale groupoid.
We denote by $\mathsf{KB}(G)$ the set of all compact-open local bisections in $G$.
This is therefore a subset of $\mathsf{B}(S)$.

The following is the one place where we use a little of the theory of ordered groupoids and inductive groupoids;
we refer the reader to \cite{Law1} for details.

\begin{lemma}\label{le: reg} Let $G$ be an \'etale groupoid.
\begin{enumerate}

\item The set of compact-open local bisections $\mathsf{KB}(G)$ is an ordered groupoid

\item The ordered groupoid $\mathsf{KB}(G)$ is an inverse semigroup if and only if the intersection
of any two compact-open subsets of $G_{o}$ is again a compact-open subset,
in which case, $\mathsf{KB}(G)$ is a distributive inverse semigroup.

\end{enumerate}
\end{lemma}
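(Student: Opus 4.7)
My plan is to route the argument through the Ehresmann--Schein--Nambooripad correspondence between inductive groupoids and inverse semigroups as developed in \cite{Law1}: part (1) amounts to verifying the ordered-groupoid axioms for $\mathsf{KB}(G)$ as a subgroupoid of the already-established pseudogroup $\mathsf{B}(G)$ (Proposition~\ref{prop: B}), and part (2) becomes the statement that this ordered groupoid is inductive precisely when the stated intersection condition holds, together with a check of distributivity.

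For part (1) I would lean on the standard fact that in an \'etale groupoid, for any open local bisection $A$, the restricted maps $\mathbf{d}|_{A} \colon A \rightarrow \mathbf{d}(A)$ and $\mathbf{r}|_{A} \colon A \rightarrow \mathbf{r}(A)$ are homeomorphisms onto open subsets of $G_o$. Three consequences drop out immediately: $A^{-1}$ is compact-open whenever $A$ is; the partial groupoid composite $AB$ (defined when $\mathbf{d}(A) = \mathbf{r}(B)$) has $\mathbf{r}(AB) = \mathbf{r}(A)$ compact-open, hence $AB$ is compact-open via the homeomorphism $\mathbf{r}|_{AB}$; and the restriction $A|_{U} = (\mathbf{d}|_{A})^{-1}(U)$ is compact-open for any compact-open identity $U \leq \mathbf{d}(A)$. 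The order on $\mathsf{KB}(G)$ is inherited from $\mathsf{B}(G)$, and compatibility with inversion and composition follows automatically.

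For part (2), the inductive-groupoid structure becomes an inverse-semigroup structure via the pseudoproduct $A \otimes B$, which restricts $A$ and $B$ to the meet $\mathbf{d}(A) \wedge \mathbf{r}(B)$ of their source and target identities before composing. In $\mathsf{B}(G)$ this meet is simply set intersection, and the resulting binary operation coincides with ordinary subset multiplication. Hence $\mathsf{KB}(G)$ is an inverse semigroup iff $\mathbf{d}(A) \cap \mathbf{r}(B)$ is compact-open for all $A,B$; specializing $A,B$ to identity bisections (any compact-open $U \subseteq G_o$ qualifies, since $G_o$ is open by Proposition~\ref{prop: etale_groupoids}(1) and every subset of $G_o$ is trivially a local bisection) shows this condition is equivalent to compact-open subsets of $G_o$ being closed under binary intersection. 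Granting the condition, the full subset product $AB$ is compact-open by applying the homeomorphism argument of part (1) to $A|_{\mathbf{d}(A) \cap \mathbf{r}(B)}$.

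For distributivity, I would check that for compatible $A,B \in \mathsf{KB}(G)$ the join in $\mathsf{B}(G)$, namely $A \cup B$, is again a compact-open local bisection: openness is automatic, compactness is a finite union of compacts, and the local-bisection property reduces via compatibility to the four sets $A^{-1}A, A^{-1}B, B^{-1}A, B^{-1}B$ all sitting in $G_o$. Multiplication distributes over union set-theoretically. The one place a reader has to slow down is the translation between compact-openness of $AB$ and of $\mathbf{d}(A) \cap \mathbf{r}(B)$ through $\mathbf{d}|_{A}$; this is routine once the homeomorphism picture is set up, and I note that no Hausdorff hypothesis on $G$ enters anywhere.
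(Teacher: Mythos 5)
Your argument is correct, and it reaches the same conclusions as the paper but by a noticeably different technical route in part (1). Where you transfer compactness through the homeomorphisms $\mathbf{d}|_{A}\colon A\rightarrow \mathbf{d}(A)$ and $\mathbf{r}|_{AB}\colon AB\rightarrow \mathbf{r}(A)$ (the classical local-homeomorphism picture of \'etale groupoids), the paper instead works directly from Resende's characterization and proves compactness of $A^{-1}A$ and of $AB$ by open-cover arguments: it covers $A^{-1}A$ by open local bisections $U_{i}$, translates to a cover $\bigcup_i AU_i$ of the compact set $A$, extracts a finite subcover, and translates back. Your route is shorter and makes the restriction operation $A|_{U}=(\mathbf{d}|_{A})^{-1}(U)$ completely transparent, at the cost of first establishing that $\mathbf{d}$ restricted to an open local bisection is an open injection onto $\mathbf{d}(A)=A^{-1}A$ (which does follow from the paper's setup, and which the paper itself uses later in Lemma~\ref{le: compact_open}); the paper's covering argument buys independence from that fact but has to be repeated for each closure property. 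In part (2) you are actually more complete than the paper: its printed proof of (2) only verifies that a compatible union $A\cup B$ is again compact-open, leaving the equivalence with the intersection condition and the identification of the ESN pseudoproduct with subset multiplication implicit, whereas you spell out both directions, correctly reducing the ``only if'' to the identity bisections via $UV=U\cap V$ and the ``if'' to compact-openness of $\mathbf{d}(A)\cap\mathbf{r}(B)$ followed by your part-(1) machinery. The distributivity check coincides with the paper's.
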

\begin{proof} (1). We prove first that if $A$ is a compact-open local bisection so too is $A^{-1}A$.
Clearly, $A^{-1}$ is a compact-open local bisection and so $A^{-1}A$ is an open local bisection.
It remains to prove that $A^{-1}A$ is compact.
We shall use the fact \cite{R1}, that an \'etale groupoid has a basis of open local bisections.
Suppose that $A^{-1}A \subseteq \bigcup_{i \in I} U_{i}$ where the $U_{i}$ are open local bisections.
Then $A \subseteq \bigcup_{i \in I} AU_{i}$ where $AU_{i}$ are open local bisections.
By assumption, $A  \bigcup_{i=1}^{m} AU_{i}$,
and so $A^{-1}A \subseteq  \bigcup_{i=1}^{m} A^{-1}AU_{i} \subseteq  \bigcup_{i=1}^{m}U_{i}$,
as required.

Let $A$ and $B$ be two compact-open local bisections such that $A^{-1}A = BB^{-1}$.
We shall prove that $AB$ is a compact-open local bisection.
Since $AB$ is \'etale it is immediate that $AB$ is open,
and $AB$ is always a local bisection whenever $A$ and $B$ are.
It remains to show that $AB$ is compact.
We shall use the fact \cite{R1}, that an \'etale groupoid has a basis of open local bisections.
Suppose that $AB = \bigcup_{i \in I} U_{i}$ where the $U_{i}$ are open local bisections.
Then $A^{-1}A =  \bigcup_{i \in I} A^{-1}U_{i}B^{-1}$.
But $A^{-1}A$ is compact and so we may write
$A^{-1}A =  \bigcup_{i \in 1}^{m} A^{-1}U_{i}B^{-1}$.
It follows that $AB =  \bigcup_{i \in I} AA^{-1}U_{i}B^{-1}B \subseteq \bigcup_{i=1}^{m} U_{i}$
and so in fact we have equality.

(2). Let $A,B \in \mathsf{KB}(G)$ be such that $A \sim B$.
Then $A \cup B$ is an open local bisection and the union of two compact subsets is always compact.
\end{proof}

Before we make our next definition, we shall need the following properties.
\begin{description}

\item[{\rm (C1)}] The set $\mathsf{KB}(G)$ of compact-open local bisections forms a basis for the topology on $G$.

\item[{\rm (C2)}]  The set $\mathsf{KB}(G)$ of compact-open local bisections is closed under subset multiplication.

\item[{\rm (C3)}] The \'etale groupoid $G$ is sober.

\end{description}

\begin{lemma}\label{le: compact_open} Let $G$ be an \'etale groupoid.
\begin{enumerate}

\item Then $G$ has a basis of compact-open local bisections if and only if $G_{o}$ has a basis of compact-open sets.

\item $G$ satisfies (C1) and (C2) if and only if $G_{o}$ satisfies (C1) and (C2).

\end{enumerate}
\end{lemma}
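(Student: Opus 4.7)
The plan is to treat the two conditions separately, first establishing (1), which handles (C1), and then using this to reduce (2) to a discussion of (C2) alone. Throughout I shall make free use of the standard fact (a consequence of the \'etale hypothesis together with Proposition~\ref{prop: etale_groupoids}(2)) that for an open local bisection $A$, the restriction $\mathbf{d}|_{A} \colon A \to \mathbf{d}(A)$ is a homeomorphism onto an open subset of $G_{o}$, so that $(\mathbf{d}|_{A})^{-1}(U) = AU$ for any $U \subseteq \mathbf{d}(A)$.

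For (1), the forward direction is essentially free: since $G_{o}$ is open in $G$, any open $U \subseteq G_{o}$ is open in $G$, so for $x \in U$ the hypothesis supplies a compact-open local bisection $B$ with $x \in B \subseteq U \subseteq G_{o}$, and $B$ is automatically a compact-open subset of $G_{o}$. For the reverse direction, given $g \in V$ open in $G$, I would first use Proposition~\ref{prop: etale_groupoids}(2) to insert an open local bisection $A$ between $g$ and $V$, then apply the hypothesised basis in $G_{o}$ to find a compact-open $U$ with $\mathbf{d}(g) \in U \subseteq \mathbf{d}(A)$. The preimage $AU = (\mathbf{d}|_{A})^{-1}(U)$ is an open local bisection homeomorphic to $U$, hence compact-open, containing $g$ and contained in $V$.

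For (2), the equivalence of (C1) is immediate from (1), so only (C2) remains, and I will assume (C1) in both settings. The forward direction is clean: a compact-open subset $U$ of $G_{o}$ is automatically a compact-open local bisection of $G$ (since $G_{o}$ is open and every subset of $G_{o}$ is a local bisection), and for two such subsets the product in $G$ reduces to $UV = U \cap V$; so closure of $\mathsf{KB}(G)$ under multiplication yields closure of compact-open subsets of $G_{o}$ under intersection.

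The reverse direction of (C2) is where the real work lies. Given $A, B \in \mathsf{KB}(G)$, the product $AB$ is automatically open (by Proposition~\ref{prop: etale_groupoids}(1)) and is a local bisection, so only compactness needs proof. The idea is to introduce $e = A^{-1}A \cap BB^{-1}$: both $A^{-1}A$ and $BB^{-1}$ are compact-open subsets of $G_{o}$ (visible in the proof of Lemma~\ref{le: reg}(1)), so (C2) for $G_{o}$ makes $e$ compact-open in $G_{o}$. The assignment
\[ u \longmapsto (\mathbf{d}|_{A})^{-1}(u) \cdot (\mathbf{r}|_{B})^{-1}(u) \]
defines a continuous map $e \to AB$ which is surjective since every element of $AB$ is determined by its common point $\mathbf{d}(a) = \mathbf{r}(b) \in e$; this exhibits $AB$ as a continuous image of a compact set. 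The main obstacle I anticipate is verifying this parameterization carefully, reconciling the homeomorphism behaviour of $\mathbf{d}|_{A}$ and $\mathbf{r}|_{B}$ with continuity of groupoid multiplication to secure both continuity and surjectivity.
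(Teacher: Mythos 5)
Your proposal is correct. Part (1) and the easy direction of (2) follow the paper's own argument essentially verbatim: the paper likewise inserts an open local bisection on which $\mathbf{d}$ restricts to a homeomorphism and pulls back a compact-open set from $G_{o}$, and it dismisses the converse of (2) as ``straightforward'' (your observation that $UV = U \cap V$ for $U,V \subseteq G_{o}$ is exactly what that amounts to). Where you genuinely diverge is in proving $AB$ compact given (C2) for $G_{o}$. The paper argues by coverings: it covers $AB$ by open local bisections $C_{i}$, conjugates to obtain a cover of $A^{-1}ABB^{-1} = A^{-1}A \cap BB^{-1}$ by the sets $A^{-1}C_{i}B^{-1}$, extracts a finite subcover using compactness of that intersection, and conjugates back via $AB \subseteq \bigcup_{i=1}^{m} AA^{-1}C_{i}B^{-1}B \subseteq \bigcup_{i=1}^{m} C_{i}$ (the same trick it uses to reprove that $A^{-1}A$ is compact, a fact you instead import from Lemma~\ref{le: reg}). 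You instead exhibit $AB$ directly as the continuous image of the compact set $e = A^{-1}A \cap BB^{-1}$ under $u \mapsto (\mathbf{d}|_{A})^{-1}(u)\cdot(\mathbf{r}|_{B})^{-1}(u)$, and the verification you flag as the main obstacle does go through: the section maps are continuous because $\mathbf{d}|_{A}$ and $\mathbf{r}|_{B}$ are open (the image of an open subset $W$ of a bisection is $W^{-1}W$, a product of open sets, hence open), the pair lands in the composable pairs by construction, and surjectivity is precisely the bisection property of $A$ and $B$. Both routes rest on the same two pillars --- compactness of $A^{-1}A$ and $BB^{-1}$, and (C2) in $G_{o}$ to make their intersection compact --- but your parameterization replaces the subcover extraction with a cleaner ``continuous image of a compact set'' argument, at the modest cost of checking that the inverse section maps are continuous.
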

\begin{proof} (1) We suppose first that $G_{o}$ has a basis of compact-open sets.
We show that $G$ has a basis of compact-open local bisections.
Let $U$ be any non-empty open bisection in $G$ and let $g \in U$.
Since $G$ is \'etale there is an open local bisection $V$ containing $g$
such that $\mathbf{d}$ restricted to $V$ is a homeomorphism onto its image.
It follows that $\mathbf{d}$ restricted to $U \cap V$ is a homeomorphism onto its image and $g \in U \cap V$.
By assumption we may find a compact-open set (and therefore local bisection) $B$ in $G_{o}$ containing $g^{-1}g$ and contained
in the image of $U \cap V$.
It follows that there is a compact-open local bisection $A$ containing $g$ such that $\mathbf{d}$ maps $A$ to $B$
and which is contained in $U \cap V$.
It follows that every open local bisection in $G$ is a union of compact-open local bisections.
Thus the compact-open local bisections form a basis for the topology.

Suppose now that $G$ has a basis of compact-open local bisections.
We prove that $G_{o}$ has a basis of compact-open sets.
Let $U$ be an open set in $G_{o}$ and let $e \in U$.
There is therefore an open set $V$ in $G$ such that $U = G_{o} \cap V$.
Thus $U$ is also an open set in $G$.
Therefore there exists a compact-open local bisection $W$ such that $e \in W \subseteq U$.
It follows that $W$ is a subset of $G_{o}$.

(2) We suppose first that $G_{o}$ satisfies (C1) and (C2).
This means that we assume that $G_{o}$ has a basis of compact-open sets and the intersection
of any two compact-open sets is again compact-open.

We prove first that if $A$ is a compact-open local bisection then so too is $A^{-1}A$.
We need only prove that it is compact.
Let $A^{-1}A \subseteq \bigcup_{i} O_{i}$ be a covering by open local bisections.
Then $A \subseteq \bigcup_{i} AO_{i}$ is also a covering by open local bisections.
By assumption, $A$ is compact and so we may find a finite number $AO_{1}, \ldots, OA_{m}$ that cover $A$.
Thus $A \subseteq \bigcup_{i=1}^{m} AO_{i}$.
Hence $A^{-1}A \subseteq \bigcup_{i=1}^{m} A^{-1}AO_{i}$.
But $A^{-1}AO_{i} \subseteq O_{i}$ and so $A^{-1}A \subseteq \bigcup_{i=1}^{m} O_{i}$.
Thus $A^{-1}A$ is compact.

Let $A$ and $B$ be two compact-open local bisections.
The product $AB$ is an open local bisection so it only remains to show that it is compact.
Let $AB \subseteq \bigcup_{i} C_{i}$ where the $C_{i}$ are open local bisections.
Then $A^{-1}ABB^{-1} \subseteq \bigcup_{i} A^{-1}C_{i}B^{-1}$.
Now $A^{-1}ABB^{-1} = A^{-1}A \cap BB^{-1}$ and so is compact.
Thus we may write
$A^{-1}ABB^{-1} \subseteq \bigcup_{i=1}^{m} A^{-1}C_{i}B^{-1}$
and so
$AB \subseteq \bigcup_{i=1}^{m} AA^{-1}C_{i}B^{-1}B \subseteq \bigcup_{i=1}^{m} C_{i}$, as required.

The proof of the converse is straightforward.
\end{proof}

An \'etale groupoid is said to be {\em spectral} if $G_{o}$ is a spectral space.
It follows by the above lemma that the compact-open local bisections of a spectral groupoid
form a basis for the topology and that this set is closed under subset multiplication. 
The proof of the following is now immediate by Lemma~\ref{le: reg} and the definitions.

\begin{proposition}\label{prop: distributive_inverse} Let $G$ be a specral groupoid. 
Then  $\mathsf{KB}(G)$ is a distributive inverse semigroup.
\end{proposition}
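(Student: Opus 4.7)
The plan is to reduce everything to Lemma~\ref{le: reg}(2), which already carries the entire conclusion once one hypothesis is verified: namely, that the intersection of any two compact-open subsets of $G_{o}$ is again compact-open. So the whole proof consists of checking that this hypothesis is a direct consequence of the definition of a spectral groupoid.

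Unpacking the definition, since $G$ is spectral, $G_{o}$ is a spectral space, and so by the definition recalled in the Preliminaries $G_{o}$ is sober and carries a basis of compact-open subsets that is closed under finite non-empty intersections. In particular the binary intersection of any two compact-open subsets of $G_{o}$ is again compact-open, which is exactly the hypothesis that Lemma~\ref{le: reg}(2) requires. Invoking that lemma then gives that $\mathsf{KB}(G)$ is a distributive inverse semigroup, as claimed.

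For orientation, I would also remark that Lemma~\ref{le: compact_open}(1) lifts the compact-open basis of $G_{o}$ to a basis of compact-open local bisections on $G$ itself, and Lemma~\ref{le: compact_open}(2) lifts closure under binary intersections of compact-opens in $G_{o}$ to closure of $\mathsf{KB}(G)$ under subset multiplication; together with the sobriety of $G_{o}$ this shows that the conditions (C1), (C2), and (C3) introduced immediately before Lemma~\ref{le: compact_open} all hold for $G$. No step presents a real obstacle: the proof is essentially a bookkeeping exercise in tracing definitions and invoking the preceding two lemmas. The mild subtlety that the present paper does not require spectral spaces to be compact plays no role here, since what is needed is only the closure of the compact-open basis under finite intersections, not compactness of the ambient space.
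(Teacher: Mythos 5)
Your proof is correct and follows essentially the same route as the paper, which likewise derives the result immediately from Lemma~\ref{le: reg}(2) together with Lemma~\ref{le: compact_open} and the definition of a spectral groupoid. The only point you pass over quickly is that closure of the compact-open \emph{basis} of $G_{o}$ under finite intersections gives closure of \emph{all} compact-open sets under binary intersection (write each as a finite union of basic compact-opens and intersect), but this is the standard fact the paper itself takes for granted when it asserts that $\mathsf{L}(X)$ is a distributive lattice.
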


Let $S$ be a distributive inverse semigroup.
Denote by $\mathsf{G}_{P}(S)$ the set of all prime filters of $S$.
Let $A$ and $B$ be filters such that $\mathbf{d}(A) = \mathbf{r}(B)$.
Define $A \cdot B = (AB)^{\uparrow}$.
Then $A \cdot B$ is a filter by Lemma~3.5 of \cite{LMS}.
If $A$ and $B$ are prime filters then $A \cdot B$ is a prime filter
and
if $A$ is a prime filter then $A^{-1}$ is a prime filter by Lemma~\ref{le: filter}. 
We have proved the following.

\begin{lemma} Let $S$ be a distributive inverse semigroup.
Then $\mathsf{G}_{P}(S)$ is a groupoid whose identities are the idempotent prime filters.
\end{lemma}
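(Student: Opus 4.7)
The plan is to verify the groupoid axioms on $\mathsf{G}_{P}(S)$ equipped with the partial operation $A\cdot B = (AB)^{\uparrow}$, defined when $\mathbf{d}(A) = \mathbf{r}(B)$. Most of the substance has already been assembled in the remark preceding the statement: Lemma~3.5 of \cite{LMS} gives that $A\cdot B$ is a filter, the text then notes that this filter is prime whenever $A$ and $B$ are, and Lemma~\ref{le: filter}(1) handles primeness of $A^{-1}$. So what remains is to check that the partial operation together with inversion really satisfies the groupoid axioms and to identify the identities.

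First I would record that the proposed identities $\mathbf{d}(A) = (A^{-1}A)^{\uparrow}$ and $\mathbf{r}(A) = (AA^{-1})^{\uparrow}$ are themselves prime filters; this is exactly Lemma~\ref{le: filter}(2). Associativity then reduces to the fact that subset multiplication in $S$ is associative and that $((XY)^{\uparrow}Z)^{\uparrow} = (XYZ)^{\uparrow} = (X(YZ)^{\uparrow})^{\uparrow}$, which is a routine consequence of upward closure. The unit laws $A\cdot \mathbf{d}(A) = A$ and $\mathbf{r}(A)\cdot A = A$ both reduce, after taking upward closures, to the identity $AA^{-1}A = A$ inside the upward closure, and this is precisely part~(1) of Lemma~\ref{le: filter_properties}. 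The inverse laws $A\cdot A^{-1} = \mathbf{r}(A)$ and $A^{-1}\cdot A = \mathbf{d}(A)$ hold by the very definition of $\mathbf{d}$ and $\mathbf{r}$.

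To identify the identities of $\mathsf{G}_{P}(S)$ with the idempotent prime filters, I would argue in two directions. Each $\mathbf{d}(A)$ and $\mathbf{r}(A)$ manifestly contains an idempotent (indeed any $a^{-1}a$ with $a \in A$), so every identity is an idempotent prime filter. Conversely, if $F$ is an idempotent prime filter, then by Lemma~\ref{le: filter_properties}(4) we have $F = E(F)^{\uparrow}$ and $F$ is an inverse subsemigroup, so in particular $F^{-1}F \subseteq F$, giving $\mathbf{d}(F) \subseteq F$; the reverse inclusion comes from writing any $e \in E(F)$ as $e = e^{-1}e \in F^{-1}F$. Hence $F = \mathbf{d}(F)$ is an identity.

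The main obstacle, insofar as there is one, is simply the bookkeeping that ensures $\mathbf{d}(A\cdot B) = \mathbf{d}(B)$ and $\mathbf{r}(A\cdot B) = \mathbf{r}(A)$, so that in the associativity check the composition conditions on each side really do match up. This follows from $B^{-1}A^{-1}AB = B^{-1}(BB^{-1})B = B^{-1}B$ using the hypothesis $A^{-1}A = BB^{-1}$, together with upward closure, but it is the one place where a little care is needed before the groupoid axioms fall out mechanically from what has already been proved.
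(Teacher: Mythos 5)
Your proposal is correct and follows essentially the same route as the paper, which simply cites Lemma~3.5 of \cite{LMS} for the filter product, notes that primeness is preserved under the product and under inversion, and leaves the groupoid axioms as inherited from the groupoid of all filters; you merely write out those routine verifications explicitly. The only nitpick is that the composability hypothesis is $(A^{-1}A)^{\uparrow}=(BB^{-1})^{\uparrow}$ rather than $A^{-1}A=BB^{-1}$ as sets, but your appeal to upward closure and directedness covers this.
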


For each $a \in S$ define $X_{a}$ to be the set of all prime filters that contain $a$.
Put $\pi = \{X_{a} \colon a \in S \}$.

\begin{lemma}\label{le: prime_topology} Let $S$ be a distributive inverse semigroup.
\begin{enumerate}

\item $X_{a} = \emptyset$ if and only if $a = 0$.

\item $X_{a^{-1}} = X_{a}^{-1}$.

\item $X_{a} = X_{b}$ if and only if $a = b$.

\item $X_{a} \subseteq X_{b}$ if and only if $a \leq b$.

\item $X_{st} = X_{s}X_{t}$.

\item If $a \sim b$ then $X_{a} \cup X_{b} = X_{a \vee b}$.

\item $X_{a} \cap X_{b} = X_{c}$ if and only if $c = a \wedge b$.

\item $X_{a} \cap X_{b} = \bigcup_{c \leq a,b} X_{c}$.

\item $\pi$ is the basis of a topology on $\mathsf{G}_{P}(S)$.

\item The sets $X_{a}$ are compact in the $\pi$ topology.

\end{enumerate}
\end{lemma}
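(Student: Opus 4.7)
The plan is to handle the ten assertions in order, with Proposition~\ref{prop: key_result} (separation by prime filters) as the main tool for (1)--(4) and (6)--(8), and a Zorn-style argument via Lemmas~\ref{le: maximal},~\ref{le: maximal_prime}, and~\ref{le: ideal_filter} for the compactness claim (10).

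Parts (1)--(4) are essentially corollaries of the existence results for prime filters. For (1), the forward direction uses properness of filters while the reverse combines Lemma~\ref{le: ultrafilters} with Proposition~\ref{prop: semigroup_filters}(1). Part (2) is immediate from Lemma~\ref{le: filter}(1) together with the definition $A^{-1} = \{a^{-1} : a \in A\}$. For (4), one direction is upward closure; the other is precisely Proposition~\ref{prop: key_result} applied contrapositively. Part (3) is then two applications of (4).

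The algebraic compatibility identities come next. For (5), the inclusion $X_s X_t \subseteq X_{st}$ is direct, since $A \in X_s$, $B \in X_t$ with $\mathbf{d}(A) = \mathbf{r}(B)$ forces $st \in AB \subseteq A \cdot B$; for the reverse, given $P \in X_{st}$, I would set $B = (t \cdot \mathbf{d}(P))^{\uparrow}$ and $A = (s \cdot \mathbf{r}(B))^{\uparrow}$, check primality using Lemma~\ref{le: filter}, and apply Lemma~\ref{le: filter_properties}(2)--(3) to conclude $A \cdot B = P$. Part (6) is immediate: $\subseteq$ is upward closure and $\supseteq$ is the definition of prime. Parts (7) and (8) combine (4) with directedness of filters, namely that a prime filter containing both $a$ and $b$ contains some $c \leq a, b$. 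Part (9) then follows from (8) together with the observation that every prime filter contains an element, so $\pi$ covers $\mathsf{G}_P(S)$.

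The main obstacle is the compactness claim (10). Given a cover $X_a \subseteq \bigcup_{i \in I} X_{b_i}$, I will first apply (8) to refine it to a cover $X_a \subseteq \bigcup_{j \in J} X_{c_j}$ with every $c_j \leq a$. Any two elements below $a$ are compatible (writing $x = ae_1$, $y = ae_2$ with $e_i$ idempotents below $\mathbf{d}(a)$, both $x^{-1}y = e_1 e_2$ and $xy^{-1} = ae_1 e_2 a^{-1}$ are idempotent), so every finite $F \subseteq J$ admits a join $c_F = \bigvee_{j \in F} c_j \leq a$, and iterating (6) gives $\bigcup_{j \in F} X_{c_j} = X_{c_F}$. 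If no finite $F$ yields $c_F = a$, then by (4) we have $a \nleq c_F$ for every finite $F$, so $I_0 = \{d : d \leq c_F \text{ for some finite } F \subseteq J\}$ is a $\vee$-closed order ideal disjoint from $a^{\uparrow}$. Lemma~\ref{le: maximal} extends $I_0$ to a maximal such ideal, Lemma~\ref{le: maximal_prime} shows it is prime, and Lemma~\ref{le: ideal_filter} produces a prime filter in $X_a$ that misses every $X_{c_j}$, contradicting the cover. The delicate point is ensuring that the initial refinement preserves the cover while producing only elements below $a$, so that the resulting family is jointly compatible and the Zorn argument has the correct setup.
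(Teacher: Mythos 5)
Your proposal is correct and follows essentially the same route as the paper: Proposition~\ref{prop: key_result} drives the separation statements (1)--(4) and the triple of Lemmas~\ref{le: maximal}, \ref{le: maximal_prime} and \ref{le: ideal_filter} drives the compactness claim (10). The only cosmetic differences are that you prove (4) directly as the contrapositive of Proposition~\ref{prop: key_result} and deduce (3) from it, whereas the paper proves (3) first and obtains (4) via the identity $X_{a} = X_{ba^{-1}a}$, and that in (10) you pre-refine the cover to elements below $a$ (so that the covering family is pairwise compatible) before forming the $\vee$-closed order ideal, whereas the paper works with the $\vee$-closure of the order ideal generated by the unrefined covering elements and extracts the finite subcover at the end.
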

\begin{proof}
(1). If $X_{a} \neq \emptyset$ then clearly $a \neq 0$.
Conversely, let $a \neq 0$.
Then by Lemma~\ref{le: ultrafilters}, $a \in F$ for some ultrafilter $F$.
But all ultrafilters are prime filters by Proposition~\ref{prop: semigroup_filters} 
and so every non-zero element is contained in some prime filter.

(2). Immediate.

(3). Suppose that $X_{a} = X_{b}$. 
We shall prove that $a = b$.
Suppose not.
Then either $a \nleq b$ or $b \nleq a$.
Then by Proposition~\ref{prop: key_result}, 
there is a prime filter that contains $a$ and omits $b$,
or a prime filter that contains $b$ and omits $a$.
In either case, we get a contradiction.
The proof of the converse is immediate.

(4). Suppose that $X_{a} \subseteq X_{b}$.
We shall prove that $a \leq b$.
In fact, we shall prove that $X_{a} = X_{ba^{-1}a}$ and then apply (3) above.
Observe that $X_{a} \subseteq X_{ba^{-1}a}$ because if $F \in X_{a}$ then $F \in X_{b}$ by assumption.
Thus $a,b \in F$.
But $ba^{-1}a \in FF^{-1}F = F$ and so $F \in X_{ba^{-1}a}$.
Now let $F \in X_{ba^{-1}a}$.
Then $ba^{-1}a,b \in F$.
Observe that $b^{-1}ba^{-1}a \in F^{-1}F$ and so $a^{-1}a \in F^{-1}F$.
Put $G = (a \mathbf{d}(F))^{\uparrow}$.
Then $G$ is a prime filter that contains $a$ and so by assumption contains $b$.
But $\mathbf{d}(F) = \mathbf{d}(G)$ and $b \in F \cap G$ and so by Lemma~\ref{le: filter_properties}, we have that $F = G$.
It follows that $a \in F$, as required.

(5). We show first that $X_{s}X_{t} \subseteq X_{st}$.
Let $F \in X_{s}$, $G \in X_{t}$ and suppose that $F \cdot G$ is defined.
Then $F \cdot G$ is a prime filter and $st \in F \cdot G$.
It remains to prove the reverse inclusion.
Let $P \in X_{st}$.
Observe that $\mathbf{d}(st) \leq \mathbf{d}(t)$.
Thus $G = (t \mathbf{d}(P))^{\uparrow}$ is a well-defined prime filter.
Observe that $t\mathbf{d}(st) \in G$ and so $\mathbf{r}(t)\mathbf{d}(s) \in \mathbf{r}(G)$.
It follows that $\mathbf{d}(s) \in \mathbf{r}(G)$.
Thus $F = (d \mathbf{r}(G))^{\uparrow}$ is a well-defined prime filter.
Clearly, $P = F \cdot G$ and $F \in X_{s}$ and $G \in X_{t}$.

(6). Only one direction needs proving. 
Let $F \in X_{a \vee b}$.
Then since $F$ is a prime filter either $a \in F$ or $b \in F$, as required.

(7). Only one direction needs proving.
Suppose that $X_{a} \cap X_{b} = X_{c}$.
Clearly $X_{c} \subseteq X_{a},X_{b}$ and so $c \leq a,b$.
Let $d \leq a,b$.
Then $X_{d} \subseteq X_{a},X_{b}$.
Thus $X_{d} \subseteq X_{c}$.
It follows that $d \leq c$.
Hence $c = a \wedge b$, as required.

(8). Straightforward.

(9). This is now immediate.

(10). Suppose that $X_{a} = \bigcup_{i \in I} X_{a_{i}}$.
Let $A$ be the order ideal generated by the $a_{i}$.
Then $I = A^{\vee}$ is $\vee$-closed order ideal.
Suppose that $a \notin I$.
Then $I \cap a^{\uparrow} = \emptyset$.
Thus by Lemma~\ref{le: maximal}
there exists a maximal $\vee$-ideal $J$ such that $I \subseteq J$ and $J \cap a^{\uparrow} = \emptyset$.
By Lemma~\ref{le: maximal_prime}, $J$ is a prime $\vee$-ideal.
Put $F = S \setminus J$.
Then $F$ is a prime filter by Lemma~\ref{le: ideal_filter} and $a \in F$ and $F \cap I = \emptyset$.
But $a_{i} \in F$ for some $i$, which is a contradiction.
It follows that $a \in I$.
Thus there exist elements $b_{j} \leq a_{j}$ for $1 \leq j \leq m$, and suitable relabelling if necessary
such that $a = \bigvee_{j=1}^{m} b_{j}$.
Now any prime filter containing $a$ must contain one of the $b_{j}$ and so one of the $a_{j}$.
It follows that $X_{a} = \bigcup_{j=1}^{m} X_{a_{j}}$.
\end{proof}

We regard $\mathsf{G}_{P}(S)$ as a topological space relative to the basis $\pi$ of Lemma~\ref{le: prime_topology}.

\begin{proposition}\label{prop: spectral_groupoid} Let $S$ be a distributive inverse semigroup.
Then $\mathsf{G}_{P}(S)$ is a spectral groupoid
whose space of identities is homeomorphic to the space associated with the distributive lattice $E(S)$.
\end{proposition}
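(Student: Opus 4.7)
The plan is to verify the two assertions in stages: first \'etaleness of $\mathsf{G}_P(S)$, then spectrality of its identity space, and finally the homeomorphism with the Stone space of $E(S)$.

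For \'etaleness I would apply Proposition~\ref{prop: etale_groupoids}(1), which only requires that $G_o$ be open and that the product of two opens be open. The identity space consists exactly of the idempotent prime filters, and by Lemma~\ref{le: filter_properties}(4) a filter $F$ is idempotent iff it contains some $e \in E(S)$. Hence
\[
G_o = \bigcup_{e \in E(S)} X_e,
\]
which is open in the $\pi$-topology. For products, it suffices to check on the basis: by Lemma~\ref{le: prime_topology}(5) one has $X_s \cdot X_t = X_{st}$, and since multiplication of subsets distributes over unions, the product of any two opens is a union of basic opens, hence open.

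For spectrality of $G_o$, I would first observe that $\{X_e : e \in E(S)\}$ is a basis for the subspace topology on $G_o$: indeed any basic open $X_a$ of $\mathsf{G}_P(S)$ satisfies $X_a \cap G_o \subseteq X_{\mathbf{d}(a)} \cup X_{\mathbf{r}(a)}$, and more directly, an idempotent prime filter containing $a$ contains $\mathbf{d}(a) = a$ whenever $a \in E(S)$, so that for idempotent filters membership in $X_a$ is controlled by $X_e$'s. Each $X_e$ is compact by Lemma~\ref{le: prime_topology}(10) and open by construction. Closure under finite intersections comes from Lemma~\ref{le: prime_topology}(7), since $E(S)$ is a lattice and $X_e \cap X_f = X_{e \wedge f}$.

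The remaining point, soberness of $G_o$, I would handle simultaneously with the homeomorphism statement by constructing an explicit map
\[
\phi : G_o \longrightarrow \mathsf{S}(E(S)), \qquad \phi(F) = F \cap E(S) = E(F).
\]
Using Lemma~\ref{le: filter}(2) and the equivalence ``$F$ prime in $S \Leftrightarrow E(F)$ prime in $E(S)$'' implicit in the proof of Proposition~\ref{prop: semigroup_filters}, together with Lemma~\ref{le: filter_properties}(4) giving the inverse $P \mapsto P^{\uparrow}$, one sees $\phi$ is a bijection between idempotent prime filters of $S$ and prime filters of $E(S)$. For $e \in E(S)$ the preimage of the basic open of $\mathsf{S}(E(S))$ indexed by $e$ is exactly $X_e \cap G_o$, so $\phi$ pulls basis to basis, is continuous and open, hence a homeomorphism. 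Since $\mathsf{S}(E(S))$ is spectral (in particular sober) by the discussion preceding Theorem~\ref{the: stone_duality_distributive}, so is $G_o$, completing the proof.

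The main obstacle, as I see it, is the bookkeeping around the bijection $\phi$: one must confirm carefully that primeness and the filter property transfer cleanly between $S$ and $E(S)$ despite the non-commutative setting. Fortunately, Lemma~\ref{le: filter} (primeness passes to $\mathbf{d}(F)$) and Lemma~\ref{le: filter_properties}(4) (idempotent filters are generated by their idempotents) do almost all of this work, so once these are invoked the homeomorphism argument reduces to a verification that basic opens match on both sides.
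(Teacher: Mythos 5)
Your overall route is the same as the paper's: show $G_o$ is open and that products of opens are open, then identify $G_o$ with the prime spectrum of $E(S)$ via the correspondence between idempotent prime filters of $S$ and prime filters of $E(S)$ (using Lemma~\ref{le: filter_properties}(4) and Lemma~\ref{le: filter}(2)), matching basic opens $X_e \leftrightarrow X_e^{E}$ and importing spectrality from the classical Stone duality for $E(S)$. That part of the argument is fine, and your observation that $X_a \cap G_o = \bigcup_{e \le a,\, e \in E(S)} X_e$ (because an idempotent prime filter satisfies $F = E(F)^{\uparrow}$) is exactly the bookkeeping the paper does.

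There is, however, one step you have skipped that the paper does not: Proposition~\ref{prop: etale_groupoids}(1) is a criterion for a \emph{topological groupoid} to be \'etale, so before invoking it you must verify that $\mathsf{G}_P(S)$ with the $\pi$-topology is a topological groupoid in the first place, i.e.\ that inversion and multiplication are continuous. Inversion is immediate from $X_{a}^{-1} = X_{a^{-1}}$, but continuity of the multiplication map $m \colon \mathsf{G}_P(S) \ast \mathsf{G}_P(S) \to \mathsf{G}_P(S)$ is a separate check from openness of products of open sets; the paper handles it by establishing
\[
m^{-1}(X_{s}) = \Bigl( \bigcup_{0 \neq ab \leq s} X_{a} \times X_{b} \Bigr) \cap \bigl(\mathsf{G}_{P}(S) \ast \mathsf{G}_{P}(S)\bigr).
\]
This is routine to prove (given $F \cdot G \in X_s$, pick $f \in F$, $g \in G$ with $fg \le s$ and note $X_f \times X_g$ maps into $X_s$), but as written your proposal never addresses it, so you should add this verification before appealing to the \'etaleness criterion.
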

\begin{proof}
Denote the set of composable elements in the groupoid $G$ by $G \ast G$ and denote the
multiplication map by $m \colon G \ast G \rightarrow G$.
We observe that
$$m^{-1}(X_{s}) = \left( \bigcup_{0 \neq ab \leq s} X_{a} \times X_{b} \right) \cap (\mathsf{G}_{P}(S) \ast \mathsf{G}_{P}(S))$$
for all $s \in S$.
The proof is straightforward and the same as step~3 of the proof of Proposition~2.22 of \cite{Law2}
and shows that $m$ is a continuous function.

It remains to show that $\mathsf{G}_{P}(S)$ is \'etale.
There are a number of ways to prove this.
We could follow step~4 of the proof of Proposition~2.22 of \cite{Law3}.
We give a different proof here.

We show first that  $\mathsf{G}_{P}(S)_{o}$ is an open subspace of $\mathsf{G}_{P}(S)$.
Let $F$ be an identity in $\mathsf{G}_{P}(S)$.
Then by Lemma~\ref{le: filter_properties}, $F$ is an inverse subsemigroup and so contains idempotents.
Let $e \in F$.
Then $F \in X_{e}$.
But every prime filter in $X_{e}$ contains an idempotent and so is an identity in the groupoid.
Thus $F \in X_{e} \subseteq \mathsf{G}_{P}(S)_{o}$ and is an open set.
Thus $\mathsf{G}_{P}(S)_{o}$ is an open set.

Next we show that the product of two open sets is an open set.
Let $X$ and $Y$ be any open sets.
By the definition of the topology, we may write
$X = \bigcup_{i} X_{s_{i}}$ and $Y = \bigcup_{j} X_{t_{j}}$.
Then we have
$$XY = \bigcup_{i,j} X_{s_{i}t_{j}}$$
by Lemma~\ref{le: prime_topology}.
Thus the product of open sets is always open.

We now prove that the space of identities is homeomorphic to the space associated with the distributive lattice $E(S)$.
We use the fact that there is a bijection between prime filters in $E(S)$ and idempotent prime filters in $S$
which is given by $F \mapsto F^{\uparrow}$ and $A \mapsto E(A)$.
Denote the set of prime filters in $E(S)$ containing the idempotent $e$ by $X_{e}^{E}$.
Observe that the inverse image of $X_{a} \cap \mathsf{G}_{P}(S)_{o}$ is $\bigcup_{e \leq a,a^{-1}a} X_{e}^{E}$.
On the other hand, the image of $X_{e}^{E}$ is $X_{e}$ which is a subset of $\mathsf{G}_{P}(S)_{o}$.
It follows that we have a homeomorphism.
To finish off, we observe that the space associated with a distributive lattice is spectral by Theorem~\ref{the: stone_duality_distributive}.
\end{proof}

From each distributive inverse semigroup $S$ we have constructed a spectral groupoid $\mathsf{G}_{P}(S)$,
and from each spectral groupoid $G$ we have constructed a distributive inverse semigroup $\mathsf{KB}(G)$.
We shall now investigate the relationship between these two constructions.

\begin{lemma}\label{le: Xlemma} Let $S$ be a distributive inverse semigroup.
\begin{enumerate}

\item $X_{s}$ is a local bisection of $\mathsf{G}_{P}(S)$. 

\item If every filter in $X_{s}$ is idempotent then $s$ is an idempotent.

\item If $X_{s} \cup X_{t}$ is a local bisection then $s \sim t$.

\item Every compact-open local bisection of $\mathsf{G}_{P}(S)$ is of the form $X_{s}$ for some $s \in S$. 

\end{enumerate}
\end{lemma}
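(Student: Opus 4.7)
The plan is to establish (1) directly from the determination of a filter by an element and its domain, then use (1) together with compactness to get (2), and finally deduce (3) and (4) as corollaries. For (1), observe that $X_{s}$ being a local bisection amounts to the assertion that two prime filters in $X_{s}$ sharing the same domain (resp.\ range) must coincide. But if $F \in X_{s}$ then $s \in F$, and Lemma~\ref{le: filter_properties}(2) gives $F = (s\,\mathbf{d}(F))^{\uparrow}$, so $F$ is completely determined by $\mathbf{d}(F)$; the range condition follows by passing to $F^{-1}, G^{-1} \in X_{s^{-1}}$ via Lemma~\ref{le: filter}(1) and repeating the same argument.

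For (2), assume that every $F \in X_{s}$ is an idempotent filter, i.e., by Lemma~\ref{le: filter_properties}(4) satisfies $F = E(F)^{\uparrow}$. Since $s \in F$, there must exist an idempotent $e \leq s$ with $e \in F$, so $F \in X_{e}$. Hence $X_{s} = \bigcup\{X_{e} : e \in E(S),\, e \leq s\}$. By compactness of $X_{s}$ (Lemma~\ref{le: prime_topology}(10)), this covering reduces to a finite one $X_{s} = X_{e_{1}} \cup \cdots \cup X_{e_{n}}$. Idempotents are automatically pairwise compatible, so $e := e_{1} \vee \cdots \vee e_{n}$ exists, is idempotent, and lies below $s$; Lemma~\ref{le: prime_topology}(6) then yields $X_{e} = X_{s}$, whence $s = e$ by Lemma~\ref{le: prime_topology}(3).

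For (3), the hypothesis that $X_{s} \cup X_{t}$ is a local bisection forces its cross term $X_{s}^{-1}X_{t}$ to consist entirely of identities; but $X_{s}^{-1}X_{t} = X_{s^{-1}t}$ by Lemma~\ref{le: prime_topology}(2),(5), so every prime filter containing $s^{-1}t$ is idempotent, and part (2) makes $s^{-1}t$ itself an idempotent (the degenerate case $s^{-1}t = 0$ being automatic). Symmetrically $st^{-1}$ is idempotent, so $s \sim t$. For (4), let $A$ be a compact-open local bisection; since $\pi$ is a basis, $A = \bigcup_{i} X_{s_{i}}$, and compactness collapses this to a finite union $A = X_{s_{1}} \cup \cdots \cup X_{s_{n}}$. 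Each $X_{s_{i}} \cup X_{s_{j}}$ is a local bisection as a subset of $A$, so (3) gives $s_{i} \sim s_{j}$; hence $s := s_{1} \vee \cdots \vee s_{n}$ exists, and iterated application of Lemma~\ref{le: prime_topology}(6) gives $X_{s} = A$.

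The main obstacle I anticipate is part (2), since this is the only place where a purely order-theoretic hypothesis (``every filter in $X_{s}$ is idempotent'') has to be translated into an algebraic statement about $s$. The bridge is the observation from Lemma~\ref{le: filter_properties}(4) that an idempotent prime filter is generated upwards by its idempotents, combined with compactness of $X_{s}$ to trim the covering by the $X_{e}$'s to a finite, compatible collection that can actually be joined. Once (2) is in hand, (3) is a direct substitution and (4) packages (3) with compactness.
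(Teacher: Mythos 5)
Your proposal is correct, and parts (1), (3) and (4) coincide with the paper's own argument: (1) is the observation that two prime filters in $X_{s}$ with the same domain meet (in $s$) and hence agree, (3) reduces the cross term $X_{s^{-1}t}=X_{s}^{-1}X_{t}$ to part (2), and (4) uses compactness to write the bisection as a finite union $\bigcup X_{s_{i}}$ and then (3) to join the $s_{i}$. Where you genuinely diverge is part (2). The paper argues that if every filter in $X_{s}$ is idempotent then $X_{s}=X_{s}^{-1}=X_{s^{-1}}$ and $X_{s}\subseteq X_{s^{2}}$, so by the separation properties of Lemma~\ref{le: prime_topology} one gets $s=s^{-1}$ and $s\leq s^{2}$, whence $s$ is idempotent; this is short and uses only the injectivity and order-reflection of $s\mapsto X_{s}$. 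You instead invoke Lemma~\ref{le: filter_properties}(4) to cover $X_{s}$ by the sets $X_{e}$ with $e\in E(S)$, $e\leq s$, and then use compactness of $X_{s}$ (Lemma~\ref{le: prime_topology}(10)) to exhibit $s$ explicitly as a finite join of idempotents below it. Your route is more constructive but leans on the compactness result, which is one of the heavier items in Lemma~\ref{le: prime_topology}; the paper's route avoids compactness entirely. Both are valid. The only point worth tidying in your version of (2) is the degenerate case $s=0$, where $X_{s}=\emptyset$ and the finite subcover may be empty so that the join $e_{1}\vee\cdots\vee e_{n}$ is not formed; of course $0$ is an idempotent, so the conclusion holds trivially there, just as you note for $s^{-1}t=0$ in part (3).
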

\begin{proof}
(1). This is immediate by part (3) of Lemma~\ref{le: filter_properties}.

(2). If every filter in $X_{s}$ is idempotent then $X_{s} = X_{s}^{-1}$ since idempotent filters are inverse subsemigroups.
It follows that $s = s^{-1}$ by Lemma~\ref{le: prime_topology}
Next observe that $X_{s} \subseteq X_{s^{2}}$.
Thus $s \leq s^{2}$ by Lemma~\ref{le: prime_topology}
It now follows that $s = s^{2}$, as required.

(3). We prove that every filter in $X_{s^{-1}t}$ is an idempotent and so the result follows by (2) above and symmetry.
By assumption, $X_{s}^{-1}X_{t} = X_{s^{-1}t}$ is a subset of the space of identities and so every element in $X_{s^{-1}t}$ 
is an idempotent filter.

(4). Let $A$ be a compact-open local bisection of $\mathsf{G}_{P}(S)$. 
Then $A = \bigcup_{i=1}^{m} X_{s_{i}}$ for a finite set of elements $s_{1}, \ldots, s_{m} \in S$.
The result now follows by (3) above.
\end{proof}

Given a distributive inverse semigroup $S$, we have proved by Proposition~\ref{prop: spectral_groupoid} 
that $\mathsf{G}_{P}(S)$ is a spectral groupoid.
It follows by Proposition~\ref{prop: distributive_inverse} 
that $\mathsf{KB}(\mathsf{G}_{P}(S))$ is a distributive inverse semigroup.
By Lemmas~\ref{le: prime_topology}, \ref{le: Xlemma},
the set $X_{s}$ is a compact-open local bisection and so the map $s \mapsto X_{s}$ is
a well-defined function from  $S$ to $\mathsf{KB}(\mathsf{G}_{P}(S))$.
This function is a homomorphism, injective and surjective.
We have therefore proved the following.

\begin{theorem}\label{the: dist_duality} Let $S$ be a distributive inverse semigroup.
Define $\varepsilon \colon S \rightarrow \mathsf{KB}(\mathsf{G}_{P}(S))$ by $\varepsilon (s) = X_{s}$.
Then $\varepsilon$ is an isomorphism.
\end{theorem}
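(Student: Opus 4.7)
The plan is to verify in turn that $\varepsilon$ is well-defined, a semigroup homomorphism, injective, surjective, and a morphism of distributive inverse semigroups (i.e.\ it preserves the compatible binary joins). Happily, almost all of these reduce to items already catalogued in Lemma~\ref{le: prime_topology} and Lemma~\ref{le: Xlemma}, so the proof is essentially a book-keeping exercise that draws all those strands together.

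First I would observe that $\varepsilon$ lands in the right codomain. Each $X_s$ is open (it is a basic open set in the topology $\pi$ defined in Lemma~\ref{le: prime_topology}(9)), compact by Lemma~\ref{le: prime_topology}(10), and a local bisection by Lemma~\ref{le: Xlemma}(1). Hence $X_s \in \mathsf{KB}(\mathsf{G}_P(S))$ for every $s\in S$, and since we have already established in Proposition~\ref{prop: spectral_groupoid} and Proposition~\ref{prop: distributive_inverse} that the codomain is a distributive inverse semigroup, $\varepsilon$ is at least a well-defined function between two distributive inverse semigroups.

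Next I would check that $\varepsilon$ is a homomorphism. The multiplicative identity $X_{st} = X_s X_t$ is precisely Lemma~\ref{le: prime_topology}(5), and compatibility with inversion is Lemma~\ref{le: prime_topology}(2). For injectivity, suppose $X_s = X_t$; then by Lemma~\ref{le: prime_topology}(3) we conclude $s = t$. For surjectivity, the content is Lemma~\ref{le: Xlemma}(4): every compact-open local bisection of $\mathsf{G}_P(S)$ is already of the form $X_s$ for some $s \in S$.

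Finally, to promote the bijective homomorphism to an isomorphism of distributive inverse semigroups, I must confirm that $\varepsilon$ preserves compatible binary joins. If $s \sim t$ in $S$, then Lemma~\ref{le: prime_topology}(6) gives $X_{s \vee t} = X_s \cup X_t$, which is exactly the join of $X_s$ and $X_t$ in $\mathsf{KB}(\mathsf{G}_P(S))$ (the union of two compatible open local bisections is their join). Conversely, order preservation by $\varepsilon$ and $\varepsilon^{-1}$ follows from Lemma~\ref{le: prime_topology}(4): $X_a \subseteq X_b$ iff $a \leq b$. The only step with any real substance is surjectivity, and the possible obstacle there, namely that a compact-open local bisection might only be a finite union $\bigcup_i X_{s_i}$ with the $s_i$ not pairwise compatible, is precisely what Lemma~\ref{le: Xlemma}(3) rules out once one knows the union is a local bisection. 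Everything else is formal.
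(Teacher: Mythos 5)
Your proposal is correct and follows essentially the same route as the paper, which likewise deduces the theorem by assembling Proposition~\ref{prop: spectral_groupoid}, Proposition~\ref{prop: distributive_inverse}, Lemma~\ref{le: prime_topology} and Lemma~\ref{le: Xlemma}; you have merely made explicit which parts of those lemmas supply well-definedness, the homomorphism property, injectivity, surjectivity and preservation of compatible joins. Your remark that surjectivity is the only substantive step, resting on part~(3) of Lemma~\ref{le: Xlemma}, matches exactly how the paper proves part~(4) of that lemma.
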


The above theorem is one of a pair.
We shall now work towards proving its companion.
Let $G$ be a spectral groupoid.
For each $g \in G$ define $F_{g}$ to be the set of all compact-open local bisections that contain $g$.

\begin{lemma}\label{le: Flemma1}
Let $G$ be a spectral groupoid.
Then for each $g \in G$ the set $F_{g}$ is a prime filter in the distributive inverse semigroup $\mathsf{KB}(G)$.
\end{lemma}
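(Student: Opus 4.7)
The plan is to verify the four conditions (upward closure, directedness, properness, primeness) required for $F_{g}$ to be a prime filter in $\mathsf{KB}(G)$, recalling that the natural partial order on local bisections is subset inclusion.

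First I would handle the easy parts. Upward closure is automatic: if $A \subseteq B$ in $\mathsf{KB}(G)$ and $g \in A$, then $g \in B$, so $B \in F_g$. Properness is equally immediate, since the zero of $\mathsf{KB}(G)$ is $\emptyset$, which cannot contain $g$.

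For directedness, suppose $A, B \in F_g$, so $g \in A \cap B$. The intersection $A \cap B$ is an open subset of $G$ containing $g$. Since $G$ is a spectral groupoid, by Lemma~\ref{le: compact_open} the compact-open local bisections form a basis for the topology on $G$. Therefore there exists $C \in \mathsf{KB}(G)$ with $g \in C \subseteq A \cap B$, and this $C$ witnesses directedness.

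The primeness condition is where I would be slightly more careful. Suppose $A, B \in \mathsf{KB}(G)$ are compatible and $A \vee B \in F_g$. The join of compatible elements in $\mathsf{KB}(G)$ is computed as their set-theoretic union: compatibility means $A^{-1}B, AB^{-1} \subseteq G_o$, which ensures $A \cup B$ is again a local bisection (and it is visibly open, compact, and the least upper bound of $A$ and $B$ in the order). Hence $g \in A \vee B = A \cup B$, so either $g \in A$ or $g \in B$, giving $A \in F_g$ or $B \in F_g$.

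The main (and only mild) obstacle is the identification of the join in $\mathsf{KB}(G)$ with the union; once that is in hand, primeness is forced by the pointwise nature of set membership. Everything else is structural.
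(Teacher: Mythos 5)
Your proof is correct and follows essentially the same route as the paper's: upward closure and properness are immediate, directedness comes from the fact (Lemma~\ref{le: compact_open}) that the compact-open local bisections form a basis so one can shrink $A \cap B$ to a basic neighbourhood of $g$, and primeness follows once the join of a compatible pair in $\mathsf{KB}(G)$ is identified with the set-theoretic union (which is established in the proof of Lemma~\ref{le: reg}). The only microscopic omission is an explicit remark that $F_{g}$ is non-empty, but this follows at once from the same basis property you already invoke.
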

\begin{proof}
The set $F_{g}$ is non-empty because $G$ has a basis of compact-open local bisections by Lemma~\ref{le: compact_open}.
If $U,V \in F_{g}$ then $U \cap V$ is an open local bisection that contains $g$.
It follows that there is a compact-open local bisection that contains $g$ and is contained in $U \cap V$ by Lemma~\ref{le: compact_open}
It follows that $F_{g}$ is down directed.
It is clear that $F_{g}$ is upwardly closed.
The fact that $F_{g}$ is prime is straightforward to prove.
\end{proof}

\begin{lemma}\label{le: Flemma2} Let $G$ be a spectral groupoid.
\begin{enumerate}

\item $\mathbf{d}(F_{g}) = F_{\mathbf{d}(g)}$, and dually.

\item If $gh$ is defined in $G$ then $F_{g} \cdot F_{h} = F_{fh}$.

\item $F_{g} = F_{h}$ if and only if $g = h$.

\item Each prime filter in $\mathsf{KB}(G)$ is of the form $F_{g}$ for some $g \in G$.

\end{enumerate}
\end{lemma}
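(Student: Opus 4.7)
The plan is to establish the four parts in order, exploiting throughout the crucial identification that the idempotents of $\mathsf{KB}(G)$ are precisely the compact-open subsets of $G_o$, so that Theorem~\ref{the: stone_duality_distributive} applies directly to the distributive lattice $E(\mathsf{KB}(G))$.

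For part (1), I will invoke Lemma~\ref{le: idempotent_filters}: $U \in \mathbf{d}(F_g)$ iff $A^{-1}A \subseteq U$ for some $A \in F_g$. Since $A$ is a local bisection, $A^{-1}A = \mathbf{d}(A) \subseteq G_o$, and $g \in A$ gives $\mathbf{d}(g) \in \mathbf{d}(A) \subseteq U$, proving $\mathbf{d}(F_g) \subseteq F_{\mathbf{d}(g)}$. For the converse, given $U \in F_{\mathbf{d}(g)}$, continuity of $\mathbf{d}$ makes $\mathbf{d}^{-1}(U)$ an open neighborhood of $g$, and Lemma~\ref{le: compact_open} lets me slip a compact-open local bisection $A$ with $g \in A \subseteq \mathbf{d}^{-1}(U)$; then $A^{-1}A = \mathbf{d}(A) \subseteq U$. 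Part (2) is analogous: $AB \ni gh$ yields $F_g \cdot F_h \subseteq F_{gh}$ (using (1) to check the product $F_g \cdot F_h$ is even defined), while for the reverse inclusion, given a compact-open local bisection $W \ni gh$, continuity of the groupoid multiplication produces open neighborhoods of $g$ and $h$ whose product sits inside $W$, after which Lemma~\ref{le: compact_open} supplies compact-open local bisections $A \in F_g$ and $B \in F_h$ inside them with $AB \subseteq W$. Part (3) then falls out quickly: $F_g = F_h$ forces $\mathbf{d}(F_g) = \mathbf{d}(F_h)$, hence $F_{\mathbf{d}(g)} = F_{\mathbf{d}(h)}$ by (1), so $\mathbf{d}(g) = \mathbf{d}(h)$ by the $T_0$ property of the spectral space $G_o$ together with its basis of compact-opens; choosing any $A \in F_g = F_h$, both $g$ and $h$ lie in $A$ with the same image under $\mathbf{d}$, and the local bisection property forces $g = h$.

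Part (4) is where the real work lies and is the main obstacle. Given a prime filter $P$ in $\mathsf{KB}(G)$, Lemma~\ref{le: filter} tells us that $\mathbf{d}(P)$ is a prime idempotent filter, so its trace $E(\mathbf{d}(P))$ is a prime filter in the distributive lattice $E(\mathsf{KB}(G))$ of compact-open subsets of the spectral space $G_o$. By Theorem~\ref{the: stone_duality_distributive} applied to $G_o$, this prime filter corresponds to a unique point $e \in G_o$, namely $E(\mathbf{d}(P)) = \{U \text{ compact-open in } G_o : e \in U\} = E(F_e)$. Lemma~\ref{le: filter_properties}(4) then lifts this identification to $\mathbf{d}(P) = E(\mathbf{d}(P))^{\uparrow} = E(F_e)^{\uparrow} = F_e$. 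Now I construct $g$: for each $A \in P$, $A^{-1}A \in \mathbf{d}(P) = F_e$ forces $e \in A^{-1}A = \mathbf{d}(A)$, and since $A$ is a local bisection there is a unique $g_A \in A$ with $\mathbf{d}(g_A) = e$. Directedness of $P$ ensures the $g_A$ coincide on any common refinement, so $g := g_A$ is well-defined and lies in every element of $P$, giving $P \subseteq F_g$. Finally, $\mathbf{d}(F_g) = F_{\mathbf{d}(g)} = F_e = \mathbf{d}(P)$ by part (1), and any $A \in P$ belongs to both $P$ and $F_g$, so Lemma~\ref{le: filter_properties}(3) delivers $P = F_g$. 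The subtlety to be careful with is the interplay between idempotent-level Stone duality and the groupoid level: the lattice spectrum pins down only the identity $e$, and it is the local bisection property together with directedness of $P$ that upgrades $e$ to the full groupoid element $g$.
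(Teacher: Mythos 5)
Your proposal is correct, and parts (1), (2) and (4) follow essentially the same route as the paper: continuity of the groupoid operations plus the basis of compact-open local bisections for (1) and (2), and for (4) the passage to the prime filter $E(\mathbf{d}(P))$ of compact-opens in $G_{o}$, the point $e$ supplied by the spectrum of that lattice, the unique lift $g_{A}\in A$ with $\mathbf{d}(g_{A})=e$ glued via directedness, and the final appeal to Lemma~\ref{le: filter_properties}(3) (which the paper leaves implicit in the line ``$\mathbf{d}(F)=\mathbf{d}(F_{g})$ and so $F=F_{g}$'').

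The one place you genuinely diverge is the second half of part (3). Both arguments first obtain $\mathbf{d}(g)=\mathbf{d}(h)$ from the $T_{0}$ separation of the spectral space $G_{o}$ by compact-opens. After that, the paper passes to $F_{gh^{-1}}=F_{hh^{-1}}$ via part (2), argues that $gh^{-1}$ must be an identity, and then invokes sobriety of $G_{o}$ a second time to conclude $gh^{-1}=hh^{-1}$. You instead pick any $A\in F_{g}=F_{h}$ and use the local bisection property directly: $g,h\in A$ with $\mathbf{d}(g)=\mathbf{d}(h)$ forces $gh^{-1}\in AA^{-1}\subseteq G_{o}$, whence $gh^{-1}=\mathbf{r}(g)$ and $g=h$. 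This is shorter, avoids the second use of sobriety, and only needs $F_{g}$ to be non-empty (Lemma~\ref{le: Flemma1}); it is a clean improvement on the paper's version of this step, at no cost in generality.
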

\begin{proof} 
(1). Let $V \in F_{\mathbf{d}(g)}$, a compact-open local bisection containing $\mathbf{d}(g)$.
Since the multiplication map in a topological groupoid is continuous, the inverse image of $V$ under the multiplication map is open.
It follows that we may find compact-open local bisections $g^{-1} \in A$ and $g \in B$ such that $AB \subseteq V$.
It is now easy to check that $\mathbf{d}(F_{g}) = F_{\mathbf{d}(g)}$. 

(2). By (1) above, if $gh$ is defined in the groupoid $G$ then the product $F_{g} \cdot F_{h}$ is defined.
Clearly, $F_{g} \cdot F_{h} \subseteq F_{gh}$.
The reverse inclusion uses the fact that multiplication is continuous and that the groupoid has
a basis of compact-open local bisections.

(3). Suppose that $F_{g} = F_{h}$.
We show first that $\mathbf{d}(g) = \mathbf{d}(h)$.
Suppose not.
Then in the sober space $G_{o}$, and without loss of generality, 
we may find a compact-open subset $U$ that contains $\mathbf{d}(g)$ and omits $\mathbf{d}(h)$.
The function $\mathbf{d}$ is continuous, and so $\mathbf{d}^{-1}(U)$ is an open set in $G$ that contains $g$.
We may therefore find a compact-open local bisection $V$ such that $g \in V$ and $\mathbf{d}(V) \subseteq U$.
By assumption, $h \in V$ which implies that $\mathbf{d}(h) \in U$, which is a contradiction.

It follows that we may write, $F_{gh^{-1}} = F_{hh^{-1}}$ using (2) above .
Now $hh^{-1}$ is an identity and so is contained in the open space $G_{o}$.
Since $G_{o}$ is spectral, $G_{o}$ has a basis of compact-open subsets.
It follows that there is a compact-open subset of $G_{o}$ that contains $hh^{-1}$.
It follows that $gh^{-1}$ must also be an identity.
Since $gh^{-1},hh^{-1} \in G_{o}$ we may now use the sobriety of $G_{o}$ to deduce that $gh^{-1} = hh^{-1}$.
It follows that $g = h$, as required.

(4). Let $F$ be a prime filter of compact-open local bisections of $G$.
Then $\mathbf{d}(F) \cap G_{o}$ is a prime filter of compact-open subsets of $G_{o}$.
Since $G_{o}$ is a spectral space, there is a point $e \in G_{o}$ such that $\mathbf{d}(F) \cap G_{o}$ 
is precisely the set of all compact-open subsets of $G_{o}$ that contain $e$.
Let $A \in F$.
Then $A^{-1}A$ contains $e$.
It follows that there is an element $g \in A$, necessarily unique, such that $\mathbf{d}(g) = e$.
Similarly, if $B \in F$ there exists a unique element $h \in B$ such that $\mathbf{d}(h) = e$.
By assumption, there exists $C \in F$ such that $C \subseteq A,B$.
Again we may find a unique element $k \in C$ such that $\mathbf{d}(k) = e$.
But by uniqueness, we get that $g = h = k$.
It follows that $F \subseteq F_{g}$.
However, $\mathbf{d}(F) = \mathbf{d}(F_{g})$ and so $F = F_{g}$.
\end{proof}

Let $G$ be a spectral groupoid.
Then $\mathsf{KB}(G)$ is a distributive lattice by Proposition~\ref{prop: distributive_inverse}
and $\mathsf{G}_{P}(\mathsf{KB}(G))$ is a spectral groupoid by Proposition~\ref{prop: spectral_groupoid}.
The map $\eta \colon G \rightarrow \mathsf{G}_{P}(\mathsf{KB}(G))$ given by $g \mapsto F_{g}$
is well-defined by Lemma~\ref{le: Flemma1} and by Lemma~\ref{le: Flemma2} it is an isomorphism
of groupoids.

\begin{theorem}\label{the: spectral_duality} Let $G$ be a spectral groupoid.
Then $\eta \colon G \rightarrow \mathsf{G}_{P}(\mathsf{KB}(G))$ is an isomorphism of topological groupoids.
\end{theorem}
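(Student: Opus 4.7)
The plan is to use what has already been established in Lemmas~\ref{le: Flemma1} and \ref{le: Flemma2}: these give that $\eta$ is a well-defined bijection and a groupoid homomorphism. What remains is purely topological, namely showing that $\eta$ is a homeomorphism with respect to the topologies on $G$ and $\mathsf{G}_{P}(\mathsf{KB}(G))$.

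The key observation I would use is the identity $\eta(A) = X_{A}$ for every compact-open local bisection $A$ of $G$. To verify this, note that $F_{g} \in X_{A}$ iff $A \in F_{g}$ iff $g \in A$, which gives $\eta(A) \subseteq X_{A}$ immediately. For the reverse inclusion, any prime filter $F$ of $\mathsf{KB}(G)$ containing $A$ is of the form $F_{g}$ for some (unique) $g \in G$ by part~(4) of Lemma~\ref{le: Flemma2}; and $A \in F_{g}$ forces $g \in A$, so $F = \eta(g) \in \eta(A)$. Thus $\eta(A) = X_{A}$ set-theoretically.

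Now I would invoke the two basis descriptions. On one side, $G$ is spectral, so by Lemma~\ref{le: compact_open} the compact-open local bisections form a basis for the topology on $G$. On the other side, the sets $X_{A}$ with $A \in \mathsf{KB}(G)$ form a basis for the topology on $\mathsf{G}_{P}(\mathsf{KB}(G))$ by Lemma~\ref{le: prime_topology}(9) applied to the distributive inverse semigroup $\mathsf{KB}(G)$. The identity $\eta(A) = X_{A}$ therefore says that $\eta$ carries a basis bijectively onto a basis, so $\eta$ is simultaneously continuous and open, hence a homeomorphism. Combined with the fact that $\eta$ preserves the groupoid operations (Lemma~\ref{le: Flemma2}(1),(2)), this yields an isomorphism of topological groupoids.

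There is no serious obstacle here beyond being careful with the direction of the set-theoretic computation; the only subtle point is that the equality $\eta(A) = X_{A}$ is not automatic from $A \in F_{g} \Leftrightarrow g \in A$ alone — it also requires the surjectivity part of Lemma~\ref{le: Flemma2}(4) to guarantee that every prime filter in $X_{A}$ actually arises as some $F_{g}$ with $g \in G$ (and then necessarily $g \in A$). Once that is noted, the rest is a formal consequence of the basis descriptions already in hand.
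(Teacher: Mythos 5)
Your proof is correct and takes essentially the same route as the paper: the paper likewise reduces to the identity $\eta^{-1}(X_{A}) = A$ and $\eta(A) = X_{A}$ for compact-open local bisections $A$, deducing continuity and openness from the basis descriptions on both sides. Your explicit remark that the reverse inclusion $X_{A} \subseteq \eta(A)$ needs the surjectivity statement of Lemma~\ref{le: Flemma2}(4) is a point the paper leaves implicit, so your write-up is if anything slightly more careful.
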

\begin{proof} It remains to prove that $\eta$ is continuous and open.
Let $A$ be a compact-open local bisection of $G$.
Then $\eta (g) = F_{g} \in X_{A} \Leftrightarrow g \in A$.
It follows that $A = \eta^{-1}(X_{A})$ and so $\eta$ is continuous
and that $\eta (A) = X_{A}$ and so $\eta$ is open.
\end{proof}

It is now time to describe the categories that will form the setting for our duality theorem.
We shall not attempt to define the most general possible morphisms, merely those sufficient 
for our present goals.\footnote{The discussion that follows owes a big debt to Ganna Kudryavtseva who suggested some of the key ideas.}
Our first result is well-known and is included for the sake of completeness. 

\begin{lemma} Let $\theta \colon S \rightarrow T$ be a morphism of distributive lattices.
Then for each prime filter $P$ in $T$ the inverse image $\theta^{-1}(P)$ is non-empty if and only if for
each $t \in T$ there exists $s \in S$ such that $t \leq \theta (s)$.
\end{lemma}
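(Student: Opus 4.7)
The plan is to prove both directions separately, with the reverse direction being essentially immediate and the forward direction requiring the filter/ideal machinery developed in Section~2.

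For the easy direction ($\Leftarrow$), assume that every $t \in T$ lies below some $\theta(s)$. Let $P$ be a prime filter in $T$. Since filters are non-empty by definition, pick any $t \in P$ and choose $s \in S$ with $t \leq \theta(s)$. Because $P$ is upwardly closed, $\theta(s) \in P$, so $s \in \theta^{-1}(P)$, and the inverse image is non-empty.

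For the forward direction ($\Rightarrow$), I would argue by contraposition. Suppose there exists $t \in T$ such that $t \nleq \theta(s)$ for every $s \in S$; the goal is to produce a prime filter $P$ of $T$ containing $t$ that is disjoint from the image $\theta(S)$, whence $\theta^{-1}(P) = \emptyset$. Consider the subset
\[
I = \{ t' \in T \colon t' \leq \theta(s) \text{ for some } s \in S \}.
\]
This is clearly a downward-closed subset of $T$. Because $\theta$ is a morphism of distributive lattices, if $t_1 \leq \theta(s_1)$ and $t_2 \leq \theta(s_2)$, then $t_1 \vee t_2 \leq \theta(s_1) \vee \theta(s_2) = \theta(s_1 \vee s_2)$, so $I$ is $\vee$-closed. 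By our assumption on $t$, the filter $t^{\uparrow}$ is disjoint from $I$, for any $t'' \geq t$ with $t'' \leq \theta(s)$ would force $t \leq \theta(s)$, contradiction.

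Now I would apply Lemma~\ref{le: maximal} with the $\vee$-closed order ideal $I$ and the filter $t^{\uparrow}$ to obtain a $\vee$-closed order ideal $J$ with $I \subseteq J$, $J \cap t^{\uparrow} = \emptyset$, maximal with respect to these properties. By Lemma~\ref{le: maximal_prime}, $J$ is a prime $\vee$-closed order ideal of $T$, so by Lemma~\ref{le: ideal_filter} the set $P = T \setminus J$ is a proper prime filter. By construction, $t \in P$ and $P \cap \theta(S) \subseteq P \cap I = \emptyset$, so $\theta^{-1}(P) = \emptyset$, completing the contrapositive.

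The main obstacle is really the verification that $I$ is $\vee$-closed, which relies on $\theta$ preserving joins (the definition of a morphism); once this is observed, the rest is a direct application of the prime filter existence results already established for distributive inverse semigroups (Lemmas~\ref{le: maximal}, \ref{le: maximal_prime}, and \ref{le: ideal_filter}), which apply in particular to distributive lattices.
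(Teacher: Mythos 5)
Your proof is correct and follows essentially the same route as the paper: both arguments reduce the forward direction to showing that $I = \mathrm{im}(\theta)^{\downarrow}$ is a $\vee$-closed order ideal disjoint from $t^{\uparrow}$ and then invoke Lemmas~\ref{le: maximal}, \ref{le: maximal_prime} and \ref{le: ideal_filter} to manufacture a prime filter containing $t$ and missing the image. The only difference is cosmetic (contraposition versus contradiction, plus your explicit write-up of the easy direction, which the paper omits).
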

\begin{proof} Only one direction needs proving.
We assume that for each prime filter $P$ in $T$ the inverse image $\theta^{-1}(P)$ is non-empty. 
Let $t \in T$ and suppose that there is no $s \in S$ such that $t \leq \theta (s)$.
This is equivalent to saying that $t \notin I = \mbox{im}(\theta)^{\downarrow}$.
Observe that $I$ is an order ideal that is $\vee$-closed; the latter claim following from the fact
that $\theta$ is a morphism of distributive lattices.
We therefore have that $t^{\uparrow} \cap I = \emptyset$.
By Lemma~\ref{le: maximal} there is a $\vee$-closed order ideal $J$ such that $I \subseteq J$ and $t^{\uparrow} \cap J = \emptyset$
and that is maximal with respect to these properties.
By Lemma~\ref{le: maximal_prime}, $J$ is prime and so $F = T \setminus P$ is a prime filter by Lemma~\ref{le: ideal_filter}.
Thus $t \in F$ and $F \cap \mbox{im}(\theta)^{\downarrow} = \emptyset$.
But this contradicts our assumption that $\theta^{-1}(F) \neq \emptyset$.
\end{proof}

Morphisms of distributive lattices that satisfy the condition of the above lemma are called {\em proper}.
The following is immediate when we recall that morphisms of distributive lattices must preserve binary meets.

\begin{corollary}  Let $\theta \colon S \rightarrow T$ be a proper morphism of distributive lattices.
Then $\theta^{-1}(F)$ is a prime filter in $S$ for each prime filter $F$ in $T$.
\end{corollary}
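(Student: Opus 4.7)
The plan is to verify, in turn, the four defining properties of a prime filter: non-emptiness, upward closure, downward directedness, and primeness (plus properness). Three of these are essentially transports along $\theta$, so the only genuine inputs are that a morphism of distributive lattices preserves binary meets and binary joins (and sends $0$ to $0$), together with the preceding lemma.

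First I would observe that $\theta^{-1}(F)$ is non-empty. This is exactly the content of the preceding lemma: because $\theta$ is proper, for the given $F$ there exists some $t \in F$ and some $s \in S$ with $t \leq \theta(s)$, and then $\theta(s) \in F$ by upward closure of $F$, so $s \in \theta^{-1}(F)$. Upward closure of $\theta^{-1}(F)$ itself is routine: any lattice homomorphism is monotone, so $a \in \theta^{-1}(F)$ and $a \leq b$ gives $\theta(a) \leq \theta(b)$, and since $F$ is upward closed $\theta(b) \in F$, i.e.\ $b \in \theta^{-1}(F)$.

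Next I would establish down-directedness, which is the one place where the assumption that $\theta$ preserves binary meets is used. Given $a,b \in \theta^{-1}(F)$, form $a \wedge b$ in $S$ (binary meets exist in any lattice); then $\theta(a \wedge b) = \theta(a) \wedge \theta(b)$. Since $\theta(a),\theta(b) \in F$ and $F$ is a filter, we have $\theta(a) \wedge \theta(b) \in F$ (because $F$ is down-directed and upward closed, so it contains the meet of any two of its elements). Hence $a \wedge b \in \theta^{-1}(F)$, and it lies below both $a$ and $b$, as required.

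Finally I would handle primeness and properness together. If $a \vee b \in \theta^{-1}(F)$, then $\theta(a) \vee \theta(b) = \theta(a \vee b) \in F$, so by primeness of $F$ either $\theta(a) \in F$ or $\theta(b) \in F$, that is, either $a \in \theta^{-1}(F)$ or $b \in \theta^{-1}(F)$. For properness, $0 \notin \theta^{-1}(F)$ because $\theta(0)=0 \notin F$ (filters in our sense are always proper). There is really no obstacle here: once properness of $\theta$ gives non-emptiness and preservation of $\wedge$, $\vee$, $0$ is invoked, each clause transfers directly from $F$ to $\theta^{-1}(F)$, which is why the paper calls the result immediate.
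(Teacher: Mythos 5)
Your proof is correct and is precisely the argument the paper intends when it calls the corollary ``immediate'': non-emptiness comes from the easy direction of the preceding lemma via properness, directedness from preservation of binary meets, and primeness from preservation of binary joins. Nothing to add.
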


We now turn to morphisms between distributive inverse semigroups.

\begin{lemma} Let $\theta \colon S \rightarrow T$ be a morphism of distributive inverse semigroups.
Let $A$ be a prime filter in $T$.
Put $F = E(\mathbf{d}(A))$, a prime filter in $E(T)$.
If $\theta^{-1}(A)$ is non-empty, then it is a disjoint union of prime filters $B$ of $S$
such that $E(\mathbf{d}(B)) = E(\theta^{-1}(F))$.
\end{lemma}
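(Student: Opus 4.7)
The plan is to exhibit, through each point $s \in \theta^{-1}(A)$, a prime filter $B_s$ of $S$ contained in $\theta^{-1}(A)$ whose idempotent data equals $F_S := E(\theta^{-1}(F))$; Lemma~\ref{le: filter_properties}(3) then forces two such filters sharing an element to coincide, giving the asserted disjoint decomposition.

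First I would verify that $F_S$ is a prime filter in $E(S)$. The restriction $\theta|_{E(S)}$ is a distributive lattice homomorphism $E(S) \to E(T)$, since $\theta$ preserves joins and products and meets of commuting idempotents are products. The preimage of the prime filter $F$ under such a homomorphism is a prime filter once it is non-empty, and non-emptiness is built into the hypothesis: any $s \in \theta^{-1}(A)$ yields $\theta(s^{-1}s) = \mathbf{d}(\theta(s)) \in E(\mathbf{d}(A)) = F$, so $s^{-1}s \in F_S$.

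For each $s \in \theta^{-1}(A)$ I then set $B_s := (sF_S)^{\uparrow}$. Directedness of $sF_S$ follows from directedness of $F_S$ (and monotonicity of left-multiplication by $s$ on idempotents), so $B_s$ is a filter containing $s = s(s^{-1}s)$. A direct calculation using $(se)^{-1}(se) = e \cdot s^{-1}s$ together with closure of $F_S$ under meets and Lemma~\ref{le: idempotent_filters} gives $\mathbf{d}(B_s) = F_S^{\uparrow}$, so $E(\mathbf{d}(B_s)) = F_S$. For $B_s \subseteq \theta^{-1}(A)$: if $t \geq se$ with $e \in F_S$, then $\theta(t) \geq \theta(s)\theta(e) \in \theta(s)\cdot\mathbf{d}(A)$, and this sits inside $A$ by Lemma~\ref{le: filter_properties}(2) applied to $\theta(s) \in A$.

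Primeness of $B_s$ reduces, by Lemma~\ref{le: filter}(2), to primeness of $\mathbf{d}(B_s) = F_S^{\uparrow}$ as a filter in $S$. Given $x_1 \vee x_2 \geq f$ for some $f \in F_S$, distributivity (applied to right-multiplication by $f$) yields $f = (x_1 \vee x_2)f = x_1 f \vee x_2 f$. Each $x_i f$ lies below the idempotent $f$ and is therefore itself idempotent (any element $s \leq f$ satisfies $s^2 = s$, since $s = f\,\mathbf{d}(s)$ is a product of commuting idempotents), so this is a join in $E(S)$; primeness of $F_S$ in $E(S)$ puts one of them, say $x_1 f$, in $F_S$, and then $x_1 \geq x_1 f$ (as $x_1 f = x_1\,\mathbf{d}(x_1 f)$ is a restriction of $x_1$) places $x_1$ in $F_S^{\uparrow}$. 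Each $s$ lies in its own $B_s$, so $\theta^{-1}(A) = \bigcup_s B_s$; distinct $B_s$'s that meet must coincide by Lemma~\ref{le: filter_properties}(3) applied to the common $\mathbf{d}$-filter $F_S^{\uparrow}$, and the union is therefore disjoint. I expect the primeness step to require the most care, in particular the observation that anything below an idempotent is itself an idempotent, which is what converts the problem back to the lattice-theoretic setting where $F_S$ is prime by construction.
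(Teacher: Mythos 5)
Your proof is correct and follows essentially the same route as the paper's: pull back $F$ to the idempotent prime filter $F_S^{\uparrow}$ (the paper's $H$), attach it to each element $b$ of $\theta^{-1}(A)$ via $(bH)^{\uparrow}$, and check containment in $\theta^{-1}(A)$ using Lemma~\ref{le: filter_properties}. You merely make explicit some steps the paper leaves implicit, namely the verification that $F_S^{\uparrow}$ is prime in $S$ (via the observation that anything below an idempotent is idempotent) and the disjointness of the decomposition via Lemma~\ref{le: filter_properties}(3).
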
  
\begin{proof} Put $H = (E(\theta^{-1}(F))^{\uparrow}$, an idempotent prime filter in $S$.
Let $h \in H$.
Then $e \leq h$ where $\theta (e) = F$.
It follows that $\theta (h) \in \mathbf{d}(A)$.
Now let $b \in \theta^{-1}(A)$ be arbitrary.
Then $\theta (b^{-1}b) \in \mathbf{d}(A)$.
Thus $\theta (b^{-1}b) \in F$ and so $b^{-1}b \in H$.
It follows that $B = (bH)^{\uparrow}$ is a well-defined prime filter in $S$.
Let $x \in B$.
Then $bh \leq x$ for some $h \in H$.
Then $\theta (bh) \leq \theta (x)$.
But $\theta (bh) = \theta (b) \theta (h) \in A\mathbf{d}(A) = A$.
\end{proof}

We shall restrict our attention to those morphisms of distributive inverse semigroups
with the additional property that the inverse images of prime filters are prime filters.
This is a global property that refers explicitly to prime filters.
We shall reformulate this condition without reference to them.

\begin{lemma} Let $\theta \colon S \rightarrow T$ be a morphism of distributive inverse semigroups.
Then for each prime filter $P$ in $T$ the inverse image $\theta^{-1}(P)$ is non-empty if and only if for
each $t \in T$ we may find a finite non-empty compatible set $t_{1}, \ldots, t_{m}$ such that
$t = \bigvee_{i=1}^{m} t_{i}$ and such that for each $i$ there exists $s_{i} \in S$ such that
$t_{i} \leq \theta (s_{i})$. 
\end{lemma}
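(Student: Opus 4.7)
The proof splits into the two implications, with the nontrivial direction mimicking the distributive lattice version proved just above (which is why the author set it up).

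For the easy direction ($\Leftarrow$), assume the covering condition and let $P$ be a prime filter in $T$. Pick any $t \in P$ and write $t = \bigvee_{i=1}^{m} t_{i}$ with each $t_{i} \leq \theta(s_{i})$. Since $P$ is prime, induction on $m$ using the binary primeness definition forces some $t_{i} \in P$; upward closure then gives $\theta(s_{i}) \in P$, i.e.\ $s_{i} \in \theta^{-1}(P)$.

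For the converse, the plan is to define
\[
I \;=\; \Bigl\{\, u \in T \;:\; u = \bigvee_{i=1}^{m} u_{i}\ \text{for some finite compatible set with each}\ u_{i} \leq \theta(s_{i}),\ s_{i} \in S \,\Bigr\}
\]
and verify that $I$ is a $\vee$-closed order ideal of $T$ containing $\mathrm{im}(\theta)$. Containment of the image is immediate. For the order ideal property, given $v \leq u = \bigvee u_{i} \in I$, one writes $v = u\mathbf{d}(v) = \bigvee u_{i}\mathbf{d}(v)$ by distributivity (discarding any vanishing terms); each summand still lies below $\theta(s_{i})$ and the set remains compatible. For $\vee$-closure, one simply concatenates two coverings, using that the union of two finite compatible sets whose joins are compatible is itself compatible. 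The crucial consequence of the order ideal argument is the separation property: if $t \notin I$, then $t^{\uparrow} \cap I = \emptyset$, because any $s \geq t$ in $I$ would force $t \in I$ by the same computation.

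With this in hand, the proof concludes via the standard Zorn/prime-extension machinery already developed in Section~2. Assume for contradiction that $t \in T$ fails the covering condition; that is, $t \notin I$. By Lemma~\ref{le: maximal} there is a $\vee$-closed order ideal $J$ containing $I$, disjoint from $t^{\uparrow}$, and maximal with these properties. By Lemma~\ref{le: maximal_prime}, $J$ is a prime $\vee$-closed order ideal, and by Lemma~\ref{le: ideal_filter}, $F = T \setminus J$ is a prime filter in $T$ containing $t$. Since $\mathrm{im}(\theta) \subseteq I \subseteq J$, we have $F \cap \mathrm{im}(\theta) = \emptyset$, so $\theta^{-1}(F) = \emptyset$, contradicting the hypothesis that every prime filter in $T$ has non-empty inverse image. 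I expect the main obstacle to be cleanly verifying the order ideal property of $I$ — in particular, that distributing $\mathbf{d}(v)$ through the join really does produce a finite compatible covering of $v$ by elements each still dominated by the corresponding $\theta(s_{i})$ — since the rest of the argument is a straight-line application of the Zorn-type machinery from the preceding lemmas.
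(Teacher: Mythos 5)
Your proof is correct and follows essentially the same route as the paper: the easy direction is identical, and in the converse your set $I$ is precisely $(\mbox{im}(\theta)^{\downarrow})^{\vee}$, after which you invoke the same Zorn/prime-extension lemmas to manufacture a prime filter containing $t$ and missing the image. The order-ideal and $\vee$-closure verifications you flag as the main obstacle are already supplied by the earlier lemma stating that $A^{\vee}$ is a $\vee$-closed order ideal for any order ideal $A$, so they need not be redone.
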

\begin{proof} Suppose first that the condition holds.
Let $A$ be a prime filter in $T$.
Choose $t \in A$.
By assumption, 
$t = \bigvee_{i=1}^{m} t_{i}$ and such that for each $i$ there exists $s_{i} \in S$ such that
$t_{i} \leq \theta (s_{i})$. 
But $A$ is a prime filter and so $t_{i} \in A$ for some $i$.
It follows that $\theta (s_{i}) \in A$, and so $\theta^{-1}(A) \neq \emptyset$, as required.

To prove the converse, assume that for each prime filter $P$ in $T$ the inverse image $\theta^{-1}(P)$ is non-empty. 
Let $t \in T$.
Suppose that $t \notin (\mbox{im}(\theta)^{\downarrow})^{\vee} = I$.
Observe that $I$ is a $\vee$-closed order ideal.
By assumption, $t^{\uparrow} \cap I = \emptyset$.
Thus by Lemma~\ref{le: maximal} there is a $\vee$-closed order ideal $J$ such that $I \subseteq J$ and $t^{\uparrow} \cap J = \emptyset$
and that is maximal with respect to these properties.
By Lemma~\ref{le: maximal_prime}, $J$ is prime and so $A = T \setminus J$ is a prime filter by Lemma~\ref{le: ideal_filter}.
Thus $t \in A$ and $A \cap \mbox{im}(\theta)^{\downarrow} = \emptyset$.
In particular, $A$ is disjoint from the image of $\theta$ which contradicts our assumption.
Hence $t \in (\mbox{im}(\theta)^{\downarrow})^{\vee}$ which translates into our condition. 
\end{proof}

A morphism of distributive inverse semigroups that satisfies the condition of the above lemma is called {\em proper}.
Given a proper morphism we now need a condition that will force the inverse image of a prime filter to
be a single prime filter. 
We say that a morphism $\theta \colon S \rightarrow T$ of distributive inverse semigroups
is {\em weakly meet preserving} if given $t \leq \theta (a), \theta (b)$ there exists
$c \leq a,b$ such that $t \leq \theta (c)$.

\begin{lemma} Let $\theta \colon S \rightarrow T$ be a morphism of distributive inverse semigroups
that is weakly meet preserving.
If $A$ is a prime filter in $T$ and $B = \theta^{-1}(A)$ is non-empty, then $B$ is a prime filter.
\end{lemma}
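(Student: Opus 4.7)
The plan is to check directly that $B = \theta^{-1}(A)$ satisfies the four conditions defining a proper prime filter, namely: it is proper, upward closed, downward directed, and prime under existing joins. Three of these are essentially formal consequences of $\theta$ being a morphism together with the corresponding property of $A$; the only condition that requires the hypothesis of weak meet preservation is downward directedness.

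First I would note that $B$ is proper: $B$ is non-empty by assumption, and since any morphism of inverse semigroups preserves $0$ and $A$ is proper (so $0 \notin A$), no element of $B$ can be $0$. Upward closure is immediate: if $b \in B$ and $b \leq b'$ in $S$, then $\theta(b) \leq \theta(b')$, and upward closure of $A$ forces $\theta(b') \in A$, whence $b' \in B$. Primeness is equally formal: if $b_1 \vee b_2$ exists in $S$ and lies in $B$, then $\theta(b_1) \vee \theta(b_2) = \theta(b_1 \vee b_2) \in A$ (here I use that $\theta$ preserves binary compatible joins, which is part of being a morphism), and primeness of $A$ gives $\theta(b_i) \in A$ for some $i$, whence $b_i \in B$.

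The substantive step is downward directedness, and this is precisely where the weakly meet preserving hypothesis enters. Given $b_1, b_2 \in B$, both $\theta(b_1)$ and $\theta(b_2)$ lie in $A$, so the filter property of $A$ yields some $t \in A$ with $t \leq \theta(b_1), \theta(b_2)$. By the weakly meet preserving hypothesis applied to $a = b_1$, $b = b_2$, there exists $c \leq b_1, b_2$ in $S$ with $t \leq \theta(c)$. Upward closure of $A$ then forces $\theta(c) \in A$, so $c \in B$ is a common lower bound of $b_1$ and $b_2$ in $B$.

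Combining the four verifications gives that $B$ is a proper prime filter. I do not anticipate any serious obstacle; the lemma is really a checklist, and the only design choice was to introduce the notion of weakly meet preserving precisely so that the directedness step goes through.
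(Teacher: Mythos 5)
Your proposal is correct and follows essentially the same route as the paper's proof: upward closure and primeness are formal consequences of $\theta$ being a morphism, while downward directedness is exactly where the weakly meet preserving hypothesis is used, via a common lower bound $t \in A$ of $\theta(b_1)$ and $\theta(b_2)$. The only addition is your explicit check of properness, which the paper leaves implicit under its standing convention that all filters are proper.
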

\begin{proof} Let $a,b \in B$.
Then $\theta (a), \theta (b) \in A$.
Since $A$ is a filter there exists $t \in A$ such that $t \leq a,b$.
By assumption, we may find an element $c \leq a,b$ such that $t \leq \theta (c)$.
Since $A$ is a filter $\theta (c) \in A$ and so $c \in B$.
Thus $B$ is down directed.
It is clearly closed upwards.
Finally, suppose that $a \in b$ and $a \vee b \in B$.
Then $\theta (a) \vee \theta (b) \in A$ and so either $\theta (a) \in A$ or $\theta (b) \in A$.
It follows that either $a \in B$ or $b \in B$.   
\end{proof}

A morphism of distributive inverse semigroups 
is called {\em callitic}\footnote{This is just a nonce word derived from the Greek word for `good'.} if it is proper and weakly meet preserving.
We define the category {\bf Dist} to have as objects the distributive inverse semigroups
and as morphisms the callitic morphisms.

A functor $\theta \colon G \rightarrow H$ between groupoids is called a {\em covering functor}
if it is {\em star injective}, meaning that
if $\mathbf{d}(g) = \mathbf{d}(h)$ and $\theta (g) = \theta (h)$ implies that $g = h$,
and 
{\em star surjective}, meaning that if $e \in G_{o}$ is such that $\theta (e) = \mathbf{d}(h)$
then there exists $g \in G$ such that $\theta (g) = h$ and $\mathbf{d}(g) = e$. 


A continuous map between topological spaces is called {\em coherent} if the inverse images of compact-open sets
are compact-open sets. 
We define the category {\bf Spec} to have as objects the spectral groupoids and as morphisms the coherent continuous covering functors.
 
\begin{lemma}\label{le: callitic} Let $\theta \colon G \rightarrow H$ be a coherent continuous covering functor
between spectral groupoids.
Then $\theta^{-1}$ induces a callitic map from $\mathsf{KB}(H)$ to $\mathsf{KB}(G)$.
\end{lemma}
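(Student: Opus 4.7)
The plan has four natural steps: verify that $A \mapsto \theta^{-1}(A)$ lands in $\mathsf{KB}(G)$, that it is a morphism of distributive inverse semigroups, that it is proper, and that it is weakly meet preserving.

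First I would check that $\theta^{-1}(A) \in \mathsf{KB}(G)$ for each $A \in \mathsf{KB}(H)$. Continuity of $\theta$ makes $\theta^{-1}(A)$ open and coherence makes it compact. The local bisection condition is the first place the covering hypothesis enters: if $g,h \in \theta^{-1}(A)$ share a domain, then $\theta(g)$ and $\theta(h)$ lie in the local bisection $A$ over the same identity, so they are equal, and star injectivity of $\theta$ then forces $g = h$; the argument for $\mathbf{r}$ is dual.

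Next I would verify that $\theta^{-1}$ is a morphism of distributive inverse semigroups. Preservation of inverses follows from $\theta$ being a functor, and preservation of compatible joins from the fact that inverse image commutes with unions. The real content is $\theta^{-1}(AB) = \theta^{-1}(A)\theta^{-1}(B)$. The inclusion $\supseteq$ is automatic. For $\subseteq$, take $g \in \theta^{-1}(AB)$ with $\theta(g) = ab$; since $\theta(\mathbf{d}(g)) = \mathbf{d}(b)$, star surjectivity lifts $b$ to an element $g_2 \in G$ with $\theta(g_2) = b$ and $\mathbf{d}(g_2) = \mathbf{d}(g)$. Setting $g_1 = gg_2^{-1}$ gives $g = g_1 g_2$ and a short calculation using $\mathbf{r}(b) = \mathbf{d}(a)$ yields $\theta(g_1) = a$.

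For properness, let $B \in \mathsf{KB}(G)$. For each $g \in B$, since $\mathsf{KB}(H)$ is a basis for $H$ by (C1) there is $A_g \in \mathsf{KB}(H)$ containing $\theta(g)$, and applying (C1) for $G$ to the open set $\theta^{-1}(A_g) \cap B$ yields $t_g \in \mathsf{KB}(G)$ with $g \in t_g \subseteq \theta^{-1}(A_g) \cap B$. Compactness of $B$ extracts a finite subcover $t_{g_1}, \ldots, t_{g_m}$; these are pairwise compatible because they sit inside the local bisection $B$, so $B = \bigvee_i t_{g_i}$ with each $t_{g_i} \leq \theta^{-1}(A_{g_i})$, which is exactly the properness condition.

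For weak meet preservation, suppose $t \leq \theta^{-1}(A), \theta^{-1}(B)$ in $\mathsf{KB}(G)$ with $A,B \in \mathsf{KB}(H)$; then $\theta(t) \subseteq A \cap B$, and $A \cap B$ is an open local bisection in $H$. Since $\theta(t)$ is compact as the continuous image of a compact set, I would cover it by finitely many basic compact-open local bisections drawn from $\mathsf{KB}(H)$ that are contained in $A \cap B$, and take $c$ to be their union. As a finite union of subsets of the common local bisection $A \cap B$, the set $c$ lies in $\mathsf{KB}(H)$, satisfies $c \leq A,B$, and $t \subseteq \theta^{-1}(c)$. The main subtlety I anticipate is not any single computation but the need to stay inside a common local bisection when forming unions and intersections; in each case above the construction is carried out inside an ambient local bisection ($B$ in step three, $A \cap B$ in step four), which makes pairwise compatibility automatic.
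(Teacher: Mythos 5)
Your proof is correct and follows essentially the same route as the paper: the properness argument covers $B$ by compact-open local bisections sitting inside preimages $\theta^{-1}(A_g)$ and extracts a finite compatible join, and the weak-meet-preservation argument covers the compact set $\theta(t)$ by compact-open local bisections inside $A\cap B$, exactly as in the paper's proof. The only difference is that you prove directly (via star injectivity and star surjectivity) that $\theta^{-1}$ lands in local bisections and is multiplicative, where the paper delegates these steps to Proposition~2.17 of the cited earlier work; your verifications of those steps are correct.
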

\begin{proof}
Let $A$ be a compact-open local bisection of $H$.
Since $\theta$ is continuous and coherent we have that $\theta^{-1}(A)$ is compact-open.
Since $\theta$ is a covering functor we have that $\theta^{-1}(A)$ is a local bisecton by Proposition~2.17 of \cite{Law2}.
It also follows by this proposition that $\theta^{-1}$ induces a morphism of distributive inverse semigroups.
It remains to prove that $\theta^{-1}$ is proper and weakly meet preserving.

We prove first that $\theta^{-1}$ is proper. 
Let $B$ be a non-empty compact-open local bisection in $G$ and let $g \in B$.
Then $\theta (g) \in H$.
Clearly $H$ is an open set containing $\theta (g)$.
Since $H$ is \'etale, it follows that $H$ is a union of compact-open local bisections 
and so $\theta (g) \in C_{g}$ for some an compact-open local bisection $C_{g}$ in $H$.
Since $\theta$ is continuous and coherent $g \in \theta^{-1}(C_{g})$ is compact-open 
and because $\theta$ is a covering functor $\theta^{-1} (C_{g})$ is a local bisection.
It follows that $B \subseteq \bigcup_{g \in B} \theta^{-1}(C_{g})$.
Since $B$ is compact, we may in fact write
$B \subseteq \bigcup_{i=1}^{m} \theta^{-1}(C_{g_{i}})$
for some finite set of elements $g_{1}, \ldots, g_{m} \in B$.
Put $B_{i} = B \cap \theta^{-1}(C_{g_{i}})$.
This is clearly an open local bisection and $B = \bigcup_{i=1}^{m} B_{i}$ 
and each $B_{i} \subseteq \theta^{-1}(C_{i})$ where $C_{i} = C_{g_{i}}$.
We prove that we may find compact-open local bisections $D_{i}$ such that
$B$ is the union of the $D_{i}$ and $D_{i} \subseteq \theta^{-1}(C_{i})$.
Since $B_{i}$ is an open local bisection it is a union of compact-open local bisections.
Amalgamating these unions we have that $B$ is a union of compact-open local bisections
each of which is a subset of one of the $\theta^{-1}(C_{i})$.
It follows that $B$ is a union of a finite number of such compact-open local bisections.
Define $D_{i}$ to be the union of those which are contained in $\theta^{-1}(C_{i})$ and the result follows.
 
We now prove that $\theta^{-1}$ is weakly meet preserving.
Let $A$ and $B$ be compact-open local bisections of $H$.
Let $Y$ be a compact-open local bisection of $G$ such that $Y \subseteq \theta^{-1}(A), \theta^{-1}(B)$.
Clearly $Y \subseteq \theta^{-1}(A \cap B)$.
We can at least say that $A \cap B$ is an open local bisection
and 
$\theta (Y) \subseteq A \cap B$.
Since $\theta$ is continuous, we know that $\theta (Y)$ is compact.
It is also immediate that $\theta (Y)$ is a local bisection.
Now $H$ is a spectral groupoid and so has a basis of compact-open local bisections.
It follows that $A \cap B$ is a union of compact-open local bisections.
But $\theta (Y)$ is compact and so $\theta (Y)$ is contained in a finite union of compact-open local
bisections that is also contained in $A \cap B$.
Thus $\theta (Y) \subseteq V = \bigcup_{i=1}^{m} V_{i} \subseteq A \cap B$.
Now $A \cap B$ a local bisection implies that $V$ is a local bisection.
It is evident that $V$ is a compact-open local bisection itself.
We therefore have $\theta (Y) \subseteq V \subseteq A \cap B$.
Hence $Y \subseteq \theta^{-1}(\theta (Y)) \subseteq \theta^{-1}(V) \subseteq \theta^{-1}(A \cap B)$. 
\end{proof}

\begin{lemma}\label{le: covering} 
Let $\theta \colon S \rightarrow T$ be a callitic morphism between distributive inverse semigroup.
Then $\theta^{-1}$ induces a coherent continuous covering functor from $\mathsf{G}_{P}(T)$ to
$\mathsf{G}_{P}(S)$.
\end{lemma}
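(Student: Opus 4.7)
The plan is to define $\theta^{\ast} \colon \mathsf{G}_{P}(T) \to \mathsf{G}_{P}(S)$ by $\theta^{\ast}(P) = \theta^{-1}(P)$. By the two lemmas just proved (properness yields non-emptiness, weakly meet preserving yields primality), $\theta^{\ast}(P)$ is indeed a prime filter in $S$, so the map is well-defined. That $\theta^{\ast}$ is a functor of groupoids (preserving $\mathbf{d}$, $\mathbf{r}$ and partial products) is a routine calculation from the fact that $\theta$ is a semigroup homomorphism and from the definition $A \cdot B = (AB)^{\uparrow}$ of groupoid multiplication on prime filters.

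Continuity and coherence are both immediate from a single identity. For each $s \in S$,
\[
(\theta^{\ast})^{-1}(X_{s}) = \{P \in \mathsf{G}_{P}(T) \colon s \in \theta^{-1}(P) \} = \{P \colon \theta(s) \in P \} = X_{\theta(s)},
\]
and each $X_{\theta(s)}$ is a basic compact-open local bisection of $\mathsf{G}_{P}(T)$ by Lemma~\ref{le: prime_topology} and Lemma~\ref{le: Xlemma}. Since the sets $X_{s}$ form a basis of compact-open sets for $\mathsf{G}_{P}(S)$, this yields that $\theta^{\ast}$ is continuous and that preimages of compact-open sets are compact-open.

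For star injectivity, suppose $P_{1},P_{2}$ are prime filters in $T$ with $\mathbf{d}(P_{1}) = \mathbf{d}(P_{2})$ and $\theta^{-1}(P_{1}) = \theta^{-1}(P_{2})$. Properness gives a non-empty common preimage, so some $s \in S$ satisfies $\theta(s) \in P_{1} \cap P_{2}$, and then $P_{1} = P_{2}$ by Lemma~\ref{le: filter_properties}(3). For star surjectivity, let $E$ be an identity of $\mathsf{G}_{P}(T)$ (an idempotent prime filter in $T$) and let $A$ be a prime filter in $S$ with $\mathbf{d}(A) = \theta^{-1}(E)$. Fix $a \in A$; since $a^{-1}a \in \mathbf{d}(A) = \theta^{-1}(E)$ we have $\theta(a^{-1}a) \in E$, so $P = (\theta(a)E)^{\uparrow}$ is a well-defined filter in $T$ with $\mathbf{d}(P) = E$, and primeness of $P$ follows from primeness of $E$ by arguing as in Lemma~\ref{le: filter}(2).

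It remains to check $\theta^{-1}(P) = A$. For any $b \in A$ one has some $c \in A$ with $c \leq a,b$, whence $\theta(c) = \theta(a)\theta(\mathbf{d}(c)) \in \theta(a)E = P$ and so $\theta(b) \in P$, giving $A \subseteq \theta^{-1}(P)$. Taking $\mathbf{d}$ and using that $\mathbf{d}(\theta^{-1}(P)) \subseteq \theta^{-1}(\mathbf{d}(P))$ in general, we obtain
\[
\mathbf{d}(A) \subseteq \mathbf{d}(\theta^{-1}(P)) \subseteq \theta^{-1}(\mathbf{d}(P)) = \theta^{-1}(E) = \mathbf{d}(A),
\]
forcing $\mathbf{d}(A) = \mathbf{d}(\theta^{-1}(P))$. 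Since $A \subseteq \theta^{-1}(P)$ is non-empty, Lemma~\ref{le: filter_properties}(3) yields $A = \theta^{-1}(P) = \theta^{\ast}(P)$, completing star surjectivity. The main obstacle is this last sandwich computation; the rest of the proof is formal once one has Lemma~\ref{le: filter_properties} at hand.
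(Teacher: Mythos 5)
Your overall route is the same as the paper's: pull back prime filters, use properness and weak meet preservation for well-definedness, compute $(\theta^{\ast})^{-1}(X_{s}) = X_{\theta(s)}$ for continuity and coherence, and use Lemma~\ref{le: filter_properties}(3) for the covering property. Your star-injectivity and star-surjectivity arguments are correct and in fact more self-contained than the paper's (which simply cites an earlier paper); the sandwich computation $\mathbf{d}(A) \subseteq \mathbf{d}(\theta^{-1}(P)) \subseteq \theta^{-1}(\mathbf{d}(P)) = \mathbf{d}(A)$ is a clean way to close that case.

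The one genuine gap is your dismissal of functoriality as ``a routine calculation from the fact that $\theta$ is a semigroup homomorphism.'' The homomorphism property alone gives only the easy inclusions $\theta^{-1}(F)\,\theta^{-1}(G) \subseteq \theta^{-1}(FG)$ and $\mathbf{d}(\theta^{-1}(F)) \subseteq \theta^{-1}(\mathbf{d}(F))$; you even implicitly concede the latter point when you invoke that inclusion ``in general'' during star surjectivity. What functoriality actually requires is the reverse inclusion $\theta^{-1}\bigl((FG)^{\uparrow}\bigr) \subseteq \bigl(\theta^{-1}(F)\,\theta^{-1}(G)\bigr)^{\uparrow}$ (and its specialization $\theta^{-1}(\mathbf{d}(F)) \subseteq \mathbf{d}(\theta^{-1}(F))$), and this is where the paper spends the bulk of its proof: given $s$ with $\theta(s) \in F\cdot G$, one uses properness to produce some $v \in S$ with $\theta(v) \in G$, observes that $\theta(sv^{-1}) \in F\cdot G\cdot G^{-1} = F$, and then factorizes $s \geq (sv^{-1})v$ with $sv^{-1} \in \theta^{-1}(F)$ and $v \in \theta^{-1}(G)$. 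Without some such argument your map need not preserve $\mathbf{d}$ on the nose (two idempotent filters, one contained in the other, need not coincide), so composability and products are not yet preserved. The statement is true and the fix is exactly this factorization trick, but as written the step is asserted rather than proved.
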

\begin{proof} The function $\theta^{-1}$ is well-defined by definition.
The bulk of the proof is taken up with showing that $\theta^{-1}$ is a functor.

Let $F$ be an idempotent prime filter in $T$.
Then $\theta^{-1}(F)$ is certainly a prime filter in $S$.
To show that it is an idempotent filter we use Lemma~\ref{le: filter_properties}.
We have that $\theta^{-1}(E(F)) \subseteq \theta^{-1}(F)$ and
$E(\theta^{-1}(E(T)))$ is a prime filter in $E(S)$.
Thus $\theta^{-1}(F)$ contains idempotents and so is itself idempotent.

We have shown that $\theta^{-1}$ maps identities to identities.

We next prove that if $F$ and $G$ are prime filters such that $F^{-1} \cdot F = G \cdot G^{-1}$ then
$$( \theta^{-1} (F) \theta^{-1} (G) )^{\uparrow} = \theta^{-1} ( (FG)^{\uparrow}  ).$$
We prove first that
$$\theta^{-1} (F) \theta^{-1} (G) \subseteq \theta^{-1} (FG).$$
Let $s \in \theta^{-1} (F) \theta^{-1} (G)$.
Then $s = ab$ where $a \in \theta^{-1} (F)$ and $b \in \theta^{-1} (G)$.
Thus $\theta (s) = \theta (a) \theta (b) \in FG$.
It follows that $s \in \theta^{-1} (FG)$.
Observe that $\theta^{-1} (X)^{\uparrow} \subseteq \theta^{-1} (X^{\uparrow})$.
It follows that
$$( \theta^{-1} (F) \theta^{-1} (G) )^{\uparrow} \subseteq \theta^{-1} ( (FG)^{\uparrow}  ).$$

We now prove the reverse inclusion.
Let $s \in \theta^{-1} ( (FG)^{\uparrow} ).$
Then $\theta (s) \in F \cdot G$ and so
$fg \leq \theta(s)$ for some $f \in F$ and $g \in G$.
The map $\theta$ is assumed proper and so we may quickly deduce that there exists $v \in S$ such that $\theta (v) \in G$.
Consider the product $\theta (s) \theta (v)^{-1}$.
Since $\theta (s) \in F \cdot G$ and $\theta (v)^{-1} \in G^{-1}$ we have that
$\theta (s)\theta (v)^{-1} \in F \cdot G \cdot G^{-1} = F \cdot F^{-1} \cdot F = F$.
Thus $\theta (sv^{-1}) \in F$,  and we were given $\theta (v) \in G$, and clearly $(sv^{-1})v \leq s$.
Put $a = sv^{-1}$ and $b = v$.
Then $ab \leq s$ where $\theta (a) \in F$ and $\theta (b) \in G$.
It follows that $s \in (\theta^{-1} (F) \theta^{-1} (G) )^{\uparrow}$.

We may now show that $\theta^{-1}$ is a functor.
Let $F$ be a prime filter.
Observe that $\theta^{-1}(F)^{-1} = \theta^{-1} (F^{-1})$.
We have that
$$(\theta^{-1} (F^{-1}) \theta^{-1} (F))^{\uparrow}
=
(\theta^{-1} (F)^{-1} \theta^{-1} (F))^{\uparrow}
=
\mathbf{d} ( \theta^{-1} (F))$$
and
$$\theta^{-1} ( (F^{-1}F)^{\uparrow} ) = \theta^{-1} (\mathbf{d} (F)).$$
Hence by our result above
$$\theta^{-1} (\mathbf{d} (F))
=
\mathbf{d} ( \theta^{-1} (F)).$$
A dual result also holds and so $\theta^{-1}$ preserves the domain and codomain operations.
Suppose that $\mathbf{d}(F) = \mathbf{r} (G)$ so that $F \cdot G$ is defined.
By our calculation above $\mathbf{d} (\theta^{-1}(F)) = \mathbf{r} (\theta^{-1} (G))$
and so the product $\theta^{-1} (F) \cdot \theta^{-1} (G)$ is defined.
By our main result above we have that
$$\theta^{-1} (F \cdot G) = \theta^{-1} (F) \cdot \theta^{-1} (G),$$
as required.

We have therefore shown that $\theta^{-1}$ is a functor.

The proof that $\theta^{-1}$ is a covering functor follows the same lines as the proof of Proposition~2.15 of \cite{Law3}:
the proof of star injectivity uses part (3) of Lemma~\ref{le: filter_properties},
and the proof of star surjectivity uses this same lemma and part (2) of Lemma~\ref{le: filter}.

To show that $\theta^{-1}$ is continuous, observe that 
a basic open set of $\mathsf{G}_{P}(S)$ has the form $X_{s}$ for some $s \in S$.
It is simple to check that this is pulled back to the set $X_{\theta (s)}$.
This also shows that the inverse image of every compact-open local bisection is a compact-open
local bisection.

We now prove coherence.
To do this we prove the following result.
Let $\phi \colon G \rightarrow H$ be a continuous covering functor between spectral groupoids
with the property that the inverse image of every compact-open local bisection is a compact-open local bisection.
Then the inverse image of every compact-open set is a compact-open set.
Let $X$ be a compact-open subset of $H$.
Since the groupoid $H$ is spectral, the compact-open local bisections form a basis.
Thus we may write $X$ as a union of compact-open local bisections
and so by compactness, we may write it as a finite union of compact-open local bisections.
It follows that the inverse image of $X$ under $\phi$ can be written as finite union of compact-open local bisections.
Thus since $\phi^{-1}(X)$ is a finite union of compact sets it is compact.
\end{proof}

Combining Lemmas~\ref{le: callitic}, \ref{le: covering} and Theorems~\ref{the: dist_duality}, \ref{the: spectral_duality},
we have proved the following.

\begin{theorem}[Stone duality for distributive inverse semigroups]\label{the: duality_theorem} 
The category {\bf Dist} of distributive inverse semigroups is dually equivalent to the category {\bf Spec}
of spectral groupoids.
\end{theorem}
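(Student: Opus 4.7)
The plan is to assemble the pieces already established into a dual equivalence of categories. First, I would define the two contravariant functors. Send each distributive inverse semigroup $S$ to its prime spectrum $\mathsf{G}_{P}(S)$ and each callitic morphism $\theta \colon S \to T$ to the coherent continuous covering functor $\theta^{-1} \colon \mathsf{G}_{P}(T) \to \mathsf{G}_{P}(S)$ produced by Lemma~\ref{le: covering}; dually, send each spectral groupoid $G$ to $\mathsf{KB}(G)$ and each coherent continuous covering functor $\phi \colon G \to H$ to the callitic morphism $\phi^{-1} \colon \mathsf{KB}(H) \to \mathsf{KB}(G)$ produced by Lemma~\ref{le: callitic}. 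Functoriality is routine since identities pull back to identities and $(\phi \circ \psi)^{-1} = \psi^{-1} \circ \phi^{-1}$ on the level of subsets and filters.

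The next step is to produce the natural isomorphisms
$$\varepsilon \colon \mathrm{id}_{\textbf{Dist}} \Rightarrow \mathsf{KB} \circ \mathsf{G}_{P} \quad \text{and} \quad \eta \colon \mathrm{id}_{\textbf{Spec}} \Rightarrow \mathsf{G}_{P} \circ \mathsf{KB}.$$
The component maps $\varepsilon_{S}(s) = X_{s}$ and $\eta_{G}(g) = F_{g}$ are already shown to be isomorphisms in Theorem~\ref{the: dist_duality} and Theorem~\ref{the: spectral_duality}, respectively. So only naturality remains.

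For $\varepsilon$, given a callitic morphism $\theta \colon S \to T$, I must verify that $(\theta^{-1})^{-1}(X_{s}) = X_{\theta(s)}$ for all $s \in S$; this is immediate from the chain of equivalences
$$P \in (\theta^{-1})^{-1}(X_{s}) \iff \theta^{-1}(P) \in X_{s} \iff s \in \theta^{-1}(P) \iff \theta(s) \in P \iff P \in X_{\theta(s)}.$$
For $\eta$, given a coherent continuous covering functor $\phi \colon G \to H$, the relevant identity $(\phi^{-1})^{-1}(F_{g}) = F_{\phi(g)}$ unfolds analogously: a compact-open local bisection $C$ of $H$ lies in $F_{\phi(g)}$ precisely when $\phi(g) \in C$, i.e.\ when $g \in \phi^{-1}(C)$, which is the condition $\phi^{-1}(C) \in F_{g}$.

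The hard part will not be any single calculation but rather keeping the direction of arrows straight under contravariance, and making sure the inverse image construction on morphisms really does land in the correct category at each stage; this is exactly what Lemmas~\ref{le: callitic} and~\ref{le: covering} are designed to handle. Once naturality is verified, Theorems~\ref{the: dist_duality} and~\ref{the: spectral_duality} supply that $\varepsilon$ and $\eta$ are componentwise isomorphisms, and the dual equivalence of $\textbf{Dist}$ and $\textbf{Spec}$ follows by the standard unit-counit criterion.
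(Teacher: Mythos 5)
Your proposal is correct and follows essentially the same route as the paper, which simply states that the theorem follows by combining Lemmas~\ref{le: callitic} and~\ref{le: covering} with Theorems~\ref{the: dist_duality} and~\ref{the: spectral_duality}; you have merely made explicit the functoriality and naturality checks that the paper leaves implicit, and your verification that $(\theta^{-1})^{-1}(X_{s}) = X_{\theta(s)}$ and $(\phi^{-1})^{-1}(F_{g}) = F_{\phi(g)}$ is exactly the right way to do so.
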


We now derive some special cases of the above theorem.

The following are the analogues of results that are well-known for distributive lattices.

\begin{lemma}\label{le: hausdorff} Let $S$ be a distributive inverse semigroup.
\begin{enumerate}

\item The topology on the space of identities of $\mathsf{G}_{P}(S)$ is Hausdorff if and only if $S$ is a Boolean inverse semigroup.

\item The topology on $\mathsf{G}_{P}(S)$ is Hausdorff if and only if $S$ is a Boolean inverse $\wedge$-semigroup.

\end{enumerate}
\end{lemma}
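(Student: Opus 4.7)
The plan is as follows. Part (1) should reduce directly to Stone duality for distributive lattices: Proposition~\ref{prop: spectral_groupoid} gives a homeomorphism between $\mathsf{G}_{P}(S)_{o}$ and the prime spectrum of the distributive lattice $E(S)$; Theorem~\ref{the: stone_duality_distributive} says this spectrum is Hausdorff iff $E(S)$ is a Boolean algebra; and by definition $S$ is a Boolean inverse semigroup precisely when $E(S)$ is Boolean. Chaining these three equivalences finishes (1).

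For the forward direction of (2), Hausdorffness of $\mathsf{G}_{P}(S)$ descends to the subspace $\mathsf{G}_{P}(S)_{o}$, so (1) yields that $S$ is Boolean. For the $\wedge$-semigroup property, I would use the standard fact that in a Hausdorff space every compact set is closed, so $X_{a} \cap X_{b}$ is a closed subset of the compact set $X_{a}$ (compact by Lemma~\ref{le: prime_topology}(10)) and is therefore itself compact-open. Since any subset of a local bisection is a local bisection, $X_{a} \cap X_{b}$ is a compact-open local bisection; Lemma~\ref{le: Xlemma}(4) then produces an element $c \in S$ with $X_{c} = X_{a} \cap X_{b}$, and Lemma~\ref{le: prime_topology}(7) identifies this $c$ as $a \wedge b$.

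For the reverse direction of (2), suppose $S$ is a Boolean inverse $\wedge$-semigroup and let $F \neq G$ be distinct prime filters. If $\mathbf{d}(F) \neq \mathbf{d}(G)$, part (1) together with continuity of $\mathbf{d}$ (composition of inversion and multiplication) yields disjoint open neighbourhoods of $F$ and $G$ by pulling back a separating pair in $\mathsf{G}_{P}(S)_{o}$. Otherwise $\mathbf{d}(F) = \mathbf{d}(G)$, and Lemma~\ref{le: filter_properties}(3) forces $F \cap G = \emptyset$. Choose $a \in F$ and $b \in G$; the meet $a \wedge b$ exists in the $\wedge$-semigroup, and upward closure shows $a \wedge b \notin F$ (else $b \in F$) and similarly $a \wedge b \notin G$. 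Using Lemma~\ref{le: relative_complement}, I would form $c = a \setminus (a \wedge b)$ and $d = b \setminus (a \wedge b)$, so that $a = (a \wedge b) \vee c$ and $b = (a \wedge b) \vee d$; primeness of $F$ and $G$ then forces $c \in F$ and $d \in G$. Finally, orthogonality of $c$ with $a \wedge b$ gives $c \wedge (a \wedge b) = 0$, while $c \leq a$ and $d \leq b$ give $c \wedge d \leq a \wedge b$; hence $c \wedge d = 0$, and Lemma~\ref{le: prime_topology}(1), (7) yields $X_{c} \cap X_{d} = \emptyset$, separating $F$ from $G$. The main obstacle is precisely this last case: Hausdorffness of the identity space alone is not enough, and one must combine the $\wedge$-semigroup structure (to form $a \wedge b$) with the Boolean structure (to strip off the overlap) in order to manufacture the orthogonal pair of witnesses.
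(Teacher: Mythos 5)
Your proposal is correct. Part (1) and the direction ``$\mathsf{G}_{P}(S)$ Hausdorff $\Rightarrow$ $S$ is a Boolean inverse $\wedge$-semigroup'' coincide with the paper's argument (the paper inlines your appeal to Lemma~\ref{le: Xlemma}(4) by writing $X_{s}\cap X_{t}$ as a finite union $\bigcup X_{a_{i}}$ and taking the join of the $a_{i}$). Where you genuinely diverge is the converse direction of (2). The paper first invokes Proposition~\ref{prop: semigroup_filters}(2) to conclude that every prime filter is an ultrafilter, and then separates two distinct ultrafilters $F,G$ in one step: pick $s\in F\setminus G$ and apply Lemma~\ref{le: ultrafilter_result} to produce $g\in G$ with $s\wedge g=0$, so $X_{s}$ and $X_{g}$ do the job. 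You instead avoid the ultrafilter machinery entirely, splitting on whether $\mathbf{d}(F)=\mathbf{d}(G)$: the unequal case is pushed down to the Hausdorff identity space via continuity of $\mathbf{d}$, and in the equal case you use Lemma~\ref{le: filter_properties}(3) to get $F\cap G=\emptyset$ and then manufacture an orthogonal pair $c=a\setminus(a\wedge b)$, $d=b\setminus(a\wedge b)$ by combining the meet, the relative complement of Lemma~\ref{le: relative_complement}, and primeness. Both are sound; the paper's route is shorter and needs no case analysis, while yours trades the prime-filter-equals-ultrafilter characterization for a more explicit, constructive separation and makes visible exactly how the Boolean and $\wedge$ structures each enter.
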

\begin{proof} (1). This follows by Theorem~\ref{the: stone_duality_distributive} and the proof of Proposition~\ref{prop: spectral_groupoid}.


(2). Suppose that $S$ is a Boolean inverse $\wedge$-semigroup.
We have already proved that in such a semigroup, all prime filters are ultrafilters.
Let $F$ and $G$ be two distinct ultrafilters.
Let $s \in F \setminus G$.
By Lemma~\ref{le: ultrafilter_result}, there exists $g \in G$ such that $s \wedge g = 0$.
Then $F \in X_{s}$, $G \in X_{g}$ and $s \wedge g = 0$.
It follows that $X_{s} \cap X_{g} = \emptyset$ and we have proved that $\mathsf{G}_{P}(S)$ is Hausdorff.

To prove the converse, observe that if $\mathsf{G}_{P}(S)$ is Hausdorff then its space of identities is Hausdorff
and so by (1) above, we know that $S$ is a Boolean inverse semigroup.
It remains to prove that $S$ is an inverse $\wedge$-semigroup.
Let $s,t \in S$.
Suppose that $s^{\downarrow} \cap t^{\downarrow} \neq 0$.
We shall prove that $s \wedge t$ exists and is non-zero.
In  Hausdorff space, compact sets are closed.
Thus both $X_{s}$ and $X_{t}$ are clopen and so $X_{s} \cap X_{t}$ is clopen.
But $X_{s} \cap X_{t}$ is a closed subset of $X_{s}$ which is compact and Hausdorff.
It follows that $X_{s} \cap X_{t}$ is compact.
Thus we may write $X_{s} \cap X_{t} = \bigcup_{i=1}^{m} X_{a_{i}}$.
Observe that $X_{a_{i}} \subseteq X_{s}$ for all $i$.
It follows that $a_{i} \leq s$ for all $i$.
Thus the $a_{i}$ are pairwise compatible.
Put $a = \bigvee_{i=1}^{m} a_{i}$.
Then $X_{s} \cap X_{t} = X_{a}$.
The result now follows by Lemma~\ref{le: prime_topology}.
\end{proof}

The following wee lemma will prepare the ground for the final two theorems of this section.

\begin{lemma} Let $\theta \colon S \rightarrow T$ be a weakly meet preserving homomorphism between
Boolean inverese $\wedge$-semigroups.
Then $\theta$ preserves the meet operation.
\end{lemma}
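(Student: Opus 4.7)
The plan is to prove both inequalities $\theta(a \wedge b) \leq \theta(a) \wedge \theta(b)$ and $\theta(a) \wedge \theta(b) \leq \theta(a \wedge b)$ separately. Since $S$ and $T$ are both inverse $\wedge$-semigroups, both meets exist, so it makes sense to speak of the equality directly.

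For the easy direction I would simply invoke the fact that any semigroup homomorphism between inverse semigroups is order-preserving with respect to the natural partial order. Since $a \wedge b \leq a$ and $a \wedge b \leq b$, applying $\theta$ yields $\theta(a \wedge b) \leq \theta(a)$ and $\theta(a \wedge b) \leq \theta(b)$, and since $\theta(a) \wedge \theta(b)$ exists in $T$ by the $\wedge$-assumption, we obtain $\theta(a \wedge b) \leq \theta(a) \wedge \theta(b)$.

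For the reverse direction, set $t = \theta(a) \wedge \theta(b)$ in $T$. Then $t \leq \theta(a)$ and $t \leq \theta(b)$, so the hypothesis of weak meet preservation applies: there exists $c \in S$ with $c \leq a$, $c \leq b$, and $t \leq \theta(c)$. Because $S$ is a $\wedge$-semigroup, $c \leq a \wedge b$, so applying $\theta$ and order-preservation gives $\theta(c) \leq \theta(a \wedge b)$. Chaining these inequalities, $\theta(a) \wedge \theta(b) = t \leq \theta(c) \leq \theta(a \wedge b)$, which combined with the first direction yields the claimed equality.

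There is really no obstacle here — the Boolean hypothesis on $S$ and $T$ is not used beyond giving access to binary meets, and the argument is almost forced by the definition of weak meet preservation. The only subtlety is to notice that weak meet preservation does not name a single witness $c$ once and for all, but that this is fine because for each fixed pair $a,b$ we only need one application, taking $t = \theta(a) \wedge \theta(b)$ as the lower bound.
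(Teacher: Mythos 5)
Your proof is correct and follows essentially the same route as the paper: the easy inequality from order-preservation, and the reverse inequality by applying weak meet preservation to the lower bound $\theta(a)\wedge\theta(b)$ (the paper phrases this with an arbitrary lower bound $u$, but the argument is identical). Your closing remark that the Boolean hypothesis is not really needed beyond the existence of binary meets is also accurate.
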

\begin{proof} Since $s \wedge t \leq s,t$ we have that $\theta (s \wedge t) \leq \theta (s) \wedge \theta (t)$.
Now let $u \leq \theta (s), \theta (t)$.
Then by assumption, there exists $v \leq s,t$ such that $u \leq \theta (v)$.
But $v \leq s,t$ implies that $v \leq s \wedge t$ and so $\theta (v) \leq \theta (s \wedge t)$.
Hence $u \leq \theta (s \wedge t)$. 
We have therefore proved that $\theta (s \wedge t) = \theta (s) \wedge \theta (t)$.
\end{proof}

It follows by the above result that the callitic maps between Boolean inverse $\wedge$-semigroups
are precisely the proper $\wedge$-morphisms.
We define a {\em Boolean groupoid} to be a spectral groupoid whose space of identites is Hausdorff.
The following two theorems now follow by Theorem~\ref{the: duality_theorem} and Lemma~\ref{le: hausdorff}.

\begin{theorem}[Stone duality for Boolean inverse semigroups]\label{the: Big_Theorem_Boolean}
The category of Boolean inverse semigroups is dually equivalent to the category of Boolean groupoids.
\end{theorem}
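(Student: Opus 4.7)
The plan is to deduce this duality as a restriction of Theorem~\ref{the: duality_theorem}. Everything hinges on showing that the two quasi-inverse functors $\mathsf{G}_P$ and $\mathsf{KB}$ established there each carry the Boolean objects on one side to the Boolean objects on the other. The unit $\varepsilon$ of Theorem~\ref{the: dist_duality} and the counit $\eta$ of Theorem~\ref{the: spectral_duality} then give the required equivalence on the subcategories without further work, because they are already natural isomorphisms of the ambient categories $\mathbf{Dist}$ and $\mathbf{Spec}$.

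First I would check that $\mathsf{G}_P$ sends a Boolean inverse semigroup $S$ to a Boolean groupoid. By Theorem~\ref{the: duality_theorem} the result $\mathsf{G}_P(S)$ is already a spectral groupoid, and by construction its space of identities is the prime spectrum of the distributive lattice $E(S)$. Lemma~\ref{le: hausdorff}(1) gives precisely that this identity space is Hausdorff if and only if $S$ is Boolean; hence $\mathsf{G}_P(S)$ is a Boolean groupoid in the sense just defined.

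Conversely I would check that if $G$ is a Boolean groupoid then $\mathsf{KB}(G)$ is a Boolean inverse semigroup. Proposition~\ref{prop: distributive_inverse} already tells us it is a distributive inverse semigroup, so what remains is the statement that its semilattice of idempotents, namely the lattice of compact-open subsets of $G_o$, is a (generalized) Boolean algebra. But $G_o$ is by hypothesis a Hausdorff spectral space, that is, a Boolean space, and Theorem~\ref{the: stone_duality_distributive} tells us exactly that the associated distributive lattice of compact-open subsets is then a Boolean algebra. Thus $\mathsf{KB}(G)$ is Boolean.

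Having verified that the functors restrict, I would note that the morphisms require no new definition: both categories inherit their morphisms from $\mathbf{Dist}$ and $\mathbf{Spec}$, and callitic maps and coherent continuous covering functors are carried to one another already in the general case by Lemmas~\ref{le: callitic} and~\ref{le: covering}. The natural isomorphisms $\varepsilon$ and $\eta$ then restrict to natural isomorphisms on the full subcategories, giving the desired dual equivalence. I expect no serious obstacle; the only nontrivial input beyond Theorem~\ref{the: duality_theorem} is the Hausdorff $\Leftrightarrow$ Boolean characterization, which is supplied cleanly by Lemma~\ref{le: hausdorff}(1) and the classical direction of Theorem~\ref{the: stone_duality_distributive}.
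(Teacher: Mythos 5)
Your proposal is correct and follows essentially the same route as the paper, which derives this theorem directly from Theorem~\ref{the: duality_theorem} together with Lemma~\ref{le: hausdorff}; your write-up merely makes explicit the two object-level checks (that $\mathsf{G}_P$ and $\mathsf{KB}$ restrict to the Boolean subcategories) that the paper leaves implicit. The only difference is cosmetic: for the direction that $\mathsf{KB}(G)$ is Boolean you invoke classical Stone duality for the identity space, whereas one could equally deduce it from Lemma~\ref{le: hausdorff}(1) applied to $\mathsf{G}_P(\mathsf{KB}(G)) \cong G$.
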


The following theorem was first proved in the monoid case in \cite{Law2} and in the general case in \cite{Law4}.

\begin{theorem}[Stone duality for Boolean inverse $\wedge$-semigroups]\label{the: Big_Theorem_Meet_Boolean}
The category of Boolean inverse $\wedge$-semigroups is dually equivalent to the category of Hausdorff Boolean groupoids.
\end{theorem}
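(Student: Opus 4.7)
The plan is to deduce this theorem by restricting the general duality of Theorem~\ref{the: duality_theorem} to appropriate full subcategories on both sides. Since Theorem~\ref{the: Big_Theorem_Boolean} already handles the Boolean (but not necessarily $\wedge$) case, the work lies in identifying which objects and morphisms on each side get cut out when we pass further to the $\wedge$-semigroup regime.

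First I would address the objects. By Theorem~\ref{the: Big_Theorem_Boolean}, under the functors $\mathsf{G}_{P}$ and $\mathsf{KB}$ the Boolean inverse semigroups correspond to the Boolean groupoids, i.e.\ spectral groupoids whose space of identities is Hausdorff. Part (2) of Lemma~\ref{le: hausdorff} then tells us exactly when a Boolean inverse semigroup $S$ is in fact a $\wedge$-semigroup: precisely when the whole space $\mathsf{G}_{P}(S)$ (not just its identity part) is Hausdorff. Thus on the semigroup side the Boolean inverse $\wedge$-semigroups correspond bijectively on objects to the Hausdorff Boolean groupoids, and we know $\mathsf{KB}(G)$ of any such $G$ is again a Boolean inverse $\wedge$-semigroup because the isomorphism $\varepsilon$ of Theorem~\ref{the: dist_duality} transports the $\wedge$-structure back.

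Next I would address the morphisms, which is the point that needs genuine checking. The category {\bf Dist} uses callitic morphisms, i.e.\ proper morphisms that are weakly meet preserving. By the ``wee lemma'' just above the theorem, a weakly meet preserving homomorphism between Boolean inverse $\wedge$-semigroups automatically preserves existing binary meets; conversely, any proper $\wedge$-morphism is clearly weakly meet preserving (take $c = a \wedge b$). Hence callitic morphisms between Boolean inverse $\wedge$-semigroups are exactly the proper $\wedge$-morphisms, so the intended hom-sets on the semigroup side coincide with those inherited from {\bf Dist}. On the topological side, the morphisms are coherent continuous covering functors, the same class used in {\bf Spec}, restricted to Hausdorff Boolean groupoids; no further condition is required, and Lemmas~\ref{le: callitic} and~\ref{le: covering} already guarantee that these morphisms are sent to and received from callitic ones by the two functors.

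Assembling everything, the functors $\mathsf{G}_{P}$ and $\mathsf{KB}$ of Theorem~\ref{the: duality_theorem} restrict to the two full subcategories described above, and the unit and counit $\varepsilon$ and $\eta$ of Theorems~\ref{the: dist_duality} and~\ref{the: spectral_duality} remain natural isomorphisms there. The main potential obstacle I see is the verification that the Hausdorff property really is preserved in both directions under the functors: one direction is Lemma~\ref{le: hausdorff}(2) applied to $S$, and the other is the same lemma applied to $\mathsf{KB}(G)$ after identifying $G$ with $\mathsf{G}_{P}(\mathsf{KB}(G))$ via $\eta$, but this identification is precisely what Theorem~\ref{the: spectral_duality} supplies. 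Once these observations are lined up the result follows formally, giving the stated dual equivalence.
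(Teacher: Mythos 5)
Your proposal is correct and follows essentially the same route as the paper: the paper likewise obtains this theorem by restricting Theorem~\ref{the: duality_theorem}, using part (2) of Lemma~\ref{le: hausdorff} to match the objects and the preceding ``wee lemma'' to identify the callitic morphisms between Boolean inverse $\wedge$-semigroups with the proper $\wedge$-morphisms. Your extra care about checking that Hausdorffness transfers in both directions via $\eta$ is a sensible elaboration of what the paper leaves implicit.
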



\section{Adjunctions and duality}

The goal of this section is to set the results of Section~3 in a wider perspective.
We shall be interested in two categories.
The category $\mathbf{PG}$ of pseudogroups and the category $\mathbf{Etale}$ of \'etale groupoids.
The morphisms in the former category are a class of homomorphisms we call {\em callitic}
whereas those in the latter are continuous covering functors.
We shall define functors
$\mathsf{G} \colon \mathbf{PG}^{op} \rightarrow \mathbf{Etale}$
and
$\mathsf{B} \colon \mathbf{Etale} \rightarrow \mathbf{PG}^{op}$
and prove that $\mathsf{G}$ is right adjoint to $\mathsf{B}$.
We shall then show how Stone duality for distributive inverse semigroups can be derived from
this adjunction by extending the concept of `coherence' from frame theory to our more general set-up.

\subsection{An adjunction theorem}

The object part of the functor that takes \'etale groupoids to pseudogroups is given by $G \mapsto \mathsf{B}(G)$
by Proposition~\ref{prop: B}.
The description of the object part of our second functor depends on a class of filters.
A filter $F$ in a pseudogroup $S$ is said to be {\em completely prime} if $\bigvee a_{i} \in F$ implies that $a_{i} \in F$ for some $i$.
Such filters were defined in \cite{R1} where they were called {\em compatibly prime} and are a generalization of 
a concept important in frame theory \cite{J}.
Given a pseudogroup $S$, we denote the set of all completely prime filters on $S$ by $\mathsf{G}_{CP}(S)$.
The following is the analogue of Lemma~\ref{le: filter}.

\begin{lemma} Let $A$ be a filter in a pseudogroup $S$.
\begin{enumerate}

\item $A$ is completely prime if and only if $A^{-1}$ is completely prime.

\item $A$ is completely prime if and only if $\mathbf{d}(A)$ is completely prime.

\end{enumerate}
\end{lemma}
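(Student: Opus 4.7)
The plan is to mirror exactly the proof of Lemma~\ref{le: filter}, replacing binary joins with arbitrary (compatible) joins and using the infinite distributivity that is built into the definition of a pseudogroup. The only technical point is that whenever a join $\bigvee a_i$ is invoked, the indexed set is automatically compatible (since otherwise the join would not exist in $S$), so infinite distributivity from Lemma~\ref{le: meets_and_joins}(2) applies freely, as does the fact that $\mathbf{d}(\bigvee a_i) = \bigvee \mathbf{d}(a_i)$ and $(\bigvee a_i)^{-1} = \bigvee a_i^{-1}$ for compatible families.

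For part (1), I would simply observe that inversion is an involution that preserves compatibility and sends joins to joins, so $\bigvee a_i \in A^{-1}$ iff $\bigvee a_i^{-1} \in A$, and complete primeness of one filter transfers to the other by symmetry.

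For part (2), I would adapt both implications of the proof of Lemma~\ref{le: filter}(2). Suppose first that $A$ is completely prime and take $x = \bigvee x_i \in \mathbf{d}(A)$. By Lemma~\ref{le: idempotent_filters} pick $a \in A$ with $a^{-1}a \leq x$. Then $a^{-1}a = x a^{-1}a = \bigvee_i (x_i a^{-1}a)$ by infinite distributivity, so $a = \bigvee_i (a x_i a^{-1}a)$ using distributivity again. Complete primeness of $A$ gives some index $i$ with $a x_i a^{-1}a \in A$; upward closure yields $a x_i \in A$, and then $a^{-1} a x_i \in \mathbf{d}(A)$ so $x_i \in \mathbf{d}(A)$. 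Conversely, if $\mathbf{d}(A)$ is completely prime and $a = \bigvee a_i \in A$, then $\mathbf{d}(a) = \bigvee \mathbf{d}(a_i) \in \mathbf{d}(A)$, so $\mathbf{d}(a_j) \in \mathbf{d}(A)$ for some $j$, whence $a_j = a\,\mathbf{d}(a_j) \in A$ by upward closure.

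The only real obstacle is verifying that the finitary manipulations in the proof of Lemma~\ref{le: filter} survive the passage to arbitrary compatible joins, and this is exactly what Lemma~\ref{le: meets_and_joins}(2) and the standard properties of compatible joins in pseudogroups guarantee. Because both directions of each equivalence reduce to these same two ingredients, no new ideas beyond those used in the finitary case are needed.
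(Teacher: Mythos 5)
Your proposal is correct and follows essentially the same route as the paper: part (1) by the symmetry of inversion, and part (2) by lifting the argument of Lemma~\ref{le: filter}(2) to arbitrary compatible joins using infinite distributivity, with the same chain $a^{-1}a = xa^{-1}a = \bigvee_i x_i a^{-1}a$, hence $a = \bigvee_i a x_i a^{-1}a$, followed by upward closure. The only cosmetic difference is that the paper justifies the existence of $a \in A$ with $a^{-1}a \leq x$ by noting $(a \wedge b)^{-1}(a \wedge b) \leq a^{-1}b$, whereas you cite Lemma~\ref{le: idempotent_filters}; both are adequate.
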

\begin{proof} (1) This is straightforward.

(2) Suppose that $A$ is completely prime.
We prove that $\mathbf{d}(A) = (A^{-1}A)^{\uparrow}$ is completely prime.
Let $x = \bigvee_{i} x_{i} \in A^{-1} \cdot A$.
Then $a^{-1}a \leq x$ for some $a \in A$;
this is always possible since if $a,b \in A$ then $(a \wedge b)^{-1}(a \wedge b) \leq a^{-1}b$.
Clearly $a^{-1}a = xa^{-1}a$ and so by infinite distributivity we have that $a^{-1}a = \bigvee_{i} x_{i}a^{-1}a$.
Thus again by infinite distributivity,
$a = \bigvee_{i} ax_{i}a^{-1}a$.
By assumption, $A$ is completely prime and so $ax_{i}a^{-1}a \in A$ for some $i$.
Thus $ax_{i} \in A$ since $A$ is upwardly closed.
However $a^{-1}ax_{i} \in A^{-1} \cdot A$.
Thus $x_{i} \in A^{-1} \cdot A$, again by upward closure, as required.

Suppose now that $\mathbf{d}(A)$ is completely prime.
We prove that $A$ is completely prime.
Let $a = \bigvee a_{i} \in A$.
Then $\mathbf{d}(a) = \bigvee \mathbf{d}(a_{i}) \in \mathbf{d}(A)$.
Where we use standard properties of compatible joins \cite{Law2}
By assumption $\mathbf{d}(a_{i}) \in \mathbf{d}(A)$.
It follows that $a_{i} = a\mathbf{d}(a_{i}) \in A$, as required.
\end{proof}

Let $A$ and $B$ be completely prime filters such that $\mathbf{d}(A) = \mathbf{r}(B)$.
Then $(AB)^{\uparrow}$ is a filter such that $\mathbf{d}((AB)^{\uparrow}) = \mathbf{d}(B)$ and
$\mathbf{r}((AB)^{\uparrow}) = \mathbf{r}(A)$ and is a completely prime filter 
by the above lemma and Lemma~\ref{le: filter_properties}.
On $\mathsf{G}_{CP}(S)$, define a partial binary operation by
$$A \cdot B = (AB)^{\uparrow} \text{ iff } \mathbf{d}(A) = \mathbf{r}(B).$$
The proof of the following is now immediate.

\begin{lemma}
For each pseudogroup $S$, the structure $(\mathsf{G}_{CP}(S),\cdot)$ is a groupoid.
\end{lemma}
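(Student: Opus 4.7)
The plan is to verify the groupoid axioms directly, in close parallel to the argument that $\mathsf{G}_{P}(S)$ is a groupoid which is implicit in the discussion before Lemma~\ref{le: prime_topology}. The preceding discussion already disposes of the one nontrivial issue, namely that when $\mathbf{d}(A) = \mathbf{r}(B)$ the product $A \cdot B = (AB)^{\uparrow}$ lies in $\mathsf{G}_{CP}(S)$ with $\mathbf{d}(A\cdot B) = \mathbf{d}(B)$ and $\mathbf{r}(A\cdot B) = \mathbf{r}(A)$. So the remaining tasks are to identify the identities, exhibit inverses, and prove associativity.

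First I would identify the candidate identities as the idempotent completely prime filters, that is, those $E$ for which $E$ contains an idempotent; by Lemma~\ref{le: filter_properties}(4) such a filter is $E(E)^{\uparrow}$ and is its own $\mathbf{d}$ and $\mathbf{r}$. For any completely prime filter $A$ the filter $\mathbf{d}(A) = (A^{-1}A)^{\uparrow}$ is idempotent and completely prime by the lemma just proved. I would then verify the identity laws $A \cdot \mathbf{d}(A) = A$ and $\mathbf{r}(A) \cdot A = A$. For the first, the inclusion $A\mathbf{d}(A) \subseteq A^{\uparrow} = A$ is immediate, while the reverse inclusion uses the representation $A = (a\mathbf{d}(A))^{\uparrow}$ from Lemma~\ref{le: filter_properties}(2): any $x \in A$ satisfies $a\mathbf{d}(A) \ni ae \leq x$ for some $e \in \mathbf{d}(A)$, so $x \in (A \cdot \mathbf{d}(A))$ after upward closure. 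The other identity law is dual.

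Next I would check inverses. By part (1) of the preceding lemma, $A^{-1}$ is a completely prime filter whenever $A$ is. The products $A \cdot A^{-1}$ and $A^{-1} \cdot A$ are defined since the domain/range data match, and one shows $A \cdot A^{-1} = \mathbf{r}(A)$ (and dually). This again reduces to the identities $\mathbf{d}(A) = (A^{-1}A)^{\uparrow}$ and $\mathbf{r}(A) = (AA^{-1})^{\uparrow}$, together with Lemma~\ref{le: filter_properties}(1), which guarantees $F = FF^{-1}F$ for any filter; no new completeness issue arises because we have already shown that the resulting filter is completely prime.

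Finally, associativity: when $A,B,C$ are composable in the appropriate sense, both $(A\cdot B)\cdot C$ and $A\cdot(B\cdot C)$ are well-defined completely prime filters by the previous steps, and both equal $(ABC)^{\uparrow}$. The content here is routine: subset multiplication in $S$ is associative, and one checks that $((XY)^{\uparrow}Z)^{\uparrow} = (XYZ)^{\uparrow} = (X(YZ)^{\uparrow})^{\uparrow}$ for any subsets $X,Y,Z$, using only that $X^{\uparrow}Y^{\uparrow} \subseteq (XY)^{\uparrow}$ and upward closure. The main potential obstacle is bookkeeping: one must be careful that each intermediate product is defined, i.e.\ that the $\mathbf{d}$/$\mathbf{r}$ conditions propagate, but this follows from the identities $\mathbf{d}(A\cdot B) = \mathbf{d}(B)$ and $\mathbf{r}(A\cdot B) = \mathbf{r}(A)$ noted above. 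With this in hand the groupoid axioms are all satisfied.
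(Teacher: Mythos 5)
Your proposal is correct and fills in exactly the verification the paper declares ``immediate'': the paper's own proof consists of the preceding paragraph (closure of $A\cdot B=(AB)^{\uparrow}$ under complete primeness with the stated $\mathbf{d}$ and $\mathbf{r}$) followed by the remark that the groupoid axioms now follow, which is precisely the routine check of identities, inverses and associativity that you carry out using Lemma~\ref{le: filter_properties}. Same approach, just written out in full.
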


For each $s \in S$ define $X_{s}$ to be the set of all completely prime filters that contains $s$.
Clearly $X_{0} = \emptyset$ although other sets $X_{s}$ could also be empty.

\begin{lemma}\label{le: top_completely_prime} Let $S$ be a pseudogroup.
\begin{enumerate}

\item $X_{s}$ is a local bisection.

\item $X_{s}^{-1} = X_{s^{-1}}$.

\item $X_{s}X_{t} = X_{st}$.

\item $X_{s} \cap X_{t} = X_{s \wedge t}$.

\item If $s = \bigvee_{i} s_{i}$ then $\bigcup_{i} X_{s_{i}} = X_{s}$.

\end{enumerate}
\end{lemma}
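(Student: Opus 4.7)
The plan is to imitate the proofs of the analogous statements for prime filters in Lemma~\ref{le: prime_topology}, with the observation that, because we are now working inside a pseudogroup and with completely prime filters, we must replace finite joins by arbitrary joins and appeal to infinite distributivity. The preceding lemmas (completely prime filters behave well under $\mathbf{d}$, $\mathbf{r}$, inversion, and composition $(AB)^\uparrow$) give us exactly the analogues of Lemma~\ref{le: filter} and Lemma~\ref{le: filter_properties} in this completed setting.

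For (1), I would argue that two completely prime filters $F,G\in X_s$ with $\mathbf{r}(F)=\mathbf{r}(G)$ satisfy $s\in F\cap G$ and $\mathbf{d}(F^{-1})=\mathbf{d}(G^{-1})$, so $F^{-1}=G^{-1}$ by Lemma~\ref{le: filter_properties}(3), whence $F^{-1}\cdot G$ is an identity; the dual handles $X_sX_s^{-1}\subseteq \mathsf{G}_{CP}(S)_o$. Part (2) is formal: $s\in F\iff s^{-1}\in F^{-1}$. For (4), the inclusion $X_{s\wedge t}\subseteq X_s\cap X_t$ is upward-closure, and if $F\in X_s\cap X_t$ then directedness of $F$ gives $c\in F$ with $c\leq s,t$; hence $c\leq s\wedge t$ (binary meets exist in the pseudogroup by Lemma~\ref{le: meets_and_joins}(2) combined with Proposition~2.10(2) of \cite{R2} cited earlier), and upward-closure yields $s\wedge t\in F$. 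For (5), the inclusion $\bigcup_iX_{s_i}\subseteq X_s$ is upward-closure, while the reverse inclusion is nothing but the defining property of complete primeness: if $\bigvee_is_i\in F$ then $s_i\in F$ for some $i$.

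The main obstacle is (3). The inclusion $X_sX_t\subseteq X_{st}$ is routine: if $F\in X_s$, $G\in X_t$ and $F\cdot G$ is defined, then $st\in FG\subseteq (FG)^{\uparrow}=F\cdot G$, so $F\cdot G\in X_{st}$. The hard direction is the reverse: given $P\in X_{st}$, I would copy the construction from Lemma~\ref{le: prime_topology}(5), setting $G=(t\,\mathbf{d}(P))^{\uparrow}$ and then $F=(s\,\mathbf{r}(G))^{\uparrow}$, and verify $P=F\cdot G$ with $F\in X_s$, $G\in X_t$. The new content compared with the prime-filter case is checking that $G$ (and hence $F$) is \emph{completely} prime. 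For this, suppose $\bigvee_i x_i\in G$, so some idempotent $te\leq\bigvee_i x_i$ with $e\in\mathbf{d}(P)$. Multiplying by $t^{-1}$ on the left and using infinite distributivity (Lemma~\ref{le: meets_and_joins}(2)) one gets $t^{-1}te\leq\bigvee_i t^{-1}x_i$, so $\bigvee_i t^{-1}x_i\in\mathbf{d}(P)$; by completeness of $\mathbf{d}(P)$ (which follows from $P$ completely prime via the analogue of Lemma~\ref{le: filter}(2) for completely prime filters, proved immediately above this lemma), some $t^{-1}x_i\in\mathbf{d}(P)$, and pushing back through $t$ puts $x_i$ in $G$. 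The same argument applied to $F$ finishes (3).

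The only non-mechanical point is thus the careful use of infinite distributivity to transfer complete primeness across the principal-style filters $(a H)^{\uparrow}$, which is the natural pseudogroup analogue of the purely order-theoretic step that worked in the distributive case.
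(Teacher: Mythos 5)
Your proof is correct and takes essentially the same route as the paper: parts (1), (2), (4) and (5) are handled identically, and for (3) the paper uses the very same factorization $B=(t\,\mathbf{d}(P))^{\uparrow}$ and $A=(s\,\mathbf{r}(B))^{\uparrow}$. The only cosmetic difference is that the paper concludes complete primeness of the factors by citing the preceding lemma ($A$ is completely prime iff $\mathbf{d}(A)$ is), whereas you re-derive that transfer by hand via infinite distributivity.
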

\begin{proof} (1) Let $F,G \in X_{s}$ such that $\mathbf{d}(F) = \mathbf{d}(G)$.
By Lemma~2.11 of \cite{Law3}, this implies that $F = G$.
The dual result can be proved similarly.

(2) Immediate from the properties of the natural partial order.

(3) It is clear that $X_{s}X_{t} \subseteq X_{st}$.
Let $F \in X_{st}$.
Put $H = F^{-1} \cdot F$.
Then $F = (stH)^{\uparrow}$.
Put $A = (s(tHt^{-1})^{\uparrow} )^{\uparrow}$ and $B = (tH)^{\uparrow}$.
Then by Lemma~2.2(2), we have that $A \in X_{s}$ and $B \in X_{t}$, and $A \cdot B = F$.

(4) This is straightforward since filters are closed under binary meets.

(5) This is immediate from the definition of completely prime filters.
\end{proof}

Put $\tau = \{X_{s} \colon s \in S \}$.
By the lemma above, $\tau$ is a basis for a topology on $\mathsf{G}_{CP}(S)$
and in what follows we shall always regard $\mathsf{G}_{CP}(S)$ equipped with this topology.

\begin{lemma}
$\mathsf{G}_{CP}(S)$ is a topological groupoid.
\end{lemma}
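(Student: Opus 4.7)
The plan is to verify the two remaining conditions: continuity of inversion and continuity of the partial multiplication, working directly from the basis $\tau = \{X_s : s \in S\}$ already introduced.

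For inversion, I would simply compute the preimage of a basic open set. Using part (2) of Lemma~\ref{le: top_completely_prime}, a completely prime filter $A$ satisfies $A^{-1} \in X_s$ iff $s \in A^{-1}$ iff $s^{-1} \in A$ iff $A \in X_{s^{-1}}$. Hence the preimage of $X_s$ under inversion is exactly $X_{s^{-1}}$, which is open. This step is routine.

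For multiplication $m \colon \mathsf{G}_{CP}(S) \ast \mathsf{G}_{CP}(S) \to \mathsf{G}_{CP}(S)$, I would show
\[
m^{-1}(X_s) \;=\; \bigl(\mathsf{G}_{CP}(S) \ast \mathsf{G}_{CP}(S)\bigr) \cap \bigcup_{0 \neq ab \leq s} (X_a \times X_b),
\]
which is open in the subspace topology on composable pairs. The inclusion $\supseteq$ is immediate: if $A \in X_a$, $B \in X_b$ with $ab \leq s$ and $\mathbf{d}(A) = \mathbf{r}(B)$, then $ab \in AB$ and so $s \in (AB)^{\uparrow} = A \cdot B$ by upward closure, whence $A \cdot B \in X_s$. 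For the inclusion $\subseteq$, suppose $(A,B)$ is composable with $A \cdot B = (AB)^{\uparrow} \in X_s$. Then $s \in (AB)^{\uparrow}$, so there exist $a \in A$ and $b \in B$ with $ab \leq s$, and necessarily $ab \neq 0$ (since $A \cdot B$ is a proper filter), giving $(A,B) \in X_a \times X_b$.

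The main potential obstacle is ensuring that the subspace topology on $\mathsf{G}_{CP}(S) \ast \mathsf{G}_{CP}(S)$ is the right one, but since basic open sets of the product are precisely $X_a \times X_b$ (by definition of the product topology) and intersecting with the composable set gives the subspace topology, this is automatic. So the argument reduces to the two preimage computations above; neither requires any new structural result about pseudogroups beyond what has already been established in Lemma~\ref{le: top_completely_prime}.
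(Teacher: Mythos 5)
Your proposal is correct and follows essentially the same route as the paper: continuity of inversion via part (2) of Lemma~\ref{le: top_completely_prime}, and continuity of multiplication via the identity $m^{-1}(X_{s}) = \left( \bigcup_{0 \neq ab \leq s} X_{a} \times X_{b} \right) \cap (\mathsf{G}_{CP}(S) \ast \mathsf{G}_{CP}(S))$. The only difference is that you write out the two inclusions (correctly, including the observation that $ab \neq 0$ because filters are proper) where the paper defers to step~3 of the proof of Proposition~2.22 of \cite{Law3}.
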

\begin{proof}
By the above lemma the inversion map is continuous.
Denote the set of composable elements in the groupoid $G$ by $G \ast G$ and denote the
multiplication map by $m \colon G \ast G \rightarrow G$.
We observe that
$$m^{-1}(X_{s}) = \left( \bigcup_{0 \neq ab \leq s} X_{a} \times X_{b} \right) \cap (\mathsf{G}_{CP}(S) \ast \mathsf{G}_{CP}(S))$$
for all $s \in S$.
The proof is straightforward and the same as step~3 of the proof of Proposition~2.22 of \cite{Law3}
and shows that $m$ is a continuous function.
\end{proof}

We can now state our second main result.

\begin{proposition} Let $S$ be a pseudogroup.
Then $\mathsf{G}_{CP}(S)$, the set of all proper completely prime filters, is an \'etale groupoid.
\end{proposition}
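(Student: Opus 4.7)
The plan is to invoke Proposition~\ref{prop: etale_groupoids}(1) which characterizes \'etale groupoids among topological groupoids: it suffices to show that the space of identities $\mathsf{G}_{CP}(S)_{o}$ is open in $\mathsf{G}_{CP}(S)$, and that the product of any two open sets is open. We already know from the preceding lemma that $\mathsf{G}_{CP}(S)$ is a topological groupoid, so these two conditions will finish the proof. The strategy mirrors the argument given for $\mathsf{G}_{P}(S)$ in Proposition~\ref{prop: spectral_groupoid}, taking advantage of the basis $\{X_{s}\}$ provided by Lemma~\ref{le: top_completely_prime}.

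First, for openness of the identity space, I would observe that an identity of $\mathsf{G}_{CP}(S)$ is an idempotent completely prime filter $F$. By part~(4) of Lemma~\ref{le: filter_properties}, such an $F$ contains an idempotent $e \in E(S)$, so $F \in X_{e}$. Conversely, any completely prime filter lying in $X_{e}$ for some $e \in E(S)$ contains an idempotent, hence is itself an inverse subsemigroup by the same lemma, and therefore an identity of the groupoid. This gives the identification
\[
\mathsf{G}_{CP}(S)_{o} = \bigcup_{e \in E(S)} X_{e},
\]
which is manifestly open.

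Second, for closure of the open sets under multiplication, I would write arbitrary open sets $X$ and $Y$ as $X = \bigcup_{i} X_{s_{i}}$ and $Y = \bigcup_{j} X_{t_{j}}$ using the basis, and then apply part~(3) of Lemma~\ref{le: top_completely_prime} to conclude
\[
XY = \bigcup_{i,j} X_{s_{i}}X_{t_{j}} = \bigcup_{i,j} X_{s_{i}t_{j}},
\]
which is again open. Here one should be slightly careful: the equality $XY = \bigcup_{i,j} X_{s_{i}}X_{t_{j}}$ is really a statement about pointwise products of subsets of the groupoid, but it is immediate from the definition of the partial multiplication on $\mathsf{G}_{CP}(S)$ and requires no more than matching up composable pairs.

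The two conditions being verified, Proposition~\ref{prop: etale_groupoids}(1) yields that $\mathsf{G}_{CP}(S)$ is \'etale. The only step that demands any thought at all is the identification of the identity space with a union of basic sets $X_{e}$; the rest is a direct appeal to Lemma~\ref{le: top_completely_prime}. I do not anticipate any real obstacle, since this is essentially the completely prime analogue of what has already been done for prime filters.
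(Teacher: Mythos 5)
Your proposal is correct and follows essentially the same route as the paper: both verify the two conditions of Proposition~\ref{prop: etale_groupoids}(1) by showing that each identity lies in some $X_{e}$ with $e$ idempotent (so $\mathsf{G}_{CP}(S)_{o}$ is a union of basic open sets) and by using $X_{s}X_{t} = X_{st}$ from Lemma~\ref{le: top_completely_prime} to see that products of open sets are open. No gaps.
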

\begin{proof} It remains to show that $\mathsf{G}_{CP}(S)$ is \'etale.
There are a number of ways to prove this.
We could follow step~4 of the proof of Proposition~2.22 of \cite{Law3}.
We give a different proof here.

We show first that  $\mathsf{G}_{CP}(S)_{o}$ is an open subspace of $\mathsf{G}_{CP}(S)$.
Let $F$ be an identity in $\mathsf{G}_{CP}(S)$.
Then by Lemma~\ref{le: filter_properties}, $F$ is an inverse subsemigroup and so contains idempotents.
Let $e \in F$.
Then $F \in X_{e}$.
But every completely prime filter in $X_{e}$ contains an idempotent and so is an identity in the groupoid.
Thus $F \in X_{e} \subseteq \mathsf{G}_{CP}(S)_{o}$ and is an open set.
Thus $\mathsf{G}_{CP}(S)_{o}$ is an open set.

Next we show that the product of two open sets is an open set.
Let $X$ and $Y$ be any open sets.
By the definition of the topology, we may write
$X = \bigcup_{i} X_{s_{i}}$ and $Y = \bigcup_{j} X_{t_{j}}$.
Then we have
$$XY = \bigcup_{i,j} X_{s_{i}t_{j}}$$
by Lemma~\label{le: top_completely_prome}.
Thus the product of open sets is always open.
\end{proof}

We shall now describe the relationships between
$$S \text{ and } \mathsf{B}(\mathsf{G}_{CP}(S)),
\text{ and }
G \text{ and } \mathsf{G}_{CP}(\mathsf{B}(G)).$$

Let $S$ be a pseudogroup.
The set $X_{s}$, a set of completely prime filters, is a local bisection by Lemma~\ref{le: top_completely_prime} 
and it is by definition open.
Thus $X_{s} \in \mathsf{B}(\mathsf{G}_{CP}(S))$.
Define $\varepsilon \colon S \rightarrow \mathsf{B}(\mathsf{G}_{CP}(S))$ by $s \mapsto X_{s}$.
A homomorphism $\theta \colon S \rightarrow T$ is called a {\em pseudogroup morphism}
if it induces a frame map between the respective frames of idempotents.

\begin{proposition}\label{prop: counit} The function $\varepsilon \colon S \rightarrow \mathsf{B}(\mathsf{G}_{CP}(S))$ has the following properties:
\begin{enumerate}

\item It is a pseudogroup $\wedge$-morphism.

\item Every element of $\mathsf{B}(\mathsf{G}_{CP}(S))$ is a compatible join of elements in the image of $\varepsilon$.

\item The map $\varepsilon$ is an isomorphism of monoids if and only if the pseudogroup $S$ has the additional property that for all
$s,t \in S$ we have that $X_{s} = X_{t}$ implies that $s = t$.

\end{enumerate}
\end{proposition}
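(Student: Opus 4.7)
The plan is to handle the three parts in turn, extracting (1) and (2) almost directly from Lemma~\ref{le: top_completely_prime} and the basic theory of filters, and then isolating the real content in (3).

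For (1), everything follows mechanically from Lemma~\ref{le: top_completely_prime}. Parts (2)--(4) of that lemma show that $\varepsilon$ preserves the multiplication, the inverse, and binary meets, so it is a $\wedge$-preserving homomorphism. Part (5) shows that $\varepsilon$ sends any compatible join in $S$ to the set-theoretic union of the images, which is precisely the join in the pseudogroup $\mathsf{B}(\mathsf{G}_{CP}(S))$ of open local bisections. In particular the restriction of $\varepsilon$ to the idempotents is a frame map, so $\varepsilon$ is a pseudogroup $\wedge$-morphism.

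For (2), given an open local bisection $A \in \mathsf{B}(\mathsf{G}_{CP}(S))$, write $A$ as a union of basic open sets $A = \bigcup_{i} X_{s_i}$ with each $X_{s_i} \subseteq A$; this is possible because $\{X_s \colon s \in S\}$ is a basis. Because $A$ is a local bisection, $A^{-1}A$ and $AA^{-1}$ are contained in the space of identities, and therefore $X_{s_i}^{-1} X_{s_j} \subseteq A^{-1}A$ and $X_{s_i} X_{s_j}^{-1} \subseteq AA^{-1}$ consist of identities. Thus $\{X_{s_i}\}$ is a pairwise compatible family in $\mathsf{B}(\mathsf{G}_{CP}(S))$, and its join (the set-theoretic union) equals $A$.

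For (3), the forward direction is trivial since isomorphisms are injective. Conversely, assume $X_s = X_t$ implies $s = t$. Injectivity of $\varepsilon$ is immediate. For surjectivity, I first upgrade the hypothesis to the order-reflection statement $X_a \subseteq X_b \iff a \leq b$, using $X_a \cap X_b = X_{a \wedge b}$ (binary meets always exist in a pseudogroup) together with the injectivity hypothesis. The real work is then to show that when $A = \bigcup_i X_{s_i}$ as in part (2), the $s_i$ are already pairwise compatible in $S$; once this is established, $s = \bigvee_i s_i$ exists in the pseudogroup $S$ and $\varepsilon(s) = X_s = A$ by Lemma~\ref{le: top_completely_prime}(5).

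The crux, and the step I expect to be the main obstacle, is the following lemma, applied to $u = s_i^{-1} s_j$ and $u = s_i s_j^{-1}$: if $X_u \subseteq \mathsf{G}_{CP}(S)_o$, then $u$ is idempotent. Each $F \in X_u$ is an identity, hence an idempotent filter, hence an inverse subsemigroup by Lemma~\ref{le: filter_properties}(4); so $u^{-1}, u^2 \in F$. This gives $X_u \subseteq X_{u^{-1}}$ and (by symmetry applied to $u^{-1}$) $X_u = X_{u^{-1}}$, and also $X_u \subseteq X_{u^2}$. The injectivity hypothesis forces $u = u^{-1}$ and the order-reflection step forces $u \leq u^2$. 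But then $u^2 = u u^{-1}$ is an idempotent, and any element below an idempotent in an inverse semigroup is itself an idempotent, so $u$ is idempotent, as required. This argument parallels that of Lemma~\ref{le: Xlemma}(2), but the analog of Lemma~\ref{le: prime_topology} is unavailable for completely prime filters without the extra hypothesis, which is exactly why assumption (3) is needed.
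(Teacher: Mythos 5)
Your proof is correct and follows essentially the same route as the paper: parts (1) and (2) are read off from Lemma~\ref{le: top_completely_prime}, and part (3) reduces surjectivity to showing that $X_{s}\sim X_{t}$ forces $s\sim t$ via the claim that $X_{u}\subseteq \mathsf{G}_{CP}(S)_{o}$ implies $u$ is idempotent. The only (immaterial) difference is in that last micro-step: the paper deduces it from $X_{u}=X_{\mathbf{d}(u)\wedge u}$ and injectivity, whereas you use $X_{u}=X_{u^{-1}}$ together with $X_{u}\subseteq X_{u^{2}}$, exactly as in the proof of Lemma~\ref{le: Xlemma}(2).
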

\begin{proof} (1) The map is a semigroup homomorphism by Lemma~\ref{le: top_completely_prime}(3),
it preserves binary meets by Lemma~\ref{le: top_completely_prime}(4) and it preserves compatible joins by Lemma~\ref{le: top_completely_prime}(5).
It is a monoid map because the idempotent completely prime filters are precisely the ones 
containing idempotents and so are precisely the ones that contain the identity of $S$.

(2) Each element of $\mathsf{B}(\mathsf{G}_{CP}(S))$ is an open local bisection and every open set, by definition,
is a union of open sets of the form $X_{s}$.

(3) Assume that $\varepsilon$ is injective.
It remains to prove that it is also surjective.
Suppose that $X_{s} \sim X_{t}$.
Then, in particular, $X_{s^{-1}t}$ is an identity.
It follows that every completely prime filter containing $s^{-1}t$ is an inverse subsemigroup.
Put $e = \mathbf{d}(s^{-1}t)$.
Clearly, $X_{s^{-1}t} = X_{e \wedge s^{-1}t}$.
We now use injectivity to deduce that $s^{-1}t$ is an idempotent.
By symmetry, we deduce that $s \sim t$.
from this result, and (2) above, we may deduce surjectivity. 
\end{proof}

A pseudogroup $S$ is said to be {\em spatial} if $X_{s} = X_{t}$ implies that $s = t$ for all $s,t \in S$. 
By  part (3) of  the preceding proposition, a pseudogroup is spatial if and only if $\varepsilon$ is an isomorphism of monoids.

The proof of the following is straightforward.

\begin{lemma} Let $G$ be an \'etale groupoid.
For each $g \in G$ define $F_{g}$ to be the set of all open local bisections that contain $g$.
Then $F_{g}$ is a completely prime filter in the pseudogroup $\mathsf{B}(G)$.
\end{lemma}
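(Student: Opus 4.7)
The plan is to verify directly that $F_g$ satisfies the four defining properties of a completely prime filter in the pseudogroup $\mathsf{B}(G)$. The key structural fact I will rely on throughout is that binary compatible joins in $\mathsf{B}(G)$ are given by set-theoretic unions: if $A, B$ are open local bisections with $A \sim B$, then $A \cup B$ is again an open local bisection and is their join; more generally, any compatible family of open local bisections has union equal to its join.

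First I would establish that $F_g$ is non-empty and proper. By Proposition~\ref{prop: etale_groupoids}(2), the open local bisections form a basis for the topology on $G$, so there exists at least one open local bisection containing $g$; hence $F_g \neq \emptyset$. Since every element of $F_g$ contains $g$, none of them equals the empty set (the zero of $\mathsf{B}(G)$), so $F_g$ is proper. Upward closure is immediate from the definition: if $A \in F_g$ and $A \subseteq B$ for some open local bisection $B$, then $g \in B$, so $B \in F_g$.

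Next I would verify that $F_g$ is downward directed. Given $A, B \in F_g$, the subset $A \cap B$ is open and contains $g$, but is not necessarily itself a local bisection in general. This is where I would invoke Proposition~\ref{prop: etale_groupoids}(2) again: since open local bisections form a basis, there is an open local bisection $C$ with $g \in C \subseteq A \cap B$, so $C \in F_g$ and $C \leq A, B$ in $\mathsf{B}(G)$.

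Finally, for complete primeness, suppose $\bigvee_i A_i$ exists in $\mathsf{B}(G)$ and lies in $F_g$. Using the observation above that this join coincides with $\bigcup_i A_i$, we have $g \in \bigcup_i A_i$, and hence $g \in A_i$ for some $i$, giving $A_i \in F_g$. There is no real obstacle here; the whole argument is a transparent translation of the topological fact that open local bisections form a basis into the language of filters, together with the identification of joins in $\mathsf{B}(G)$ with unions of subsets of $G$.
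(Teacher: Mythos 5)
Your proof is correct, and it is exactly the direct verification that the paper omits (the paper dismisses this lemma with ``the proof of the following is straightforward''), so there is nothing missing. One small point: your worry that $A \cap B$ ``is not necessarily itself a local bisection'' is unfounded --- any subset of a local bisection is a local bisection (if $C \subseteq A$ then $C^{-1}C \subseteq A^{-1}A \subseteq G_{o}$ and likewise for $CC^{-1}$), so $A \cap B$ is already an open local bisection containing $g$ and the detour through the basis is unnecessary here; that extra step is only genuinely needed in the compact-open analogue (Lemma~\ref{le: Flemma1}), where one must shrink $A \cap B$ to a \emph{compact}-open local bisection.
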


Let $G$ be a \'etale groupoid.
Define $\eta \colon G \rightarrow \mathsf{G}_{CP}(\mathsf{B}(G))$ by $g \mapsto F_{g}$.
By the above lemma this is a well-defined map.

\begin{proposition}\label{prop: unit}
The function  $\eta \colon G \rightarrow \mathsf{G}_{CP}(\mathsf{B}(G))$ is a continuous covering functor.
\end{proposition}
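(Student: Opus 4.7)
The plan is to verify functoriality, continuity, and the star injectivity/surjectivity conditions defining a covering functor, in that order.

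First, for functoriality, the key computation is $\mathbf{d}(F_g) = F_{\mathbf{d}(g)}$ (and dually for $\mathbf{r}$). For the inclusion $\mathbf{d}(F_g) \subseteq F_{\mathbf{d}(g)}$, any $A \in F_g$ yields $A^{-1}A$ in the set product $F_g^{-1}F_g$, and since $A$ is a local bisection containing $g$ we have $\mathbf{d}(g) \in A^{-1}A$. For the reverse inclusion, given an open local bisection $V$ containing $\mathbf{d}(g)$, continuity of multiplication in $G$ together with the fact that open local bisections form a basis (Proposition~\ref{prop: etale_groupoids}) produces open local bisections $A \ni g^{-1}$ and $B \ni g$ with $AB \subseteq V$, so $V \in (F_g^{-1}F_g)^\uparrow = \mathbf{d}(F_g)$. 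An entirely parallel argument, starting from $g \in A$ and $h \in B$ and refining any open neighbourhood of $gh$ to a product of open local bisections, gives $F_g \cdot F_h = F_{gh}$ whenever $gh$ is composable. Identities are handled by noting that $G_o$ is open, so some open local bisection $U \subseteq G_o$ belongs to $F_e$, making $F_e$ idempotent by Lemma~\ref{le: filter_properties}(4).

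Continuity is then immediate: a basic open set of $\mathsf{G}_{CP}(\mathsf{B}(G))$ has the form $X_A$ for $A \in \mathsf{B}(G)$, and directly from the definition of $F_g$ one has $\eta^{-1}(X_A) = \{g \in G : A \in F_g\} = A$, which is open in $G$. Star injectivity is equally short: if $\mathbf{d}(g) = \mathbf{d}(h)$ and $F_g = F_h$, pick any open local bisection $A \ni g$; then $A \in F_h$ forces $h \in A$, and the local bisection property forces $g = h$.

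The main obstacle is star surjectivity. Suppose $e \in G_o$ and $B$ is a completely prime filter in $\mathsf{B}(G)$ with $\mathbf{d}(B) = F_e$. For each $A \in B$, the relation $A^{-1}A \in \mathbf{d}(B) = F_e$ gives $e \in A^{-1}A = \mathbf{d}(A)$, and since $A$ is a local bisection there is a unique element $g_A \in A$ with $\mathbf{d}(g_A) = e$. To see that $g_A$ is independent of $A$, I would use that in the pseudogroup $\mathsf{B}(G)$ binary meets of open local bisections exist and coincide with intersection, so $A \cap A' \in B$ for any $A, A' \in B$; the element $g_{A \cap A'}$ then lies in both $A$ and $A'$, so by uniqueness $g_A = g_{A \cap A'} = g_{A'}$. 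This yields a well-defined $g \in G$ with $\mathbf{d}(g) = e$ and $g \in A$ for every $A \in B$, so $B \subseteq F_g$. Since $\mathbf{d}(F_g) = F_{\mathbf{d}(g)} = F_e = \mathbf{d}(B)$ and $B \cap F_g \neq \emptyset$, Lemma~\ref{le: filter_properties}(3) gives $B = F_g$, as required.
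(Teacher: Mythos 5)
Your proof is correct and follows essentially the same route as the paper: functoriality via refining a basic open local bisection around a product into a product of open local bisections, continuity via $\eta^{-1}(X_A)=A$, star injectivity from the local bisection property, and star surjectivity by locating an element with the prescribed domain and invoking Lemma~\ref{le: filter_properties}(3). Your star-surjectivity argument does a little more than necessary (establishing $g_A$ is independent of $A$ and that $B\subseteq F_g$), since once a single $A\in B$ yields $g$ with $\mathbf{d}(g)=e$ and $A\in F_g\cap B$, Lemma~\ref{le: filter_properties}(3) already gives $B=F_g$, which is exactly how the paper concludes.
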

\begin{proof} By Proposition~\ref{prop: etale_groupoids}
in an \'etale groupoid the open local bisections form a basis for the topology.
We may deduce from this, and the fact that the multiplication function is continuous,
that if $O_{g}$ is an open local bisection containing $g \in G$ and $g = hk$
then there are open local bisections $h \in O_{h}$ and $k \in O_{k}$ such that $O_{h}O_{k} \subseteq O_{g}$.
It now readily follows that $\eta$ is a functor.

We now prove that $\eta$ is a covering functor.
Suppose that $\mathbf{d}(g) = \mathbf{d}(h)$ and $\eta (g) = \eta (h)$.
Then there is an open local bisection $O$ that contains both $g$ and $h$.
But it then follows immediately from the definition of bisection that $g = h$.
Now suppose that $\eta (e) = \mathbf{d}(F)$ where $e$ is an identity and $F$
is a completely prime filter in $\mathsf{B}(G)$.
By definition $\mathbf{d}(F) = F_{e}$.
Let $b \in F$ be any open local bisection.
By assumption $e \in b^{-1}b$.
Thus we may find $g \in b$ such that $e = g^{-1}g$.
Consider $F_{g}$.
Then $\mathbf{d}(F_{g}) = \mathbf{d}(F)$ and $b \in F_{g} \cap F$.
By Lemma~\ref{le: filter_properties}, we have that $F_{g} = F$, as required.

It remains to show that $\eta$ is continuous.
Let $b \in \mathsf{B}(G)$ be an open local bisection.
Then
$$g \in \eta^{-1} (X_{b}) \Leftrightarrow F_{g} \in X_{b} \Leftrightarrow b \in F_{g} \Leftrightarrow g \in b.$$
Thus $\eta^{-1}(X_{b}) = b$.
\end{proof}

An \'etale groupoid is said to be {\em sober} if the map $\eta$ is a homeomorphism.
The following result shows that whether an \'etale groupoid is sober or not is determined by
its space of identities. 

\begin{proposition} Let $G$ be an \'etale groupoid.
Then $G$ is sober as an \'etale groupoid if and only if the space $G_{0}$ is sober.
\end{proposition}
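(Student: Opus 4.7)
The plan is to factor $\eta$ through its restriction to identities and to identify that restriction with the canonical sobriety map of $G_0$. Let $\eta_0 \colon G_0 \to \mathsf{G}_{CP}(\mathsf{B}(G))_0$ denote the restriction of $\eta$ to identities, sending $e$ to $F_e$. First I would observe that the semilattice of idempotents $E(\mathsf{B}(G))$ is precisely the frame $\Omega(G_0)$ of open subsets of $G_0$, since an open local bisection is idempotent iff it is contained in $G_0$.

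The key preparatory step is to set up a natural bijection between the idempotent completely prime filters of $\mathsf{B}(G)$ and the completely prime filters of $\Omega(G_0)$, given by $F \mapsto E(F)$ with inverse $P \mapsto P^{\uparrow}$. By Lemma~\ref{le: filter_properties}(4), every idempotent filter is determined by its idempotent part, so the map is injective, and one checks directly that $E(F)$ is a completely prime filter in $\Omega(G_0)$. The only non-obvious verification is that $P^{\uparrow}$ is \emph{completely} prime in $\mathsf{B}(G)$: if $\bigvee_i b_i \in P^{\uparrow}$ then some $U \in P$ satisfies $U \leq \bigvee_i b_i$, and intersecting with $G_0$ gives $U = \bigcup_i (U \cap b_i)$ in $\Omega(G_0)$, whence complete primality of $P$ yields some $U \cap b_i \in P$ and hence $b_i \in P^{\uparrow}$. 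Under this bijection, $F_e$ corresponds exactly to $O_e = \{U \in \Omega(G_0) \colon e \in U\}$, so $\eta_0$ becomes the canonical map $e \mapsto O_e$.

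With this setup, the forward implication is immediate: if $\eta$ is a homeomorphism then $\eta_0$ is a bijection, so $e \mapsto O_e$ is a bijection, which is precisely the definition of sobriety for $G_0$. For the converse, assume $G_0$ is sober, so that $\eta_0$ is a bijection. By Proposition~\ref{prop: unit} the map $\eta$ is a continuous covering functor, so it remains to prove bijectivity and openness. For surjectivity: given $F \in \mathsf{G}_{CP}(\mathsf{B}(G))$, sobriety produces an $e \in G_0$ with $\eta_0(e) = \mathbf{d}(F)$, and star surjectivity lifts this to some $g \in G$ with $\eta(g) = F$. For injectivity: if $\eta(g) = \eta(h)$ then applying $\mathbf{d}$ and using injectivity of $\eta_0$ gives $\mathbf{d}(g) = \mathbf{d}(h)$, and then star injectivity gives $g = h$. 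For openness: the identity $\eta^{-1}(X_b) = b$ from Proposition~\ref{prop: unit} combined with bijectivity of $\eta$ yields $\eta(b) = X_b$, and since $G$ is \'etale the open local bisections form a basis by Proposition~\ref{prop: etale_groupoids}, so $\eta$ carries basic opens to basic opens.

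The main obstacle is the bijection in the second paragraph, and specifically showing that $P^{\uparrow}$ remains completely prime when lifted from $\Omega(G_0)$ into $\mathsf{B}(G)$. This requires translating arbitrary joins of open local bisections (which generally escape $G_0$) back into joins of open subsets of $G_0$, and relies crucially on infinite distributivity in the pseudogroup $\mathsf{B}(G)$ together with the elementary observation that intersecting a local bisection with $G_0$ yields an open subset of $G_0$.
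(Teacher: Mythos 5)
Your proof is correct and follows essentially the same route as the paper's: reduce bijectivity of $\eta$ to bijectivity on identities via the covering-functor property, identify the identity part of $\mathsf{G}_{CP}(\mathsf{B}(G))$ with the completely prime filters of $\Omega(G_{o})$, and use $\eta(b)=X_{b}$ for openness. The only difference is that you spell out the two steps the paper states without proof (the star-injectivity/surjectivity argument and the verification that $P^{\uparrow}$ is completely prime), and both of those verifications are carried out correctly.
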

\begin{proof} Observe first that a covering functor $\eta \colon G \rightarrow H$ is bijective if and only if
the function $\eta \mid G_{o} \colon G_{o} \rightarrow H_{o}$ is bijective.
It follows that $\eta \colon G \rightarrow \mathsf{G}_{CP}(\mathsf{B}(G))$ is bijective
if and only if $\eta \mid G_{o} \colon G_{o} \rightarrow \mathsf{G}(\mathsf{B}(G)_{o})$ is a bijective.
However, filters that are identities are determined by their idempotent elements,
and the idempotents in $\mathsf{B}(G)$ are the open subsets of $G_{o}$.
It follows that $\eta \colon G \rightarrow \mathsf{G}_{CP}(\mathsf{B}(G))$ is bijective
if and only if 
$\eta \colon G_{o} \rightarrow \mathsf{G}_{CP}(\mathsf{B}(G_{o}))$ is bijective.
Now observe that if $b$ is an open local bisection in $G$ then $\eta (b) = X_{b}$.
It follows that $\eta$ is always an open map.
We have therefore proved that $G$ is sober if and only if $G_{o}$ is sober.
\end{proof}

It follows from the above result that spectral groupoids are sober groupoids.

\begin{proposition}\label{prop: spatial_sober} \mbox{}
\begin{enumerate}

\item For every \'etale groupoid $G$ the pseudogroup $\mathsf{B}(G)$ is spatial.

\item For every pseudogroup $S$ the \'etale groupoid $\mathsf{G}_{CP}(S)$ is sober.

\end{enumerate}
\end{proposition}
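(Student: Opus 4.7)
For part (1), the plan is to exploit directly the pointwise description of the sets $X_{s}$ together with the filters $F_{g}$ constructed before Proposition~\ref{prop: unit}. Suppose $s,t \in \mathsf{B}(G)$ satisfy $X_{s} = X_{t}$. I would fix an arbitrary $g \in s$ and observe that the open local bisection $s$ lies in the completely prime filter $F_{g}$, so $F_{g} \in X_{s}$. By hypothesis $F_{g} \in X_{t}$, which unpacks to $t \in F_{g}$, hence $g \in t$. This gives $s \subseteq t$, and the symmetric argument yields $s = t$. Thus $\mathsf{B}(G)$ is spatial. The one subtlety here is verifying that $F_{g} \in X_{s}$ really does mean $s \in F_{g}$ and hence $g \in s$; this is immediate from the definition of $F_{g}$ as the set of open local bisections containing $g$, together with the definition of $X_{s}$.

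For part (2), the plan is to reduce to the corresponding classical fact at the object level using the preceding proposition, which asserts that an \'etale groupoid is sober iff its space of identities is sober. Thus it suffices to show that $\mathsf{G}_{CP}(S)_{o}$ is a sober topological space. By Lemma~\ref{le: filter} applied to pseudogroups (the completely prime analogue proved just above), idempotent completely prime filters of $S$ are in bijection with completely prime filters of the frame $E(S)$ via $F \mapsto E(F)$ with inverse $A \mapsto A^{\uparrow}$, and under this bijection the basic open $X_{e} \cap \mathsf{G}_{CP}(S)_{o}$ (for $e \in E(S)$) corresponds to the set of completely prime filters of $E(S)$ containing $e$. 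This means $\mathsf{G}_{CP}(S)_{o}$ is homeomorphic to the classical point-space of the frame $E(S)$, which is sober by standard frame theory (cf.\ \cite{J}).

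The main obstacle I anticipate is the careful verification in part (2) that every open set of $\mathsf{G}_{CP}(S)_{o}$ really arises from an open of the frame $E(S)$, so that the spatial-points bijection is actually a homeomorphism rather than merely a continuous bijection. The point is that a basic open of $\mathsf{G}_{CP}(S)$ has the form $X_{s}$ for $s \in S$, and its intersection with $\mathsf{G}_{CP}(S)_{o}$ should be expressible purely in terms of idempotents below $s$; the required identity $X_{s} \cap \mathsf{G}_{CP}(S)_{o} = \bigcup_{e \leq \mathbf{d}(s)} X_{e} \cap \mathsf{G}_{CP}(S)_{o}$ follows because an idempotent filter contains $s$ iff it contains $\mathbf{d}(s)$, and because filters are upward closed. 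Once this is in hand, sobriety transfers from the frame side directly, and part (2) follows.
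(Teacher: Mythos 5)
Part~(1) of your plan is the paper's own argument, just run contrapositively: the paper picks $g \in U \setminus V$ for distinct $U,V$ and notes that $F_{g}$ contains $U$ and omits $V$. Nothing to add there. For part~(2) you take a genuinely different route. The paper proves sobriety of $\mathsf{G}_{CP}(S)$ directly: given a completely prime filter $F$ in $\mathsf{B}(\mathsf{G}_{CP}(S))$ it defines $f = \{s \in S \colon X_{s} \in F\}$, checks that $f$ is a completely prime filter with $F = F_{f}$, checks uniqueness, and verifies that $\eta$ is open. You instead invoke the preceding proposition to reduce sobriety of the groupoid to classical sobriety of $\mathsf{G}_{CP}(S)_{o}$, identify that space with the point space of the frame $E(S)$, and quote the standard fact that the points of any frame form a sober space. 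That reduction is legitimate and arguably more conceptual, since it localises all the work at the level of identities; the paper's direct proof has the advantage of exhibiting the inverse of $\eta$ explicitly.

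There is, however, one concrete error in your part~(2). The claim that ``an idempotent filter contains $s$ iff it contains $\mathbf{d}(s)$'' is false: the forward direction holds because an idempotent filter is an inverse subsemigroup, but the converse fails (in the symmetric inverse monoid a fixed-point-free permutation $s$ has $\mathbf{d}(s)=1$, yet no idempotent filter containing $1$ need contain $s$, since no nonzero idempotent lies below $s$). As a result your displayed identity collapses to $X_{s} \cap \mathsf{G}_{CP}(S)_{o} = X_{\mathbf{d}(s)} \cap \mathsf{G}_{CP}(S)_{o}$, which is wrong in general. The correct statement is
$$X_{s} \cap \mathsf{G}_{CP}(S)_{o} = \bigcup_{e \in E(S),\; e \leq s} \left( X_{e} \cap \mathsf{G}_{CP}(S)_{o} \right),$$
which follows from part~(4) of Lemma~\ref{le: filter_properties}: an idempotent filter $F$ satisfies $F = E(F)^{\uparrow}$, so it contains $s$ precisely when it contains some idempotent below $s$. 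This is exactly the formula the paper uses in the prime-filter analogue in the proof of Proposition~\ref{prop: spectral_groupoid}. With that correction your homeomorphism between $\mathsf{G}_{CP}(S)_{o}$ and the point space of $E(S)$ is established, and the rest of your argument for part~(2) goes through.
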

\begin{proof} (1) Let $U$ and $V$ be two distinct open bisections in $\mathsf{B}(G)$.
Without loss of generality, there exists $g \in U$ and $g \notin V$.
But then $F_{g}$ is a completely prime filter in $\mathsf{B}(G)$ that contains $U$ and omits $V$.

(2) Let $S$ be a pseudogroup.
We show that every completely prime filter in  $\mathsf{B}(\mathsf{G}_{CP}(S))$
is of the form $F_{f}$ where $f \in \mathsf{G}_{CP}(S)$ is a uniquely determined element.
We show first that such an $f$ exists.
Define $f = \{s \in S \colon X_{s} \in F \}$.
From the fact that $F$ is completely prime and that the sets $X_{s}$ form a basis of open bisections for $\mathsf{G}_{CP}(S))$
it follows that $f$ is non-empty.
Using Lemma~\ref{le: top_completely_prime}, 
it is routine to verify that $f$ is a completely prime filter and by construction $F_{f} \subseteq F$.
Let $O \in F$.
Then $O$ can be written as a union of open bisections of the form $X_{s}$ for some $s$.
It follows that $O \in F_{f}$.

Now suppose that $F_{f} = F_{g}$ for completely prime filters $f$ and $g$ in $S$.
Let $s \in f$.
Then $f \in X_{s}$ and so by assumption $X_{s} \in F_{g}$ which gives $s \in g$.
It follows that $f \leq g$.
The reverse inclusion follows by symmetry.

It remains to show that $\eta$ is an open map.
Let $X_{s}$ be a basic open bisection in $\mathsf{G}_{CP}(S)$.
Then $\eta (X_{s})$ consists of all $F_{f}$ where $f \in X_{s}$.
But this is precisely the set $\{F_{f} \colon X_{s} \in F_{f} \}$
which is the basic open set $X_{X_{s}}$.\end{proof}

We shall prove, for suitable definitions of morphisms,
that the functor
$$\mathsf{G} \colon \mathbf{PG}^{op} \rightarrow \mathbf{Etale}$$
is right adjoint to the functor
$$\mathsf{B} \colon \mathbf{Etale} \rightarrow \mathbf{PG}^{ op}.$$
Since the idempotent pseudogroups are the frames and the \'etale groupoids in which every element is an identity are just the
topological spaces, this would generalize the classical adjunction between categories of these structures;
see Theorem~1, page~476 of \cite{MM} and Theorem~1.4, page~42 of \cite{J}.
The problem is in defining appropriate morphisms.
Frames have top elements preserved by frame morphisms but this is not true of general pseudogroups.
This means that the inverse images of completely prime filters might be empty.
We do, however, have the following.

\begin{lemma} Let $\theta \colon S \rightarrow T$ be a pseudogroup $\wedge$-morphism.
If $F$ is a completely prime filter in $T$ and $\theta^{-1}(F)$ is non-empty then it is a completely prime filter.
\end{lemma}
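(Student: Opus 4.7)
The plan is to verify the three defining properties of a completely prime filter for $B := \theta^{-1}(F)$ in turn, each being essentially a direct transfer of the corresponding property of $F$ across $\theta$ using one of the structural features of a pseudogroup $\wedge$-morphism.

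First I would show that $B$ is upward closed. If $a \in B$ and $a \leq b$ in $S$, then $\theta(a) \leq \theta(b)$ since every semigroup homomorphism preserves the natural partial order; upward closure of $F$ then forces $\theta(b) \in F$, whence $b \in B$. Next, to see that $B$ is directed, take $a, b \in B$. In the pseudogroup $S$ the binary meet $a \wedge b$ exists (by the existence of arbitrary non-empty meets in a pseudogroup, noted in the paper after Lemma~\ref{le: meets_and_joins}). Because $\theta$ is a $\wedge$-morphism, $\theta(a \wedge b) = \theta(a) \wedge \theta(b)$, and the latter lies in $F$ since $F$ is a filter in a pseudogroup and hence closed under existing binary meets of its elements. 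Thus $a \wedge b \in B$, giving a common lower bound in $B$.

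For complete primeness, suppose $\bigvee_{i} a_{i} \in B$ for some compatible family $\{a_{i}\} \subseteq S$ whose join exists. Since $\theta$ is a pseudogroup morphism it preserves compatible joins, so $\bigvee_{i} \theta(a_{i}) = \theta\!\left(\bigvee_{i} a_{i}\right) \in F$. The complete primeness of $F$ yields $\theta(a_{i}) \in F$ for some $i$, and hence $a_{i} \in B$. Finally, $B$ is proper: as $\theta$ preserves $0$, if $0 \in B$ then $0 \in F$, contradicting that $F$ is proper.

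There is no real obstacle here; the only subtlety to watch is that directedness of $B$ genuinely uses the hypothesis that $\theta$ preserves binary meets, since otherwise the meet of $\theta(a)$ and $\theta(b)$ in $T$ (which lies in $F$) need not be of the form $\theta(c)$ for any $c \leq a,b$ in $S$. Non-emptiness of $B$ is assumed, so no further maximality or Zorn-type argument is required.
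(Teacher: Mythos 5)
Your proof is correct and takes essentially the same route as the paper: upward closure is immediate, directedness uses exactly the fact that $F$ is closed under binary meets together with the $\wedge$-morphism hypothesis, and complete primeness transfers directly because a pseudogroup morphism preserves compatible joins. You simply spell out (correctly) the steps the paper leaves as "clear", including the properness check.
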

\begin{proof} Let $a,b \in \theta^{-1}(F)$.
Then $\theta (a),\theta (b) \in F$.
But $F$ is a filter in a pseudogroup and so $\theta (a) \wedge \theta (b) \in F$.
We have assume that $\theta$ is a $\wedge$-morphism and so $\theta (a \wedge b) = \theta (a) \wedge \theta (b)$.
Thus $a \wedge b \in \theta^{-1}(F)$.
It is clear that $\theta^{-1}(F)$ is closed upwards, and it is completely prime because $\theta$ is a pseudogroup morphism.
\end{proof}

A function $\theta \colon S \rightarrow T$ between pseudogroups will be called {\em callitic}
if it satisfies two conditions:
\begin{enumerate}

\item it is a $\wedge$-morphism of pseudogroups, and

\item for each completely prime filter $F$ in $T$,  we have that $F \cap \mbox{im}(\theta) \neq \emptyset$.

\end{enumerate}

Thus $\mathbf{PG}$ denotes the category whose objects are pseudogroups and whose morphisms are the callitic maps.
We now have the following.

\begin{lemma}\label{le: u} Let $\theta \colon S \rightarrow T$ be a callitic morphism of pseudogroups.
Then
$$\theta^{-1} \colon \mathsf{G}_{CP}(T) \rightarrow \mathsf{G}_{CP}(S)$$
is a continuous covering functor.
\end{lemma}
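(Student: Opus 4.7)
The plan is to mirror the argument of Lemma~\ref{le: covering} (which handled the distributive/prime-filter case), replacing `prime' by `completely prime' throughout and using the callitic hypothesis to guarantee that inverse images of completely prime filters are non-empty. The four things to verify are: (i)~$\theta^{-1}$ is well-defined as a map $\mathsf{G}_{CP}(T) \to \mathsf{G}_{CP}(S)$; (ii)~it is a functor of groupoids; (iii)~it is a covering functor; (iv)~it is continuous.

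For (i), the immediately preceding lemma shows that $\theta^{-1}(F)$ is completely prime whenever it is non-empty, and non-emptiness follows from condition (2) in the definition of callitic. For (ii), the bulk of the work is to show the two identities
\[
\mathbf{d}(\theta^{-1}(F)) = \theta^{-1}(\mathbf{d}(F)) \quad \text{and} \quad \theta^{-1}(F \cdot G) = \theta^{-1}(F) \cdot \theta^{-1}(G),
\]
together with the observation that idempotent c.p.\ filters in $T$ pull back to idempotent c.p.\ filters in $S$ (using Lemma~\ref{le: filter_properties}(4) and that $\theta$ restricts to a frame map between the idempotent frames). Both identities can be lifted essentially verbatim from the proof of Lemma~\ref{le: covering}; the inclusion $(\theta^{-1}(F)\theta^{-1}(G))^{\uparrow} \subseteq \theta^{-1}((FG)^{\uparrow})$ is formal, and the reverse inclusion follows by picking $v \in S$ with $\theta(v) \in G$ (guaranteed by the callitic condition applied to $G$) and using the decomposition $s = (sv^{-1}) v$.

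For (iii), star injectivity is immediate from Lemma~\ref{le: filter_properties}(3): if $\mathbf{d}(F) = \mathbf{d}(G)$ in $\mathsf{G}_{CP}(T)$ and $\theta^{-1}(F) = \theta^{-1}(G)$, then applying $\mathbf{d}$ gives $\mathbf{d}(\theta^{-1}(F)) = \mathbf{d}(\theta^{-1}(G))$, hence by the $\mathbf{d}$-identity above $\theta^{-1}(\mathbf{d}(F)) = \theta^{-1}(\mathbf{d}(G))$; and any element of $F$ (pick one via the callitic hypothesis) pulls back into the common inverse image, giving a point in $\theta^{-1}(F) \cap \theta^{-1}(G) = \theta^{-1}(F \cap G)$, so $F \cap G$ is non-empty and Lemma~\ref{le: filter_properties}(3) in $T$ forces $F = G$. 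For star surjectivity, suppose $e$ is an idempotent c.p.\ filter in $T$ with $\theta^{-1}(e) = \mathbf{d}(K)$ for some $K \in \mathsf{G}_{CP}(S)$. Pick any $a \in K$ and form $F = (\theta(a) \cdot e)^{\uparrow}$ in $T$; using Lemma~\ref{le: filter_properties}(2) and the fact that $\mathbf{d}(\theta(a)) = \theta(\mathbf{d}(a)) \in e$, this is a well-defined c.p.\ filter with $\mathbf{d}(F) = e$, and a short calculation shows $\theta^{-1}(F)$ and $K$ have the same $\mathbf{d}$ and a common element $a$, hence coincide by Lemma~\ref{le: filter_properties}(3).

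For (iv), continuity is the easiest part: for any $s \in S$ one checks directly that $(\theta^{-1})^{-1}(X_s) = X_{\theta(s)}$, which is open by definition of the topology on $\mathsf{G}_{CP}(T)$. The main obstacle is the star-surjectivity step in (iii), because one has to correctly identify the c.p.\ filter $F$ in $T$ lifting $K$; everything else is a careful transcription of the distributive case with infinite joins replacing finite ones, and making sure each appeal to `filter' or `directedness' still applies at the level of completely prime filters in a pseudogroup.
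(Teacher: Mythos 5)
Your proposal is correct and follows essentially the same route as the paper: well-definedness from the callitic hypothesis plus the preceding lemma, functoriality via the identity $(\theta^{-1}(F)\theta^{-1}(G))^{\uparrow} = \theta^{-1}((FG)^{\uparrow})$ proved with the $v = \theta^{-1}$-of-an-element-of-$G$ trick, and continuity via $(\theta^{-1})^{-1}(X_{s}) = X_{\theta(s)}$. The only difference is that where the paper outsources the covering-functor verification to Proposition~2.15 of \cite{Law3}, you supply the star-injectivity and star-surjectivity arguments explicitly (correctly, using parts (2) and (3) of Lemma~\ref{le: filter_properties}).
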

\begin{proof} The assumption that $\theta$ is callitic simply ensures that for each completely prime filter $F$ the set
$\theta^{-1}(F)$ is non-empty and so by our above lemma is a completely prime filter.
It follows that $\theta^{-1} \colon \mathsf{G}_{CP}(T) \rightarrow \mathsf{G}_{CP}(S)$ is a well-defined function.
The bulk of the proof is taken up with showing that $\theta^{-1}$ is a functor.
Let $F$ be an identity completely prime filter in $T$.
Then $F$ contains idempotents by Lemma~\ref{le: filter_properties}.
In particular, it must contain the top idempotent in the frame $E(T)$ by upward closure.
Since $\theta$ is a frame morphism when restricted to the semilattice of idempotents
it follows that $\theta^{-1}(F)$ contains the top element of $E(S)$.
Thus $\theta^{-1}(F)$ is a completely prime filter containing idempotents and so it is an identity in the groupoid.

We prove that if $F$ and $G$ are completely prime filters such that $F^{-1} \cdot F = G \cdot G^{-1}$ then
$$( \theta^{-1} (F) \theta^{-1} (G) )^{\uparrow} = \theta^{-1} ( (FG)^{\uparrow}  ).$$
We prove first that
$$\theta^{-1} (F) \theta^{-1} (G) \subseteq \theta^{-1} (FG).$$
Let $s \in \theta^{-1} (F) \theta^{-1} (G)$.
Then $s = ab$ where $a \in \theta^{-1} (F)$ and $b \in \theta^{-1} (G)$.
Thus $\theta (s) = \theta (a) \theta (b) \in FG$.
It follows that $s \in \theta^{-1} (FG)$.
Observe that $\theta^{-1} (X)^{\uparrow} \subseteq \theta^{-1} (X^{\uparrow})$.
It follows that
$$( \theta^{-1} (F) \theta^{-1} (G) )^{\uparrow} \subseteq \theta^{-1} ( (FG)^{\uparrow}  ).$$
We now prove the reverse inclusion.
Let $s \in \theta^{-1} ( (FG)^{\uparrow} ).$
Then $\theta (s) \in F \cdot G$ and so
$fg \leq \theta(s)$ for some $f \in F$ and $g \in G$.
The map $\theta$ is assumed callitic and so there exists $v \in S$ such that $\theta (v) \in G$.
Consider the product $\theta (s) \theta (v)^{-1}$.
Since $\theta (s) \in F \cdot G$ and $\theta (v)^{-1} \in G^{-1}$ we have that
$\theta (s)\theta (v)^{-1} \in F \cdot G \cdot G^{-1} = F \cdot F^{-1} \cdot F = F$.
Thus $\theta (sv^{-1}) \in F$,  and we were given $\theta (v) \in G$, and clearly $(sv^{-1})v \leq s$.
Put $a = sv^{-1}$ and $b = v$.
Then $ab \leq S$ where $\theta (a) \in F$ and $\theta (b) \in G$.
It follows that $s \in (\theta^{-1} (F) \theta^{-1} (G) )^{\uparrow}$.

We may now show that $\theta^{-1}$ is a functor.
Let $F$ be any completely prime filter.
Observe that $\theta^{-1}(F)^{-1} = \theta^{-1} (F^{-1})$.
We have that
$$(\theta^{-1} (F^{-1}) \theta^{-1} (F))^{\uparrow}
=
(\theta^{-1} (F)^{-1} \theta^{-1} (F))^{\uparrow}
=
\mathbf{d} ( \theta^{-1} (F))$$
and
$$\theta^{-1} ( (F^{-1}F)^{\uparrow} ) = \theta^{-1} (\mathbf{d} (F)).$$
Hence
$$\theta^{-1} (\mathbf{d} (F))
=
\mathbf{d} ( \theta^{-1} (F)).$$
A dual result also holds and so $\theta^{-1}$ preserves the domain and codomain operations.
Suppose that $\mathbf{d}(F) = \mathbf{r} (G)$ so that $F \cdot G$ is defined.
By our calculation above $\mathbf{d} (\theta^{-1}(F)) = \mathbf{r} (\theta^{-1} (G))$
and so the product $\theta^{-1} (F) \cdot \theta^{-1} (G)$ is defined.
By our main result above we have that
$$\theta^{-1} (F \cdot G) = \theta^{-1} (F) \cdot \theta^{-1} (G),$$
as required.

The proof that $\theta^{-1}$ is a covering functor follows the same lines as the proof of Proposition~2.15 of \cite{Law3}.
It remains to show that it is continuous.
A basic open set of $\mathsf{G}_{CP}(S)$ has the form $X_{s}$ for some $s \in S$.
It is simple to check that this is pulled back to the set $X_{\theta (s)}$.
\end{proof}

Let $\theta \colon S \rightarrow T$ be a pseudogroup $\wedge$-morphism.
We say that $\theta$ is {\em hypercallitic} if for each $t \in T$ we may write
$t = \bigvee_{i} t_{i}$ where $t_{i} \leq \theta (s_{i})$ for some $s_{i} \in S$.
It is immediate that hypercallitic maps are callitic.
Observe that $\theta$ is hypercallitic if and only if we may write
$t = \bigvee_{i} (t \wedge \theta (s_{i}))$ for some $s_{i} \in S$.

Hypercallitic maps can be seen to arise naturally by using a construction of Resende \cite{R1,R2}.\footnote{The following 
grew out of conversations with Pedro Resende}
Let $S$ be a pseudogroup.
Define $\mathcal{L}^{\vee} (S)$ to be the set of all order ideals of $S$ that are closed under compatible joins.
This is called the {\em enveloping quantale} of $S$.
It is, in particular, a frame with top element $S$.
Let $\theta \colon S \rightarrow T$ be a morphism of pseudogroups.
Then we may define a function $\bar{\theta} \colon \mathcal{L}^{\vee} (S) \rightarrow \mathcal{L}^{\vee} (T)$
by $\bar{\theta} (A) = [A^{\downarrow}]^{\vee}$
which means the downward closure of $A$ followed by the closure under compatible joins.

\begin{lemma}
The map $\bar{\theta}$ defined above is a frame map if and only if $\theta$ is hypercallitic.
\end{lemma}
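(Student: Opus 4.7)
The plan is to read off the biconditional by matching each frame-map axiom to a property of $\theta$, with hypercalliticity corresponding exactly to preservation of the top element.

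First I would unwind the target structure. The frame $\mathcal{L}^{\vee}(S)$ has meets given by intersections and joins given by $\bigvee_i A_i = [\bigcup_i A_i]^{\vee}$ (each $A_i$ is already downward closed), with top element $S$. A frame morphism must preserve arbitrary joins, finite meets, and the top. The map $\bar{\theta}$ is obviously monotone, so preservation of arbitrary joins reduces to showing $\bar{\theta}(\bigvee_i A_i) \subseteq \bigvee_i \bar{\theta}(A_i)$.

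For the easy direction I would simply specialise preservation of the top: if $\bar{\theta}$ is a frame map then $\bar{\theta}(S) = T$, which by definition of $\bar{\theta}$ says that every $t \in T$ lies in $[\theta(S)^{\downarrow}]^{\vee}$, i.e.\ is a compatible join of elements below images of $\theta$; this is precisely hypercalliticity.

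For the converse, assume $\theta$ is hypercallitic. Preservation of the top is then exactly the definition. For arbitrary joins, since $\theta$ is a pseudogroup morphism it preserves compatible joins, so every element of $\theta([\bigcup_i A_i]^{\vee})$ is a compatible join of elements of $\bigcup_i \theta(A_i)$; taking downward closures and $\vee$-closures on both sides gives the desired equality $\bar{\theta}(\bigvee_i A_i) = \bigvee_i \bar{\theta}(A_i)$. For binary meets, the inclusion $\bar{\theta}(A \cap B) \subseteq \bar{\theta}(A) \cap \bar{\theta}(B)$ is monotonicity; for the reverse, if $t \in \bar{\theta}(A) \cap \bar{\theta}(B)$ then I would write $t = \bigvee_i t_i = \bigvee_j u_j$ with $t_i \leq \theta(a_i)$, $a_i \in A$ and $u_j \leq \theta(b_j)$, $b_j \in B$, and then use part~(2) of Lemma~\ref{le: meets_and_joins} (twice, to distribute $\wedge$ over each join) together with the fact that $\theta$ is a $\wedge$-morphism to rewrite
\[
t = t \wedge t = \bigvee_{i,j} (t_i \wedge u_j), \qquad t_i \wedge u_j \leq \theta(a_i) \wedge \theta(b_j) = \theta(a_i \wedge b_j),
\]
noting that $a_i \wedge b_j \in A \cap B$ because both are order ideals. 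This exhibits $t$ as a compatible join of elements of $\theta(A \cap B)^{\downarrow}$, hence $t \in \bar{\theta}(A \cap B)$.

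The only delicate point is the meet argument, where one has to be careful that the relevant binary meets actually exist and that distributivity can be applied iteratively; this is guaranteed because we are working inside a pseudogroup, where all non-empty binary meets exist and infinitary distributivity over compatible joins holds by Lemma~\ref{le: meets_and_joins}(2). Everything else is a direct translation between the hypercallitic condition and preservation of the top of $\mathcal{L}^{\vee}$.
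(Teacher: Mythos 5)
Your proposal is correct and follows the same route as the paper: both reduce the biconditional to the single condition $\bar{\theta}(S) = T$ and unpack that as hypercalliticity. The paper's proof treats preservation of joins and binary meets by $\bar{\theta}$ as automatic and says nothing about it, whereas you verify those axioms explicitly (using that $\theta$ preserves compatible joins and is a $\wedge$-morphism, together with Lemma~\ref{le: meets_and_joins}(2)); this is a welcome filling-in of detail rather than a different argument.
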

\begin{proof} The map $\bar{\theta}$ is a frame map if and only if $\bar{\theta}(S) = T$.
That is if and only if $[\theta(S)^{\downarrow}]^{\vee} = T$.
This means that for each $t \in T$ we may find $t_{i} \in \theta (S)^{\downarrow}$ such that $t = \bigvee t_{i}$.
But $t_{i} \in \theta (S)^{\downarrow}$ means that $t_{i} \leq \theta (s_{i})$ for some $s_{i} \in S$.
\end{proof}



We now have the companion to Lemma~\ref{le: u} going in the opposite direction.

\begin{lemma}\label{le: v} Let $\theta \colon G \rightarrow H$ be a continuous covering functor between two \'etale groupoids.
Then $\theta^{-1} \colon \mathsf{B}(H) \rightarrow \mathsf{B}(G)$ is hypercallitic.
\end{lemma}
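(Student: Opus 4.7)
The plan is to verify the two clauses in the definition of hypercallitic. First, I would confirm that $\theta^{-1}$ is a well-defined pseudogroup $\wedge$-morphism from $\mathsf{B}(H)$ to $\mathsf{B}(G)$. For $U \in \mathsf{B}(H)$, continuity of $\theta$ gives that $\theta^{-1}(U)$ is open, and the covering functor hypothesis (in particular, star injectivity) ensures that $\theta^{-1}(U)$ is a local bisection; this is precisely the content of Proposition~2.17 of \cite{Law2} already invoked in the proof of Lemma~\ref{le: callitic}. That $\theta^{-1}$ preserves binary meets and arbitrary compatible joins is then automatic, since meets in $\mathsf{B}(G)$ and $\mathsf{B}(H)$ are intersections, compatible joins are unions of open local bisections, and preimages commute with both operations set-theoretically.

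Next, I would verify the hypercallitic condition itself. Using the equivalent formulation recorded immediately before the statement of the lemma, it suffices to show that for each $B \in \mathsf{B}(G)$ there exist open local bisections $U_i \in \mathsf{B}(H)$ with $B = \bigvee_i (B \wedge \theta^{-1}(U_i))$. For each $g \in B$, Proposition~\ref{prop: etale_groupoids} furnishes an open local bisection $U_g$ of $H$ containing $\theta(g)$; then $B \cap \theta^{-1}(U_g)$ is an open subset of $B$ containing $g$ and contained in $\theta^{-1}(U_g)$. Taking the union over all $g \in B$ yields $B = \bigcup_{g \in B} (B \cap \theta^{-1}(U_g))$, which is a legitimate compatible join in $\mathsf{B}(G)$ since all the pieces sit inside the single local bisection $B$ and are therefore pairwise compatible.

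The main — and genuinely minor — obstacle is simply insisting that the $U_g$ be drawn from the basis of open local bisections of $H$ rather than from arbitrary open neighbourhoods of $\theta(g)$, so that each $\theta^{-1}(U_g)$ lies in the codomain $\mathsf{B}(G)$ of the hypercallitic witness; once that is arranged, both halves of the argument are essentially formal.
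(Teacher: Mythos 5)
Your proof is correct and follows essentially the same route as the paper's: the $\wedge$-morphism part is delegated to Proposition~2.17 of the cited earlier work, and the hypercallitic condition is established by covering $\theta(B)$ with basic open local bisections $U_g$ of $H$ and writing $B = \bigcup_{g \in B}\bigl(B \cap \theta^{-1}(U_g)\bigr)$, exactly as in the paper. Your explicit remarks that the pieces are pairwise compatible because they lie in the single local bisection $B$, and that the $U_g$ must be taken from the basis of open local bisections rather than arbitrary open sets, are just slightly more careful articulations of points the paper leaves implicit.
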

\begin{proof}
The proof that we have a $\wedge$-morphism of pseudogroups basically follows the proof of Proposition~2.17 of \cite{Law3}.
It remains to show that $\theta^{-1}$ is hypercallitic.
By Proposition~\ref{prop: etale_groupoids},
the \'etale groupoid $G$ has a basis consisting of open local bisections.
Let $B$ be a non-empty open local bisection in $G$ and let $g \in B$.
Then $\theta (g) \in H$.
Clearly $H$ is an open set containing $\theta (g)$ but not a bisection.
However, since $H$ is \'etale, it follows that $H$ is a union of open local bisections and so $\theta (g) \in C_{g}$ an open local bisection $C_{g}$ in $H$.
Since $\theta$ is continuous $g \in \theta^{-1}(C_{g})$ is open and because $\theta$ is a covering functor $\theta^{-1} (C_{g})$ is a local bisection.
Thus $g \in B \cap \theta^{-1}(C_{g})$ an open local bisection in $G$.
It follows that we may write
$$B = \bigcup_{g \in B} (B \cap \theta^{-1} (C_{g})).$$
\end{proof}

The following will be needed in the proof of the adjunction theorem below.

\begin{lemma} Let $\theta \colon S \rightarrow T$ be a callitic morphism of pseudogroups where $T$ is spatial.
Then $\theta$ is in fact hypercallitic.
\end{lemma}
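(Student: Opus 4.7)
The plan is to exploit spatiality of $T$ by testing the prospective decomposition of an element against completely prime filters. Given $t \in T$, I would set
\[
t^{\ast} = \bigvee \{ t \wedge \theta(s) \colon s \in S \},
\]
where the binary meets exist by Proposition~2.10(2) of \cite{R2}. All the summands lie below $t$, and in an inverse semigroup any two elements with a common upper bound are compatible, so the join exists in the pseudogroup $T$. Obviously $t^{\ast} \leq t$, and showing the reverse inequality is all that is required: once $t = t^{\ast}$, the defining family exhibits $t$ as a compatible join of elements each dominated by some $\theta(s)$, which is exactly the hypercallitic condition.

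To prove $t = t^{\ast}$ I would use spatiality. Since $T$ is spatial, it suffices to show $X_{t} = X_{t^{\ast}}$; the inclusion $X_{t^{\ast}} \subseteq X_{t}$ is automatic from $t^{\ast} \leq t$ (Lemma~\ref{le: top_completely_prime}). For the reverse inclusion, take any completely prime filter $F$ containing $t$. By calliticity of $\theta$ there is some $s \in S$ with $\theta(s) \in F$. Filters in a pseudogroup are closed under binary meets, so $t \wedge \theta(s) \in F$. Since $t \wedge \theta(s)$ is one of the summands defining $t^{\ast}$, we have $t \wedge \theta(s) \leq t^{\ast}$, and upward closure of $F$ gives $t^{\ast} \in F$. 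Therefore $X_{t} \subseteq X_{t^{\ast}}$, whence $X_{t} = X_{t^{\ast}}$ and then $t = t^{\ast}$ by spatiality.

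The only genuine content here is the interplay between the two halves of the callitic hypothesis and spatiality: calliticity supplies an element of $S$ whose image meets any prescribed completely prime filter containing $t$, while spatiality lets us conclude a global equality $t = t^{\ast}$ from the local test on completely prime filters. I do not foresee any real obstacle; the main point one must be careful about is that the indexing family $\{ t \wedge \theta(s) \}$ is pairwise compatible (which follows immediately from every element lying below $t$), so that the join defining $t^{\ast}$ is legitimate in the pseudogroup.
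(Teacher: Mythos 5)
Your proposal is correct and follows essentially the same route as the paper: you form $t' = \bigvee_{s \in S}(t \wedge \theta(s))$, use calliticity to show every completely prime filter containing $t$ contains $t'$, and conclude $t = t'$ by spatiality. The only difference is cosmetic — you spell out the compatibility of the joinands and the meet-closure of filters, which the paper leaves implicit.
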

\begin{proof} Let $t \in T$ where $t \neq 0$.
Then the set of all completely prime filters $X_{t}$ containing $t$ cannot be empty
because then $X_{t} = X_{0}$ would imply that $t = 0$.
Put $t' = \bigvee_{s \in S} (t \wedge \theta (s))$.
We prove that
$$X_{t} = X_{t'}$$
from which the result follows by the spatiality of $T$.
Let $F \in X_{t}$.
Since $\theta$ is callitic there exists $\theta (s) \in F$ for some $s \in S$.
Thus $t \wedge \theta (s) \in F$.
It follows that $F \in  X_{t'}$.
Conversely, if $F \in  X_{t'}$ then clearly $F \in X_{t}$.
\end{proof}

We now come to our main theorem.

\begin{theorem}[Adjunction]
The functor
$$\mathsf{G}_{CP} \colon \mathbf{PG}^{op} \rightarrow \mathbf{Etale}$$
is right adjoint to the functor
$$\mathsf{B} \colon \mathbf{Etale} \rightarrow \mathbf{PG}^{ op}.$$
\end{theorem}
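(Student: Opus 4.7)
The plan is to establish the adjunction by exhibiting, for each \'etale groupoid $G$ and each pseudogroup $S$, a natural bijection
$$\mathrm{Hom}_{\mathbf{Etale}}(G, \mathsf{G}_{CP}(S)) \;\cong\; \mathrm{Hom}_{\mathbf{PG}^{op}}(\mathsf{B}(G), S) \;=\; \mathrm{Hom}_{\mathbf{PG}}(S, \mathsf{B}(G)),$$
whose unit and counit recover $\eta_G$ from Proposition~\ref{prop: unit} and $\varepsilon_S$ from Proposition~\ref{prop: counit}. Functoriality of $\mathsf{G}_{CP}$ and $\mathsf{B}$ between the stated categories is essentially in hand: on objects it is given, and on morphisms Lemma~\ref{le: u} handles $\mathsf{G}_{CP}$ while Lemma~\ref{le: v} handles $\mathsf{B}$, with composition respected because preimages compose contravariantly.

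For the hom-set bijection, given a callitic $\theta \colon S \to \mathsf{B}(G)$ I would define $\theta^{\sharp} \colon G \to \mathsf{G}_{CP}(S)$ by $g \mapsto \theta^{-1}(F_g)$. Calliticity guarantees that $\theta^{-1}(F_g)$ is completely prime in $S$, and the proof that $\theta^{\sharp}$ is a functor with the covering property repeats the calculations in Lemma~\ref{le: u}. Continuity is encoded in the chain
$$g \in (\theta^{\sharp})^{-1}(X_s) \;\Longleftrightarrow\; s \in \theta^{-1}(F_g) \;\Longleftrightarrow\; \theta(s) \in F_g \;\Longleftrightarrow\; g \in \theta(s),$$
which gives $(\theta^{\sharp})^{-1}(X_s) = \theta(s)$, itself an open local bisection. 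Conversely, given a continuous covering functor $\phi \colon G \to \mathsf{G}_{CP}(S)$, I would set $\phi^{\flat}(s) = \phi^{-1}(X_s)$; this is an open local bisection by continuity and the covering property, and Lemma~\ref{le: v} shows $\phi^{\flat}$ is hypercallitic, hence callitic.

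The operations $\sharp$ and $\flat$ are mutually inverse by direct computation: $(\theta^{\sharp})^{\flat}(s) = (\theta^{\sharp})^{-1}(X_s) = \theta(s)$ by the displayed chain, and
$$(\phi^{\flat})^{\sharp}(g) \;=\; (\phi^{\flat})^{-1}(F_g) \;=\; \{s \in S : g \in \phi^{-1}(X_s)\} \;=\; \{s \in S : s \in \phi(g)\} \;=\; \phi(g).$$
Naturality in both variables follows immediately from $\mathsf{G}_{CP}(\theta) = \theta^{-1}$ and $\mathsf{B}(\phi) = \phi^{-1}$ together with the contravariant composition of preimages.

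The main obstacle is the work buried in the second step: verifying that $\theta^{\sharp}$ is a genuine continuous covering functor between \'etale groupoids. Preservation of domains and codomains, of composable products, and star-injectivity and star-surjectivity all have to be checked, essentially by redoing the groupoid manipulations in the proof of Lemma~\ref{le: u} in the present setting (using parts of Lemma~\ref{le: filter_properties} and the description of $F_g$). Once that verification is accepted, the bijection collapses to the two one-line identities above and naturality becomes purely formal.
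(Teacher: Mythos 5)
Your proposal is correct and follows essentially the same route as the paper: the paper also defines the two transposition maps as $\beta \mapsto \beta^{-1}\eta$ (i.e.\ $g \mapsto \beta^{-1}(F_{g})$) and $\alpha \mapsto \alpha^{-1}\varepsilon$ (i.e.\ $s \mapsto \alpha^{-1}(X_{s})$), verifies they are mutually inverse by the same two chains of equivalences, and leaves naturality as formal. The only cosmetic difference is that the paper obtains the covering-functor and calliticity properties by composing with the unit $\eta$ and counit $\varepsilon$ and citing Lemmas~\ref{le: u} and~\ref{le: v}, rather than re-running those verifications directly.
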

\begin{proof}
Let $\alpha \colon G \rightarrow \mathsf{G}_{CP}(S)$ be a continuous covering functor.
Now $\varepsilon \colon S \rightarrow \mathsf{B}(\mathsf{G}_{CP}(S))$ is a
pseudogroup $\wedge$-morphism and every element of the codomain is a join of elements in the image.
It follows that $\varepsilon$ is hypercallitic.
We also have by our results above that $\alpha^{-1}$ is hypercallitic.
Thus the map $\alpha^{-1} \varepsilon \colon S \rightarrow \mathsf{B}(G)$
given by
$$s \mapsto \alpha^{-1}(X_{s})$$
is hypercallitic.
Observe that $\mathsf{B}(G)$ is spatial and so all callitic maps into it are automaticaly hypercallitic.

Let $\beta \colon S \rightarrow \mathsf{B}(G)$ be a (hyper)callitic map.
Now $\eta \colon G \rightarrow \mathsf{G}_{CP} (\mathsf{B}(G))$ is a continuous covering functor
and so is $\beta^{-1} \colon \mathsf{G}_{CP}(\mathsf{B}(G)) \rightarrow \mathsf{G}_{CP}(S)$.
Thus the map
$\beta^{-1} \eta \colon G \rightarrow \mathsf{G}_{CP}(S)$
given by
$$g \mapsto \beta^{-1} (F_{g})$$
is a continuous covering functor.

We shall that these two constructions are mutually inverse.

Let $\beta \colon S \rightarrow \mathsf{B}(G)$ be a (hyper)callitic morphism of pseudogroups.
Define $\alpha (g) = \beta^{-1}(G_{g})$.
Then the map we get from $S$ to $\mathsf{B}(G)$
after applying the above procedures twice is the map $s \mapsto \alpha^{-1} (X_{s})$.
We have that
$g \in \alpha^{-1} (X_{s})
\Leftrightarrow \alpha (g) \in X_{s}
\Leftrightarrow \beta^{-1} (F_{g}) \in X_{s}
\Leftrightarrow s \in \beta^{-1} (F_{g})
\Leftrightarrow \beta (s) \in F_{g}
\Leftrightarrow g \in \beta (s)$.
It follows that $\beta (s) = \alpha^{-1}(X_{s})$, as required.

Let $\alpha \colon G \rightarrow \mathsf{G}_{CP}(S)$ be a continuous covering functor.
Define $\beta (s) = \alpha^{-1} (X_{s})$.
Then then map we get from $G$ to $\mathsf{G}_{CP}(S)$
after applying the above procedures twice is the map $g \mapsto \beta^{-1} (F_{g})$.
We have that
$s \in \beta^{-1} (F_{g})
\Leftrightarrow \beta (s) \in F_{g}
\Leftrightarrow \alpha^{-1}(X_{s}) \in F_{g}
 \Leftrightarrow g \in \alpha^{-1} (X_{s})
\Leftrightarrow \alpha (g) \in X_{s}
\Leftrightarrow s \in \alpha (g)$, as required.

Naturality is straightfoward to prove here, and so we have an adjunction.
\end{proof}

The map $\eta \colon G \rightarrow \mathsf{G}_{CP}(\mathsf{B} (G))$ of Proposition~\ref{prop: unit} is the
{\em unit} of the adjunction.
The map $\varepsilon \colon S \rightarrow \mathsf{B} (\mathsf{G}_{CP} (S))$ of Proposition~\ref{prop: counit}
is the {\em counit} of the adjunction.

Let $\mathbf{PG}_{sp}$ be the category of spatial pseudogroups and callitic pseudogroup morphisms.
Let $\mathbf{Etale}_{so}$ be the category of sober \'etale groupoids and continuous covering functors.
From the above theorem, Proposition~\ref{prop: spatial_sober} and general category theory we have proved the following.
At the level of objects, it was first proved in \cite{R2}.

\begin{theorem}\label{the: duality}[Duality between spatial pseudogroups and sober \'etale groupoids]
The category  $\mathbf{PG}_{sp}^{op}$ is equivalent to the category  $\mathbf{Etale}_{so}$.
\end{theorem}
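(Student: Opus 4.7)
The plan is to deduce this theorem from the adjunction between $\mathsf{B}$ and $\mathsf{G}_{CP}$ just established, by invoking the standard categorical fact that any adjunction restricts to an equivalence between the full subcategories on which the unit and the counit are isomorphisms. So I would not prove the equivalence directly; I would just locate those subcategories and check that the two functors in the adjunction already map into them.

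First I would identify the relevant subcategories explicitly. By the definition of a sober \'etale groupoid (namely, that $\eta$ is a homeomorphism), the unit $\eta \colon G \rightarrow \mathsf{G}_{CP}(\mathsf{B}(G))$ is an isomorphism in $\mathbf{Etale}$ precisely when $G$ is sober. Dually, by part~(3) of Proposition~\ref{prop: counit}, the counit $\varepsilon \colon S \rightarrow \mathsf{B}(\mathsf{G}_{CP}(S))$ is an isomorphism in $\mathbf{PG}$ precisely when $S$ is spatial. Hence the subcategories where unit and counit are invertible are exactly $\mathbf{Etale}_{so}$ and $\mathbf{PG}_{sp}$.

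Next I would verify that the two functors restrict. This is precisely the content of Proposition~\ref{prop: spatial_sober}: for every \'etale groupoid $G$ the pseudogroup $\mathsf{B}(G)$ is spatial, and for every pseudogroup $S$ the \'etale groupoid $\mathsf{G}_{CP}(S)$ is sober. Thus $\mathsf{B}$ restricts to a functor $\mathbf{Etale}_{so} \rightarrow \mathbf{PG}_{sp}^{op}$ and $\mathsf{G}_{CP}$ restricts to a functor $\mathbf{PG}_{sp}^{op} \rightarrow \mathbf{Etale}_{so}$. Since both subcategories are full, the unit and counit of the original adjunction restrict along the embeddings without any extra naturality check.

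The main (and really the only) obstacle is to be sure that restricting to these subcategories actually turns the unit and counit into natural isomorphisms at every object. For a spatial $S$ the counit at $S$ is an iso by Proposition~\ref{prop: counit}(3); for an \'etale groupoid $G$ (whether or not $G$ itself is sober), the counit at $\mathsf{B}(G)$ must be an iso because $\mathsf{B}(G)$ is spatial by Proposition~\ref{prop: spatial_sober}(1). Symmetrically, the unit at a sober $G$ is an iso by definition, and for any pseudogroup $S$ the unit at $\mathsf{G}_{CP}(S)$ is an iso because $\mathsf{G}_{CP}(S)$ is sober by Proposition~\ref{prop: spatial_sober}(2). The triangle identities of the adjunction then guarantee, in the standard way, that the restricted unit and counit are mutually inverse natural isomorphisms, yielding the desired equivalence $\mathbf{PG}_{sp}^{op} \simeq \mathbf{Etale}_{so}$.
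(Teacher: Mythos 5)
Your proposal is correct and follows essentially the same route as the paper, which derives the theorem in one line from the adjunction, Proposition~\ref{prop: spatial_sober}, and ``general category theory'' --- i.e.\ precisely the standard fact that an adjunction restricts to an equivalence on the full subcategories where the unit and counit are invertible. Your expansion of that one line (identifying those subcategories as $\mathbf{Etale}_{so}$ and $\mathbf{PG}_{sp}$ via the definition of sobriety and part~(3) of Proposition~\ref{prop: counit}, and checking the functors restrict via Proposition~\ref{prop: spatial_sober}) is exactly the intended argument.
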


\subsection{Deducing dualities}

The main goal of this section is to derive the duality theorem for distributive inverse semigroups, Theorem~\ref{the: duality_theorem},
from Theorem~\ref{the: duality} above.
We shall first need a way of completing distributive inverse semigroups to pseudogroups.
Recall that $\mathsf{C}(S)$ is the Schein completion of $S$.
When $S$ is a distributive inverse semigroup, 
we shall work with a cut down version of $\mathsf{C}(S)$ which uses a finitary version of a construction due to Rinow \cite{R}.
Observe that if $A$ is a compatible order ideal
then it becomes a $\vee$-closed compatible order ideal when we include all the joins of finite subsets of $A$.
The operation $A \mapsto A^{\vee}$ satisfies the following properties:
\begin{description}

\item[{\rm (Cl1)}] $A \subseteq A^{\vee}$.

\item[{\rm (Cl2)}] If $A \subseteq B$ then $A^{\vee} \subseteq B^{\vee}$.

\item[{\rm (Cl3)}] $A^{\vee} = (A^{\vee})^{\vee}$.

\item[{\rm (Cl4)}] $A^{\vee}B^{\vee} = (AB)^{\vee}$.

\item[{\rm (Cl5)}] If $A$ consists entirely of idempotents then so does $A^{\vee}$.
\end{description}
The proofs are all straightforward except for (Cl4) which needs some comment.
The proof of the inclusion $A^{\vee}B^{\vee} \subseteq (AB)^{\vee}$ follows from the fact that multiplication distributes over compatible joins
whereas the proof of the reverse inclusion uses the fact that $A^{\vee}B^{\vee}$ is an order ideal. 
We denote by
$\mbox{Idl} (S)$
the set of all $\vee$-closed elements of $\mathsf{C}(S)$.

\begin{proposition} Let $S$ be a distributive inverse semigroup.
Then $\mbox{\rm Idl} (S)$ is a pseudogroup and the homomorphism $\iota \colon S \rightarrow \mbox{\rm Idl} (S)$
given by $s \mapsto s^{\downarrow}$
preserves binary joins of compatible pairs of elements.

In addition, $\mbox{\rm Idl}$ is left adjoint to the forgetful functor from the category
of pseudogroups and pseudogroup morphisms to the category of distributive inverse semigroups and their morphisms.

If $\theta \colon S \rightarrow T$ is a morphism of distributive inverse semigroups then
$\Theta \colon \mbox{\rm Idl}(S) \rightarrow \mbox{\rm Idl}(T)$ defined by $\Theta (A) = [\theta (A)^{\downarrow}]^{\vee}$
is the induced morphism of pseudogroups.

\end{proposition}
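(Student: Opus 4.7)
The plan is to verify the four claims in the order listed: that $\mbox{Idl}(S)$ is a pseudogroup, that $\iota$ preserves binary compatible joins, that $\mbox{Idl}$ is a left adjoint, and that morphisms lift via the formula $\Theta(A) = [\theta(A)^{\downarrow}]^{\vee}$. The closure operator properties (Cl1)--(Cl5) will do most of the heavy lifting.

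First I would show that $\mbox{Idl}(S)$ is a pseudogroup. Closure under subset multiplication is immediate from (Cl4): if $A = A^{\vee}$ and $B = B^{\vee}$ then $AB = A^{\vee}B^{\vee} = (AB)^{\vee}$. For inverses, the set $A^{-1}$ is a compatible order ideal whenever $A$ is, and one checks directly (using that the natural order respects inversion and that joins of inverses are inverses of joins in an inverse semigroup) that $A^{\vee}$ being $\vee$-closed forces $A^{-1}$ to be $\vee$-closed. For joins of a non-empty compatible family $\{A_{i}\}_{i \in I} \subseteq \mbox{Idl}(S)$, I would define $\bigvee_{i} A_{i} := \left(\bigcup_{i} A_{i}\right)^{\vee}$, verify pairwise compatibility of its elements (which follows from the assumed compatibility of the family together with the fact that compatibility is preserved under passage to joins of compatible subsets, by standard properties of compatible joins from \cite{Law2}), and then show it is the least upper bound by the usual argument. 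For infinite distributivity, one uses that subset multiplication distributes over arbitrary unions, combined with (Cl4) applied to $B(\bigcup_{i} A_{i})^{\vee}$.

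Next, for $\iota(s) = s^{\downarrow}$, the equality $\iota(s)\iota(t) = \iota(st)$ is standard, and $\iota(s \vee t) = \iota(s) \vee_{\mbox{Idl}(S)} \iota(t)$ follows by showing $(s \vee t)^{\downarrow} = (s^{\downarrow} \cup t^{\downarrow})^{\vee}$. One inclusion uses that $(s \vee t)^{\downarrow}$ is a $\vee$-closed order ideal (an instance of Lemma~\ref{le: meets_and_joins}(1) applied inside the principal ideal $(s \vee t)^{\downarrow}$) containing both generators, and the reverse inclusion holds because $s \vee t$ itself lies in $(s^{\downarrow} \cup t^{\downarrow})^{\vee}$ as a compatible join of two of its elements.

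For the adjunction, given a morphism of distributive inverse semigroups $\theta \colon S \to T$ with $T$ a pseudogroup, I would define $\Theta \colon \mbox{Idl}(S) \to T$ by $\Theta(A) = \bigvee \theta(A)$; this join exists in $T$ because $\theta$ preserves compatibility and $T$ is a pseudogroup. Multiplicativity of $\Theta$ comes from infinite distributivity in $T$ together with $AB = \{ab : a \in A, b \in B\}$, and preservation of all compatible joins reduces (using (Cl4) and the fact that $\theta$ preserves binary compatible joins, so the elements added in passing to the $\vee$-closure contribute nothing new to the join in $T$) to the identity $\bigvee \theta\left(\bigcup_{i} A_{i}\right) = \bigvee_{i} \bigvee \theta(A_{i})$. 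The equation $\Theta \iota = \theta$ is clear because $s$ is the top element of $s^{\downarrow}$, so $\bigvee \theta(s^{\downarrow}) = \theta(s)$. Uniqueness follows from the observation that in $\mbox{Idl}(S)$ every element decomposes as $A = \bigvee_{a \in A} a^{\downarrow}$, a compatible join that any pseudogroup morphism must preserve.

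Finally, the induced morphism $\Theta \colon \mbox{Idl}(S) \to \mbox{Idl}(T)$ corresponding to $\theta \colon S \to T$ is obtained by feeding $\iota_{T} \circ \theta \colon S \to \mbox{Idl}(T)$ into the adjunction, which yields $\Theta(A) = \bigvee_{a \in A} \theta(a)^{\downarrow} = \left(\bigcup_{a \in A} \theta(a)^{\downarrow}\right)^{\vee} = [\theta(A)^{\downarrow}]^{\vee}$, matching the stated formula. The main obstacle I anticipate is the verification of infinite distributivity and, relatedly, the check that $\Theta$ preserves \emph{all} compatible joins rather than only finite ones; here the essential point is to show that taking the $\vee$-closure in the definition of joins in $\mbox{Idl}(S)$ does not disturb the value of $\bigvee \theta(\cdot)$ in $T$, which is where the hypothesis that $\theta$ preserves binary compatible joins is really used.
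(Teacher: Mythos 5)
Your proposal follows essentially the same route as the paper: realize $\mbox{Idl}(S)$ as the $\vee$-closed elements of $\mathsf{C}(S)$, use the closure properties (Cl1)--(Cl5) to get closure under products and to reflect the joins of $\mathsf{C}(S)$ into $\mbox{Idl}(S)$ via $A \mapsto A^{\vee}$, prove $\iota(a \vee b) = \iota(a) \vee \iota(b)$ by observing that any $\vee$-closed ideal containing $a$ and $b$ must contain $a \vee b$, and obtain the adjunction by sending $A \mapsto \bigvee \theta(A)$ with uniqueness coming from the decomposition $A = \bigvee_{a \in A} a^{\downarrow}$ (indeed your formula corrects a typo in the paper, which writes $\bar{\alpha}(A) = \bigvee A$). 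The only point the paper records that you omit is that $E(S)$ itself is the maximum idempotent of $\mbox{Idl}(S)$, so its idempotents form a frame --- relevant because pseudogroups are taken to be monoids in this paper's usage --- but this is immediate.
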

\begin{proof} It is clear that $\mbox{\rm Idl} (S)$ is closed under inverses, and it is closed under multiplication by (C4) above.
It follows that $\mbox{\rm Idl} (S)$ is an inverse semigroup.
Observe that $\mbox{\rm Idl} (S)$ is actually an inverse subsemigroup of $\mathsf{C}(S)$ and so the natural partial orders agree.
A compatible set of elements in  $\mbox{\rm Idl} (S)$ has a join in $\mathsf{C}(S)$ and this can be reflected into  $\mbox{\rm Idl} (S)$
using the operation $A \mapsto A^{\vee}$.
Thus every compatible subset of $\mbox{\rm Idl} (S)$ has a join.
It is now easy to prove using the properties of the $\vee$-closure operation that $\mbox{\rm Idl} (S)$ is infinitely distributive.
Observe that the idempotents of $\mbox{\rm Idl} (S)$ are the $\vee$-closed order ideals in the meet semilattice $E(S)$
and that there is a maximum idempotent $E(S)$ and so the semilattice of idempotents of $\mbox{\rm Idl} (S)$ forms a frame.
In the monoid case, this can be deduced from Corollary in Section~2.11 of \cite{J}.
It follows that $\mbox{\rm Idl} (S)$ is a pseudogroup.

The map $\iota \colon S \rightarrow \mbox{\rm Idl} (S)$ is a homomorphism.
Suppose that $c = a \vee b$ in $S$.
Clearly $\iota (a), \iota (b) \subseteq \iota (c)$.
But any {\em $\vee$-closed} element of $\mathsf{C}(S)$ that contains $a$ and $b$ must contain $c$.
It follows that $\iota (c) = \iota (a) \vee \iota (b)$.

Let $\alpha \colon S \rightarrow T$ be a homomorphism to a pseudogroup that preserves finite compatible joins.
Then there is a unique morphism of pseudogroups $\bar{\alpha} \colon \mbox{\rm Idl} (S) \rightarrow T$
such that $\bar{\alpha} \iota = \alpha$ defined by $\bar{\alpha} (A) = \bigvee A$.

The proof of the last claim is routine.
\end{proof}

We call the pseudogroup $\mbox{\rm Idl}(S)$ the {\em $\mbox{\rm Idl}$-completion} of $S$.

Prime filters in distributive inverse semigroups and completely prime filters in their $\mbox{Idl}$-completions are related as follows.

\begin{lemma}\label{prime-vs-completelyprime} Let $S$ be a distributive inverse semigroup.

If $P$ is a prime filter in $S$ define
$$P^{u} = \{ A \in  \mbox{\rm Idl} (S) \colon  A \cap P \neq \emptyset \}.$$
Then $P^{u}$ is a completely prime filter in $\mbox{\rm Idl} (S)$.

If $F$ is a completely prime filter in $\mbox{\rm Idl} (S)$
define
$$F^{d} = \{s \in S \colon s^{\downarrow} \in F \}.$$
Then $F^{d}$ is a prime filter in $S$.

The above two operations are mutually inverse and set up an order
isomorphism between the poset of prime filters on $S$ and the poset of
completely prime filters on  $\mbox{\rm Idl} (S)$.
\end{lemma}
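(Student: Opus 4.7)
The plan is to verify each of the three assertions in turn: (i) $P^u$ is a completely prime filter in $\mbox{Idl}(S)$; (ii) $F^d$ is a prime filter in $S$; (iii) the operations are mutually inverse and monotone. The constant technical fact I would use throughout is that for any $\vee$-closed compatible order ideal $A \in \mbox{Idl}(S)$ the equality $A = \bigvee_{a \in A} a^{\downarrow}$ holds as a compatible join in $\mbox{Idl}(S)$: the family $\{a^{\downarrow} : a \in A\}$ is compatible because $A$ is, each $a^{\downarrow}$ is a $\vee$-closed order ideal contained in $A$, and the union $\bigcup_{a \in A} a^{\downarrow}$ already coincides with $A$, hence is itself $\vee$-closed. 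This is the key identity that converts statements about a completely prime filter on $\mbox{Idl}(S)$ into statements about an ordinary filter on $S$.

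For (i), upward closure of $P^u$ is immediate and directedness reduces to showing that $A \wedge B$ in $\mbox{Idl}(S)$ is just $A \cap B$, which is a $\vee$-closed order ideal: if $A,B \in P^u$, then taking $a \in A \cap P$ and $b \in B \cap P$ one uses directedness of $P$ to find $c \in P$ with $c \leq a,b$, which lies in $A \cap B$. Complete primeness is where the key identity enters: if $\bigvee_{i} A_i \in P^u$, pick an element $p$ of $P$ in the join; by definition of the $\vee$-closure, $p$ is a finite compatible join $\bigvee_{j=1}^{n} a_{i_j}$ with $a_{i_j} \in A_{i_j}$, and then by an induction on $n$ using primeness of $P$, some $a_{i_j}$ lies in $P$, forcing $A_{i_j} \in P^u$. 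Properness follows because $0 \notin P$.

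For (ii), $F^d$ is upward closed because $s \leq t$ implies $s^{\downarrow} \subseteq t^{\downarrow}$. Directedness uses the crucial observation that $s^{\downarrow} \cap t^{\downarrow} = \bigvee_{c \leq s,t} c^{\downarrow}$ in $\mbox{Idl}(S)$, a special case of the key identity above. If $s,t \in F^d$, then $s^{\downarrow}, t^{\downarrow} \in F$; since meets exist in the pseudogroup $F$ we get $s^{\downarrow} \cap t^{\downarrow} \in F$, and complete primeness yields some $c \leq s,t$ with $c^{\downarrow} \in F$, i.e.\ $c \in F^d$. Primeness of $F^d$ uses that $\iota$ preserves compatible binary joins (from the preceding proposition): $(a \vee b)^{\downarrow} = a^{\downarrow} \vee b^{\downarrow}$, so $a \vee b \in F^d$ forces $a^{\downarrow}$ or $b^{\downarrow}$ to lie in $F$ by primeness (completely prime filters are, in particular, prime).

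For (iii), $(P^u)^d = P$ is essentially a tautology: $s^{\downarrow} \cap P \neq \emptyset$ combined with upward closure of $P$ gives $s \in P$, and conversely $s \in P$ places $s$ in $s^{\downarrow} \cap P$. The direction $(F^d)^u = F$ is the most substantive and is where I would again invoke the key identity: given $A \in F$, writing $A = \bigvee_{a \in A} a^{\downarrow}$ and applying complete primeness of $F$ produces $a \in A$ with $a^{\downarrow} \in F$, so $a \in A \cap F^d$ and $A \in (F^d)^u$; conversely, any $a \in A \cap F^d$ gives $a^{\downarrow} \in F$ with $a^{\downarrow} \subseteq A$, so $A \in F$ by upward closure. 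Monotonicity of both operations is immediate, so these bijections are order isomorphisms. The only real obstacle is book-keeping around the definition of compatible join in $\mbox{Idl}(S)$ and making sure the representation $A = \bigvee_{a \in A} a^{\downarrow}$ is applied as a genuine compatible join, not merely a set-theoretic union.
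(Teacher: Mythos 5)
Your proposal is correct and follows essentially the same route as the paper: both arguments hinge on the identity $A=\bigvee_{a\in A}a^{\downarrow}$ in $\mbox{\rm Idl}(S)$, use finite compatible decompositions of elements of a join $\bigvee_i A_i$ together with (iterated) primeness of $P$ for complete primeness of $P^{u}$, and use complete primeness of $F$ applied to $s^{\downarrow}\cap t^{\downarrow}$ for directedness of $F^{d}$. You merely write out the ``mutually inverse'' verification that the paper declares routine, and that verification is correct.
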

\begin{proof}
Clearly the set $P^{u}$ is closed upwards.
Let $A,B \in P^{u}$.
Then $s \in A \cap P$ and $t \in B \cap P$.
But $P$ is a filter and so there exists $p \in P$ such that $p \leq s,t$.
But then $p \in A \cap B$ and so $P^{u}$ is closed under binary intersections.
Suppose that $\bigvee A_{i} \in P^{u}$.
Thus there exists $p \in P$ such that $p \in \bigvee_{i} A_{i}$.
By definition $p = \vee_{j=1}^{m} a_{j}$ for some finite set of elements $a_{j}$ in the $A_{i}$.
But $P$ is a prime filter and so $a_{k} \in P$ for some $k$.
Thus one of the $A_{i}$, the one containing $a_{k}$, belongs to $P^{u}$ as required.
Thus $P^{u}$ is a completely prime filter.

We now show that $F^{d}$ is a prime filter.
Let $s,t \in F^{d}$.
Then $s^{\downarrow},t^{\downarrow} \in F$.
Thus $A = s^{\downarrow} \cap t^{\downarrow} \in F$.
Now $A = \bigvee_{a \in A} a^{\downarrow} \in F$ and so $a^{\downarrow} \in F$ for some $a \in A$.
Thus $a \in F^{d}$ and $a \leq s,t$.
It is clear that $F^{d}$ is closed upwards.
It remains to show that $F^{d}$ is a prime filter.
Let $s \vee t \in F^{d}$.
Then $(s \vee t)^{\downarrow} \in F$.
But $(s \vee t)^{\downarrow} = s^{\downarrow} \vee t^{\downarrow} \in F$.
It follows that $s^{\downarrow} \in F$ or $t^{\downarrow} \in F$.
Thus $s \in F$ or $t \in F$.

It is now routine to check that these two operations are mutually inverse and order-preserving.
\end{proof}

We shall now characterize the pseudogroups that arise as  $\mbox{\rm Idl}$-completions.
To do this we need the following definition.
We say that the compatible subset $X$ of a pseudogroup $S$ is a {\em covering} of the element $a$ if $a \leq \bigvee X$.
An element $a \in S$ in a pseudogroup $S$ is said to be {\em finite} if for any compatible subset $X \subseteq S$ such that
$a \leq \bigvee X$ there exists a finite subset $Y$ of $X$ such that $a \leq \bigvee Y$.
In other words, every covering has a finite subcovering.
In the case of the frames of open sets of a topological space the finite elements are just the compact ones.
It is worth noting that the inequalities can be replaced by equalities; see page~63 of \cite{J}.
We denote the set of finite elements of a pseudogroup $S$ by $\mathsf{K}(S)$.

\begin{lemma} Let $S$ be a pseudogroup.
\begin{enumerate}

\item If $a$ is finite then $a^{-1}$ is finite.

\item If $a$ is any element and $e$ is a finite idempotent such that $e \leq a^{-1}a$ then $ae$ is finite.

\item If $a$ is finite then $a^{-1}a$ is finite, and dually.

\item If $a$ and $b$ are finite and $a^{-1}a = bb^{-1}$ then $ab$ is finite.

\end{enumerate}
\end{lemma}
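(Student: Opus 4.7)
The four parts are variations on one idea: use left (or right) multiplication to transfer a hypothetical covering of one element to a covering of another, and apply finiteness there. The only facts I need are that multiplication distributes over compatible joins, that inversion is an order-isomorphism that sends compatible joins to compatible joins, and that $fy \le y$ whenever $f$ is an idempotent (since $fy = (fy)y^{-1}y$ shows $fy = e'y$ for the idempotent $e' = fy\cdot y^{-1} = fyy^{-1}$).

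For (1), let $X$ be a compatible subset with $a^{-1} \le \bigvee X$. Inversion is an anti-automorphism of the natural partial order that sends compatible joins to compatible joins, so $X^{-1} = \{x^{-1}: x \in X\}$ is compatible and $a \le \bigvee X^{-1}$. Finiteness of $a$ yields a finite $Y \subseteq X$ with $a \le \bigvee Y^{-1}$; inverting again gives $a^{-1} \le \bigvee Y$. For (2), suppose $ae \le \bigvee X$ with $X$ compatible. Left multiply by $a^{-1}$: by infinite distributivity $a^{-1}ae \le \bigvee a^{-1}X$, and the left-hand side simplifies to $e$ because $e \le a^{-1}a$. Since $a^{-1}X$ is compatible and $e$ is finite, there is a finite $Y \subseteq X$ with $e \le \bigvee a^{-1}Y$. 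Left multiplying by $a$ and using distributivity gives $ae \le \bigvee aa^{-1}y$; since $aa^{-1}$ is idempotent, $aa^{-1}y \le y$ for each $y$, so $ae \le \bigvee Y$.

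Parts (3) and (4) are variants of the same move. For (3), given a compatible covering $a^{-1}a \le \bigvee X$, left multiplication by $a$ gives $a = a(a^{-1}a) \le \bigvee aX$; finiteness of $a$ yields a finite $Y \subseteq X$ with $a \le \bigvee aY$, and then left multiplication by $a^{-1}$ combined with $aa^{-1}y \le y$ produces $a^{-1}a \le \bigvee Y$. The dual statement follows by applying (1). For (4), suppose $ab \le \bigvee X$ with $X$ compatible. Left multiplying by $a^{-1}$ yields $a^{-1}ab \le \bigvee a^{-1}X$, and since $a^{-1}a = bb^{-1}$ the left-hand side equals $bb^{-1}b = b$. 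Finiteness of $b$ delivers a finite $Y \subseteq X$ with $b \le \bigvee a^{-1}Y$, and left multiplying by $a$ together with the idempotent inequality yields $ab \le \bigvee Y$.

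There is no real obstacle; the only points that deserve care are the two routine facts that I flagged above, namely that inversion preserves compatible joins (needed for (1)) and that $aa^{-1}y \le y$ (needed for (2), (3), (4)). Everything else is algebraic bookkeeping using the infinite distributive law of the pseudogroup.
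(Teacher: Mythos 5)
Your proof is correct and follows essentially the same argument as the paper: transfer a covering by left (or right) multiplication, use infinite distributivity, invoke finiteness of the transferred element, and multiply back using $aa^{-1}x \leq x$. The only cosmetic difference is in (4), where the paper conjugates on both sides to reduce to the finite idempotent $a^{-1}abb^{-1}$ while you left-multiply by $a^{-1}$ alone to reduce to $b$ directly; both work equally well.
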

\begin{proof} (1) Straightforward.

(2) Let $a$ be any element and $e$ a finite idempotent $e \leq a^{-1}a$.
We prove that $ae$ is finite.
Suppose that $ae \leq \bigvee x_{i}$.
Then $e = a^{-1}ae \leq \bigvee a^{-1}x_{i}$.
But $e$ is finite and so $e \leq \bigvee_{i=1}^{m} a^{-1}x_{i}$.
Thus $ae \leq \bigvee_{i=1}^{m} aa^{-1}x_{i} \leq \bigvee_{i=1}^{m} x_{i}$.
It follows that $ae$ is finite.

(3) Let $a$ be any finite element.
Suppose that $a^{-1}a \leq \bigvee x_{i}$.
Then $a \leq \bigvee ax_{i}$.
Thus $a \leq \bigvee_{i=1}^{m} ax_{i}$ since $a$ is finite.
Hence $a^{-1}a \bigvee_{i=1}^{m} a^{-1}ax_{i} \leq  \bigvee_{i=1}^{m} x_{i}$.
Thus $a^{-1}a$ is finite.

(4) Let $a$ and $b$ be any finite elements where $a^{-1}a = bb^{-1}$.
We prove that $ab$ is finite.
Suppose that $ab \leq \bigvee x_{i}$.
Then $a^{-1}abb^{-1} \leq \bigvee a^{-1}x_{i}b^{-1}$.
By assumption $a^{-1}abb^{-1}$ is a finite idempotent.
Thus we may write $a^{-1}abb^{-1} \leq \bigvee_{i=1}^{m} a^{-1}x_{i}b^{-1}$.
Hence $ab  \leq \bigvee_{i=1}^{m} aa^{-1}x_{i}b^{-1}b \leq \bigvee_{i=1}^{m} x_{i}$.
It follows that $ab$ is finite.
\end{proof}

The above lemma tells us that the finite elements in a pseudogroup always form an ordered groupoid \cite{Law2}.

\begin{lemma} Let $S$ be a pseudogroup.
\begin{enumerate}

\item The finite elements of $S$ form an inverse subsemigroup if and only if the finite idempotents form a subsemigroup.

\item If the finite elements form an inverse subsemigroup they form a distributive inverse semigroup.

\item Every element of $S$ is a join of finite elements if and only if every idempotent is a join of finite idempotents.

\end{enumerate}
\end{lemma}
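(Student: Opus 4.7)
The plan is to handle each of the three parts in turn, leaning heavily on the preceding lemma about how finiteness interacts with inverses, restrictions, and restricted products.

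For part (1), the forward direction is immediate since finite idempotents are, in particular, finite elements. For the converse, I would take finite $a, b \in S$ and write $ab$ as a restricted product. Set $e = \mathbf{d}(a) \mathbf{r}(b)$; since $\mathbf{d}(a)$ and $\mathbf{r}(b)$ are finite idempotents by part (3) of the preceding lemma (applied to $a$ and to $b^{-1}$, together with part (1)), the hypothesis that finite idempotents form a subsemigroup gives that $e$ is a finite idempotent. Then $ae$ is finite by part (2) of the preceding lemma, since $e \leq \mathbf{d}(a)$; dually, applying parts (1) and (2) to $b^{-1}$ yields that $eb$ is finite. Now $\mathbf{d}(ae) = e = \mathbf{r}(eb)$, so part (4) of the preceding lemma gives that $(ae)(eb)$ is finite. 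Since idempotents commute and $e^{2} = e$, this product equals $a e b = a \mathbf{d}(a)\mathbf{r}(b) b = ab$. Combined with closure under inverses from part (1) of the preceding lemma, this shows that the finite elements form an inverse subsemigroup.

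For part (2), closure of the finite elements under compatible binary joins is straightforward: if $a$ and $b$ are finite with $a \sim b$, and $a \vee b \leq \bigvee_{i} x_{i}$, then by finiteness of $a$ and $b$ there are finite subcovers for each, whose union is a finite subcover of $a \vee b$. Hence $a \vee b$ is again finite, and distributivity of multiplication over compatible finite joins is inherited from the ambient pseudogroup. For part (3), the forward direction is trivial. For the reverse, given $a \in S$, I would decompose $\mathbf{d}(a) = \bigvee_{i} e_{i}$ as a join of finite idempotents using the hypothesis, and then invoke infinite distributivity to obtain $a = a \mathbf{d}(a) = \bigvee_{i} a e_{i}$; each $a e_{i}$ is finite by part (2) of the preceding lemma, since $e_{i} \leq \mathbf{d}(a)$ and $e_{i}$ is a finite idempotent.

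There is no serious obstacle here; the whole lemma reduces to bookkeeping once the preceding lemma is in hand. The one place that requires a little care is the factorisation $ab = (ae)(eb)$ used in part (1), where one must verify both that $e$ lies in the required position and that the restricted product condition $\mathbf{d}(ae) = \mathbf{r}(eb)$ is met so that part (4) of the preceding lemma is applicable.
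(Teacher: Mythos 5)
Your proposal is correct and follows essentially the same route as the paper: the same factorisation $ab=(ae)(eb)$ with $e=\mathbf{d}(a)\mathbf{r}(b)$ for part (1), the same finite-subcover argument for joins in part (2), and the same decomposition $a=\bigvee ae_{i}$ via infinite distributivity for part (3). You merely spell out the details (such as checking $\mathbf{d}(ae)=e=\mathbf{r}(eb)$ before invoking the restricted-product case of the preceding lemma) that the paper leaves implicit.
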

\begin{proof} (1) Let $a$ and $b$ be arbitrary finite elements.
Then $a^{-1}a$ and $bb^{-1}$ are both finite and so $e = a^{-1}abb^{-1}$ is finite
and consequently $ab = (ae)(eb)$ is finite.

(2) Observe that if $a$ and $b$ are compatible finite elements then $a \vee b$ is finite.

(3) Only one direction needs proving.
Suppose that every idempotent is a join of finite idempotents.
Let $a$ be an arbitrary element.
By assumption we may write $a^{-1}a = \bigvee e_{i}$ where $e_{i} \leq a^{-1}a$ and are finite.
Thus $a = \bigvee ae_{i}$ and by Lemma~3.18(1) the elements $ae_{i}$ are all finite.
\end{proof}

A pseudogroup $S$ is said to be {\em coherent} if the set of its finite elements forms a distributive inverse subsemigroup
and if every element of $S$ is a join of finite elements.
  
\begin{proposition}\label{prop: coherent_pseudogroups} A pseudogroup $S$ is coherent if and only if there exists a distributive inverse semigroup $T$ such
that $S$ is isomorphic to $\mbox{\rm Idl} (T)$.  
In fact, any coherent pseudogroup $S$ is canonically isomorphic to $\mbox{\rm Idl} (\mathsf{K}(S))$.
\end{proposition}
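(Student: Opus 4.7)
The plan is to prove both directions by establishing that, for any coherent pseudogroup $S$, the map $\Phi \colon \mbox{\rm Idl}(\mathsf{K}(S)) \to S$ defined by $\Phi(A) = \bigvee A$ is an isomorphism of pseudogroups, with inverse $\Psi \colon S \to \mbox{\rm Idl}(\mathsf{K}(S))$ given by $\Psi(s) = \{k \in \mathsf{K}(S) \colon k \leq s\}$. The ``canonical'' clause is then built into the construction.

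For the forward direction (every $\mbox{\rm Idl}(T)$ is coherent), I would first identify the finite elements of $\mbox{\rm Idl}(T)$ as precisely the principal order ideals $t^{\downarrow}$ for $t \in T$. If $t^{\downarrow} \leq \bigvee_{i} A_{i}$ in $\mbox{\rm Idl}(T)$, then $t$ is a finite compatible join of elements drawn from $\bigcup_{i} A_{i}$, so $t^{\downarrow}$ already lies below finitely many $A_{i}$. Conversely, every $A \in \mbox{\rm Idl}(T)$ is the compatible join of the principal ideals $a^{\downarrow}$ with $a \in A$; if $A$ is finite this reduces to a finite compatible join $a_{1}^{\downarrow} \vee \cdots \vee a_{n}^{\downarrow} = (a_{1} \vee \cdots \vee a_{n})^{\downarrow}$. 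Via $t \mapsto t^{\downarrow}$ we then get $\mathsf{K}(\mbox{\rm Idl}(T)) \cong T$ as distributive inverse semigroups, and by construction every element of $\mbox{\rm Idl}(T)$ is a compatible join of its principal ideals, establishing coherence.

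For the backward direction I would first check $\Psi$ is well-defined: given two finite elements $k_{1}, k_{2} \leq s$, they are compatible and $k_{1} \vee k_{2}$ is again a finite element below $s$, so $\Psi(s)$ is indeed a $\vee$-closed compatible order ideal in $\mathsf{K}(S)$. That $\Phi$ is a well-defined pseudogroup $\wedge$-morphism is routine using infinite distributivity and the fact that the $A \in \mbox{\rm Idl}(\mathsf{K}(S))$ are compatible subsets of $S$. The identity $\Phi \Psi(s) = s$ is then a direct consequence of coherence, since by assumption every element of $S$ is a join of finite elements, which must lie below $s$ and hence belong to $\Psi(s)$.

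The main obstacle will be verifying the other composition, $\Psi \Phi (A) = A$. The inclusion $A \subseteq \Psi \Phi(A)$ is immediate since each element of $A$ is already finite and lies below $\bigvee A$. For the reverse, given a finite $k$ with $k \leq \bigvee A$, I would apply the infinite distributivity of Lemma~\ref{le: meets_and_joins}(2) to write $k = k \wedge \bigvee_{a \in A} a = \bigvee_{a \in A}(k \wedge a)$; finiteness of $k$ then extracts a finite subcovering, giving $k = \bigvee_{i=1}^{n}(k \wedge a_{i})$ for some $a_{1}, \ldots, a_{n} \in A$. Since $A$ is an order ideal each $k \wedge a_{i}$ lies in $A$, and since $A$ is $\vee$-closed and the $k \wedge a_{i}$ are pairwise compatible (all bounded by $k$), their finite join $k$ itself lies in $A$. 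Combined with the fact that $\Phi$ trivially respects multiplication, inversion, and compatible joins, this completes the identification $S \cong \mbox{\rm Idl}(\mathsf{K}(S))$.
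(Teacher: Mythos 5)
Your proposal is correct and follows essentially the same route as the paper: both directions rest on identifying the finite elements of $\mbox{\rm Idl}(T)$ with the principal order ideals $t^{\downarrow}$, and on the map $A \mapsto \bigvee A$ being a bijection, with the crucial step being that a finite $k \leq \bigvee A$ must already lie in the $\vee$-closed order ideal $A$ by extracting a finite subcover. The only cosmetic differences are that you exhibit an explicit two-sided inverse $\Psi$ where the paper argues injectivity and surjectivity directly, and that you first distribute $k$ over the join before invoking finiteness, whereas the paper applies finiteness to the covering family $B$ itself; both variants are sound.
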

\begin{proof} 
Let $T$ be a distributive inverse semigroup.
We prove first  that the finite elements of $\mbox{\rm Idl} (T)$ are precisely the elements of the form $t^{\downarrow}$ where $t \in T$.
Observe that $t^{\downarrow} = \overline{t^{\downarrow}}$.
Let $t^{\downarrow} = \bigvee A_{i}$.
Then $t$ is in the $\vee$-closure of  $\bigcup A_{i}$.
Thus there is a finite set of elements $a_{1}, \ldots, a_{m} \in \bigcup A_{i}$ such that $t = \vee a_{j}$.
But this implies that $t^{\downarrow}$ is the join of only finitely many of the $A_{i}$.
Thus $t^{\downarrow}$ is finite.
Suppose now that $A$ is a finite element.
We have that $A = \bigvee_{a \in A} a^{\downarrow}$.
By assumption there are finitely many elements $a_{1}, \ldots, a_{m} \in A$ such that
$A = \bigvee a_{i}^{\downarrow}$.
But if $a = \vee a_{i}$ then $A = a^{\downarrow}$, as required.
Clearly, every element of  $\mbox{\rm Idl} (T)$ is a compatible join of finite elements.
It follows that $\mbox{\rm Idl} (T)$ is coherent and that its finite elements form a distributive
inverse semigroup isomorphic to $T$. 

Now suppose that $S$ is a coherent pseudogroup.
Put $T = \mathsf{K}(S)$, a distributive inverse semigroup by assumption.
Define $\theta \colon \mbox{\rm Idl} (T) \rightarrow S$ by $\theta (A) = \bigvee A$.
This is surjective since every element of $S$ is the join of finite elements.
Suppose that $\theta (A) = \theta (B)$.
Let $a \in A$.
Then $a \leq \bigvee A$.
Thus $a \leq \bigvee B$.
But $a$ is a finite element and so there is a finite subset $b_{1}, \ldots, b_{m} \in B$ such that
$a \leq \vee_{i} b_{i}$.
But $B$ is  $\vee$-closed and so $\bigvee_{i} b_{i} \in B$ that implies $a \in B$.
We have proved that $A \subseteq B$.
The reverse inclusion is proved similarly.
It follows that $\theta$ is a bijection.
It is clearly a homomorphism.
We have proved that $\mathsf{K}(\mbox{\rm Idl} (T))$ is isomorphic to $S$.
\end{proof}

A pseudogroup morphism between coherent pseudogroups is said to be {\em coherent} if it preserves finite elements.
The proof of the following is now straightforward. In fact, the previous proposition just gives the object-part of the statement.

\begin{proposition}
The category of distributive inverse semigroups and their morphisms is equivalent to the category of coherent pseudogroups
and coherent pseudogroup morphisms.
\end{proposition}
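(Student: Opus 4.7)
The plan is to extend the object-level correspondence of Proposition~\ref{prop: coherent_pseudogroups} to a full categorical equivalence. Let \textbf{CohPG} denote the category of coherent pseudogroups with coherent pseudogroup morphisms. I will construct functors $\mbox{Idl} \colon \mathbf{Dist} \to \mathbf{CohPG}$ and $\mathsf{K} \colon \mathbf{CohPG} \to \mathbf{Dist}$ and exhibit natural isomorphisms $\mathrm{id} \cong \mathsf{K} \circ \mbox{Idl}$ and $\mbox{Idl} \circ \mathsf{K} \cong \mathrm{id}$.

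First I would set up the functors on morphisms. Given $\theta \colon T \to T'$ in \textbf{Dist}, define $\mbox{Idl}(\theta)(A) = [\theta(A)^{\downarrow}]^{\vee}$; the preceding proposition asserts this is a pseudogroup morphism, and it is coherent because $\mbox{Idl}(\theta)(t^{\downarrow}) = \theta(t)^{\downarrow}$, so finite elements are sent to finite elements. Conversely, given a coherent pseudogroup morphism $\phi \colon S \to S'$, its restriction $\mathsf{K}(\phi) := \phi|_{\mathsf{K}(S)} \colon \mathsf{K}(S) \to \mathsf{K}(S')$ is well-defined by coherence, is a semigroup homomorphism, and preserves compatible binary joins in $\mathsf{K}(S)$ because such joins, when they exist, coincide with the corresponding joins in $S$ (which $\phi$ preserves). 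Hence $\mathsf{K}(\phi)$ is a morphism of distributive inverse semigroups. Functoriality of both assignments follows by a direct check.

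Next I would verify the natural isomorphisms. For each $T \in \mathbf{Dist}$, the map $\eta_T \colon T \to \mathsf{K}(\mbox{Idl}(T))$ given by $t \mapsto t^{\downarrow}$ is an isomorphism of distributive inverse semigroups: the proof of Proposition~\ref{prop: coherent_pseudogroups} identifies the finite elements of $\mbox{Idl}(T)$ precisely as the $t^{\downarrow}$, and $\eta_T$ is clearly a homomorphism and preserves compatible joins since $(s \vee t)^{\downarrow} = s^{\downarrow} \vee t^{\downarrow}$ in $\mbox{Idl}(T)$. Naturality reduces to the identity $\mathsf{K}(\mbox{Idl}(\theta))(t^{\downarrow}) = \theta(t)^{\downarrow}$, which is immediate. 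For each coherent pseudogroup $S$, the map $\varepsilon_S \colon \mbox{Idl}(\mathsf{K}(S)) \to S$ given by $A \mapsto \bigvee A$ is an isomorphism of pseudogroups by Proposition~\ref{prop: coherent_pseudogroups}; it is coherent because $\varepsilon_S$ sends each finite element $k^{\downarrow}$ to $k \in \mathsf{K}(S)$. Naturality at a coherent morphism $\phi \colon S \to S'$ amounts to $\phi(\bigvee A) = \bigvee \phi(A)$, which holds because pseudogroup morphisms preserve compatible joins.

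The main obstacle lies in the morphism-level bookkeeping. For $\mbox{Idl}(\theta)$ the delicate verification is preservation of \emph{infinite} compatible joins, which requires combining the closure properties (Cl1)--(Cl5) of the $\vee$-closure operation with the finite-join preservation inherent in $\theta$; the key point is that a compatible join $\bigvee_i A_i$ in $\mbox{Idl}(S)$ is, by construction of the pseudogroup structure on $\mbox{Idl}(S)$, the $\vee$-closure of $\bigcup_i A_i$, and its image is then the $\vee$-closure of $\bigcup_i \theta(A_i)^{\downarrow}$. For $\mathsf{K}(\phi)$ the subtle point is that compatible joins of finite elements remain finite and agree with their joins in $S$, so morphism-preservation descends cleanly. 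With these verifications in hand the equivalence follows by standard category theory.
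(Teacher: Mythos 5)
Your proposal is correct and follows essentially the route the paper intends: the paper declares the proof "straightforward" given the object-level correspondence of Proposition~\ref{prop: coherent_pseudogroups}, and you have simply filled in the morphism-level functors ($\Theta(A) = [\theta(A)^{\downarrow}]^{\vee}$ on one side, restriction to $\mathsf{K}(S)$ on the other) and the naturality checks in the expected way. The key verifications — that $\mbox{Idl}(\theta)$ sends $t^{\downarrow}$ to $\theta(t)^{\downarrow}$ and hence is coherent, and that compatible joins of finite elements agree with joins in the ambient pseudogroup — are exactly the points the paper leaves implicit.
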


The above result is not quite what we need because the morphisms are too general.

\begin{proposition} Let $\theta \colon S \rightarrow T$ be a morphism of distributive inverse semigroups
and let $\Theta \colon \mbox{\rm Idl}(S) \rightarrow \mbox{\rm Idl}(T)$ be the induced pseudogroup map.
Then $\theta$ is callitic if and only if $\Theta$ is callitic.
\end{proposition}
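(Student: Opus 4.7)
Plan. The key tools are the bijection of Lemma~\ref{prime-vs-completelyprime} between prime filters $P$ on $T$ and completely prime filters $F$ on $\mbox{Idl}(T)$, and the identity $\Theta(s^{\downarrow}) = \theta(s)^{\downarrow}$: indeed $\theta(s)^{\downarrow}$ is already $\vee$-closed, since any finite compatible join of elements below $\theta(s)$ is itself below $\theta(s)$. I will also use that the meet in the pseudogroup $\mbox{Idl}(T)$ is just set intersection, because intersections of $\vee$-closed compatible order ideals are again $\vee$-closed compatible order ideals.

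For the forward direction, suppose $\theta$ is callitic. To see that $\Theta$ is a $\wedge$-morphism it suffices to verify $\Theta(A) \cap \Theta(B) \subseteq \Theta(A \cap B)$ (the reverse inclusion is immediate from monotonicity). Given $t$ in the intersection, write $t = \bigvee_i u_i = \bigvee_j v_j$ with $u_i \leq \theta(a_i)$, $v_j \leq \theta(b_j)$, $a_i \in A$, $b_j \in B$. From $u_i = u_i \wedge t = u_i \wedge \bigvee_j v_j$, Lemma~\ref{le: meets_and_joins}(1), applied inductively, yields all cross-meets $u_i \wedge v_j$ in $T$ with $u_i = \bigvee_j (u_i \wedge v_j)$. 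Each such meet satisfies $u_i \wedge v_j \leq \theta(a_i), \theta(b_j)$, so weak meet preservation of $\theta$ supplies $c_{ij} \leq a_i, b_j$ with $u_i \wedge v_j \leq \theta(c_{ij})$; since $c_{ij} \in A \cap B$, reassembling gives $t = \bigvee_{i,j}(u_i \wedge v_j) \in \Theta(A \cap B)$. For the filter condition on $\Theta$, let $F$ be a completely prime filter on $\mbox{Idl}(T)$ and let $P = F^d$ be the corresponding prime filter on $T$; properness of $\theta$ produces $s \in S$ with $\theta(s) \in P$, hence $\Theta(s^{\downarrow}) = \theta(s)^{\downarrow} \in F$.

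Conversely, suppose $\Theta$ is callitic. For properness of $\theta$, fix a prime filter $P$ on $T$; then $P^u$ is completely prime on $\mbox{Idl}(T)$, so $\Theta(A) \in P^u$ for some $A \in \mbox{Idl}(S)$, giving some $t \in \Theta(A) \cap P$. Writing $t = \bigvee_i u_i$ with $u_i \leq \theta(a_i)$, $a_i \in A$, and applying primality of $P$ forces some $u_i$, and hence $\theta(a_i)$, into $P$, so $\theta^{-1}(P) \neq \emptyset$. For weak meet preservation, given $t \leq \theta(a), \theta(b)$ we have $t \in \Theta(a^{\downarrow}) \cap \Theta(b^{\downarrow}) = \Theta(a^{\downarrow} \cap b^{\downarrow})$ by $\wedge$-preservation of $\Theta$, so $t = \bigvee_i u_i$ with $u_i \leq \theta(c_i)$ for some $c_i \leq a, b$. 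The $c_i$ are pairwise compatible (all lying below $a$), so $c = \bigvee_i c_i$ exists in the distributive inverse semigroup $S$, satisfies $c \leq a, b$, and $\theta(c) = \bigvee_i \theta(c_i) \geq \bigvee_i u_i = t$.

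The main obstacle is the $\wedge$-preservation half of the forward direction, where two distinct finite-join representations of $t$ must be cross-refined; Lemma~\ref{le: meets_and_joins}(1) is the essential tool, since it guarantees that the cross-meets $u_i \wedge v_j$ actually live inside $T$ (not merely inside $\mbox{Idl}(T)$) so that weak meet preservation of $\theta$ can be applied to them element-wise.
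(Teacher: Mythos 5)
Your proposal is correct and follows essentially the same route as the paper: the properness half is handled via the prime/completely-prime filter correspondence of Lemma~\ref{prime-vs-completelyprime} together with the identity $\Theta(s^{\downarrow})=\theta(s)^{\downarrow}$, and the meet-preservation half uses exactly the paper's cross-meet refinement $t=\bigvee_{i,j}(u_i\wedge v_j)$ via Lemma~\ref{le: meets_and_joins}(1) followed by element-wise application of weak meet preservation. Your justification that the cross-meets $u_i\wedge v_j$ exist in $T$ is in fact slightly more explicit than the paper's.
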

\begin{proof}
We prove first that $\theta^{-1}$ prime filters to non-empty sets 
if and only if
$\Theta^{-1}$ maps completely prime filters to non-empty sets.
We use Lemma~\ref{prime-vs-completelyprime}.
Suppose that $\Theta^{-1}$ maps completely prime filters to non-empty sets.
Let $P$ be any prime filter in $T$.
Then $P^{u}$ is a completely prime filter in $\mbox{\rm Idl}(T)$.
By assumption $\Theta^{-1}(P^{u})$ is non-empty.
Thus there is an element $A \in  \mbox{\rm Idl}(S)$ such that $\Theta (A) \in P^{u}$.
By definition, $[\theta(A)^{\downarrow}]^{\vee} \in P^{u}$.
We quickly deduce, using the fact that $P^{u}$ is completely prime, 
that for some $a \in A$ we have that $\theta (a)^{\downarrow} \in P^{u}$.
This implies that $\theta (a) \in P$, as required.
The proof of the converse is straightforward.

We now prove that $\theta$ is weakly meet preserving if and only if $\theta$ is a $\wedge$-map.
Suppose first that $\theta$ is weakly meet preserving.
we need to show that $\Theta (A \cap B) = \Theta (A) \cap \Theta (B)$.
It is immediate that the lefthand-side is contained in the righthand-side.
We prove the reverse inclusion.
That is, we have to prove that
$$[\theta (A)^{\downarrow}]^{\vee} \cap [\theta (B)^{\downarrow}]^{\vee} \subseteq [\theta (A \cap B)^{\downarrow}]^{\vee}.$$ 
Let $x$ belong to the lefthand-side.
This means that
$$x = \vee_{i=1}^{m} x_{i} = \vee_{j=1}^{n} y_{j}$$
where $x_{i} \in \theta (A)^{\downarrow}$ and $y_{j} \in \theta (B)^{\downarrow}$.
Thus $x_{i} \leq \theta (a_{i})$ for some $a_{i} \in A$ 
and $y_{j} \leq \theta (b_{j})$ for some $b_{j} \in B$.
By Lemma~\ref{le: meets_and_joins} , we have that $x = \vee_{i,j} (x_{i} \wedge y_{j})$.
It follows that $x_{i} \wedge y_{j} \leq \theta (a_{i}), \theta (b_{j})$.
We now use the fact that $\theta$ is weak meet preserving.
It follows that there is $c_{ij} \leq a_{i},b_{j}$ such that $x_{i} \wedge x_{j} \leq \theta (c_{ij})$.
It follows that $x$ belongs to $[\theta (A \cap B)^{\downarrow}]^{\vee}$.

We now prove the converse.
Assume that $\Theta$ is a $\wedge$-map.
We prove that $\theta$ is weak meet preserving.
Let $t \leq \theta (a), \theta (b)$.
We have that $a^{\downarrow}, b^{\downarrow} \in  \mbox{\rm Idl}(S)$.
Thus $a^{\downarrow} \cap b^{\downarrow} \in  \mbox{\rm Idl}(S)$, since we are working inside a pseudogroup.
By assumption, we have that 
$$[\theta (a^{\downarrow} \cap b^{\downarrow})^{\downarrow}]^{\vee}
=
[\theta (a^{\downarrow})^{\downarrow}]^{\vee} \cap [\theta (b^{\downarrow})^{\downarrow}]^{\vee}
= \theta (a)^{\downarrow} \cap \theta (b)^{\downarrow}.$$
By assumption 
$t \in [\theta (a^{\downarrow} \cap b^{\downarrow})^{\downarrow}]^{\vee}$.
Then $t = \vee_{i=1}^{m} t_{i}$ where $t_{i} \in \theta (a^{\downarrow} \cap b^{\downarrow})^{\downarrow})$.
It follows that $t_{i} \leq \theta (c_{i})$ where $c_{i} \leq a,b$.
Put $c = \vee_{i=1}^{m} c_{i}$ which exists since the $c_{i}$ are all bounded above.
Then $c \leq a,b$ and $t \leq \theta (c)$, as required, using the fact that
$\theta$ is a morphism of distributive inverse semigroups.
\end{proof}

We have therefore proved the following.

\begin{corollary}\label{cor: edna}
The category of distributive inverse semigroups and their callitic morphisms is equivalent to the category of coherent pseudogroups
and coherent callitic morphisms.
\end{corollary}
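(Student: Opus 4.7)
The plan is to obtain this corollary as an immediate consequence of the two preceding propositions: the equivalence between distributive inverse semigroups and coherent pseudogroups (with their morphisms), and the characterization of callitic morphisms under the $\mathrm{Idl}$ construction.

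First, I would recall the shape of the equivalence established just above. On objects, the functor $\mathrm{Idl}$ sends a distributive inverse semigroup $S$ to the coherent pseudogroup $\mathrm{Idl}(S)$, with quasi-inverse $\mathsf{K}$ sending a coherent pseudogroup $T$ to its distributive inverse semigroup $\mathsf{K}(T)$ of finite elements; by Proposition~\ref{prop: coherent_pseudogroups}, these functors are mutually quasi-inverse via the canonical isomorphisms $S \cong \mathsf{K}(\mathrm{Idl}(S))$ and $T \cong \mathrm{Idl}(\mathsf{K}(T))$. On morphisms, a homomorphism $\theta \colon S \rightarrow T$ of distributive inverse semigroups induces the pseudogroup morphism $\Theta \colon \mathrm{Idl}(S) \rightarrow \mathrm{Idl}(T)$ given by $\Theta(A) = [\theta(A)^{\downarrow}]^{\vee}$, and conversely a coherent pseudogroup morphism $\Phi \colon \mathrm{Idl}(S) \rightarrow \mathrm{Idl}(T)$ restricts to a morphism $\mathsf{K}(\Phi) \colon \mathsf{K}(\mathrm{Idl}(S)) \rightarrow \mathsf{K}(\mathrm{Idl}(T))$ of distributive inverse semigroups.

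Second, I would invoke the proposition immediately preceding this corollary, which states precisely that $\theta$ is callitic if and only if $\Theta$ is callitic. Combining this with the already established equivalence, the functor $\mathrm{Idl}$ restricts to a full and faithful functor from distributive inverse semigroups with callitic morphisms to coherent pseudogroups with coherent callitic morphisms, and $\mathsf{K}$ restricts similarly in the other direction.

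The only remaining verification is essential surjectivity on morphisms: every coherent callitic morphism $\Phi \colon \mathrm{Idl}(S) \rightarrow \mathrm{Idl}(T)$ should arise as $\Theta$ for some callitic $\theta \colon S \rightarrow T$. But since $\Phi$ is coherent, $\theta := \mathsf{K}(\Phi)$ is a morphism of distributive inverse semigroups with $\mathrm{Idl}(\theta) = \Phi$ (up to the canonical isomorphisms), and by the preceding proposition applied in reverse, $\theta$ is callitic because $\Phi$ is. The main subtlety, and where I expect the only genuine work to sit, is to confirm that the two characterizations of ``callitic'' (proper plus weakly meet preserving on the distributive inverse semigroup side; $\wedge$-morphism plus non-empty inverse images of completely prime filters on the pseudogroup side) transfer cleanly along the bijection between prime filters in $S$ and completely prime filters in $\mathrm{Idl}(S)$ of Lemma~\ref{prime-vs-completelyprime}; but this is exactly the content of the preceding proposition, so no new argument is required.
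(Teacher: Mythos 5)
Your proposal is correct and matches the paper's intended argument: the paper offers no separate proof, simply stating ``We have therefore proved the following'' after the equivalence of the previous proposition and the result that $\theta$ is callitic if and only if the induced map $\Theta$ on $\mbox{\rm Idl}$-completions is callitic. Your combination of these two facts, including the observation that the backward direction of the callitic-transfer proposition handles fullness onto the coherent callitic morphisms, is exactly what the paper relies on.
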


The following relies on Section~2.

\begin{proposition}\label{coherent-implies-spatial}
Every coherent pseudogroup is spatial.
\end{proposition}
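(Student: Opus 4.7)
The plan is to reduce spatiality to the separation theorem for prime filters in distributive inverse semigroups (Proposition~\ref{prop: key_result}, and its component Lemmas~\ref{le: maximal}, \ref{le: maximal_prime}, \ref{le: ideal_filter}), transferred across the canonical isomorphism $S \cong \mbox{\rm Idl}(\mathsf{K}(S))$ of Proposition~\ref{prop: coherent_pseudogroups} via the filter correspondence of Lemma~\ref{prime-vs-completelyprime}.

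First I would fix, for each $s \in S$, the element $A_{s} = \{k \in \mathsf{K}(S) : k \leq s\}$ of $\mbox{\rm Idl}(\mathsf{K}(S))$ and observe that it is a $\vee$-closed order ideal: any two finite elements below $s$ are compatible, and their join is again finite (since by coherence $\mathsf{K}(S)$ is a distributive inverse subsemigroup) and still below $s$. Moreover $\bigvee A_{s} = s$ by coherence, so $s \mapsto A_{s}$ is the inverse of the isomorphism $A \mapsto \bigvee A$ appearing in Proposition~\ref{prop: coherent_pseudogroups}; in particular $s \leq t$ in $S$ iff $A_{s} \subseteq A_{t}$. By part~(3) of Proposition~\ref{prop: counit}, to prove spatiality it suffices to show that $X_{s} = X_{t}$ forces $s = t$.

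Assume $X_{s} = X_{t}$; by symmetry it is enough to rule out $s \nleq t$. In that case $A_{s} \not\subseteq A_{t}$, so pick a finite element $a \in A_{s} \setminus A_{t}$. Inside the distributive inverse semigroup $\mathsf{K}(S)$, the $\vee$-closed order ideal $A_{t}$ is disjoint from the principal filter $a^{\uparrow}$ (since $A_{t}$ is downwards closed and $a \notin A_{t}$). By Lemma~\ref{le: maximal} there is a $\vee$-closed order ideal $J$ of $\mathsf{K}(S)$ containing $A_{t}$, disjoint from $a^{\uparrow}$, and maximal with respect to these properties; Lemma~\ref{le: maximal_prime} then shows $J$ is prime, and Lemma~\ref{le: ideal_filter} gives that $P := \mathsf{K}(S) \setminus J$ is a prime filter of $\mathsf{K}(S)$ with $a \in P$ and $P \cap A_{t} = \emptyset$.

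Finally, by Lemma~\ref{prime-vs-completelyprime} the set $P^{u} = \{A \in \mbox{\rm Idl}(\mathsf{K}(S)) : A \cap P \neq \emptyset\}$ is a completely prime filter in $\mbox{\rm Idl}(\mathsf{K}(S)) \cong S$. We have $a \in A_{s} \cap P$, so $s \in P^{u}$; and $A_{t} \cap P = \emptyset$, so $t \notin P^{u}$. Hence $P^{u} \in X_{s} \setminus X_{t}$, contradicting $X_{s} = X_{t}$. The argument is largely bookkeeping; the step I expect to require the most care is keeping the three partial orders (the one on $S$, the one on $\mathsf{K}(S)$, and containment of $\vee$-closed ideals under the $\mbox{\rm Idl}$-identification) correctly aligned, so that the prime filter $P$ of $\mathsf{K}(S)$ really does lift to a completely prime filter separating $s$ from $t$ in $S$.
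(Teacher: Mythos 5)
Your proposal is correct and follows essentially the same route as the paper: both identify $S$ with $\mbox{\rm Idl}$ of its distributive inverse semigroup of finite elements via Proposition~\ref{prop: coherent_pseudogroups}, separate a finite element lying under one of the two given elements but not the other from the corresponding $\vee$-closed order ideal using Lemmas~\ref{le: maximal}, \ref{le: maximal_prime} and \ref{le: ideal_filter}, and then lift the resulting prime filter to a completely prime filter via Lemma~\ref{prime-vs-completelyprime}. The only cosmetic difference is that you argue by contradiction from $X_{s}=X_{t}$ whereas the paper directly separates two distinct ideals $A\neq B$; the content is identical.
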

\begin{proof}
Let $S$ be a coherent pseudogroup.
By Proposition~\ref{prop: coherent_pseudogroups}, 
we may assume that $S = \mbox{Idl}(T)$ where $T$ is a distributive inverse semigroup.
Let $A,B \in \mbox{Idl}(T)$ be distinct elements.
We shall construct a completely prime filter that contains one of these elements but not the other.
Without loss of generality, we may assume that there is $b \in B$ such that $b \notin A$.
It follows that
$$b^{\uparrow} \cap B = \emptyset.$$
Clearly $B$ is a $\vee$-closed order ideal.
Thus by Lemma~\ref{le: maximal}, there exists a $\vee$-closed order ideal $P$ that contains $B$,
is disjoint from $b^{\uparrow}$,
and is a maximal $\vee$-closed order ideal with respect to these two conditions.
By Lemma~\ref{le: maximal_prime}, $P$ is a prime $\vee$-closed order ideal.
Thus by Lemma~\ref{le: ideal_filter}, the set $F = T \setminus P$ is a prime filter in $S$ that contains $b$ and is disjoint from $B$.
By Lemma~\ref{prime-vs-completelyprime}, we have that $F^{u}$ is a completely prime filter in $S = \mbox{Idl}(T)$.
By definition $B \in F^{u}$ and $A \notin F^{u}$, as required.
\end{proof}

We shall now turn to the spectral groupoids introduced in Section~3.
We have seen that if $G$ is such a groupoid, then its set of compact-open local bisections
$\mathsf{KB}(G)$ is a distributive inverse semigroup by Proposition~\ref{prop: distributive_inverse}.
This sits inside the pseudogroup $\mathsf{B}(S)$.
It is clear that the finite elements of $\mathsf{B}(S)$ are just the elements of $\mathsf{KB}(G)$
and so the finite elements form a distributive inverse semigroup.
In addition, in a spectral groupoid, the compact-open local bisections form a basis.
We have therefore proved the following.

\begin{proposition}\label{prop: doris} Let $G$ be a spectral groupoid.
Then $\mathsf{B}(G)$ is a coherent pseudogroup.
\end{proposition}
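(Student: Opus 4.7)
The plan is to show that for a spectral groupoid $G$, the set of finite elements of the pseudogroup $\mathsf{B}(G)$ coincides with $\mathsf{KB}(G)$, and then invoke results already established in the paper. This reduces the proposition to two checks matching the definition of coherence.

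First I would prove that an element $U \in \mathsf{B}(G)$ is finite in the pseudogroup sense if and only if $U$ is compact-open, i.e.\ $\mathsf{K}(\mathsf{B}(G)) = \mathsf{KB}(G)$. One direction is easy: if $U$ is compact-open and $U \leq \bigvee_i V_i$ for a compatible family of open local bisections (which, joined in $\mathsf{B}(G)$, is just their union), then $U \subseteq \bigcup_i V_i$ is an open cover of a compact set, hence has a finite subcover, which is precisely a finite subcovering in the pseudogroup. Conversely, if $U$ is finite, then since $G$ is spectral, the compact-open local bisections form a basis by Lemma~\ref{le: compact_open}, so $U = \bigcup_{\alpha} V_{\alpha}$ for some family of compact-open local bisections $V_{\alpha} \subseteq U$. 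All $V_{\alpha}$ lie below $U$, hence are pairwise compatible, and $U = \bigvee_{\alpha} V_{\alpha}$ in $\mathsf{B}(G)$; by finiteness this join reduces to a finite one, so $U$ is a finite union of compact sets, hence compact.

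Having identified $\mathsf{K}(\mathsf{B}(G)) = \mathsf{KB}(G)$, the first coherence requirement, that finite elements form a distributive inverse subsemigroup of $\mathsf{B}(G)$, is exactly the content of Proposition~\ref{prop: distributive_inverse}. For the second requirement, that every element of $\mathsf{B}(G)$ is a join of finite elements, I would again use that the compact-open local bisections form a basis: any open local bisection $U$ can be written $U = \bigcup_{\alpha} V_{\alpha}$ with $V_{\alpha} \in \mathsf{KB}(G)$, and since the $V_{\alpha}$ are all below $U$ they are pairwise compatible, so $U = \bigvee_{\alpha} V_{\alpha}$ is a compatible join of finite elements.

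The only nontrivial step is the characterization of finite elements as compact-open local bisections, and within that the genuinely substantive point is producing a cover of $U$ by compact-open local bisections lying inside $U$ when $U$ is finite; everything else is routine manipulation with bases and compactness. This is not really an obstacle since Lemma~\ref{le: compact_open} supplies exactly the basis statement needed. Once the identification is in hand, coherence follows directly from the definition and the earlier propositions, with no further calculation required.
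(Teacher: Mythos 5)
Your proof is correct and follows essentially the same route as the paper: identify the finite elements of $\mathsf{B}(G)$ with the compact-open local bisections, then quote Proposition~\ref{prop: distributive_inverse} and the basis property from Lemma~\ref{le: compact_open}. The only difference is that the paper dismisses the identification $\mathsf{K}(\mathsf{B}(G)) = \mathsf{KB}(G)$ as ``clear'' while you spell out the compactness argument in both directions, which is a welcome addition rather than a divergence.
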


We now go in the opposite direction.

\begin{proposition}\label{prop: fred} Let $S$ be a coherent pseudogroup.
Then $\mathsf{G}_{CP}(S)$ is a spectral groupoid. 
Moreover, the isomorphism  $\varepsilon$ establishes a bijection 
between  the finite elements of $S$ and the compact-open local bisections.
\end{proposition}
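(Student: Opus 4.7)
The plan is to reduce to the already-established theory of Section~3 by representing $S$ as an $\mbox{Idl}$-completion. Since $S$ is a coherent pseudogroup, Proposition~\ref{prop: coherent_pseudogroups} gives a canonical isomorphism $S \cong \mbox{Idl}(T)$ where $T = \mathsf{K}(S)$ is a distributive inverse semigroup, and the proof of that proposition shows that, under this identification, the finite elements of $S$ are precisely the principal order ideals $t^{\downarrow}$ for $t \in T$. Accordingly I will work inside $\mbox{Idl}(T)$ and compare $\mathsf{G}_{CP}(S)$ to the spectral groupoid $\mathsf{G}_{P}(T)$ of Proposition~\ref{prop: spectral_groupoid}.

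First I would define $\phi \colon \mathsf{G}_{P}(T) \to \mathsf{G}_{CP}(S)$ by $\phi(P) = P^{u}$; this is a bijection by Lemma~\ref{prime-vs-completelyprime}. To see that $\phi$ is a functor of groupoids I would check that $\mathbf{d}(P^{u}) = \mathbf{d}(P)^{u}$ (and dually) by showing that both sides are upward-closed filters containing the same idempotents and invoking Lemma~\ref{le: filter_properties}, and then verify $(P \cdot Q)^{u} = P^{u} \cdot Q^{u}$ whenever the left side is defined by a direct unwinding of the definitions.

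Next I would verify that $\phi$ is a homeomorphism by comparing bases. Directly from the definitions one gets $\phi(X_{t}^{P}) = X_{t^{\downarrow}}^{CP}$ for each $t \in T$, where $X_{t}^{P}$ and $X_{A}^{CP}$ denote the basic open sets in $\mathsf{G}_{P}(T)$ and $\mathsf{G}_{CP}(S)$ respectively. Conversely, because $A = \bigvee_{t \in A} t^{\downarrow}$ in $\mbox{Idl}(T)$ and every completely prime filter containing $A$ must contain some $t^{\downarrow}$, one has $X_{A}^{CP} = \bigcup_{t \in A} X_{t^{\downarrow}}^{CP}$, so the image under $\phi$ of the basis of $\mathsf{G}_{P}(T)$ is itself a basis for the topology on $\mathsf{G}_{CP}(S)$. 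Thus $\phi$ is an isomorphism of topological groupoids, and since $\mathsf{G}_{P}(T)$ is a spectral groupoid, the first assertion follows.

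For the second assertion I would transport Lemma~\ref{le: Xlemma}(4) across $\phi$: every compact-open local bisection of $\mathsf{G}_{P}(T)$ has the form $X_{t}^{P}$ for a unique $t \in T$, and $\phi$ carries this to $X_{t^{\downarrow}}^{CP} = \varepsilon(t^{\downarrow})$. Since the finite elements of $S$ are exactly the $t^{\downarrow}$, this yields the claimed bijection. The main technical point is the verification that $\phi$ is a functor of groupoids (rather than merely a bijection on underlying sets); once that is in place and the clean formula $\phi(X_{t}^{P}) = X_{t^{\downarrow}}^{CP}$ is recorded, the topological and order-theoretic conclusions are essentially bookkeeping.
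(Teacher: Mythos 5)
Your argument is correct, but it takes a genuinely different route from the paper. You reduce everything to Section~3 by writing $S \cong \mbox{Idl}(T)$ with $T = \mathsf{K}(S)$ and upgrading the order isomorphism $P \mapsto P^{u}$ of Lemma~\ref{prime-vs-completelyprime} to an isomorphism of topological groupoids $\mathsf{G}_{P}(T) \cong \mathsf{G}_{CP}(S)$; spectrality is then inherited from Proposition~\ref{prop: spectral_groupoid}, and the classification of compact-open local bisections is transported from Lemma~\ref{le: Xlemma}(4). The paper instead argues intrinsically inside $\mathsf{G}_{CP}(S)$: sobriety comes from Proposition~\ref{prop: spatial_sober}, spatiality of coherent pseudogroups (Proposition~\ref{coherent-implies-spatial}) gives $S \cong \mathsf{B}(\mathsf{G}_{CP}(S))$ via $\varepsilon$, and coherence is then used directly to show that the images of the finite elements are exactly the compact-open local bisections, that they form a basis, and that they are closed under multiplication. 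Your route carries a slightly heavier verification burden up front (checking that $P \mapsto P^{u}$ respects $\mathbf{d}$, $\mathbf{r}$ and the partial product, and that it matches the two bases via $\phi(X_{t}) = X_{t^{\downarrow}}$), but these checks are routine and of the same kind the paper performs elsewhere (e.g.\ in Proposition~\ref{prop: filter_prime_filter}); in return you get, as a by-product, the explicit homeomorphism $\mathsf{G}_{P}(\mathsf{K}(S)) \cong \mathsf{G}_{CP}(S)$, which sharpens Lemma~\ref{prime-vs-completelyprime} from a poset isomorphism to a topological-groupoid isomorphism. The paper's proof is shorter because it can quote the spatiality and sobriety results of Section~4 directly and never needs to leave $\mathsf{G}_{CP}(S)$. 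Both arguments rely on non-circular earlier material, so either is acceptable; the only mild stylistic cost of yours is that, in a section whose purpose is to rederive the Section~3 duality from the adjunction, you lean rather more heavily on the Section~3 construction itself.
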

\begin{proof} By Proposition~\ref{prop: spatial_sober}, the groupoid  $\mathsf{G}_{CP}(S)$ is sober.
Now $S$ is isomorphic to $\mathsf{B} (\mathsf{G}_{CP}(S))$ via $\varepsilon$ 
since every coherent pseudogroup is spatial by Proposition \ref{coherent-implies-spatial}.
Under this isomorphism, finite elements of $S$ are mapped to the compact-open local bisections of $\mathsf{G}_{CP}(S)$.
By the coherence of $S$, every $s\in S$ is the joint of finite elements and hence every $X_s$ is a union of compact-open local bisections 
which stem from the  range of $\varepsilon$.
Thus every open local bisection is a union of compact-open local bisections.  
Thus the compact-open local bisections form a basis.
Moreover, every compact-open local bisection comes comes from a finite element of $S$ 
since the finite elements are closed under finite joins by coherence.
We have that the set of compact-open local bisections is closed under subset multiplication
because the finite elements of $S$ are closed under multiplication. 
\end{proof}

We may now derive Theorem~\ref{the: duality_theorem} from Theorem~\ref{the: duality}.

\begin{theorem}[New proof for duality for distributive inverse semigroups]\label{ddis}
The category of distributive inverse semigroups and their callitic morphisms is dually equivalent to the category
of spectral groupoids and coherent continuous covering functors.
\end{theorem}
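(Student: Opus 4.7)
The plan is to deduce this theorem from the more abstract Theorem~\ref{the: duality} (duality between spatial pseudogroups and sober \'etale groupoids) by cutting it down to the coherent/spectral level, using the $\mbox{Idl}$-completion as a bridge.

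First I would invoke Corollary~\ref{cor: edna} to replace the category of distributive inverse semigroups and callitic morphisms by the equivalent category of coherent pseudogroups and coherent callitic morphisms. Since, by Proposition~\ref{coherent-implies-spatial}, every coherent pseudogroup is spatial, this category sits as a full subcategory (on the objects) of $\mathbf{PG}_{sp}$, so the restrictions of the functors $\mathsf{G}_{CP}$ and $\mathsf{B}$ from Theorem~\ref{the: duality} are still available and still mutually quasi-inverse whenever they land in the right places.

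Next I would check that this restriction lands in the spectral/coherent world on both sides. Proposition~\ref{prop: fred} gives that $\mathsf{G}_{CP}(S)$ is a spectral groupoid for every coherent pseudogroup $S$, and Proposition~\ref{prop: doris} gives conversely that $\mathsf{B}(G)$ is a coherent pseudogroup for every spectral groupoid $G$. Combined with Theorem~\ref{the: duality}, this already handles the object part: coherent pseudogroups correspond bijectively, up to isomorphism, with spectral groupoids via $\mathsf{G}_{CP}$ and $\mathsf{B}$.

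The heart of the work is the morphism side. In one direction, let $\theta \colon S \rightarrow T$ be a coherent callitic morphism of coherent pseudogroups; I need to argue that $\theta^{-1}\colon \mathsf{G}_{CP}(T)\to\mathsf{G}_{CP}(S)$, which is already known to be a continuous covering functor by Lemma~\ref{le: u}, is \emph{coherent}, i.e.\ pulls compact-open sets back to compact-open sets. Via $\varepsilon$, compact-open local bisections correspond to finite elements (Proposition~\ref{prop: fred}), and since $\theta$ preserves finite elements by assumption, the equality $\theta^{-1}(X_t)=X_{\theta^{-1}(t)}$ (used already in the proof of Lemma~\ref{le: u}) together with the basis-of-compact-opens argument from the proof of Lemma~\ref{le: covering} yields coherence on all compact-open sets. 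In the other direction, given a coherent continuous covering functor $\phi\colon G\to H$ between spectral groupoids, Lemma~\ref{le: v} says $\phi^{-1}\colon \mathsf{B}(H)\to \mathsf{B}(G)$ is hypercallitic (hence callitic); coherence of $\phi$ says $\phi^{-1}$ sends compact-open local bisections to compact-open local bisections, which under the identification of finite elements with compact-open local bisections is exactly preservation of finite elements. Thus $\phi^{-1}$ is a coherent callitic morphism of coherent pseudogroups.

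Finally I would note that the unit $\eta$ and counit $\varepsilon$ of the Theorem~\ref{the: duality} adjunction are isomorphisms on these restricted subcategories (by Proposition~\ref{prop: spatial_sober} and coherence-implies-spatiality), and that naturality is inherited from that theorem, so the restricted functors give a dual equivalence. Composing with the equivalence of Corollary~\ref{cor: edna} yields the stated duality. The main obstacle I anticipate is the bookkeeping in matching morphisms: one must verify that the classes ``coherent callitic'' and ``coherent continuous covering'' are exchanged by $\mathsf{G}_{CP}$ and $\mathsf{B}$ under the identification $\mathsf{KB}(G) \cong \mathsf{K}(\mathsf{B}(G))$, which is precisely what makes Propositions~\ref{prop: doris} and \ref{prop: fred} the key technical inputs.
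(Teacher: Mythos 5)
Your proposal is correct and follows essentially the same route as the paper: the paper's proof likewise reduces to coherent pseudogroups via Corollary~\ref{cor: edna}, uses Propositions~\ref{prop: doris} and \ref{prop: fred} for the object correspondence, and establishes coherence of the induced morphisms by identifying finite elements with compact-open local bisections and writing compact-open sets as finite unions of compact-open local bisections. The only cosmetic difference is that you spell out more explicitly the appeal to Proposition~\ref{coherent-implies-spatial} and the unit/counit of Theorem~\ref{the: duality}, which the paper leaves implicit.
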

\begin{proof} 
Let $\theta \colon \: G \rightarrow H$ be a coherent continous covering functor between spectral groupoids.
Then $\mathsf{B}(\theta) \colon \mathsf{B}(H) \rightarrow \mathsf{B}(G)$ is a callitic morphism of pseudogroups.
Since $G$ and $H$ are both spectral, both  $\mathsf{B}(G)$ and $\mathsf{B}(H)$ are coherent by Proposition~\ref{prop: doris}.
Since $\theta$ is coherent,  $\mathsf{B}(\theta)$ maps finite elements to finite elements by Lemma~\ref{le: callitic}.
Thus $\mathsf{B}(\theta)$ is coherent.

Let $\theta \colon S \rightarrow T$ be a callitic coherent morphism between coherent pseudogroups.
Then $\mathsf{G}_{CP}(\theta) \colon \mathsf{G}_{CP}(T) \rightarrow \mathsf{G}_{CP}(S)$ is a continuous covering functor.
By Proposition~\ref{prop: fred}, both $\mathsf{G}_{CP}(S)$ and $\mathsf{G}_{CP}(T)$ are spectral groupoids.
Since $\theta$ is also coherent, the inverse image under $\mathsf{G}_{CP}(\theta)$ of every compact-open local bisection is a compact-open local bisection.
It remains to show that the inverse image of every compact-open set is compact-open.
This is certainly open so it only remains to show that it is compact.
However, since the groupoid is spectral, the compact-open bisections form a basis.
Thus every compact-open set may be written as a finite union of compact-open local bisections.
It follows, that the inverse image is a finite union of compact sets and so is compact.
It follows that $\mathsf{G}_{CP}(\theta)$ is coherent.

Finally, the category of  distributive inverse semigroups and their callitic morphisms is equivalent to
the category of coherent pseudogroups and their callitic morphisms by Corollary~\ref{cor: edna}.
\end{proof}

\section{Booleanizations and Paterson's universal groupoid}

In this section, we shall develop the theory of Booleanizations of spectral groupoids and thereby
of distributive inverse semigroups and apply our theory to providing a new interpretation of Paterson's universal groupoid.

\subsection{Booleanization}
We shall show in this section how to construct a Boolean inverse semigroup from each distributive inverse semigroup
in the freest possible way.

Let $S$ be a distributive inverse semigroup and $\mathsf{G}_{P}(S)$ its associated spectral groupoid.
Recall that a basis is given by $\pi = \{X_{s} \colon s \in S \}$ where $X_{s}$ is the set of all prime filters containing $s$.
Define $\Pi = \{ X_{s} \cap X_{t}^{c} \colon s,t \in S, \,  t \leq s \}$.
It is convenient to define $X_{s;t} = X_{s} \cap X_{t}^{c}$ where $t \leq s$.

\begin{lemma}\label{le: patch} Let $S$ be a distributive inverse semigroup.
\begin{enumerate}

\item $\Pi$ is a basis for a topology on the groupoid $\mathsf{G}_{P}(S)$.

\item If $S$ is Boolean then the topologies generated by $\pi$ and $\Pi$ are the same.
 
\item Let $s \sim t$ and $u \sim v$ and $u \vee v \leq s \vee t$.
Then
$$X_{s \vee t ; u \vee v} = X_{s; (u \vee v)s^{-1}s} \cup X_{t ; (u \vee v)t^{-1}t}.$$

\item $X_{s;t}X_{u;v} = X_{su;sv \vee tu \vee tv}$.

\item $X_{s;t}^{-1} = X_{s^{-1} ; t^{-1}}$.

\end{enumerate}
\end{lemma}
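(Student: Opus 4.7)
The plan is to exploit throughout a single uniform principle: in a distributive inverse semigroup, two elements lying below a common upper bound are compatible, and their meet within the corresponding principal ideal exists and is given by $a\wedge b=ab^{-1}b=ba^{-1}a$. This is the algebraic engine driving the intersection-type computations in parts (1), (3), and (4); parts (2) and (5) will then be quick corollaries. For part (1), since $\pi\subseteq\Pi$ (take $t=0$) the family covers $\mathsf{G}_{P}(S)$, so only the intersection condition remains. Given $F\in X_{s;t}\cap X_{u;v}$, directness produces $c\in F$ with $c\le s,u$; then $c\wedge t=ct^{-1}t$ and $c\wedge v=cv^{-1}v$ exist in $s^{\downarrow}$ and $u^{\downarrow}$. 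Setting $b=ct^{-1}t\vee cv^{-1}v\le c$, primeness of $F$ together with $ct^{-1}t\le t$ and $cv^{-1}v\le v$ forces $b\notin F$, so $F\in X_{c;b}$; the inclusion $X_{c;b}\subseteq X_{s;t}\cap X_{u;v}$ reduces, via directness, to the identity $X_c\cap X_t=X_{ct^{-1}t}$ and its analogue for $v$.

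Part (2) is then immediate from Lemma \ref{le: relative_complement}: for $t\le s$ in a Boolean inverse semigroup the relative complement yields $X_s=X_t\sqcup X_{s\setminus t}$, so $X_{s;t}=X_{s\setminus t}\in\pi$. Part (5) is immediate from $X_s^{-1}=X_{s^{-1}}$ in Lemma \ref{le: prime_topology}(2), combined with the fact that inversion distributes over intersection and complementation. For part (3), first observe $(u\vee v)s^{-1}s\le s$: since $s\sim t$, $ts^{-1}$ is idempotent, so $ts^{-1}s\le s$ and distributivity gives $(s\vee t)s^{-1}s=s$, whence $(u\vee v)s^{-1}s\le s$. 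For a prime filter $P$ with $s\in P$, the condition $u\vee v\notin P$ is equivalent to $(u\vee v)s^{-1}s\notin P$: one direction uses upward closure applied to $(u\vee v)s^{-1}s\le u\vee v$, the other uses directness to find $c\in P$ with $c\le s$ and $c\le u\vee v$, so that $c=(u\vee v)c^{-1}c\le (u\vee v)s^{-1}s$. Combining this observation with its analogue for $t$ and with primeness of $P$ applied to $s\vee t$ yields the claimed equality.

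Part (4) will be the main obstacle, since it combines the groupoid multiplication with four exclusion conditions. I would argue by double inclusion. The forward direction is algebraic: if $s\in F$, $t\notin F$, $u\in G$, $v\notin G$ and $P=F\cdot G$, then $su\in P$ is automatic, while $sv\in P$ would force $v\in G$ by the standard restriction step of replacing $f\in F$ by an element below $s$ and then pulling the inequality $fg\le sv$ through $f^{-1}$; the symmetric arguments rule out $tu,tv\in P$. For the reverse inclusion, given $P\in X_{su;sv\vee tu\vee tv}$, the unique factorization from the proof of Lemma \ref{le: prime_topology}(5) produces $F=(s\,\mathbf{r}(G))^{\uparrow}$ and $G=(u\,\mathbf{d}(P))^{\uparrow}$ with $F\cdot G=P$; verifying $t\notin F$ and $v\notin G$ reduces, again via the principal-ideal meet identity, to the non-membership of $tu$, $sv$, and $tv$ in $P$.
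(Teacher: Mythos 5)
Your proposal is correct and follows essentially the same route as the paper's proof: the same refinement $X_{z;\,zt^{-1}t\vee zv^{-1}v}$ for the basis condition in (1), relative complements for (2), the observation $(u\vee v)s^{-1}s\le s$ for (3), the double inclusion for (4) with the forward direction ruling out $sv,tu,tv$ by multiplying through by inverses and the reverse direction factoring $P$ as $(s\,\mathbf{r}(G))^{\uparrow}\cdot(u\,\mathbf{d}(P))^{\uparrow}$, and inversion of basic sets for (5). The only differences are cosmetic (e.g.\ using directedness and the meet identity $a\wedge b=ab^{-1}b$ where the paper invokes $P=PP^{-1}P$).
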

\begin{proof} (1). Observe that $X_{s:0} = X_{s}$ and that $X_{s;s} = \emptyset$.
Suppose that $(X_{s} \cap X_{t}^{c}) \cap (X_{u} \cap X_{v}^{c}) \neq \emptyset$ where $t \leq s$ and $v \leq u$.
Let $P$ be any prime filter belonging to this set.
Then $s,u \in P$ and $t,v \notin P$.
It follows that there exists $z \in P$ such that $z \leq s,u$.
Put $a = zt^{-1}t \leq t$ and $b = zv^{-1}v \leq v$.
Since $a,b \leq z$ we have that $c = a \vee b$ exists.
Consider now the set $X_{z} \cap X_{c}^{c}$.
Suppose that $c \in P$.
Then since $P$ is a prime filter either $a \in P$ or $b \in P$.
Without loss of generality, suppose that $a \in P$.
Then $t \in P$ which is a contradiction.
It follows that $P \in X_{z} \cap X_{c}^{c}$.
Now let $Q \in X_{z} \cap X_{c}^{c}$.
Then $a,b \in Q$.
Suppose that $t \in Q$.
Then since $a = zt^{-1}t$ we would have $a \in Q$ which is a contradiction since $Q$ omits $c$.
Thus $t \notin Q$.
Similarly, $v \notin Q$.
It follows that $P \in X_{z} \cap X_{c}^{c} \subseteq (X_{s} \cap X_{t}^{c}) \cap (X_{u} \cap X_{v}^{c})$.
Thus $\Pi$ is the basis for a topology on $\mathsf{G}_{P}(S)$. 

(2). Let $t \leq s$. Then $\mathbf{d}(t) \leq \mathbf{d}(s)$.
By Lemma~\ref{le: relative_complement}, we may construct an element $s \backslash t$ such that
$s \backslash t \leq s$, $(s \backslash t) \wedge t = 0$ and $(s \backslash t) \vee t = s$.
Put $u = s \backslash t$.
We prove that $X_{s} \cap X_{t}^{c} = X_{u}$.
Let $P$ be a prime filter in  $X_{s} \cap X_{t}^{c}$.
Then $s \in P$.
But $P$ is a prime filter and so either $u \in P$ or $t \in P$.
We cannot have the latter and so $u \in P$ and $P \in X_{u}$.
Conversely, suppose that $P \in X_{u}$.
Then $s \in P$ and we cannot have $t \in P$ because $u \wedge t = 0$.

(3). Observe first that $(u \vee v)s^{-1}s \leq s$.
We have that $(u \vee v)s^{-1}s \leq (s \vee t)s^{-1}s$.
But $(s \vee t)s^{-1}s = s \vee ts^{-1}s$ and since $s$ and $t$ are compatible $ts^{-1}$ is an idempotent.
Thus $ts^{-1}s \leq s$.
It follows that $s \vee ts^{-1}s = s$, as required.

Let $P \in X_{s \vee t ; u \vee v}$.
Thus $s \vee t \in P$.
Since $P$ is a prime filter either $s \in P$ or $t \in P$.
Without loss of generality, suppose that $s \in P$.
Observe that if $(u \vee v)s^{-1}s \in P$
then $u \vee v \in P$ which is a contradiction.
It follows that the lefthandside is contain in the righthandside.

Now let $P \in X_{s; (u \vee v)s^{-1}s} \cup X_{t ; (u \vee v)t^{-1}t}$.
Without loss of generality, suppose that $P \in X_{s; (u \vee v)s^{-1}s}$.
Then $s \in P$ implies that $s \vee t \in P$.
Suppose that $u \vee v \in P$.
Then $(u \vee v)s^{-1}s \in P$, using the fact that since $P$ is a filter we have that $P = PP^{-1}P$,
which is a contradiction.
It follows that the righthandside is contained in the lefthandside. 

(4). Observe that since $t \leq s$ and $v \leq u$ we have that $sv, tu, tv \leq su$.
Thus the join $sv \vee tu \vee tv$ exists.

Let $X \in X_{s;t}$, $Y \in  X_{u;v}$ where $\mathbf{d}(X) = \mathbf{r}(Y)$.
Then $su \in X \cdot Y$.
Suppose that $sv \vee tu \vee tv \in X \cdot Y$.
But $X \cdot Y$ is a prime filter and so either $sv \in X \cdot Y$ or $tu \in X \cdot Y$ or $tv \in X \cdot Y$ but each of these is ruled out
as follows.
Suppose that $sv \in X \cdot Y$.
Then $s^{-1}sv \in X^{-1} \cdot X \cdot Y = Y$ and so $v \in Y$, a contradiction.
A similar argument applies to $tu$.
Suppose that $tv \in X \cdot Y$.
Then $s^{-1}tv \in X^{-1} \cdot X \cdot Y = Y$.
But $s^{-1}t$ is an idempotent.
It follows that $v \in Y$, which is a contradiction.
Thus $X_{s;t}X_{u;v} \subseteq X_{su;sv \vee tu \vee tv}$.

We now prove the reverse inclusion.
Put $w = sv \vee tu \vee tv$. 
Let $Z \in  X_{su;w}$.
Observe that $\mathbf{d}(u) \in \mathbf{d}(Z)$.
It follows that $Y = (u \mathbf{d}(Z))^{\uparrow}$ is a prime filter containing $u$.
Suppose $v \in Y$.
Then $ue \leq v$ for some $e \in E(\mathbf{d}(Z))$.
Thus $sue \leq sv$
But $(su)e \in ZZ^{-1}Z = Z$.
Thus $sv \in Z$, which is a contradiction.
Thus $Y \in X_{u;v}$.
Observe that $\mathbf{d}(s) \in \mathbf{r}(Y)$.
It follows that $X = (s \mathbf{r}(Y))^{\uparrow}$ is a prime filter containing $s$,
that $\mathbf{d}(X) = \mathbf{r}(Y)$ and that $Z = X \cdot Y$.
Suppose $t \in X$.
Then we may write $sueu^{-1} \leq t$ for some $e \in E(\mathbf{d}(Z))$.
Thus $(su)e(u^{-1}s^{-1})(su) \leq ts^{-1}su$.
But $(su)e(u^{-1}s^{-1})(su) \in Z$ and since $t \leq s$ we have that $ts^{-1}su = tu$.
Thus we deduce that $tu \in Z$, which is a contradiction.
Hence $X \in X_{s;t}$.

(5). We have that $X \in X_{s;t}$ if and only if $X^{-1} \in X_{s^{-1};t^{-1}}$
and we know that $X$ is a prime filter if and only if $X^{-1}$ is a prime filter.
\end{proof}

The topology generated by $\Pi$ is a generalization of the {\em patch topology} described on page~72 of \cite{J}.
We denote the groupoid  $\mathsf{G}_{P}(S)$ equipped with the patch topology by  $\mathsf{G}_{P}(S)^{\dagger}$.
Part (2) of the above lemma suggests that we only get a new topology if the distributive inverse semigroup is not Boolean.
We shall prove this claim later.

\begin{proposition} Let $S$ be a distributive inverse semigroup.
Then the groupoid $\mathsf{G}_{P}(S)$ equipped with the patch topology is Boolean.
\end{proposition}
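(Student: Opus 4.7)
The plan is to verify that $\mathsf{G}_{P}(S)^{\dagger}$ meets the three requirements of a Boolean groupoid: it is étale, its identity space is spectral, and its identity space is Hausdorff. Étalité is essentially immediate from Proposition~\ref{prop: etale_groupoids}(1) together with Lemma~\ref{le: patch}. Continuity of inversion and multiplication is encoded in parts (4) and (5) of that lemma, which show that inverses and products of patch-basic opens remain patch-basic opens. Moreover $\mathsf{G}_{P}(S)_{o}$ is already open in the coarser prime-filter topology (as observed in the proof of Proposition~\ref{prop: spectral_groupoid}), hence a fortiori in the patch topology, and products of patch-opens are patch-open by (4).

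For Hausdorffness of the identity space, I take distinct idempotent prime filters $P\neq Q$. By Lemma~\ref{le: filter_properties}(4) and upward closure, I may find an idempotent $e\in E(P)\setminus Q$. Picking any $f\in E(Q)$, the product $ef=e\wedge f$ exists in the idempotent meet-semilattice and satisfies $ef\leq f$; moreover $ef\notin Q$, for otherwise upward closure would force $e\in Q$. Then $P\in X_{e}=X_{e;0}$ and $Q\in X_{f;ef}$ are disjoint patch-basic opens separating $P$ and $Q$, since any prime filter containing both $e$ and $f$ is directed and therefore contains some common lower bound, hence (by upward closure) contains $ef$ itself.

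For the spectrality of the identity space, I would transfer the problem through the homeomorphism of the proof of Proposition~\ref{prop: spectral_groupoid} to the prime spectrum of the distributive lattice $E(S)$ with its patch topology, reducing to the non-unital analogue of the classical fact that the patch topology on a spectral space is again spectral. Concretely, closure under finite intersections of the patch basis is supplied by Lemma~\ref{le: patch}(1), and sobriety and compactness of each $X_{s;t}$ are established by an Alexander-subbase argument over the subbase $\{X_{s}, X_{t}^{c}\}$: any subbasic cover of $X_{s;t}$ admitting no finite subcover would, by Lemmas~\ref{le: maximal} and~\ref{le: maximal_prime}, yield a $\vee$-closed prime order ideal whose complementary prime filter lies in $X_{s;t}$ but is missed by the cover, a contradiction.

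The main obstacle I anticipate is the compactness of $X_{s;t}$ in the patch topology. Unlike the unital setting of \cite{J}, there is no global top element to bootstrap from, so the proof must proceed locally, exploiting that each $X_{s}$ is compact in the coarser prime topology and recasting patch-covers in terms of order ideals in $S$ so that the maximal-ideal machinery of Section~2 can be brought to bear.
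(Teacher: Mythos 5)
There are two genuine gaps. The first is your claim that continuity of multiplication is ``encoded'' in part~(4) of Lemma~\ref{le: patch}. That part computes the \emph{forward} image $X_{s;t}X_{u;v} = X_{su;sv \vee tu \vee tv}$, which gives you exactly one half of the \'etale criterion of Proposition~\ref{prop: etale_groupoids}(1) (products of open sets are open); but that criterion presupposes you already have a topological groupoid, and continuity of $m$ is a statement about \emph{preimages}: you must show that for every composable pair $(U,V)$ with $U\cdot V\in X_{s;t}$ there are patch-basic opens around $U$ and $V$ whose product lands inside $X_{s;t}$. This does not follow formally from (4); the paper has to make a specific (and not obvious) choice, namely $U\in X_{u;tt^{-1}u}$ and $V\in X_{v;vt^{-1}t}$ for a suitable $uv\leq s$, and then verify via (4) that the product of these two sets is contained in $X_{s;t}$. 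Part~(5) does handle inversion, since inversion is its own inverse, but the multiplication step is missing from your argument.

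The second gap is the one you yourself flag: compactness of the sets $X_{s;t}$ (equivalently, of the basic opens of the identity space) in the patch topology. You sketch an Alexander-subbase argument feeding into Lemmas~\ref{le: maximal} and~\ref{le: maximal_prime}, but you do not carry it out, and this is precisely the crux of the proposition --- reducing a subbasic cover by sets $X_{a}$ and $X_{b}^{c}$ to a finite one requires a nontrivial interplay between the filter generated by $s$ and the $b$'s and the $\vee$-closed ideal generated by $t$ and the $a$'s, none of which is supplied. The paper avoids this entirely by a different route: it embeds the idempotent prime filters as a closed subset $\mathbf{P}$ of $\mathbf{2}^{E(S)}$ with the product topology, gets compactness from Tychonoff, and checks that the patch topology on the identity space agrees with the subspace topology from $\mathbf{P}$ (sobriety then being free since the space is Hausdorff). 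On the positive side, your direct Hausdorff separation of two idempotent prime filters $P\neq Q$ by $X_{e}$ and $X_{f;ef}$ is correct and is arguably cleaner than the paper's, which obtains Hausdorffness only as a byproduct of the product-space embedding; but as it stands the proposal does not constitute a proof.
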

\begin{proof} 
We prove first that $\mathsf{G}_{P}(S)$ is also an \'etale groupoid with respect to the patch topology.
By Lemma~\ref{le: patch}, it is clear that the inversion map is continuous with respect to the patch topology.

We now show that the multiplication map is continuous with respect to the patch topology.
We use the same idea as in the proof of Proposition~4.3 of \cite{L}.
We prove that $m^{-1} (X_{s;t})$ is open.
Let $U$ and $V$ be prime filters such that $\mathbf{d}(U) = \mathbf{r}(V)$
where $U \cdot V = X \in X_{s;t}$.
Then $s \in U\cdot V$ and so we may find $u \in U$ and $v \in V$ such that $uv \leq s$.
In addition, we may assume that $u^{-1}u = vv^{-1}$ since $U^{-1} \cdot U = V \cdot V^{-1}$.
Consider now $X_{u;tt^{-1}u}$ and $X_{v;vt^{-1}t}$.
Their product is obtained by part (4) of Lemma~\ref{le: patch} and is
$$X_{uv; uvt^{-1}t \vee tt^{-1}uv \vee tt^{-1}uvt^{-1}t}.$$
Let $P$ be an element of this set.
Then $uv \in P$ and so $s \in P$.
Suppose that $t \in P$.
Then $(uv)t^{-1}t \in PP^{-1}P = P$, which is a contradiction.
Thus $P \in X_{s;t}$.
It follows that
$$X_{uv; uvt^{-1}t \vee tt^{-1}uv \vee tt^{-1}uvt^{-1}t} \subseteq X_{s;t}.$$
We claim that $U \in Y_{u;tt^{-1}u}$ and $V \in Y_{v;vt^{-1}t}$.
Suppose that $tt^{-1}u \in U$.
Then $tt^{-1}uv \in X$.
But $tt^{-1}uv \leq tt^{-1}s = t$ which implies that $t \in X$, which is a contradiction.
Suppose that $vt^{-1}t \in V$.
Then $uvt^{-1}t \in X$ and $uvt^{-1}t \leq st^{-1}t = t$, which is a contradiction.

We now show that the groupoid with the patch topology is \'etale.
First we show that the set of identities is an open subspace with respect to the patch topology.
Let $P$ be an identity in the groupoid $\mathsf{G}_{P}(S)$. 
Then it is an inverse subsemigroup and so contains an idempotent $e$, say.
Then $P \in X_{e}$ which consists entirely of idempotent prime filters.
Next we have by part (4) of Lemma~\ref{le: patch} that the product of open sets is open.

To show that the groupoid with the patch topology is Boolean it is enough
to show that the set of identities of $\mathsf{G}_{P}(S)$
with respect to the subspace topology, is a Boolean space.
The case where the distributive lattice has a top element is dealt with in Proposition~II.4.5 of \cite{J}.
We deal with the general case here.
The idea for this proof goes back to Section~4.3 of Paterson \cite{P}.
Observe first that we can restrict our attention to the distributive lattice $E(S)$ 
since idempotent filters are determined by the idempotents they contain.
We give the set $\mathbf{2}^{E(S)}$ the discrete topology.
By the Axiom of Choice this is a compact space.
A subbase for this topology is given by sets of the form $U_{e}$ and $U_{e}^{c}$ where $e \in E(S)$ and
$$U_{e} = \{\theta \colon E(S) \rightarrow \mathbf{2} \colon \theta (e)(1) = 1\}
\mbox{ and }
U_{e}^{e} = \{\theta \colon E(S) \rightarrow \mathbf{2} \colon \theta (e)(1) = 1\}.$$
With each filter, we may associate an element $j(X)$ of $\mathbf{2}^{E(S)}$.
This is an injective function.
If we restrict our attention to the prime filters on $E(S)$ then the function $j$ restricts to a bijection with a closed subset of $\mathbf{2}^{E(S)}$.
We denote this closed subset by $\mathbf{P}$.
The simple argument to prove that it really is closed is made explicit in \cite{Law3}.
The restriction of the product topology to $\mathbf{P}$ is Hausdorff and has a basis of compact-open sets.
The topology generated by $\Pi$ restricted to the prime filters on $E(S)$ is easily seen to be 
homeomorphic to the restriction of the product topology to the set $\mathbf{P}$.
Thus we have shown that the space $\mathsf{G}_{P}(S)$ equipped with the patch topology is Boolean.
\end{proof}

We may now complete what was started in part (2) of Lemma~\ref{le: patch}.

\begin{proposition}\label{prop: patch_equal_usual} Let $S$ be a distributive inverse semigroup.
Then the patch topology is the same as the usual topology if and only if $S$ is Boolean.
\end{proposition}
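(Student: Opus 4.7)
The plan is to leverage the preceding proposition, which shows that $\mathsf{G}_{P}(S)$ equipped with the patch topology is always a Boolean groupoid, together with the characterization of Boolean inverse semigroups by Hausdorffness of the identity space given in Lemma~\ref{le: hausdorff}(1).

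One direction is already in hand: part (2) of Lemma~\ref{le: patch} establishes that if $S$ is Boolean then $\Pi$ and $\pi$ generate the same topology, because every set of the form $X_{s;t}$ can be rewritten as $X_{s\setminus t}$ via the relative complement construction of Lemma~\ref{le: relative_complement}.

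For the converse, I would argue as follows. Suppose the patch topology on $\mathsf{G}_{P}(S)$ coincides with the usual one. By the previous proposition, $\mathsf{G}_{P}(S)^{\dagger}$ is a Boolean groupoid; in particular its space of identities, equipped with the patch topology, is Hausdorff. Under the hypothesis, this subspace topology coincides with the usual subspace topology on the space of identities of $\mathsf{G}_{P}(S)$. Hence the space of identities of $\mathsf{G}_{P}(S)$ (with the usual topology) is Hausdorff. Applying Lemma~\ref{le: hausdorff}(1) then yields that $S$ is Boolean, as required.

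There is no real obstacle here: the substantive content has already been extracted in the previous proposition and in Lemma~\ref{le: hausdorff}(1); this result is just the synthesis of those two facts with the trivial direction of Lemma~\ref{le: patch}(2). The only thing to be careful of is that the Hausdorffness transfers correctly to the subspace of identities, but this is automatic since a subspace of a Hausdorff space is Hausdorff, and the identity space coincides under the two topologies precisely because the two topologies on the whole groupoid do.
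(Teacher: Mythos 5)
Your proof is correct, but it takes a genuinely different route from the paper's. The paper argues directly: assuming the two topologies coincide, each patch-basic set $X_{s;t}$ (with $t\leq s$) is open in the usual topology and compact, hence equals a finite union $\bigcup_{i=1}^{m}X_{a_{i}}$ with each $a_{i}\leq s$; the $a_{i}$ are therefore compatible, and their join $a$ is shown, via prime filters and Lemma~\ref{le: prime_topology}, to satisfy $a\wedge t=0$ and $a\vee t=s$. This exhibits an explicit relative complement $s\setminus t$ for every $t\leq s$, from which Booleanness follows. Your argument instead routes everything through the identity space: the previous proposition makes $\mathsf{G}_{P}(S)^{\dagger}$ a Boolean groupoid, so its identity space is Hausdorff in the patch subspace topology; if the two topologies on the groupoid agree then so do the subspace topologies on the (fixed) set of idempotent prime filters, and Lemma~\ref{le: hausdorff}(1) converts Hausdorffness of the usual identity space into Booleanness of $S$. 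This is legitimate and non-circular (the previous proposition and Lemma~\ref{le: hausdorff}(1) are both established independently), and it is shorter; what it buys is economy, at the cost of leaning on the classical commutative Stone duality hidden inside Lemma~\ref{le: hausdorff}(1) rather than producing the complements by hand. The paper's version is more self-contained and constructive, and incidentally shows that equality of the topologies forces every $X_{s;t}$ to be of the form $X_{a}$, which is the concrete mechanism behind the equivalence. Your observation that only the identity space matters is sound, since Booleanness of $S$ is by definition a condition on $E(S)$ alone.
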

\begin{proof} Only one direction needs proving.
Suppose that the patch topology is the same as the usual topology.
Let $t \leq s$.
Then $X_{s;t}$ is a compact-open set and so can be written as a finite union $\bigcup_{i=1}^{m} X_{a_{i}}$.
Clearly $X_{a_{i}} \subseteq X_{s}$.
It follows that $a_{i} \leq s$ for all $i$ and so the set $\{a_{1}, \ldots, a_{m} \}$ is compatible and so has join; $a$, say.
Hence $X_{s;t} = X_{a}$.
We have used Lemma~\ref{le: prime_topology} throughout.
We prove that $a = s \setminus t$.
Suppose that $0 \neq z \leq t,a$.
Let $P$ be a prime filter containing $z$.
Then $P$ contains $t$ and $a$.
But a prime filter containing $a$ cannot contain $t$
and so we get a contradiction.
It follows that $z = 0$ and that $t \wedge a = 0$.
Let $P$ be any prime filter containing $s$.
If $t \notin P$ then $a \in P$.
We have shown that $X_{s} \subseteq X_{a} \cup X_{t}$.
We clearly have equality and so again by Lemma~\ref{le: prime_topology}
we have that $s = a \vee t$.
We have therefore proved that $S$ is Boolean.
\end{proof}

If $S$ is a distributive inverse semigroup then the groupoid $\mathsf{G}_{P}(S)^{\dagger}$ is called the 
{\em Booleanization of the groupoid $\mathsf{G}_{P}(S)$}.
We may therefore form the Boolean inverse semigroup $\mathbf{B}(S) = \mathsf{KB}(\mathsf{G}_{P}(S)^{\dagger})$
which we call the {\em Booleanization of $S$}.
There is a map $\beta \colon S \rightarrow \mathbf{B}(S)$ given by $s \mapsto X_{s}$
which is an injective homomorphism.
We shall characterize the properties of this homomorphism.
First we need a lemma.

\begin{lemma}\label{le: products_complements} Let $S$ be a Boolean inverse semigroup.
Suppose that $t \leq s$ and $v \leq u$.
Then 
$(s \setminus t)(u \setminus v) = su \setminus (sv \vee tu \vee tv )$.
\end{lemma}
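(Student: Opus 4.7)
The plan is to verify that the element $X := (s \setminus t)(u \setminus v)$ satisfies the two defining properties of the relative complement $su \setminus (sv \vee tu \vee tv)$ from Lemma~\ref{le: relative_complement}, and then invoke the uniqueness clause of that lemma. Since $t \leq s$ and $v \leq u$, the elements $sv$, $tu$, $tv$ all lie below $su$, so the join $sv \vee tu \vee tv$ exists and is bounded above by $su$; in particular the right-hand side is well-defined.

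To handle the join part of the defining condition, I would use the fact that multiplication in a distributive inverse semigroup distributes over binary compatible joins. Starting from $s = t \vee (s \setminus t)$ and $u = v \vee (u \setminus v)$ (Lemma~\ref{le: relative_complement}) and distributing twice, one obtains
\[ su = tv \vee t(u \setminus v) \vee (s \setminus t)v \vee (s \setminus t)(u \setminus v). \]
On the other hand $sv = tv \vee (s \setminus t)v$ and $tu = tv \vee t(u \setminus v)$, so
\[ sv \vee tu \vee tv = tv \vee (s \setminus t)v \vee t(u \setminus v). \]
Comparing these identities immediately yields $su = (sv \vee tu \vee tv) \vee X$.

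The real content is orthogonality: $(sv \vee tu \vee tv) \wedge X = 0$ inside the Boolean algebra $(su)^{\downarrow}$. I would prove this by showing that the four summands $tv$, $t(u \setminus v)$, $(s \setminus t)v$, $X$ appearing above are pairwise orthogonal. For elements $a, b \leq su$, any meet inside $(su)^{\downarrow}$ is controlled by the meets of domains in $\mathbf{d}(su)^{\downarrow}$ (equivalently, of ranges in $\mathbf{r}(su)^{\downarrow}$), via the order-isomorphism $a \mapsto \mathbf{d}(a)$, so the meet vanishes as soon as $\mathbf{r}(a) \wedge \mathbf{r}(b) = 0$ or $\mathbf{d}(a) \wedge \mathbf{d}(b) = 0$. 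Pairs with $t$ on one side and $s \setminus t$ on the other (e.g.\ $tv$ vs $X$, or $tv$ vs $(s \setminus t)v$) are handled by the range identity $\mathbf{r}(t) \wedge \mathbf{r}(s \setminus t) = 0$, which follows from $t \wedge (s \setminus t) = 0$ inside $s^{\downarrow}$; pairs with $v$ on one side and $u \setminus v$ on the other (e.g.\ $tv$ vs $t(u \setminus v)$, or $(s \setminus t)v$ vs $X$) are handled by the dual domain identity in $u^{\downarrow}$.

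The main obstacle is the clean organization of this orthogonality case analysis, rather than any subtle step: once pairwise orthogonality of the four summands is in hand, the join of any three of them is orthogonal to the fourth, and the uniqueness statement in Lemma~\ref{le: relative_complement} forces $X = su \setminus (sv \vee tu \vee tv)$.
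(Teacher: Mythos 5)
Your proposal is correct and follows essentially the same route as the paper: decompose $su$ as the join of the four pairwise orthogonal pieces $tv$, $t(u \setminus v)$, $(s \setminus t)v$ and $(s \setminus t)(u \setminus v)$, identify the first three with $sv \vee tu \vee tv$, and conclude by the uniqueness of relative complements. The only difference is that you spell out the orthogonality checks via the domain and range identities, a step the paper leaves implicit.
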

\begin{proof}
We have that
$$su = (s \setminus t)(u \setminus v)
\vee
tv
\vee
t(u \setminus v)
\vee
(s \setminus t)v.$$
But
$$tu = tv \vee t(u \setminus v) \mbox{ and } sv = tv \vee (s \setminus t)v.$$
Thus
$$su = (s \setminus t)(u \setminus v)
\vee
tv
\vee
sv
\vee
tu.$$
It remains to show that
$(s \setminus t)(u \setminus v)$
and
$tv \vee sv \vee tu$
have only zero as a common lower bound.
But this follows from the fact that 
$tv$, 
$t(u \setminus v)$   
and
$(s \setminus t)v$ are pairwise orthogonal.
\end{proof}

The main result of this section is the following.

\begin{theorem}[Booleanization of distributive inverse semigroups]\label{the: Booleanization} 
Let $S$ be a distributive inverse semigroup and let $\theta \colon S \rightarrow T$ be a morphism to a Boolean inverse semigroup
with the property that the inverse image under $\theta$ of each prime filter in $T$ is a prime filter in $S$.
Then there is a unique morphism $\bar{\theta} \colon \mathbf{B}(S) \rightarrow T$ of distributive inverse semigroups 
such that 
$\bar{\theta} \beta = \theta$.
\end{theorem}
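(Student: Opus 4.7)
The plan is to work on the groupoid side and use the duality of Section~3 together with the patch topology introduced earlier in this section. The hypothesis that $\theta^{-1}(P)$ is a prime filter for every prime filter $P$ of $T$ already implies properness of $\theta$, since non-emptiness of the inverse image of every prime filter is equivalent to properness. The arguments of Lemma~\ref{le: covering} then go through essentially verbatim to show that $\theta^{-1}$ is a covering functor from $\mathsf{G}_{P}(T)$ to $\mathsf{G}_{P}(S)$; the only role of the weakly meet preserving hypothesis in Lemma~\ref{le: covering} was to secure primality of the inverse image of a prime filter, and this is now a direct assumption.

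The crucial additional step is to upgrade this to a coherent continuous covering functor whose codomain is $\mathsf{G}_{P}(S)^{\dagger}$. Each basic patch-open $X_{s;t}$ of $\mathsf{G}_{P}(S)^{\dagger}$, where $t \leq s$, pulls back under $\theta^{-1}$ to $X_{\theta(s);\theta(t)}$; and because $T$ is Boolean, Proposition~\ref{prop: patch_equal_usual} gives $\mathsf{G}_{P}(T) = \mathsf{G}_{P}(T)^{\dagger}$, so this preimage is a compact-open local bisection of $\mathsf{G}_{P}(T)$. Arbitrary compact-open local bisections of $\mathsf{G}_{P}(S)^{\dagger}$ are finite unions of such basic sets, so their preimages remain compact-open. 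Applying $\mathsf{KB}$ therefore induces a morphism $\mathsf{KB}(\mathsf{G}_{P}(S)^{\dagger}) \to \mathsf{KB}(\mathsf{G}_{P}(T))$, and composing with the inverse of the isomorphism $\varepsilon \colon T \to \mathsf{KB}(\mathsf{G}_{P}(T))$ from Theorem~\ref{the: dist_duality} produces the desired $\bar{\theta} \colon \mathbf{B}(S) \to T$. By construction $\bar{\theta}(X_{s;t}) = \theta(s) \setminus \theta(t)$ and in particular $\bar{\theta}(X_{s}) = \theta(s)$, so $\bar{\theta}\beta = \theta$.

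For uniqueness, compactness implies that every element of $\mathbf{B}(S)$ is a finite union of basic patch-opens $X_{s;t}$, and the relative complements in the Boolean inverse semigroup $\mathbf{B}(S)$ guaranteed by Lemma~\ref{le: relative_complement} allow us to refine such a union into an orthogonal join. Suppose $\bar{\theta}'$ is any morphism of distributive inverse semigroups with $\bar{\theta}'\beta = \theta$. Then $\bar{\theta}'(X_{s}) = \theta(s)$ for all $s$, and the orthogonal decomposition $X_{s} = X_{t} \vee X_{s;t}$ passes through $\bar{\theta}'$ to give $\theta(s) = \theta(t) \vee \bar{\theta}'(X_{s;t})$ with $\theta(t)$ and $\bar{\theta}'(X_{s;t})$ orthogonal, since a semigroup homomorphism preserves both multiplication and zero. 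The uniqueness clause of Lemma~\ref{le: relative_complement} applied in $T$ then forces $\bar{\theta}'(X_{s;t}) = \theta(s) \setminus \theta(t) = \bar{\theta}(X_{s;t})$, and preservation of compatible joins handles the rest. The main obstacle is the upgrade described in the second paragraph: keeping the ordinary and patch topologies on $\mathsf{G}_{P}(S)$ carefully separated and correctly verifying that $\theta^{-1}$ is coherent as a map into $\mathsf{G}_{P}(S)^{\dagger}$.
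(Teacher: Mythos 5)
Your proof is correct, but it reaches the conclusion by a genuinely different route from the paper's. The paper constructs $\bar{\theta}$ by hand: it defines $\bar{\theta}(X_{s;t}) = \theta(s) \setminus \theta(t)$ directly, proves well-definedness by separating elements of $T$ with ultrafilters (Propositions~\ref{prop: key_result} and~\ref{prop: semigroup_filters}) and pulling these back along $\theta$, establishes multiplicativity via the algebraic identity of Lemma~\ref{le: products_complements}, checks preservation of compatibility by a further prime-filter argument, and only then extends to finite unions. You instead run everything through the duality machinery: the hypothesis (which, as you note, subsumes properness) makes $\theta^{-1}$ a coherent continuous covering functor from $\mathsf{G}_{P}(T)$ to $\mathsf{G}_{P}(S)^{\dagger}$, where the key computation is that the preimage of $X_{s;t}$ is $X_{\theta(s);\theta(t)}$, compact-open in $\mathsf{G}_{P}(T)$ because $T$ Boolean forces the patch and ordinary topologies to coincide there (Proposition~\ref{prop: patch_equal_usual}); then Lemma~\ref{le: callitic} and the isomorphism $\varepsilon$ of Theorem~\ref{the: dist_duality} hand you $\bar{\theta}$. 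What your route buys is that the well-definedness, multiplicativity and compatibility checks are absorbed into facts already proved about covering functors and $\mathsf{KB}$; what it costs is dependence on the full duality apparatus, whereas the paper's computation is self-contained and produces the formula $\bar{\theta}(X_{s;t}) = \theta(s)\setminus\theta(t)$ without ever mentioning $\mathsf{G}_{P}(T)$. Your uniqueness argument, via the orthogonal decomposition $X_{s} = X_{t} \vee X_{s;t}$ and the uniqueness clause of Lemma~\ref{le: relative_complement}, is essentially the paper's, spelled out in more detail. One small point worth making explicit: when you assert that $\bar{\theta}'$ carries the orthogonal pair $X_{t}, X_{s;t}$ to an orthogonal pair you are using $\bar{\theta}'(\emptyset) = 0$; this does hold, because $\bar{\theta}'(X_{0}) = \theta(0)$ and the prime-filter hypothesis forces $\theta(0) = 0$ (otherwise an ultrafilter containing $\theta(0)$ would pull back to a filter containing $0$, which is not proper).
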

\begin{proof} 
We shall define the map $\bar{\theta}$ is stages.
Suppose first that $t \leq s$.
Put
$$\bar{\theta}(X_{s;t}) = \theta (s) \backslash \theta (t).$$
Observe that the righthandside is well-defined since $t \leq s$ implies that $\theta (t) \leq \theta (s)$.
To show this is well-defined, suppose that 
$X_{s;t} = X_{u;v}$ and that $\theta (s) \setminus \theta (t) \neq \theta (u) \setminus \theta (v)$.
Then by Proposition~\ref{prop: key_result} and Proposition~\ref{prop: semigroup_filters}
we may find an ultrafilter  $P$ in $T$ which contains, without loss of generality, 
$\theta (s) \setminus \theta (t)$ but not $\theta (u) \setminus \theta (v)$.
By assumption $Q = \theta^{-1}(P)$ is a prime filter in $S$.
Clearly $s \in Q$ and $t \notin Q$ and so $Q \in X_{s;t}$.
By assumption $Q \in X_{u;v}$.
But this implies that $\theta (u) \in P$ and $\theta (v) \notin P$ 
and so $\theta (u) \setminus \theta (v) \in P$
by Lemma~\ref{le: relative_complement} and the fact that $P$ is an ultrafilter and so a prime filter,
and this is a contradiction.
Thus this map is well-defined.
The fact that $\bar{\theta}(X_{s;t}) \bar{\theta}(X_{u;v}) = \bar{\theta}(X_{su;su \vee tu \vee tv})$
follows from Lemma~\ref{le: products_complements} and the fact that $\theta$ is a morphism of
distributive inverse semigroups.

We next prove that if $X_{s;t}$ and $X_{u;v}$ are compatible in $\mathbf{B}(S)$ then
$\theta (s)\setminus \theta (t)$ is compatible with $\theta (u) \setminus \theta (v)$.
By Lemma~\ref{le: products_complements} we have that
$$(\theta (s)\setminus \theta (t))^{-1}(\theta (u) \setminus \theta (v)) 
= 
\theta (s)^{-1}\theta (u) \setminus \theta (s)^{-1}\theta (v) \vee \theta (t)^{-1}\theta (u) \vee \theta (t)^{-1}\theta (v).$$
Let $P$ be a prime filter containing $(\theta (s)\setminus \theta (t))^{-1}(\theta (u) \setminus \theta (v))$.
Then $\theta^{-1}(P)$ is a prime filter that contains $s^{-1}u$ and omits $s^{-1}v \vee t^{-1}u \vee t^{-1}v$.
Thus $\phi^{-1}(P) \in X_{s;t}^{-1}X_{u;v}$.
By assumption, this contains only idempotent filters.
Thus $\phi^{-1}(P)$ is an idempotent filter and so $P$ is an idempotent prime filter.
Thus all the prime filters containing $(\theta (s)\setminus \theta (t))^{-1}(\theta (u) \setminus \theta (v))$ are idempotent.
By Lemma~\ref{le: Xlemma},
it follows that $(\theta (s)\setminus \theta (t))^{-1}(\theta (u) \setminus \theta (v))$ is an idempotent.
By symmetry, we deduce that $\theta (s)\setminus \theta (t)$ and $\theta (u) \setminus \theta (v)$ are compatible.

It remains to extend the map $\bar{\theta}$ to the whole of $\mathbf{B}(S)$.
Every compact-open local bisection in $\mathbf{B}(S)$
is of the form
$\bigcup_{i=1}^{m} Y_{s_{i} ; t_{i}}$ 
for a finite compatible set of basic elements.
Put
$$\bar{\theta}(\bigcup_{i=1}^{m} X_{s_{i} ; t_{i}}) = \bigvee_{i=1}^{m} \theta (s_{i}) \setminus \theta (t_{i}).$$
We show that this map is well-defined.
Suppose that
$$\bigcup_{i=1}^{m} X_{s_{i} ; t_{i}}
=
\bigcup_{j=1}^{n} X_{u_{j} ; v_{j}}.$$
We need to prove that 
$$\bigvee_{i=1}^{m} \theta (s_{i}) \setminus \theta (t_{i})
=
\bigvee_{j=1}^{n} \theta (u_{j}) \setminus \theta (v_{j}).$$
It is enough to show that the set of prime filters containing the
lefthandside is the same as the set of prime filters containing the righthandside.
This is straightforward to prove given our assumption on $\theta$.
By construction, the map $\bar{\theta}$ is a morphism of distributive inverse semigroups.

It is clear that $\bar{\theta} \beta = \theta$.
It only remains to prove uniqueness.
Let $\psi \colon \mathbf{B}(S) \rightarrow T$ be a morphism such that
$\psi \beta = \theta$.
It follows that $\psi (\beta (s)) = \theta (s)$.
Thus $\psi (X_{s;0}) = \theta (s)$.
Let $t \leq s$.
Then $\theta (t) \leq \theta (s)$ and so $\theta (s) \backslash \theta (t)$ is defined.
We shall prove that $\psi (X_{s;t}) = \theta (s) \backslash \theta (t)$.
Since $\psi$ is a morphism of Boolean inverse semigroups we have that
$\psi (X_{s} \setminus X_{t}) = \theta (s) \setminus \theta (t)$ and clearly $X_{s} \setminus X_{t} = X_{s ; t}$.
\end{proof}

\subsection{Paterson's universal groupoid}

The goal of this section is to apply Theorem~\ref{the: Booleanization} to help us understand
Paterson's universal groupoid \cite{L,LMS,P}.

Let $S$ be an arbitrary inverse semigroup with zero.
We denote by $\mathsf{G}(S)$ the set of all proper filters of $S$.
This is a groupoid when we define $A \cdot B = (AB)^{\uparrow}$ if $\mathbf{d}(A) = \mathbf{r}(B)$ \cite{LMS}.
For each $s \in S$ define $U_{s}$ to be the set of all proper filters that contain $s$.
If $s \neq 0$ then $s^{\uparrow}$ is a proper filter containing $s$, it follows that $U_{s} \neq \emptyset$ if and only if $s \neq 0$.
Put $\tau = \{U_{s} \colon  s \in S \}$.
The proof of the following lemma is similar to the proofs from part (1) of Lemma~\ref{le: Xlemma} 
and parts (2), (5) and (8) of Lemma~\ref{le: prime_topology}.

\begin{lemma} Let $S$ be an inverse semigroup with zero.
\begin{enumerate}

\item $U_{s}$ is a local bisection.

\item $U_{s^{-1}} = U_{s}^{-1}$.

\item $U_{s}U_{t} = U_{st}$.

\item $U_{s} \cap U_{t} = \bigcup_{a \leq s,t} U_{a}$. 
\end{enumerate}
\end{lemma}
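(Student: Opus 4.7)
The plan is to mirror, point by point, the arguments for the analogous results about prime filters (Lemma~\ref{le: Xlemma}(1) and Lemma~\ref{le: prime_topology}(2), (5), (8)), noting that none of those arguments really used primeness; they only used that we were dealing with proper filters closed upwards under the natural partial order together with the structural results collected in Lemma~\ref{le: filter_properties}.

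For (1), I would take $F,G \in U_{s}$ with $\mathbf{d}(F) = \mathbf{d}(G)$ (the $\mathbf{r}$-side is dual). Both filters contain $s$, so $F \cap G \neq \emptyset$, and part~(3) of Lemma~\ref{le: filter_properties} gives $F = G$. This shows $U_{s}$ is a local bisection. Part (2) is immediate from the observation that $F \mapsto F^{-1}$ is an involutive bijection on proper filters and $s \in F$ iff $s^{-1} \in F^{-1}$.

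For (3), the inclusion $U_{s}U_{t} \subseteq U_{st}$ is direct: if $F \in U_{s}$, $G \in U_{t}$ and $F \cdot G$ is defined, then $st \in FG \subseteq F \cdot G$. For the reverse inclusion I would copy the construction from Lemma~\ref{le: prime_topology}(5). Given $P \in U_{st}$, note that $\mathbf{d}(st) \leq \mathbf{d}(t)$, so $G = (t\,\mathbf{d}(P))^{\uparrow}$ is a well-defined proper filter containing $t$. Since $t\,\mathbf{d}(st) \in G$, one checks that $\mathbf{r}(t)\mathbf{d}(s) \in \mathbf{r}(G)$, so $F = (s\,\mathbf{r}(G))^{\uparrow}$ is a well-defined proper filter containing $s$. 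Then $F \cdot G$ is defined, contains $st$, and has the same domain and codomain as $P$; by Lemma~\ref{le: filter_properties}(3) applied with $st$ as the witness, $P = F \cdot G$, giving $P \in U_{s} U_{t}$.

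For (4), one inclusion is trivial since $a \leq s,t$ implies $U_{a} \subseteq U_{s} \cap U_{t}$. Conversely, if $F \in U_{s} \cap U_{t}$, then $s,t \in F$, and because $F$ is directed there exists $a \in F$ with $a \leq s,t$, whence $F \in U_{a}$. The one mild point to watch is to verify that the various constructed filters (most notably the $F$ and $G$ in part~(3)) are proper; this follows because they each contain a non-zero element ($s$, $t$, or ultimately $st$, which is non-zero since it lies in the proper filter $P$). No step looks like a genuine obstacle, as primeness was never the source of any real work in the analogous proofs.
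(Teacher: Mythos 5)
Your proposal is correct and matches the paper's treatment exactly: the paper's own ``proof'' is literally the one-line remark that the arguments of Lemma~\ref{le: Xlemma}(1) and Lemma~\ref{le: prime_topology}(2), (5) and (8) carry over, and that is precisely what you have carried out, rightly observing that primeness plays no role in those parts. The only small caution is that properness of $(t\,\mathbf{d}(P))^{\uparrow}$ does not follow merely from its ``containing a non-zero element'' but from the fact that no element of the generating directed set $t\,\mathbf{d}(P)$ is zero (e.g.\ $s(t\,\mathbf{d}(b)) = b \neq 0$ for $b \in P$ with $b \leq st$), a point the paper likewise leaves implicit.
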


It follows that $\tau$ is the basis for a topology on $\mathsf{G}(S)$.
The proof of the following is similar to that of Proposition~\ref{prop: spectral_groupoid}.
 
\begin{lemma}  Let $S$ be an inverse semigroup with zero.
Then  $\mathsf{G}(S)$ is an \'etale topological groupoid.
\end{lemma}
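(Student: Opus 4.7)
My plan is to follow the template of the proof of Proposition~\ref{prop: spectral_groupoid} almost verbatim, since none of the arguments there used the primeness of the filters, only the filter axioms together with the properties of the basic sets $U_s$ listed in the preceding lemma. There are three things to check: the multiplication and inversion are continuous, the unit space is open, and the product of two open sets is open. Once all three are in hand, Proposition~\ref{prop: etale_groupoids}(1) gives the \'etale conclusion.

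First I would verify continuity of the structure maps. Continuity of inversion is immediate from part~(2) of the preceding lemma, since $\iota^{-1}(U_s) = U_{s^{-1}}$ for every $s \in S$. For the multiplication map $m \colon \mathsf{G}(S) \ast \mathsf{G}(S) \to \mathsf{G}(S)$ I would establish the identity
\[
m^{-1}(U_s) = \left( \bigcup_{0 \neq ab \leq s} U_a \times U_b \right) \cap (\mathsf{G}(S) \ast \mathsf{G}(S))
\]
for each $s \in S$, by exactly the computation carried out in step~3 of the proof of Proposition~\ref{prop: spectral_groupoid}. The inclusion from right to left is clear; the reverse uses Lemma~\ref{le: filter_properties}(1) together with the fact that if $s \in A \cdot B = (AB)^{\uparrow}$ then by definition there exist $a \in A$ and $b \in B$ with $ab \leq s$.

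Next I would show that the unit space is open. Every identity of $\mathsf{G}(S)$ is, by Lemma~\ref{le: filter_properties}(4), an inverse subsemigroup and hence contains at least one idempotent; conversely, any filter that contains an idempotent is an inverse subsemigroup and hence an identity of the groupoid. Thus
\[
\mathsf{G}(S)_{o} = \bigcup_{e \in E(S)} U_{e},
\]
which is a union of basic open sets. For the remaining condition, let $X$ and $Y$ be arbitrary open sets and write $X = \bigcup_{i} U_{s_i}$, $Y = \bigcup_{j} U_{t_j}$. Using part~(3) of the preceding lemma, $XY = \bigcup_{i,j} U_{s_i t_j}$, which is open.

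With these three facts in hand the result follows from Proposition~\ref{prop: etale_groupoids}(1). The only step that carries any real content is the formula for $m^{-1}(U_s)$, but even this is routine since it just reinterprets the definition of the product filter $A \cdot B$ in terms of the basis $\tau$; I do not anticipate any genuine obstacle.
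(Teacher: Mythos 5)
Your proposal is correct and follows exactly the route the paper intends: the paper's own ``proof'' is just the remark that the argument is similar to that of Proposition~\ref{prop: spectral_groupoid}, and you have carried out precisely that adaptation (continuity of the structure maps via the $m^{-1}(U_s)$ formula, openness of the unit space via Lemma~\ref{le: filter_properties}(4), and closure of open sets under products via $U_{s}U_{t}=U_{st}$, followed by Proposition~\ref{prop: etale_groupoids}(1)). No gaps.
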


The following is a key step and makes substantive use of Theorem~\ref{the: distributive_completions}.

\begin{proposition}\label{prop: filter_prime_filter} Let $S$ be an inverse semigroup with zero.
Then the groupoid $\mathsf{G}(S)$ is homeomorphic to the groupoid $\mathsf{G}_{P}( \mathsf{D}(S))$.
\end{proposition}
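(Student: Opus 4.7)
The plan is to exhibit an explicit bijection between proper filters on $S$ and prime filters on $\mathsf{D}(S)$, then promote it to a homeomorphism of groupoids. Define
$$\Phi \colon \mathsf{G}(S) \rightarrow \mathsf{G}_{P}(\mathsf{D}(S)), \qquad F \mapsto \tilde{F} := \{ A \in \mathsf{D}(S) \colon A \cap F \neq \emptyset \},$$
$$\Psi \colon \mathsf{G}_{P}(\mathsf{D}(S)) \rightarrow \mathsf{G}(S), \qquad P \mapsto \hat{P} := \{ s \in S \colon s^{\downarrow} \in P \}.$$
These are modelled exactly on Lemma~\ref{prime-vs-completelyprime}, and I expect them to be mutually inverse.

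I would first check that $\tilde{F}$ is a prime filter in $\mathsf{D}(S)$. Upward closure is immediate from subset inclusion being the order on $\mathsf{D}(S)$. For downward directedness, given $A,B\in\tilde F$ pick $a\in A\cap F$ and $b\in B\cap F$, use the directedness of $F$ to find $c\in F$ with $c\le a,b$, and note $c^{\downarrow}\in\mathsf{D}(S)$ satisfies $c^{\downarrow}\subseteq A\cap B$ and contains an element of $F$. Properness holds because $0\in\tilde F$ would force $0\in F$. For primeness, a compatible join $A\vee B$ in $\mathsf{D}(S)$ is just the set-theoretic union $A\cup B$, so $(A\cup B)\cap F \neq\emptyset$ gives $A\in\tilde F$ or $B\in\tilde F$.

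Next I would check that $\hat{P}$ is a proper filter in $S$. Upward closure is direct from $s\le t \Rightarrow s^{\downarrow}\subseteq t^{\downarrow}$ and upward closure of $P$. The delicate step, and the main technical obstacle, is downward directedness: if $s,t\in\hat P$ then $s^{\downarrow},t^{\downarrow}\in P$, so the filter $P$ provides some $C\in P$ with $C\subseteq s^{\downarrow}\cap t^{\downarrow}$. Write $C = c_{1}^{\downarrow}\vee\cdots\vee c_{n}^{\downarrow}$ as a compatible join of principal ideals; primeness of $P$ forces some $c_{j}^{\downarrow}\in P$, i.e.\ $c_{j}\in\hat P$, and $c_{j}\le s,t$ because $c_{j}\in C\subseteq s^{\downarrow}\cap t^{\downarrow}$. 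Properness follows as $\{0\}\notin P$. Mutual inversion is then direct: $s\in\widehat{\tilde F}\iff s^{\downarrow}\cap F\neq\emptyset\iff s\in F$ by upward closure of $F$, and $A\in\widetilde{\hat P}$ iff some $a\in A$ has $a^{\downarrow}\in P$, which using primeness of $P$ applied to the decomposition of $A$ is equivalent to $A\in P$.

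Finally, I would verify that $\Phi$ respects the groupoid structure and the topology. Since $\iota \colon S \to \mathsf{D}(S)$ is a semigroup homomorphism, a direct unwinding shows $s\in F\iff \iota(s)\in\tilde F$, whence $\mathbf d(\tilde F)=\widetilde{\mathbf d(F)}$ and $\tilde F\cdot\tilde G = \widetilde{F\cdot G}$ whenever composable; once the bijection $\Phi$ is known this carries automatically to an isomorphism of groupoids. For the topology:
$$\Phi^{-1}(X_{A}) = \{F \colon A\cap F\neq\emptyset\} = \bigcup_{a\in A} U_{a},$$
which is open, so $\Phi$ is continuous. For openness, $\Phi(U_{s}) = X_{\iota(s)}$, and since the sets $X_{A}$ with $A = \{a_{1},\ldots,a_{n}\}^{\downarrow}$ satisfy $X_{A} = X_{\iota(a_{1})}\cup\cdots\cup X_{\iota(a_{n})}$ by primeness, the basic opens of $\mathsf{G}_{P}(\mathsf{D}(S))$ are matched, completing the homeomorphism. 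The principal obstacle throughout is the asymmetry between the two topologies: everything reduces to the observation that a finitely generated order ideal, regarded as an element of $\mathsf{D}(S)$, is the compatible join of its principal ideals, so that primeness of a filter in $\mathsf{D}(S)$ translates back to the filter being determined element-wise.
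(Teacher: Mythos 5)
Your proof is correct and follows essentially the same route as the paper: the same pair of maps $F \mapsto \{A \colon A \cap F \neq \emptyset\}$ and $P \mapsto \{s \colon s^{\downarrow} \in P\}$, the same use of the decomposition $\{a_{1},\ldots,a_{m}\}^{\downarrow} = \bigvee_{i} a_{i}^{\downarrow}$ together with primeness to transfer directedness back to $S$, and the same identification of basic open sets $U_{s} \leftrightarrow X_{s^{\downarrow}}$. You have merely written out details the paper labels as routine; the only point worth adding explicitly is that non-emptiness of $P^{d}$ also follows from primeness of $P$ via the same decomposition.
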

\begin{proof} The idea is very simple: proper filters in $S$ correspond bijectively prime filters in $\mathsf{D}(S)$.
We shall also use the fact that an element $\{a_{1}, \ldots, a_{m}\}^{\downarrow}$ of $\mathsf{D}(S)$ may also be written
as $\bigvee_{i=1}^{m} a_{i}^{\downarrow}$.
Let $F$ be a proper filter in $S$.
Define 
$$F^{u} = \{ A \in \mathsf{D}(S) \colon A \cap F \neq \emptyset \}.$$ 
Then $F^{u}$ is a prime filter in $\mathsf{D}(S)$.
Let $P$ be a prime filter in $\mathsf{D}(S)$.
Define
$$P^{d} = \{s \in S \colon s^{\downarrow} \in P \}.$$
The fact that $P^{d}$ is non-empty uses the fact that $P$ is a prime filter
as does the fact that $P^{d}$ is down-directed.
Then $P^{d}$ is a filter.
It is routine to check that these maps are mutually inverse sand set up a bijection between
$\mathsf{G}(S)$ and $\mathsf{G}_{P}( \mathsf{D}(S))$.
It is routine to check that for any proper filter $F$ we have that $(F^{-1} \cdot F)^{u} = (F^{u})^{-1} \cdot F^{u}$.
It follows that if $F \cdot G$ is defined then $F^{u} \cdot G^{u}$ is also defined.
It is routine to check that $(F \cdot G)^{u} = F^{u} \cdot G^{u}$.
It follows that $F \mapsto F^{u}$ is a bijective functor.
Finally, The basic open set $U_{s}$ is mapped to the basic open set $X_{s^{\downarrow}}$.
Also, the basic open set $X_{A}$ where $A = \{a_{1}, \ldots, a_{m}\}^{\downarrow}$ can be written
$X_{A} = \bigcup_{i=1}^{m} X_{a_{i}^{\downarrow}}$.
But the inverse image of any basic open set of the form $X_{a^{\downarrow}}$ is $U_{a}$.
It follows that our bijective is continuous and open.
\end{proof}

We now describe Paterson's universal groupoid $\mathsf{G}_{u}(S)$ of the inverse semigroup $S$.
The underlying groupoid is still $\mathsf{G}(S)$ but a different topology is defined.
For $x$, $x_1,\ldots, x_n \in S$ with $x_1,\ldots, x_n \leq x$, 
the set $U_{x; x_1, \ldots, x_n} $ is defined by 
$$ U_{x; x_1, \ldots, x_n} = U_x \cap U_{x_1}^c \cap\ldots \cap U_{x_n}^c$$
where $U_{x}^c$ is the complement of $U_x$ in the groupoid.
Let $\Omega$ be the set of all such subsets.
With respect to this topology, 
the groupoid is called the {\em universal groupoid} and is denoted by means of $\mathsf{G}_{u}(S)$ where `u' stands for `universal'.
It is proved in \cite{L}, in our terminology, that $\mathsf{G}_{u}(S)$ is Boolean.
The main result of this section is the following.

\begin{theorem} Let $S$ be an inverse semigroup with zero.
The universal groupoid $\mathsf{G}_{u}(S)$ is homeomorphic to $\mathsf{G}_{P}(\mathsf{D}(S))^{\dagger}$.
\end{theorem}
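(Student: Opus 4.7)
The plan is to show that the bijection $\phi\colon \mathsf{G}(S) \to \mathsf{G}_P(\mathsf{D}(S))$, $F \mapsto F^u$, provided by Proposition~\ref{prop: filter_prime_filter}, becomes a homeomorphism of topological groupoids once the source carries Paterson's topology and the target the patch topology. Since $\phi$ is already a bijective functor of groupoids by that proposition, the entire task is to match up the two bases.

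First I would handle the straightforward direction: $\phi$ sends Paterson basic opens to patch basic opens. Using $x \in F \iff x^\downarrow \cap F \neq \emptyset$ gives $\phi(U_x) = X_{x^\downarrow}$, hence $\phi(U_x^c) = X_{x^\downarrow}^c$; combining the excluded factors via $X_{\{x_1, \ldots, x_n\}^\downarrow} = \bigcup_i X_{x_i^\downarrow}$ then yields
\[
\phi(U_{x; x_1, \ldots, x_n}) = X_{x^\downarrow;\, \{x_1, \ldots, x_n\}^\downarrow}.
\]
The Paterson constraint $x_i \leq x$ forces $\{x_1, \ldots, x_n\}^\downarrow \subseteq x^\downarrow$ in $\mathsf{D}(S)$, so this is a legitimate patch basic open and $\phi$ is an open map.

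The substantive direction is to verify that every patch basic open $X_{A; B}$ pulls back to an open set in Paterson's topology. Writing $A = \{a_1, \ldots, a_m\}^\downarrow$ and $B = \{b_1, \ldots, b_n\}^\downarrow$ with $B \subseteq A$, the preimage $\phi^{-1}(X_{A; B})$ consists of those proper filters $F$ of $S$ that meet $\{a_1, \ldots, a_m\}$ and avoid $\{b_1, \ldots, b_n\}$. Given such an $F$ and a choice of $a_i \in F$, my candidate neighbourhood is $U_{a_i;\, a_i \wedge b_1, \ldots, a_i \wedge b_n}$, omitting any meets that are zero.

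The main obstacle is guaranteeing that these meets exist, since a priori $b_j$ need not be compatible with $a_i$ in $S$. Here the hypothesis $B \subseteq A$ is crucial: both $b_j^\downarrow$ and $a_i^\downarrow$ lie below $A$ in the distributive inverse semigroup $\mathsf{D}(S)$, so they are compatible as elements of $\mathsf{D}(S)$; inspecting the product $b_j^\downarrow \cdot (a_i^\downarrow)^{-1}$, which must be idempotent, one reads off that $b_j a_i^{-1}$ is idempotent in $S$ and dually $b_j^{-1} a_i$ is idempotent, whence $b_j \sim a_i$ in $S$. It follows that $a_i \wedge b_j = a_i b_j^{-1} b_j$ exists in $S$ and is $\leq a_i$, so the candidate is a bona fide Paterson basic open. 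Checking that $F$ belongs to it is routine, since each $a_i \wedge b_j \leq b_j \notin F$; conversely, if some $G$ in the basic open contained a $b_j$, the directedness of $G$ together with $a_i \in G$ would produce $d \in G$ with $d \leq a_i, b_j$, hence $d \leq a_i \wedge b_j$, forcing $a_i \wedge b_j \in G$ by upward closure and contradicting its exclusion. Letting $F$ vary over $\phi^{-1}(X_{A;B})$ then expresses this preimage as a union of Paterson basic opens, completing the proof that $\phi$ is a homeomorphism.
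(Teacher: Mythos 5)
Your proof is correct and takes essentially the same route as the paper: the same bijection from Proposition~\ref{prop: filter_prime_filter}, the same identification $\phi(U_{x;x_{1},\ldots,x_{n}}) = X_{x^{\downarrow};\{x_{1},\ldots,x_{n}\}^{\downarrow}}$, and the same covering of the preimage of a patch basic open $X_{A;B}$ by Paterson basic opens of the form $U_{a_{i};\, a_{i}\wedge b_{1},\ldots,a_{i}\wedge b_{n}}$. The only difference is one of presentation: the paper obtains that covering by induction on part~(3) of Lemma~\ref{le: patch}, whereas you verify it pointwise and, usefully, make explicit the compatibility argument (via $a_{i}^{\downarrow}, b_{j}^{\downarrow} \leq A$ in $\mathsf{D}(S)$) guaranteeing that the meets $a_{i}\wedge b_{j}$ exist, a detail the paper leaves implicit.
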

\begin{proof} We shall use the same groupoid isomorphism as in Proposition~\ref{prop: filter_prime_filter}.
The basic open set $U_{s;s_{1}, \ldots, s_{m}}$ is mapped to the basic open set $X_{s^{\downarrow};\{s_{1}, \ldots, s_{m}\}^{\downarrow}}$.
To go in the other direction, we use induction on part (3) of Lemma~\ref{le: patch} to reduce
to the case where we need only consider basic open sets of the form $X_{s^{\downarrow};\{s_{1}, \ldots, s_{m}\}^{\downarrow}}$
whose inverse image is the basic open set $U_{s;s_{1}, \ldots, s_{m}}$. 
\end{proof}

With each inverse semigroup $S$, we may associate a Boolean inverse semigroup $\mathbf{BS}(S) = \mathbf{B}(\mathsf{D}(S))$
togther with an injective homomorphism 
$\gamma \colon S \rightarrow \mathbf{BS}(S)$ which takes $s$ to $X_{s^{\downarrow}}$.
Observe that $\gamma (s) = \beta \iota (s)$ where $\beta \colon \mathsf{D}(S) \rightarrow \mathbf{B}(\mathsf{D}(S))$.

\begin{lemma} Let $S$ be an inverse semigroup.
Then the map $\gamma \colon S \rightarrow \mathbf{BS}(S)$ pulls prime filters back to filters.
\end{lemma}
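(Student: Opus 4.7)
The strategy is to factor $\gamma = \beta\circ\iota$, where $\iota\colon S\to\mathsf{D}(S)$ is the unit of the distributive completion and $\beta\colon\mathsf{D}(S)\to\mathbf{B}(\mathsf{D}(S))=\mathbf{BS}(S)$ is the Booleanization map of Theorem~\ref{the: Booleanization}, and to analyze $\beta^{-1}$ and $\iota^{-1}$ separately. In fact, I will argue that $\beta$ carries prime filters back to \emph{prime} filters, after which the lemma reduces to invoking Proposition~\ref{prop: filter_prime_filter}.

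The first step is the identification of prime filters in $\mathbf{BS}(S)$ with prime filters in $\mathsf{D}(S)$. By construction $\mathbf{BS}(S)=\mathsf{KB}(\mathsf{G}_{P}(\mathsf{D}(S))^{\dagger})$, and the patched groupoid $\mathsf{G}_{P}(\mathsf{D}(S))^{\dagger}$ is a (Boolean) spectral groupoid whose underlying point set consists of the prime filters of $\mathsf{D}(S)$. Applying Theorem~\ref{the: spectral_duality} to this groupoid yields an isomorphism $\eta\colon\mathsf{G}_{P}(\mathsf{D}(S))^{\dagger}\to\mathsf{G}_{P}(\mathbf{BS}(S))$ sending a point $Q$ to the prime filter $F_{Q}=\{C\in\mathbf{BS}(S):Q\in C\}$, and every prime filter of $\mathbf{BS}(S)$ arises uniquely in this form (this is the content of Lemma~\ref{le: Flemma2}(4)). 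Given $A\in\mathsf{D}(S)$, we have $\beta(A)=X_{A}$, so $\beta(A)\in F_{Q}$ iff $Q\in X_{A}$ iff $A\in Q$. Hence $\beta^{-1}(F_{Q})=Q$, and we conclude that the inverse image under $\beta$ of any prime filter in $\mathbf{BS}(S)$ is a prime filter in $\mathsf{D}(S)$.

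The second step is immediate from Proposition~\ref{prop: filter_prime_filter}, whose explicit bijection sends a prime filter $Q$ of $\mathsf{D}(S)$ to the proper filter $Q^{d}=\{s\in S:s^{\downarrow}\in Q\}$ of $S$. Since $\iota(s)=s^{\downarrow}$, this is precisely $\iota^{-1}(Q)$. Combining the two steps for a prime filter $P$ of $\mathbf{BS}(S)$ with corresponding point $Q$:
\[
\gamma^{-1}(P)=\iota^{-1}(\beta^{-1}(P))=\iota^{-1}(Q)=Q^{d},
\]
which is a (proper) filter in $S$, as required.

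The step that carries all the weight is the first one, where one must recognize that the spectral duality applies to $\mathsf{G}_{P}(\mathsf{D}(S))^{\dagger}$ just as it does to $\mathsf{G}_{P}(\mathsf{D}(S))$. The key observation is that passing to the patch topology changes which subsets qualify as compact-open local bisections but leaves the point set, and therefore the notion of a ``prime filter of the coordinate semigroup'', unchanged; this is what makes the two coordinates $Q$ (in $\mathsf{D}(S)$) and $F_{Q}$ (in $\mathbf{BS}(S)$) match up under $\beta^{-1}$. A direct proof (verifying proper, upward closed, and directed by hand) would at the directedness step force one to rewrite a witness $C\in P$ below $X_{s^{\downarrow}}\cap X_{t^{\downarrow}}$ as a basic patch-open $X_{A_{i};B_{i}}$ and then extract an $S$-element below both $s$ and $t$, and this extraction really does require the passage through the corresponding point $Q$ anyway; hence the factored route above is the cleanest.
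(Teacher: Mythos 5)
Your proof is correct, but it takes a different route from the paper's. The paper argues directly: given $s,t$ in the inverse image of $P$, it picks a common lower bound of $X_{s^{\downarrow}}$ and $X_{t^{\downarrow}}$ inside $P$, writes it as a basic patch-open set $X_{a^{\downarrow};\{a_{1},\ldots,a_{m}\}^{\downarrow}}$, and then uses the \emph{principal} prime filter $(a^{\uparrow})^{u}$ (not the point corresponding to $P$, as your closing remark suggests) to witness that $a\leq s,t$; upward closure and non-emptiness are handled by similarly elementary observations. You instead invoke the representation of prime filters of $\mathsf{KB}$ of a spectral groupoid (Lemma~\ref{le: Flemma2}(4), applicable since the patched groupoid is Boolean, hence spectral) to write $P=F_{Q}$ for a point $Q$, observe that $\beta^{-1}(F_{Q})=Q$ because $X_{A}\in F_{Q}\Leftrightarrow Q\in X_{A}\Leftrightarrow A\in Q$, and then finish with the filter/prime-filter correspondence of Proposition~\ref{prop: filter_prime_filter}. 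Your argument leans on heavier machinery (and implicitly on the fact, already built into the definition of $\beta$, that each $X_{A}$ is compact-open in the patch topology), but in exchange it yields more: it identifies $\gamma^{-1}(P)$ exactly as $Q^{d}$ for the unique point $Q$ with $P=F_{Q}$, shows at once that the inverse image is a non-empty proper filter, and makes the conceptual reason for the lemma transparent, whereas the paper's proof is shorter and self-contained at the level of the patch-basis combinatorics. Both are valid; there is no circularity in your use of the earlier duality results.
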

\begin{proof} Let $P$ be a prime filter.
Using primality, we deduce immediately that $P$ contains elements of the form $X_{s^{\downarrow}}$ and
so the inverse image of $P$ is non-empty.
Let $s$ and $t$ be in the inverse image of $P$.
Since $P$ is downwards directed we may find $X_{a^{\downarrow};\{a_{1}, \ldots, a_{m} \}^{\downarrow}}$ in $P$ and below
both $X_{s^{\downarrow}}$ and $X_{t^{\downarrow}}$.
Clearly $X_{a^{\downarrow}}$ also belongs to $P$.
We know that $(a^{\uparrow})^{u}$ is a prime filter.
It contains $a^{\downarrow}$ and omits $\{a_{1}, \ldots, a_{m}\}^{\downarrow}$ since $a_{1}, \ldots, a_{m} < a$.
Thus $(a^{\uparrow})^{u} \in X_{s^{\downarrow}},X_{t^{\downarrow}}$.
It follows that $a \leq s,t$ and so we have proved that the inverse image of $P$ is downwards directed.
It is clearly upwardly closed.
\end{proof}

The theorem below was inspired by the calculations on pp~190--191 of Paterson's book \cite{P}.

\begin{theorem}[Booleanization for inverse semigroups]
Let $S$ be an inverse semigroup and let $\theta \colon S \rightarrow T$ be a homomorphism to a Boolean inverse semigroup
with the property that the inverse image under $\theta$ of each prime filter in $T$ is a filter in $S$.
Then there is a unique homomorphism of distributive inverse semigroups 
$\bar{\psi} \colon \mathbf{BS}(S) \rightarrow T$ such that $\bar{\theta} \gamma = \theta$.
\end{theorem}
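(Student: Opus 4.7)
\medskip

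\noindent\textbf{Proof plan.} The strategy is to factor $\theta$ through the distributive completion $\mathsf{D}(S)$ and then through the Booleanization, invoking the two universal properties we already have. First apply part~(1) of Theorem~\ref{the: distributive_completions} to the homomorphism $\theta \colon S \rightarrow T$ (regarding the Boolean inverse semigroup $T$ simply as a distributive inverse semigroup). This yields a unique morphism of distributive inverse semigroups $\theta^{\ast} \colon \mathsf{D}(S) \rightarrow T$ with $\theta^{\ast} \iota = \theta$. If we can then show that $\theta^{\ast}$ satisfies the hypothesis of Theorem~\ref{the: Booleanization}, namely that the inverse image under $\theta^{\ast}$ of every prime filter in $T$ is a prime filter in $\mathsf{D}(S)$, that theorem produces a unique morphism $\bar{\theta^{\ast}} \colon \mathbf{B}(\mathsf{D}(S)) = \mathbf{BS}(S) \rightarrow T$ of distributive inverse semigroups with $\bar{\theta^{\ast}} \beta = \theta^{\ast}$. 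Setting $\bar{\psi} = \bar{\theta^{\ast}}$ we then obtain $\bar{\psi} \gamma = \bar{\theta^{\ast}} \beta \iota = \theta^{\ast} \iota = \theta$, as required.

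\medskip

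\noindent\textbf{Main obstacle.} The key verification is the filter-pullback condition for $\theta^{\ast}$. Given a prime filter $P$ in $T$, set $F = \theta^{-1}(P)$, which is a proper filter in $S$ by hypothesis. Using the correspondence that appears in the proof of Proposition~\ref{prop: filter_prime_filter}, form $F^{u} = \{A \in \mathsf{D}(S) \colon A \cap F \neq \emptyset \}$, which is a prime filter in $\mathsf{D}(S)$. I would then check directly that $(\theta^{\ast})^{-1}(P) = F^{u}$: any $A = \{a_{1},\ldots,a_{n}\}^{\downarrow} \in \mathsf{D}(S)$ satisfies $\theta^{\ast}(A) = \bigvee_{i=1}^{n} \theta(a_{i})$, so $A \in (\theta^{\ast})^{-1}(P)$ is equivalent, by primality of $P$, to some $\theta(a_{i}) \in P$, and hence to some $a_{i} \in A \cap F$. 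Upward closure on both sides completes the identification. This is the one place where the specific hypothesis on $\theta$ is used.

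\medskip

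\noindent\textbf{Uniqueness.} Suppose $\psi_{1}, \psi_{2} \colon \mathbf{BS}(S) \rightarrow T$ are morphisms of distributive inverse semigroups with $\psi_{i} \gamma = \theta$. Since $\beta$ is a morphism of distributive inverse semigroups (it preserves compatible binary joins because $X_{a \vee b} = X_{a} \cup X_{b}$ for compatible $a,b$, as in Lemma~\ref{le: prime_topology}) and each $\psi_{i}$ is such a morphism, each composite $\psi_{i} \beta \colon \mathsf{D}(S) \rightarrow T$ is a morphism of distributive inverse semigroups satisfying $(\psi_{i} \beta) \iota = \theta$. By the uniqueness clause of Theorem~\ref{the: distributive_completions}(1) we get $\psi_{1} \beta = \psi_{2} \beta = \theta^{\ast}$, and then the uniqueness clause of Theorem~\ref{the: Booleanization} forces $\psi_{1} = \psi_{2}$.
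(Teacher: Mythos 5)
Your proposal is correct and follows essentially the same route as the paper: factor $\theta$ through $\mathsf{D}(S)$ via Theorem~\ref{the: distributive_completions}, verify that the induced map pulls prime filters of $T$ back to prime filters of $\mathsf{D}(S)$ using the hypothesis on $\theta$ together with primality, and then invoke Theorem~\ref{the: Booleanization} for both existence and uniqueness. Your packaging of the key verification as the identity $(\theta^{\ast})^{-1}(P) = F^{u}$ with $F = \theta^{-1}(P)$ is a slightly tidier way of writing the same computation the paper carries out directly.
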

\begin{proof} The semigroup $T$ is, in particular, distributive, and so there is a morphism
$\psi \colon \mathsf{D}(S) \rightarrow T$ such that $\psi \iota = \theta$.
We need to prove that $\psi$ pulls prime filters in $T$ back to prime filters in $\mathsf{D}(S)$.
Let $P$ be a prime filter in $T$.
We shall prove that $\psi^{-1}(P)$ is a prime filter in $\mathsf{D}(S)$.
Observe first that $\psi^{-1}(P)$ is non-empty because $\theta^{-1}(P)$ is a filter in $S$.
Let $A,B \in \psi^{-1}(P)$ where $A = \{a_{1}, \ldots, a_{m}\}^{\downarrow}$ and $B = \{b_{1}, \ldots, b_{n}\}^{\downarrow}$.
Then since $P$ is a prime ideal we have that $\theta (a_{i}), \theta (b_{j}) \in P$ for some $i$ and some $j$.
It follows that $a_{i}^{\downarrow},b_{j}^{\downarrow} \in \psi^{-1}(P)$.
Now $a_{i},b_{j} \in \theta^{-1}(P)$.
Thus by assumption, there is an element $c \in S$ such that $c \leq a_{i},b_{j}$
where $\theta (c) \in P$.
It follows that $c^{\downarrow} \leq A,B$ and so $\psi^{-1}(P)$ is a filter
and it is easy to check that it is a prime filter.

By Theorem~\ref{the: Booleanization}, 
there exists $\bar{\psi} \colon \mathbf{B}(\mathsf{D}(S)) \rightarrow T$ such that $\bar{\psi} \beta \iota = \theta$.
Thus $\bar{\psi} \gamma = \theta$.

It remains to prove uniqueness.
Let $\phi \colon \mathbf{BS}(S) \rightarrow T$ be such that $\phi \gamma = \theta$.
We use the same notation as above.
The map $\psi$ is defined and it is straightforward to check that $\phi \beta = \psi$.
We can now use the uniqueness guaranteed by  
Theorem~\ref{the: Booleanization}.
\end{proof}

\section{Tight completions and Exel's tight groupoid}

The goal of this section is to develop the theory of what we call tight completions
and apply it to providing a conceptual understanding of Exel's tight groupoid.

\subsection{Coverages}

A {\em coverage} $\mathcal{C}$ on an inverse semigroup $S$ is defined by the following data.
For each $a \in S$, there is a set $\mathcal{C}(a)$ of subsets of $a^{\downarrow}$, whose elements are called {\em coverings},
satisfying the following axioms: 
\begin{description}

\item[{\rm (R)}] $\{a \} \in \mathcal{C}(a)$ for all $a \in S$.

\item[{\rm (I)}] If $X \in \mathcal{C}(a)$ then $X^{-1} \in \mathcal{C}(a^{-1})$.

\item[{\rm (MS)}] $X \in \mathcal{C}(a)$ and $Y \in \mathcal{C}(b)$ imply that $XY \in \mathcal{C}(ab)$.

\item[{\rm (T)}] If $X \in \mathcal{C}(a)$ and  $X_{i} \in \mathcal{C}(x_{i})$ for each $x_{i} \in X$ then $\bigcup_{i} X_{i} \in \mathcal{C}(a)$.

\end{description}

The intuitive idea behind the definition of a coverage is that it axiomatizes the notion of join.
Thus $X \in \mathcal{C}(a)$ should be regarded as saying that {\em morally} the join of $X$ is $a$.

A proper filter $A$ on $S$ is called a {\em $\mathcal{C}$-filter} if $x \in A$ and $X \in \mathcal{C}(x)$ then $y \in A$ for some $y \in X$.
We shall use the word {\em family} to describe the set of all $\mathcal{C}$-filters for a given coverage $\mathcal{C}$.

Throughout the remainder of this section, $\mathcal{C}$ will be a coverage.
If $X \in \mathcal{C}(a)$ define $\mathbf{d}(X) = \{x^{-1}x \colon x \in X \}$.

\begin{lemma}\label{le: coverage_properties} Let $\mathcal{C}$ be a coverage on $S$.
\begin{enumerate}

\item $X \in \mathcal{C}(a)$ implies that $\mathbf{d}(X) \in \mathcal{C}(a^{-1}a)$.

\item If $X \subseteq a^{\downarrow}$ then $X \in \mathcal{C}(a)$ if and only if $\mathbf{d}(X) \in \mathcal{C}(a^{-1}a)$.

\item Let $X,Y \in \mathcal{C}(a)$ and define $X \wedge Y = \{x \wedge y \colon x \in X, y \in Y\}$.
Then $X \wedge Y \in \mathcal{C}(a)$ and $X \wedge Y = X\mathbf{d}(Y) = Y\mathbf{d}(X)$.

\item If $X \in \mathcal{C}(b)$ and $X,Y \in \mathcal{C}(a)$ then $X \wedge Y \in \mathcal{C}(b)$.
 
\end{enumerate}
\end{lemma}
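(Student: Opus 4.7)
The plan is to prove the four parts in order, using earlier parts together with the axioms (R), (I), (MS), (T). The driving observation is that whenever $x \leq a$ in an inverse semigroup, $x = a \cdot \mathbf{d}(x)$, and consequently $a^{-1}x = \mathbf{d}(x)$ and $x \cdot a^{-1}a = x$. These two ``swallowing'' identities let us turn a coverage $X \subseteq a^{\downarrow}$ into a coverage by idempotents, and back again, using multiplication from (MS).

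For (1), (R) gives $\{a^{-1}\} \in \mathcal{C}(a^{-1})$; combining this with $X \in \mathcal{C}(a)$ via (MS) produces $a^{-1}X \in \mathcal{C}(a^{-1}a)$. Since $X \subseteq a^{\downarrow}$, the elementwise identity $a^{-1}x = \mathbf{d}(x)$ gives $a^{-1}X = \mathbf{d}(X)$, so $\mathbf{d}(X) \in \mathcal{C}(a^{-1}a)$. For (2), the forward implication is (1). For the converse, multiply $\{a\} \in \mathcal{C}(a)$ (by (R)) with $\mathbf{d}(X) \in \mathcal{C}(a^{-1}a)$ via (MS) to get $a\mathbf{d}(X) \in \mathcal{C}(a)$; since $X \subseteq a^{\downarrow}$, the identity $a \cdot \mathbf{d}(x) = x$ gives $a\mathbf{d}(X) = X$.

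For (3), observe first that $x, y \leq a$ implies that the meet $x \wedge y$ exists and equals $x\mathbf{d}(y) = y\mathbf{d}(x)$: this is a direct calculation writing $x = a\mathbf{d}(x)$ and $y = a\mathbf{d}(y)$ and using that idempotents commute. Consequently $X \wedge Y = X\mathbf{d}(Y) = Y\mathbf{d}(X)$ as sets. To show $X \wedge Y \in \mathcal{C}(a)$, apply (1) to $Y$ to get $\mathbf{d}(Y) \in \mathcal{C}(a^{-1}a)$, and then apply (MS) to $X \in \mathcal{C}(a)$ and $\mathbf{d}(Y)$ to obtain $X\mathbf{d}(Y) \in \mathcal{C}(a \cdot a^{-1}a) = \mathcal{C}(a)$.

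For (4), the essential idea is an elementwise refinement via (T). For each $x \in X$, (R) gives $\{x\} \in \mathcal{C}(x)$, and (1) applied to $Y \in \mathcal{C}(a)$ gives $\mathbf{d}(Y) \in \mathcal{C}(a^{-1}a)$; (MS) then yields $x\mathbf{d}(Y) \in \mathcal{C}(x \cdot a^{-1}a) = \mathcal{C}(x)$, using $x \leq a$. Finally, apply (T) to $X \in \mathcal{C}(b)$ with the assignment $x \mapsto x\mathbf{d}(Y) \in \mathcal{C}(x)$ to conclude that $\bigcup_{x \in X} x\mathbf{d}(Y) = X\mathbf{d}(Y) = X \wedge Y$ lies in $\mathcal{C}(b)$. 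The main obstacle is recognizing that (4) is precisely the situation (T) is designed to handle once one refines $X$ pointwise; after that the only technical point is the ``swallowing'' identity $x \cdot a^{-1}a = x$ for $x \leq a$, which is needed to make the target of (MS) come out right.
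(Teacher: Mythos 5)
Your proof is correct and follows essentially the same route as the paper's: in each part you reduce to idempotent covers via the identities $a^{-1}x = \mathbf{d}(x)$ and $x = a\mathbf{d}(x)$ for $x \leq a$, use (R) and (MS) to transport covers, and in part (4) apply (T) to the pointwise refinement $x \mapsto x\mathbf{d}(Y)$ of $X \in \mathcal{C}(b)$, exactly as in the paper. No gaps.
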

\begin{proof} (1) We have that $x \in X$ implies that $x \leq a$ and so $x = ax^{-1}x$.
Thus $a^{-1}x = x^{-1}x$.
By (R) and (I), we have that $\{a^{-1} \} \in \mathcal{C}(a^{-1})$ and so by (MS) we have that $a^{-1}X \in \mathcal{C}(a^{-1}a)$
but $a^{-1}X = \{x^{-1}x \colon x \in X   \}$ and the claim follows.

(2) By (1), only one direction needs proving. Suppose that $X \subseteq a^{\downarrow}$ and  $\mathbf{d}(X) \in \mathcal{C}(a^{-1}a)$.
Then by (MS), we have that $a\mathbf{d}(X) \in \mathcal{C}(a)$.
But $X = a\mathbf{d}(X)$ and the result follows.

(3) Observe first that since $x,y \leq a$ the meet $x \wedge y$ is defined.
Since $x$ and $y$ are compatible, $x \wedge y = xy^{-1}y = yx^{-1}x$.
Thus $X \wedge Y = X\{y^{-1}y \colon y \in Y\}$ which belongs to $\mathcal{C}(a)$ by (1) and (MS).

(4) It remains to show that $X \wedge Y \in \mathcal{C}(b)$.
For each $x \in X$ we have that $x \leq a$ and so $x = xa^{-1}a$.
Thus for each $x \in X$ we have that $x\mathbf{d}(Y) \in \mathcal{C}(x)$.
But $X \in \mathcal{C}(b)$ and so by (T), we have that $X\mathbf{d}(Y) \in \mathcal{C}(b)$.
\end{proof}

If $\mathcal{C}$ is a coverage on a semigroup $S$ such that 
$\mathcal{C}(a) \cap \mathcal{C}(b) \neq \emptyset$ implies $a = b$
then
we say that $S$ is {\em separative (with respect to the coverage $\mathcal{C}$)}
and that the coverage is {\em separated}.
The coverage $\mathcal{C}$ is said to be {\em idempotent-pure} if $X \in \mathcal{C}(a)$
and $X \subseteq E(S)$ implies that $a \in E(S)$.

\begin{lemma} 
A separated coverage is idempotent-pure.
\end{lemma}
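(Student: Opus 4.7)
The plan is to directly compare two coverings that witness the idempotency, and then apply separativity. Suppose $X \in \mathcal{C}(a)$ with $X \subseteq E(S)$; we want $a \in E(S)$, and the cleanest way is to prove $a = a^{-1}a$, which is automatically idempotent.

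First I would observe that for each $x \in X$, since $x$ is an idempotent we have $x^{-1}x = x$, and consequently $\mathbf{d}(X) = \{x^{-1}x : x \in X\} = X$. Next I would invoke part (1) of Lemma~\ref{le: coverage_properties}, which yields $\mathbf{d}(X) \in \mathcal{C}(a^{-1}a)$. Combined with the previous observation, this gives $X \in \mathcal{C}(a^{-1}a)$. But by hypothesis we also have $X \in \mathcal{C}(a)$, so $\mathcal{C}(a) \cap \mathcal{C}(a^{-1}a) \neq \emptyset$. Separativity of the coverage then forces $a = a^{-1}a$, and the right-hand side is an idempotent, so $a \in E(S)$.

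There is essentially no obstacle here: the argument is a one-line application of Lemma~\ref{le: coverage_properties}(1) together with the identification $\mathbf{d}(X) = X$ for idempotent sets, followed immediately by separativity. The only mildly subtle point worth double-checking is that $X \subseteq a^{\downarrow}$ is preserved under the passage from $a$ to $a^{-1}a$, i.e.\ that elements of $X$ actually lie below $a^{-1}a$; but this is automatic since each $x \leq a$ with $x$ idempotent satisfies $x = x^{-1}x \leq a^{-1}a$, so the input to Lemma~\ref{le: coverage_properties}(1) is legitimate and the appeal to separativity is valid.
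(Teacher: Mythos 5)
Your proof is correct and is essentially the same as the paper's: the paper applies (R) and (MS) directly to get $Ex^{-1}\in\mathcal{C}(xx^{-1})$ and observes $Ex^{-1}=E$, whereas you route the identical computation through part (1) of Lemma~\ref{le: coverage_properties} on the domain side, obtaining $X=\mathbf{d}(X)\in\mathcal{C}(a^{-1}a)$; in both cases separativity then forces the element to equal an idempotent. The only difference is cosmetic ($a^{-1}a$ versus $xx^{-1}$, and citing the earlier lemma rather than re-deriving it), so there is nothing to fix.
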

\begin{proof} Let $E \in \mathcal{C}(x)$ where $E \subseteq E(S)$.
By (R) and (MS) we have that $Ex^{-1} \in \mathcal{C}(xx^{-1})$.
But if $e \in E$ then $e \leq x$ and so $e = ex = xe$.
Thus $ex^{-1} = exx^{-1} = e$.
It follows that $Ex^{-1} = E$ and so by the separated assumption $x = xx^{-1}$.
\end{proof}

Let $S$ be an inverse semigroup equipped with a coverage $\mathcal{C}$.
Define the relation $\equiv$ on $S$ by
$$a \equiv b \Leftrightarrow \mathcal{C}(a) \cap \mathcal{C}(b) \neq \emptyset.$$

\begin{lemma} 
The relation $\equiv$ is a congruence on $S$.
\end{lemma}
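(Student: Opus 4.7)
The plan is to verify that $\equiv$ is both an equivalence relation and compatible with the multiplication (and inversion) of $S$.

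Reflexivity follows immediately from axiom (R), which gives $\{a\} \in \mathcal{C}(a)$, so $\{a\} \in \mathcal{C}(a) \cap \mathcal{C}(a)$. Symmetry is obvious from the definition. For transitivity, suppose $a \equiv b$ and $b \equiv c$, witnessed by $X \in \mathcal{C}(a) \cap \mathcal{C}(b)$ and $Y \in \mathcal{C}(b) \cap \mathcal{C}(c)$. The natural witness for $a \equiv c$ should be $X \wedge Y$. Since $X, Y \in \mathcal{C}(b)$ and $X \in \mathcal{C}(a)$, part (4) of Lemma~\ref{le: coverage_properties} gives $X \wedge Y \in \mathcal{C}(a)$; and since $X, Y \in \mathcal{C}(b)$ and $Y \in \mathcal{C}(c)$, the same part gives $X \wedge Y \in \mathcal{C}(c)$. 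Thus $X \wedge Y \in \mathcal{C}(a) \cap \mathcal{C}(c)$, as required.

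For compatibility with multiplication, suppose $a \equiv a'$ and $b \equiv b'$, witnessed by $X \in \mathcal{C}(a) \cap \mathcal{C}(a')$ and $Y \in \mathcal{C}(b) \cap \mathcal{C}(b')$. Axiom (MS) then gives $XY \in \mathcal{C}(ab)$ and $XY \in \mathcal{C}(a'b')$ simultaneously, hence $ab \equiv a'b'$. Inversion is even easier: axiom (I) applied to any common covering $X \in \mathcal{C}(a) \cap \mathcal{C}(b)$ gives $X^{-1} \in \mathcal{C}(a^{-1}) \cap \mathcal{C}(b^{-1})$, so $a \equiv b$ implies $a^{-1} \equiv b^{-1}$.

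I do not anticipate any real obstacle here; the proof is essentially a bookkeeping exercise that uses exactly the four coverage axioms together with the closure property $(4)$ of Lemma~\ref{le: coverage_properties} packaged in the right order. The only conceptually non-trivial step is transitivity, where one has to notice that the meet construction $X \wedge Y$ of Lemma~\ref{le: coverage_properties}(3)--(4) is precisely the right tool, playing the role of ``common refinement'' of two coverings.
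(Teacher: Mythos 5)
Your proof is correct and follows essentially the same route as the paper: reflexivity from (R), transitivity via the common refinement $X \wedge Y$ using part (4) of the coverage-properties lemma, and compatibility with multiplication from (MS). The only difference is that you also check compatibility with inversion explicitly, which the paper leaves implicit (and which is automatic for a multiplicative congruence on an inverse semigroup in any case).
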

\begin{proof} We show first that $\equiv$ is an equivalence relation.
We have that $\{a \} \in \mathcal{C}(a)$ and so $a \equiv a$.
It is immediate that $a \equiv b$ implies that $b \equiv a$.
Suppose that $a \equiv b$ and $b \equiv c$.
Let $X \in \mathcal{C}(a) \cap \mathcal{C}(b)$ and $Y \in \mathcal{C}(b) \cap \mathcal{C}(c)$.
By part (4) of Lemma~\ref{le: coverage_properties}, 
we have that $X \wedge Y \in \mathcal{C}(a) \cap \mathcal{C}(c)$
and so $a \equiv c$.
Thus $\equiv$ is an equivalence relation and it is a congruence by (MS).
\end{proof}

We denote by $\mathbf{S}$ the quotient of $S$ by $\equiv$,
and the $\equiv$-congruence class containing $s$ by $\mathbf{s}$.
There is a homomorphism $\sigma \colon S \rightarrow \mathbf{S}$ given by $s \mapsto \mathbf{s}$.

Our goal now is to show that the set of all $\mathcal{C}$-filters forms an \'etale groupoid.

\begin{lemma}\label{le: C-filters} Let $A$ be a filter on the inverse semigroup $S$.
\begin{enumerate}

\item $A$ is a $\mathcal{C}$-filter if and only if $A^{-1}$ is a $\mathcal{C}$-filter.

\item $A$ is a $\mathcal{C}$-filter if and only if $A^{-1} \cdot A$ is a $\mathcal{C}$-filter.

\end{enumerate}
\end{lemma}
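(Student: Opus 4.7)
The plan is to verify both equivalences directly from the coverage axioms, using throughout that the groupoid product $A^{-1} \cdot A$ coincides with the idempotent filter $\mathbf{d}(A) = (A^{-1}A)^{\uparrow}$ and that every idempotent filter is determined by its idempotent elements.

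Part (1) will fall out immediately from axiom (I). Given $b \in A^{-1}$ and a covering $Y \in \mathcal{C}(b)$, axiom (I) converts this into the pair $b^{-1} \in A$ and $Y^{-1} \in \mathcal{C}(b^{-1})$; the $\mathcal{C}$-filter property of $A$ then produces some $y^{-1} \in Y^{-1} \cap A$, so $y \in Y \cap A^{-1}$. Since $(A^{-1})^{-1} = A$, the same argument (with roles swapped) gives the converse.

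The forward direction of part (2) is the main work. Given $e \in \mathbf{d}(A)$ and $E \in \mathcal{C}(e)$, I will use Lemma~\ref{le: idempotent_filters} to select $a \in A$ with $a^{-1}a \leq e$; this forces $ae = a$, so that axiom (MS) applied to $\{a\} \in \mathcal{C}(a)$ and $E \in \mathcal{C}(e)$ yields $aE \in \mathcal{C}(a)$. The $\mathcal{C}$-filter property of $A$ then supplies some $x \in E$ with $ax \in A$. Because $E \subseteq e^{\downarrow}$ consists of idempotents, $x$ commutes with $a^{-1}a$, and the computation $(ax)^{-1}(ax) = xa^{-1}a \leq x$ places an element of $\mathbf{d}(A)$ below $x$; upward closure then gives $x \in E \cap \mathbf{d}(A)$.

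For the backward direction of (2), I will translate a covering $X \in \mathcal{C}(a)$ with $a \in A$ into the idempotent covering $\mathbf{d}(X) \in \mathcal{C}(a^{-1}a)$ via Lemma~\ref{le: coverage_properties}(1); since $a^{-1}a \in \mathbf{d}(A)$, applying the hypothesis to this covering produces some $x \in X$ with $x^{-1}x \in \mathbf{d}(A)$. Unwinding $\mathbf{d}(A) = (A^{-1}A)^{\uparrow}$ yields $b \in A$ with $b^{-1}b \leq x^{-1}x$; directedness of $A$ gives a common lower bound $c \leq a, b$ inside $A$, and then the chain $c = ac^{-1}c \leq ab^{-1}b \leq ax^{-1}x = x$ (using $c \leq a$ at the start and $x \leq a$ at the end) forces $x \in A$ by upward closure. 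I do not anticipate any real obstacle: the only step requiring a moment's care is this closing chain of order manipulations; everything else is direct bookkeeping with axioms (I), (MS), the definition of $\mathbf{d}(A)$, and Lemmas~\ref{le: idempotent_filters} and~\ref{le: coverage_properties}.
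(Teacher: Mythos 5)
Part (1) and the backward half of part (2) are correct and essentially reproduce the paper's argument (the paper phrases the backward step via $x^{-1}X \in \mathcal{C}(x^{-1}x)$ and multiplies back by $x$, but your route through $\mathbf{d}(X)$ and a common lower bound $c \leq a,b$ is the same computation). The problem is in the forward half of part (2). The filter $A^{-1}\cdot A = (A^{-1}A)^{\uparrow}$ is not a set of idempotents: it is an \emph{idempotent filter}, i.e.\ a filter that merely contains idempotents, and the $\mathcal{C}$-filter condition must be verified for \emph{every} element $e$ of it and every $E \in \mathcal{C}(e)$. Your argument only covers idempotent $e$: the assertion that $E \subseteq e^{\downarrow}$ consists of idempotents, and the ensuing computation $(ax)^{-1}(ax) = xa^{-1}a \leq x$, both require $x$ to be an idempotent commuting with $a^{-1}a$. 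For a general $e \in \mathbf{d}(A)$, which need only lie above some $a^{-1}b$ with $a,b \in A$ and need not be idempotent, the elements of $E$ need not be idempotent and that step fails.

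The reduction to idempotent elements is in fact valid, but it is itself a small argument you have not supplied: given $y \in \mathbf{d}(A)$ and $Y \in \mathcal{C}(y)$, pass to $\mathbf{d}(Y) \in \mathcal{C}(y^{-1}y)$ by Lemma~\ref{le: coverage_properties}, obtain $x \in Y$ with $x^{-1}x \in \mathbf{d}(A)$, and then use that $\mathbf{d}(A)$ is an inverse subsemigroup (Lemma~\ref{le: filter_properties}) to conclude $x = yx^{-1}x \in \mathbf{d}(A)$. The slogan ``an idempotent filter is determined by its idempotent elements'' does not deliver this for free. The cleanest repair, and the one the paper effectively uses, avoids the reduction entirely: keep $e \in \mathbf{d}(A)$ arbitrary, note $ae = a$ and $aE \in \mathcal{C}(a)$ exactly as you do, obtain $x \in E$ with $ax \in A$, and then observe that $a^{-1}(ax) = (a^{-1}a)x \in A^{-1}A$ with $(a^{-1}a)x \leq x$, which places $x$ in $(A^{-1}A)^{\uparrow} = \mathbf{d}(A)$ with no idempotency assumption on $x$.
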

\begin{proof} (1) Suppose that $A$ is a $\mathcal{C}$-filter.
Let $x \in A^{-1}$ where $X \in \mathcal{C}(x)$.
Then $x^{-1} \in A$ and $X^{-1} \in \mathcal{C}(x^{-1})$ by axiom (I).
By assumption, there exists $y \in X^{-1}$ such that $y \in A$.
But then $y^{-1} \in A^{-1}$ where $y^{-1} \in X$, as required.
 
(2) Suppose that $A$ is a $\mathcal{C}$-filter.
Let $x \in A^{-1} \cdot A$ where $X \in \mathcal{C}(x)$.
Then $a^{-1}b \leq x$ where $a,b \in A$.
It follows that $ax \in A$ where $aX \in \mathcal{C}(ax)$ by axiom (MS).
By assumption, $ay \in A$ for some $y \in X$.
Thus $a^{-1}ay \in A^{-1} \cdot A$ and so $y \in  A^{-1} \cdot A$, as required.

Suppose now that $A^{-1} \cdot A$ is a $\mathcal{C}$-filter.
Let $x \in A$ where $X \in \mathcal{C}(x)$.
Then $x^{-1}x \in A^{-1} \cdot A$ where $x^{-1}X \in \mathcal{C}(x^{-1}X)$.
By assumption, $x^{-1}y \in A^{-1} \cdot A$ where $x^{-1}X \in \mathcal{C}(x^{-1}x)$.
Thus $x^{-1}y \in A^{-1} \cdot A$ for some $y \in A$.
Thus $x^{-1}xy \in A$ and so $y \in A$, as required.
\end{proof}

\begin{lemma}\label{le: product_of_C_filters} 
If $A$ and $B$ are $\mathcal{C}$-filters and if $A \cdot B$ exists then $A \cdot B$ is a $\mathcal{C}$-filter.
\end{lemma}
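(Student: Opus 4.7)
\medskip

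\noindent\textbf{Proof plan.} My plan is to bypass any direct manipulation of elements of $A \cdot B$ with coverings and instead route everything through the idempotent filter $\mathbf{d}(A \cdot B)$, exploiting the ``iff'' already proved in Lemma~\ref{le: C-filters}(2). The observation that makes this work is that, in any groupoid, the domain of a composable product depends only on the right-hand factor; in our setting this is the identity
\[
\mathbf{d}(A \cdot B) \;=\; \mathbf{d}(B),
\]
which requires only that $A \cdot B$ is defined, i.e.\ $\mathbf{d}(A) = \mathbf{r}(B)$.

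First I would verify this identity at the level of filters by a short direct calculation: using that $A \cdot B = (AB)^{\uparrow}$ and that $(A\cdot B)^{-1} = B^{-1}\cdot A^{-1}$, one obtains
\[
\mathbf{d}(A\cdot B) \;=\; (A\cdot B)^{-1}\cdot (A\cdot B) \;=\; B^{-1}\cdot A^{-1}\cdot A\cdot B \;=\; B^{-1}\cdot\mathbf{d}(A)\cdot B \;=\; B^{-1}\cdot\mathbf{r}(B)\cdot B \;=\; \mathbf{d}(B),
\]
where the penultimate equality is the hypothesis ensuring that the product $A\cdot B$ exists, and the last equality is a standard property of filters in the inverse semigroup of filters (part (1) of Lemma~\ref{le: filter_properties} gives $B = BB^{-1}B$, whence $B^{-1}\cdot \mathbf{r}(B)\cdot B = B^{-1}\cdot B = \mathbf{d}(B)$).

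With this identity in hand, the result is a double application of Lemma~\ref{le: C-filters}(2). Since $B$ is a $\mathcal{C}$-filter, the ``only if'' direction of that lemma tells us that $\mathbf{d}(B) = B^{-1}\cdot B$ is a $\mathcal{C}$-filter. By the displayed identity, $\mathbf{d}(A\cdot B)$ is therefore a $\mathcal{C}$-filter. Now the ``if'' direction of Lemma~\ref{le: C-filters}(2), applied to the filter $A\cdot B$ (which is well-defined and a filter by Lemma~\ref{le: product_of_C_filters}'s preamble / the product operation on the groupoid $\mathsf{G}(S)$), yields that $A\cdot B$ is a $\mathcal{C}$-filter.

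There is essentially no obstacle here; the hypothesis that $A$ is itself a $\mathcal{C}$-filter is not even needed beyond making $A\cdot B$ a bona fide filter. If a more hands-on proof is preferred over the groupoid bookkeeping, one could instead start with $x \in A\cdot B$ and $X \in \mathcal{C}(x)$, pick $a\in A, b\in B$ with $ab\le x$, form $b^{-1}a^{-1}X \in \mathcal{C}(b^{-1}a^{-1}x)$ via axioms (R) and (MS), and use that the latter covers an idempotent in $\mathbf{d}(B)$ to extract, via the $\mathcal{C}$-filter property of $B$, a suitable $y\in X\cap(A\cdot B)$; but the route through Lemma~\ref{le: C-filters}(2) is much shorter and is the one I would write down.
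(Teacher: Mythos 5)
Your proposal is correct and is essentially identical to the paper's own proof, which likewise observes that $\mathbf{d}(A\cdot B)=\mathbf{d}(B)$ and then invokes part~(2) of Lemma~\ref{le: C-filters}. Your extra remark that the $\mathcal{C}$-filter hypothesis on $A$ is not really needed is accurate but does not change the argument.
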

\begin{proof} We have already seen that $A \cdot B$ is a filter and 
$\mathbf{d}(A \cdot B) = \mathbf{d}(B)$.
Thus the result follows from the lemma above.
\end{proof}

It follows that we may define the groupoid $\mathsf{G}_{\mathcal{C}}(S)$ of $\mathcal{C}$-filters of $S$.

For each $s \in S$, define $Z_{s}$ to be the set of all $\mathcal{C}$-filters that contain $s$.
Define $\xi$ to be the set of all such sets.

\begin{lemma}\label{le: C-bisections} \mbox{}
\begin{enumerate}

 \item $Z_{s}$ is a bisection.

\item $Z_{s}^{-1} = Z_{s^{-1}}$.

\item $Z_{s}Z_{t} = Z_{st}$.

\item $Z_{s} \cap Z_{t}$ is a union of elements of $\xi$.

\end{enumerate}
\end{lemma}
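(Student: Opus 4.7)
The plan is to treat the four parts in order, with parts (1), (2), and (4) mirroring their analogues in Lemma~\ref{le: prime_topology}, while part (3) requires an additional verification that certain constructed filters are themselves $\mathcal{C}$-filters. For (1), I would argue that if $A, B \in Z_s$ satisfy $\mathbf{d}(A) = \mathbf{d}(B)$, then since $s \in A \cap B$, part (3) of Lemma~\ref{le: filter_properties} immediately gives $A = B$; the dual argument handles the case of equal ranges, so $Z_s$ is a local bisection. For (2), the identity $Z_s^{-1} = Z_{s^{-1}}$ follows at once from the observation that $A$ is a $\mathcal{C}$-filter containing $s$ if and only if $A^{-1}$ is a $\mathcal{C}$-filter containing $s^{-1}$, where preservation under inversion is part (1) of Lemma~\ref{le: C-filters}.

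For (3), the inclusion $Z_s Z_t \subseteq Z_{st}$ is immediate from Lemma~\ref{le: product_of_C_filters}: if $A \in Z_s$ and $B \in Z_t$ with $\mathbf{d}(A) = \mathbf{r}(B)$, then $A \cdot B$ is a $\mathcal{C}$-filter containing $st$. The reverse inclusion is the main technical point. Following the template of part (5) of Lemma~\ref{le: prime_topology}, I would take $C \in Z_{st}$ and define $B = (t \mathbf{d}(C))^{\uparrow}$ and $A = (s \mathbf{r}(B))^{\uparrow}$; routine calculations (identical to those in the cited proof) show that $A$ and $B$ are filters containing $s$ and $t$ respectively, that $\mathbf{d}(A) = \mathbf{r}(B)$, and that $A \cdot B = C$.

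The remaining issue is to show that the factor filters $A$ and $B$ are $\mathcal{C}$-filters. This is precisely where part (2) of Lemma~\ref{le: C-filters} enters: by construction $\mathbf{d}(B) = \mathbf{d}(C)$, which is a $\mathcal{C}$-filter because $C$ is, and hence $B$ is a $\mathcal{C}$-filter; applying the same reasoning to $A$, using $\mathbf{d}(A^{-1}) = \mathbf{r}(A) = \mathbf{r}(B)$ together with part (1) of Lemma~\ref{le: C-filters}, finishes the factorization.

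Finally, for (4), given $A \in Z_s \cap Z_t$, directedness of $A$ produces $c \in A$ with $c \leq s, t$; then $A \in Z_c$ and $Z_c \subseteq Z_s \cap Z_t$ by upward closure, so $Z_s \cap Z_t$ is the union of the $Z_c$ ranging over such witnesses. The only genuine obstacle in the whole lemma is the reverse inclusion in (3), and more specifically the need to propagate the $\mathcal{C}$-filter property from $C$ to its factors $A$ and $B$; this propagation is exactly the content of Lemma~\ref{le: C-filters}, which is why that lemma was established first.
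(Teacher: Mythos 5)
Your proposal is correct and follows essentially the same route as the paper, which for part (3) simply invokes the argument of part (5) of Lemma~\ref{le: prime_topology} together with the closure properties of $\mathcal{C}$-filters; your explicit use of parts (1) and (2) of Lemma~\ref{le: C-filters} to propagate the $\mathcal{C}$-filter property from $C$ to the factors $A$ and $B$ is exactly the intended filling-in of that step. One small slip: the relevant identity for $A$ is $\mathbf{d}(A) = \mathbf{r}(B)$ (not $\mathbf{r}(A) = \mathbf{r}(B)$), and since $\mathbf{r}(B) = \mathbf{d}(B^{-1})$ is a $\mathcal{C}$-filter, part (2) of Lemma~\ref{le: C-filters} then gives directly that $A$ is a $\mathcal{C}$-filter.
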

\begin{proof} (1) This follows by part (3) of Lemma~\ref{le: filter_properties}.

(2) This follows by part (1) of Lemma~\ref{le: C-filters}.

(3) The inclusion $Z_{s}Z_{t} \subseteq Z_{st}$ follows by Lemma~\ref{le: product_of_C_filters} 
and the reverse inclusion uses the same argument as part (5) of Lemma~\ref{le: prime_topology}
combined with Lemma~\ref{le: product_of_C_filters}. 

(4) Let $A \in Z_{s} \cap Z_{t}$.
Then $s,t \in A$.
Since $A$ is a filter there exists $a \in A$ such that $a \leq s,t$.
Observe that $Z_{a} \subseteq Z_{s} \cap Z_{t}$ and that $A \in Z_{a}$. 
\end{proof}

It follows that $\xi$ is a basis for a topology on  $\mathsf{G}_{\mathcal{C}}(S)$.
The proof of the next result follows the same lines as the main part of the proof
of Proposition~\ref{prop: spectral_groupoid} using Lemma~\ref{le: C-bisections}.

\begin{proposition} For each coverage $\mathcal{C}$ on the inverse semigroup $S$,  
the groupoid $\mathsf{G}_{\mathcal{C}}(S)$ is an \'etale topological groupoid.
\end{proposition}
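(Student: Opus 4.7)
The plan is to verify the two conditions from Proposition~\ref{prop: etale_groupoids}: that $\mathsf{G}_{\mathcal{C}}(S)_{o}$ is open in $\mathsf{G}_{\mathcal{C}}(S)$ and that the product of two open sets is again open; continuity of inversion will be immediate, and continuity of multiplication will follow from a standard pre-image calculation. The template is exactly that of Proposition~\ref{prop: spectral_groupoid}, with the roles of prime filters and their $X_s$ played by $\mathcal{C}$-filters and the basic sets $Z_s$ of Lemma~\ref{le: C-bisections}.

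First I would establish continuity of inversion: since $Z_s^{-1} = Z_{s^{-1}}$ by Lemma~\ref{le: C-bisections}(2), the preimage of a basic open set under inversion is basic open. Next I would prove continuity of multiplication $m \colon \mathsf{G}_{\mathcal{C}}(S) \ast \mathsf{G}_{\mathcal{C}}(S) \rightarrow \mathsf{G}_{\mathcal{C}}(S)$ by verifying the identity
$$m^{-1}(Z_{s}) \;=\; \left(\bigcup_{0 \neq ab \leq s} Z_{a} \times Z_{b}\right) \cap \bigl(\mathsf{G}_{\mathcal{C}}(S) \ast \mathsf{G}_{\mathcal{C}}(S)\bigr).$$
The inclusion $\supseteq$ is immediate from $Z_{a}Z_{b} \subseteq Z_{ab} \subseteq Z_{s}$. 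For $\subseteq$, given $A \cdot B \in Z_{s}$ with $\mathbf{d}(A) = \mathbf{r}(B)$, one writes $s \geq a'b'$ with $a' \in A$, $b' \in B$, and then adjusts as in part~(5) of Lemma~\ref{le: prime_topology}, invoking Lemma~\ref{le: product_of_C_filters} to ensure the resulting filters $A$ and $B$ remain $\mathcal{C}$-filters.

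To prove étaleness, I would first show $\mathsf{G}_{\mathcal{C}}(S)_{o}$ is open. By Lemma~\ref{le: filter_properties}(4) an identity in $\mathsf{G}_{\mathcal{C}}(S)$ is a filter $F$ containing an idempotent $e$; then $F \in Z_{e}$, and every $\mathcal{C}$-filter in $Z_{e}$ contains the idempotent $e$ and so is itself an identity. Hence $\mathsf{G}_{\mathcal{C}}(S)_{o}$ is a union of basic open sets $Z_{e}$ with $e \in E(S)$. For closure of open sets under product, any two open sets can be written as $U = \bigcup_{i} Z_{s_{i}}$ and $V = \bigcup_{j} Z_{t_{j}}$, and then
$$UV \;=\; \bigcup_{i,j} Z_{s_{i}}Z_{t_{j}} \;=\; \bigcup_{i,j} Z_{s_{i}t_{j}}$$
by Lemma~\ref{le: C-bisections}(3), which is open. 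Invoking Proposition~\ref{prop: etale_groupoids} then completes the proof.

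I do not anticipate any serious obstacles here, since all the combinatorial inputs have been packaged into Lemma~\ref{le: C-bisections} and Lemma~\ref{le: product_of_C_filters}. The only mildly delicate point is the $\subseteq$ direction in the preimage formula for multiplication, where one must produce $\mathcal{C}$-filter factorizations of a given $\mathcal{C}$-filter lying in $Z_{s}$; but this is exactly the content of Lemma~\ref{le: product_of_C_filters} combined with the construction already used for prime filters in Lemma~\ref{le: prime_topology}(5).
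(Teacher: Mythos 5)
Your proposal is correct and follows essentially the same route as the paper, which simply states that the proof "follows the same lines as the main part of the proof of Proposition~\ref{prop: spectral_groupoid} using Lemma~\ref{le: C-bisections}" — i.e.\ the preimage formula for continuity of multiplication, openness of the identity space via the sets $Z_{e}$, closure of open sets under products via $Z_{s}Z_{t} = Z_{st}$, and Resende's criterion. You have filled in that template faithfully, so there is nothing to add.
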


We now show how these definitions unify what we have discussed so far.

\begin{examples} {\em Let $S$ be an inverse semigroup.
\begin{enumerate}

\item The simplest coverage is defined by putting $\mathcal{C}(x) =\{ \{x \} \}$.
We call this the {\em trivial coverage}.
The $\mathcal{C}$-filters are just the {\em filters}.

\item Let $S$ be a distributive inverse semigroup. Define $\mathcal{C}(x)$ to be those finite subsets of $x^{\downarrow}$
whose joins are $x$. This defines a coverage.
The $\mathcal{C}$-filters are just the {\em prime filters}.

\item Let $S$ be a pseudogroup.  Define $\mathcal{C}(x)$ to be those subsets of $x^{\downarrow}$
whose joins are $x$. This defines a coverage.
The $\mathcal{C}$-filters are just the {\em completely prime filters}.

\end{enumerate}
}
\end{examples}

\subsection{Tight completions}

Theorem~\ref{the: distributive_completions} tells us that every inverse semigroup can be completed to a distributive inverse semigroup.
In this section, we shall endow every inverse semigroup with additional structure called a {\em tight coverage}.
We shall then prove that inverse semigroups equipped with tight coverages can be completed to distributive inverse semigroups 
in a way that takes account of the coverage.
This result has important applications in the the way that inverse semigroup theory is used in constructing $C^{\ast}$-algebras.

Let $S$ be an inverse semigroup.
To define our coverage we need some notation. 
Let $a \in S$ and $B \subseteq S$.
Define $a \rightarrow B$ to mean that for each $0 \neq x \leq a$ there exists $b \in B$ such that $x^{\downarrow} \cap b^{\downarrow} \neq 0$.
We call this the {\em arrow relation} and it was first defined in \cite{L}.
For each $a \in S$ define $\mathcal{T}(a)$ to consist of those {\em finite} subsets
$B \subseteq a^{\downarrow}$ such that $a \rightarrow B$.

\begin{lemma} 
With the above definition $\mathcal{T}$ defines a coverage on $S$. 
\end{lemma}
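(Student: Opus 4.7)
The plan is to verify each of the four axioms (R), (I), (MS), (T) directly from the definition of the arrow relation. Three of them are essentially routine; the interesting one is (MS), so I will save it for last.

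First, I would check (R): clearly $\{a\}$ is a finite subset of $a^{\downarrow}$, and the arrow condition $a \rightarrow \{a\}$ is immediate since any $0 \neq x \leq a$ witnesses its own nontrivial lower bound with $a$. For (I), I would note that the passage $x \mapsto x^{-1}$ is an order-isomorphism between $a^{\downarrow}$ and $(a^{-1})^{\downarrow}$, and that it preserves both non-zeroness and common nonzero lower bounds; so $a \rightarrow X$ translates directly into $a^{-1} \rightarrow X^{-1}$, and finiteness is obviously preserved. For (T), the set $\bigcup_i X_i$ is a finite union of finite subsets of $a^{\downarrow}$, hence finite and contained in $a^{\downarrow}$; given $0 \neq y \leq a$, apply $a \rightarrow X$ to get some $0 \neq y' \leq y, x_i$ with $x_i \in X$, then apply $x_i \rightarrow X_i$ to $y' \leq x_i$ to produce $0 \neq y'' \leq y', z$ for some $z \in X_i \subseteq \bigcup_j X_j$. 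This $y''$ then witnesses the required common nonzero lower bound for $y$ and $z$.

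The hard part will be (MS). Here I would start with $0 \neq z \leq ab$ and aim to produce $x \in X$, $y \in Y$ with a nonzero element below $z$ and $xy$. The key observation is that $\mathbf{d}(z) \leq b^{-1}a^{-1}ab \leq b^{-1}b$, so $z = z \cdot b^{-1}b$, which means $zb^{-1}$ is nonzero (otherwise $z = zb^{-1}b = 0$) and satisfies $zb^{-1} \leq a$. Applying the arrow $a \rightarrow X$ to $zb^{-1}$ gives $x \in X$ and $0 \neq w \leq zb^{-1}, x$. A short calculation shows $w^{-1}w \leq bb^{-1}$, and therefore $0 \neq w^{-1}w \cdot b \leq b$. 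Now applying $b \rightarrow Y$ to $w^{-1}w \cdot b$ yields $y \in Y$ and $0 \neq v \leq w^{-1}w \cdot b, y$. I would then verify that $wv$ is the desired witness: it is nonzero (since $v^{-1} w^{-1}w v = v^{-1}v \neq 0$, because $w^{-1}w \cdot v = v$ follows from $v \leq w^{-1}w \cdot b$), lies below $z$ (since $wv \leq w \cdot w^{-1}wb = wb \leq zb^{-1} \cdot b = z$), and lies below $xy$ (since $w \leq x$ and $v \leq y$).

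The inclusion $XY \subseteq (ab)^{\downarrow}$ and finiteness of $XY$ are both clear. The main obstacle is purely calculational: keeping track of the correct idempotent adjustments so that the two applications of the arrow relation (one to $a$, one to $b$) can be glued through the product. The pivot step is the observation that even though one cannot in general ``divide'' $z$ cleanly as a product of something below $a$ and something below $b$, one can produce $zb^{-1}$ and then compensate with the idempotent $w^{-1}w$ before applying the second arrow, which is what makes the argument go through.
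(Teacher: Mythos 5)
Your proposal is correct and follows essentially the same strategy as the paper: (R), (I), (T) are handled routinely, and (MS) is proved by two successive applications of the arrow relation glued together with idempotent adjustments. The only difference is cosmetic — you peel off $b$ on the right (working with $zb^{-1}\leq a$ and applying $a\rightarrow X$ first), whereas the paper peels off $a$ on the left (working with $a^{-1}z\leq b$ and applying $b\rightarrow Y$ first) — and all of your intermediate verifications (nonzeroness of $zb^{-1}$, $w^{-1}w\leq bb^{-1}$, and the properties of the witness $wv$) check out.
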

\begin{proof}
It is immediate that (R) and (I) hold.
Suppose that $X \in \mathcal{T}(a)$ and $Y \in \mathcal{T}(b)$.
Then since $X \subseteq a^{\downarrow}$ and $Y \subseteq b^{\downarrow}$ we have that $XY \subseteq (ab)^{\downarrow}$.
Let $0 \neq z \leq ab$.
Then $aa^{-1}z = z$ and so $a^{-1}z \neq 0$.
It follows that $0 \neq a^{-1}z \leq a^{-1}ab \leq b$.
Thus there exists $y \in Y$ and a $u$ such that $u \leq y, a^{-1}z$.
Observe that $a^{-1}au = u$ and so $au \neq 0$.
Thus $0 \neq au \leq ay, z$.
We now carry out a similar calculation starting from $au \leq ay$.
Then $auy^{-1} \leq a$ and so there exists $x \in X$ and a $v$ such that $v \leq x, auy^{-1}$.
Observe that $vyy^{-1} = v$ and so $vy \neq 0$.
Thus $0 \neq vy \leq xy, auy^{-1}y = au \leq z$.
It follows that (MS) holds.
Finally, we check that (T) holds.
Let $X \in \mathcal{T}(a)$ and suppose that for each $x_{i} \in X$ we have that $X_{i} \in \mathcal{T}(x_{i})$.
We prove that $\bigcup_{i} X_{i} \in \mathcal{T}(x)$.
Let $0 \neq z \leq a$.
Then there exists $0 \neq u \leq z,x_{i}$ for some $x_{i} \in X$.
But $0 \neq u \leq x_{i}$ implies that there exists $y \in X_{i}$ and a $v$ such that
$0 \neq v \leq y,u$.
Thus there exists $0 \neq u \leq z, y$ where $y \in X_{i}$, as required.
\end{proof}

We call $\mathcal{T}$ the {\em tight coverage}, 
an element of $\mathcal{T}(a)$ is called a {\em tight cover of $a$},
and $\mathcal{T}$-filters are called {\em tight filters}.
In this section, 
a {\em separative} semigroup will be one with the property that 
$\mathcal{T}(a) \cap \mathcal{T}(b) \neq \emptyset$
implies that $a = b$.

\begin{lemma}\label{le: ultrafilter_tight} 
Every ultrafilter is a tight filter.
\end{lemma}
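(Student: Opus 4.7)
The plan is to reduce the statement to the meet-semilattice characterization of ultrafilters given in Lemma~\ref{le: ultrafilter_result}.

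First I would reduce to the case where $F$ is an idempotent ultrafilter. By Lemma~\ref{le: filter}(2), if $F$ is an ultrafilter then so is $\mathbf{d}(F)$, and $\mathbf{d}(F)$ is automatically an idempotent filter; by Lemma~\ref{le: C-filters}(2), $F$ is a tight filter if and only if $\mathbf{d}(F)$ is. So it is legitimate to replace $F$ by $\mathbf{d}(F)$ and assume from the outset that $F$ is an idempotent ultrafilter. Then $F = E(F)^{\uparrow}$ by Lemma~\ref{le: filter_properties}(4), and $E(F)$ is an ultrafilter in the meet semilattice $E(S)$, as observed in the proof of Proposition~\ref{prop: semigroup_filters}.

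Now let $x \in F$ and $X = \{y_{1}, \ldots, y_{n}\} \in \mathcal{T}(x)$; the task is to find some $y_{i} \in F$. By Lemma~\ref{le: coverage_properties}(1), $\mathbf{d}(X) = \{y_{1}^{-1}y_{1}, \ldots, y_{n}^{-1}y_{n}\}$ is a tight cover of $\mathbf{d}(x)$ consisting of idempotents, and $\mathbf{d}(x) \in F$ since $F$ is idempotent. It suffices to show that some $y_{i}^{-1}y_{i}$ lies in $E(F)$, because then $y_{i} = x \cdot (y_{i}^{-1}y_{i}) \in (x\mathbf{d}(F))^{\uparrow} = F$ by Lemma~\ref{le: filter_properties}(2).

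I would then argue by contradiction. Suppose no $y_{i}^{-1}y_{i}$ belongs to $E(F)$. Applying Lemma~\ref{le: ultrafilter_result} to the ultrafilter $E(F)$ in $E(S)$, for each $i$ one obtains $z_{i} \in E(F)$ with $z_{i}(y_{i}^{-1}y_{i}) = 0$. Put $z = \mathbf{d}(x) z_{1} \cdots z_{n}$; this is a nonzero element of $E(F)$ (as $E(F)$ is proper) and $z \leq \mathbf{d}(x)$. By the tight-cover property of $\mathbf{d}(X)$ applied to $0 \neq z \leq \mathbf{d}(x)$, there exist an index $i$ and a nonzero $w$ with $w \leq z$ and $w \leq y_{i}^{-1}y_{i}$. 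But $w \leq z \leq z_{i}$ together with $w \leq y_{i}^{-1}y_{i}$ forces $w \leq z_{i}(y_{i}^{-1}y_{i}) = 0$ in $E(S)$, a contradiction. The only slightly nonroutine part is organising the two reductions (to $\mathbf{d}(F)$ and then to $\mathbf{d}(X)$) so that the arrow-relation hypothesis transfers; once inside $E(S)$, the remainder is essentially the calculation of Lemma~\ref{le: lattice_filters}(1).
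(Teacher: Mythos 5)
Your proof is correct and follows essentially the same route as the paper's: reduce via $\mathbf{d}$ to the idempotent case, pass to the meet semilattice $E(S)$, and obtain a contradiction from Lemma~\ref{le: ultrafilter_result} by taking the meet of the separating elements and invoking the arrow relation. The only difference is organizational: the paper first proves that an idempotent filter is tight in $S$ if and only if $E(F)$ is tight in $E(S)$ and then works wholly inside the semilattice, whereas you transport the cover $X$ of $x$ to the cover $\mathbf{d}(X)$ of $\mathbf{d}(x)$ and pull the conclusion back via $y_{i} = x(y_{i}^{-1}y_{i})$; the closing contradiction is identical.
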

\begin{proof} By Lemma~\ref{le: C-filters},
we have that $A$ is a tight filter if and only if $\mathbf{d}(A)$ is an (idempotent) tight filter.

We show next that $F$ is an idempotent tight filter if and only if $E(F)$ is a tight filter in $E(S)$.
Let $F$ be an idempotent tight filter.
Then $E(F)$ is a filter in $E(S)$ and $F = E(F)^{\uparrow}$.
Let $e \in E(F)$ and suppose $X = \{f_{1}, \ldots, f_{m}\}$ is a tight cover of $e$ in $E(S)$.
Then $e \in F$ and $X$ is also a tight cover of $e$ in $S$.
It follows that $f_{i} \in F$ for some $i$ and so $f_{i} \in E(F)$.
We have therefore shown that $E(F)$ is a tight filter.
Conversely, suppose that $E(F)$ is a tight filter in $E(S)$.
Let $a \in F$ and $\{a_{1}, \ldots, a_{m} \}$ be a tight cover of $a$.
Then $\mathbf{d}(X)$ is a tight cover of $a^{-1}a$.
By assumption, $\mathbf{d}(a_{i}) \in E(F)$ for some $i$.
But $a \mathbf{d}(a_{i}) \in F$ and so $a_{i} = a\mathbf{d}(a_{i}) \in F$, as required. 

We therefore need only prove our result in the case where our inverse semigroup is a meet semilattice
which means that we may use Lemma~\ref{le: ultrafilter_result}.
Let $F$ be an ultrafilter in a meet semilattice.
Let $x \in F$ and let $\{a_{1}, \ldots, a_{m} \}$ be a tight cover of $x$.
Suppose that for all $i$ we have that $a_{i} \notin F$.
Then form each $i$ there is $f_{i}$ in $F$ such that $f_{i} \wedge a_{i} = 0$.
Put $f = \bigwedge_{i=1}^{m} a_{i}$.
Then $f \in F$ and we may assume without loss of generality that $f \leq x$.
Clearly $f \neq 0$.
But by assumption, we must have that $f^{\downarrow} \cap a_{i}^{\downarrow} \neq 0$
which leads to a contradition.
It follows that $a_{i} \in F$ for some $i$.
\end{proof}

It follows by the above result that if $s$ is non-zero the set $Z_{s}$ of tight filters containing $s$ is non-empty.

A semigroup homomorphism $\theta \colon S \rightarrow T$ to a distributive inverse semigroup
is said to be a {\em tight map}
if for each element $a \in S$ and $\mathcal{T}$-cover $A = \{a_{1}, \ldots, a_{n} \}$ of $a$ 
we have that $\theta (a) = \bigvee_{i=1}^{n} \theta (a_{i})$.








We begin by examining the form taken by the tight coverage on distributive inverse semigroups.

\begin{lemma}\label{le: essential} Let $S$ be a distributive inverse semigroup.
Then $\{a_{1}, \ldots, a_{m} \} \subseteq a^{\downarrow}$ is a tight cover if and only if
the singleton-set $\{b\}$, where $b = \bigvee_{i=1}^{m} a_{i}$, is a tight cover of $a$.
\end{lemma}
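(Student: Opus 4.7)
The proof is short and splits into two directions, both controlled by the arrow relation $a \to X$.

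For the forward direction, assume $\{a_1,\ldots,a_m\}$ is a tight cover of $a$ and set $b = \bigvee_{i=1}^{m} a_i$. Since each $a_i \le a$ we have $b \le a$, so $\{b\} \subseteq a^{\downarrow}$. Given any $0 \ne x \le a$, the tightness hypothesis yields some $a_i$ and some $0 \ne z \le x, a_i$. Because $a_i \le b$, we also get $z \le b$, so $z$ witnesses $x^{\downarrow} \cap b^{\downarrow} \ne 0$. Hence $\{b\}$ is a tight cover of $a$. This direction uses no distributivity.

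For the converse, assume $\{b\}$ is a tight cover of $a$, where $b = \bigvee_{i=1}^{m} a_i$. Let $0 \ne x \le a$. By assumption there exists $0 \ne z \le x, b$. Since $z \le b$, the meet $z \wedge b$ exists and equals $z$. Applying Lemma~\ref{le: meets_and_joins}(1) inductively to $b = a_1 \vee \cdots \vee a_m$ (splitting off one join at a time), each meet $z \wedge a_i$ exists, and
$$z = z \wedge b = \bigvee_{i=1}^{m} (z \wedge a_i).$$
Since $z \ne 0$, some $z \wedge a_i \ne 0$. This element lies below both $x$ (because $z \le x$) and $a_i$, so $x^{\downarrow} \cap a_i^{\downarrow} \ne 0$. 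Thus $\{a_1,\ldots,a_m\}$ is a tight cover of $a$.

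The only nontrivial step is the inductive use of the binary distributive law to get $z = \bigvee (z \wedge a_i)$; this is the main (minor) obstacle, but it is handled directly by repeated application of Lemma~\ref{le: meets_and_joins}(1), noting at each stage that the relevant join and meet with $z$ already exist because $z \le b$.
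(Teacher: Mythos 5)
Your proof is correct and follows essentially the same route as the paper: the forward direction is identical, and in the converse both arguments decompose the witness $z \leq x,b$ into $m$ pieces each lying below some $a_{i}$ and pick a non-zero one. The only (cosmetic) difference is that the paper writes the decomposition as $z = \bigvee_{i=1}^{m} a_{i}\mathbf{d}(z)$ using distributivity of multiplication over joins, whereas you write $z = \bigvee_{i=1}^{m}(z \wedge a_{i})$ via Lemma~\ref{le: meets_and_joins}(1); since $z$ and $a_{i}$ are compatible (both lie below $b$), these are the same elements.
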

\begin{proof} Suppose first that $\{a_{1}, \ldots, a_{m} \} \subseteq a^{\downarrow}$ is a tight cover.
Let $0 \neq x \leq a$.
Then by assumption there exists $0 \neq z$ such that $z \leq x,a_{i}$ for some $i$.
But clearly $z \leq x, b$.
Conversely, suppose that $b \leq a$ is a tight cover.
Let $0 \neq x \leq a$.
Then by assumption there exists $0 \neq z \leq x, b$.
By distributivity it follows that $z = \bigvee_{i=1}^{m} a_{i} \mathbf{d}(z)$.
It follows that since $z$ is non-zero we have that $a_{i} \mathbf{d}(z)$ is non-zero for some $i$.
Put $z' = a_{i}\mathbf{d}(z)$.
Then $0 \neq z'$ and $z' \leq x,a_{i}$, as required.
\end{proof}

The above lemma leads us to the following definition.
The non-zero element $x$ is said to be {\em essential in $s$} if $x \leq s$ and $s \rightarrow x$.
We shall write $x \preceq s$.
A morphism $\theta \colon S \rightarrow T$ between distributive inverse semigroups is said to be {\em essential}
if $x \preceq s$ implies that $\theta (x) = \theta (s)$.

\begin{lemma} 
A morphism $\theta \colon S \rightarrow T$ between distributive inverse semigroups is essential if and only if it is tight.
\end{lemma}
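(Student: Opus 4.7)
The plan is to derive the equivalence directly from Lemma~\ref{le: essential}, which already does the main work by showing that tight covers in a distributive inverse semigroup can always be collapsed to a single element, namely the join. The definition of essential is just the special case of the tight condition for one-element covers, so one direction is immediate; the other direction requires only that we pass back and forth between a general finite tight cover and its join using the fact that morphisms of distributive inverse semigroups preserve compatible joins.

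More concretely, first I would handle the easy direction. Assume $\theta$ is tight and suppose $x \preceq s$, i.e., $x \leq s$ and $s \rightarrow x$. Then by definition $\{x\}$ is a tight cover of $s$, so tightness gives $\theta(s) = \theta(x)$ (a one-element join). Hence $\theta$ is essential.

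For the converse, assume $\theta$ is essential and let $\{a_1, \ldots, a_n\} \subseteq a^{\downarrow}$ be a tight cover of $a$. Since each $a_i \leq a$, the set $\{a_1, \ldots, a_n\}$ is pairwise compatible, so its join $b = \bigvee_{i=1}^n a_i$ exists in $S$ and $b \leq a$. By Lemma~\ref{le: essential}, the singleton $\{b\}$ is itself a tight cover of $a$, i.e., $b \preceq a$. Essentiality then yields $\theta(b) = \theta(a)$. On the other hand, because $\theta$ is a morphism of distributive inverse semigroups (and in particular preserves compatible binary joins, hence finite compatible joins by induction), we get $\theta(b) = \bigvee_{i=1}^n \theta(a_i)$. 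Combining these, $\theta(a) = \bigvee_{i=1}^n \theta(a_i)$, which is exactly the tightness condition.

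There is no serious obstacle: the content is already packaged in Lemma~\ref{le: essential}, and the only mild point to check is the existence and preservation of the join $b = \bigvee a_i$, which follows from compatibility (all $a_i$ lie below $a$) and the definition of a morphism of distributive inverse semigroups. The proof is therefore essentially a two-line argument in each direction.
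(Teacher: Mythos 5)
Your proposal is correct and follows essentially the same route as the paper: the essential-implies-tight direction collapses the cover to its join $b=\bigvee a_i$, invokes Lemma~\ref{le: essential} to see $b\preceq a$, and uses preservation of compatible joins, while the converse is the one-element-cover observation the paper dismisses as immediate. No gaps.
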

\begin{proof} Suppose that $\theta$ is essential.
Let $\{a_{1}, \ldots, a_{m} \} \subseteq a^{\downarrow}$ be a cover.
Then $b = \bigvee_{i=1}^{m} a_{i}$ is essential in $a$ by Lemma~\ref{le: essential}.
By assumption $\theta (a) = \theta (b)$.
We now use the fact that $\theta$ is a morphism and so $\theta (b) = \bigvee_{i=1}^{m} \theta (a_{i})$
which gives $\theta (a) = \bigvee_{i=1}^{m} \theta (a_{i})$, as required.
The proof of the converse is immediate. 
\end{proof}

The above lemma tells us that when we are dealing with distributive inverse semigroups from the perespective of tight coverages,
we may restrict our attention to essential elements.
We now extend the definition of $\rightarrow$.
Let $A$ and $B$ be two finite non-empty sets.
We write $A \rightarrow B$ if and only if $a \rightarrow B$ for each $a \in A$.

\begin{lemma} Let $S$ be an inverse semigroup and 
$A = \{a_{1}, \ldots, a_{m} \}^{\downarrow} \subseteq B = \{b_{1}, \ldots, b_{n} \}^{\downarrow}$
both be elements of $\mathsf{D}(S)$.
Then $A \preceq B$ if and only if $\{b_{1}, \ldots, b_{n} \} \rightarrow \{a_{1}, \ldots, a_{m}\}$.
\end{lemma}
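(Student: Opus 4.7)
The plan is to unpack both sides of the equivalence using the fact that the order on $\mathsf{D}(S)$ is subset inclusion of finitely-generated compatible order ideals, with the empty (i.e.\ zero) ideal as the minimum element. In these terms, $A \preceq B$ means $A \subseteq B$ (which is given) together with: for every non-zero $X \in \mathsf{D}(S)$ with $X \subseteq B$, there is a non-zero $Y \in \mathsf{D}(S)$ contained in both $X$ and $A$. The right-hand condition $\{b_1,\dots,b_n\} \rightarrow \{a_1,\dots,a_m\}$ is a pointwise statement in $S$: for every $j$ and every $0 \neq x \leq b_j$ in $S$ there exist $i$ and $0 \neq y$ with $y \leq x$ and $y \leq a_i$. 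The bridge between the two is the observation that, at the level of $\mathsf{D}(S)$, a non-zero ideal is determined by the non-zero elements of $S$ it contains, and each such element generates a principal ideal $y^{\downarrow}$ which is itself an element of $\mathsf{D}(S)$.

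For the forward direction, I would fix an arbitrary $j$ and a non-zero $x \leq b_j$ in $S$. Then $x^{\downarrow}$ is a non-zero member of $\mathsf{D}(S)$ with $x^{\downarrow} \subseteq b_j^{\downarrow} \subseteq B$, so the hypothesis $A \preceq B$ yields a non-zero $Y \in \mathsf{D}(S)$ with $Y \subseteq x^{\downarrow} \cap A$. Picking any non-zero $y \in Y$ gives $y \leq x$ and $y \in A$, so $y \leq a_i$ for some $i$. Hence $y \in x^{\downarrow} \cap a_i^{\downarrow}$, which is the witness required by the arrow relation in $S$.

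For the reverse direction, I would take a non-zero $X \in \mathsf{D}(S)$ with $X \subseteq B$ and pick any non-zero $x \in X$. Since $X \subseteq B = \{b_1,\dots,b_n\}^{\downarrow}$, there is some $j$ with $x \leq b_j$. Applying the hypothesis $b_j \rightarrow \{a_1,\dots,a_m\}$ produces some $i$ and a non-zero $y \leq x$ with $y \leq a_i$. Then $y^{\downarrow}$ is a non-zero element of $\mathsf{D}(S)$ contained in $x^{\downarrow} \subseteq X$ and in $a_i^{\downarrow} \subseteq A$, establishing $A \preceq B$.

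I do not expect any genuine obstacle: the content is purely book-keeping between the two orders. The only step that requires a little care is verifying that the principal ideal $y^{\downarrow}$ really is a legitimate (non-zero, finitely generated, compatible) element of $\mathsf{D}(S)$, which is immediate from the construction, and that $X$ being an order ideal lets us replace the arbitrary choice of element $x \in X$ by its downward closure inside $X$. Once these bookkeeping points are in place, both implications are a single round of chasing definitions.
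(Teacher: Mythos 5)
Your proof is correct and follows essentially the same route as the paper: both directions are the same unpacking of $\preceq$ on $\mathsf{D}(S)$ via principal ideals $x^{\downarrow}$ and $y^{\downarrow}$. The only (harmless) difference is in the converse, where the paper assembles a witness ideal $\{x_{1},\ldots,x_{p}\}^{\downarrow}$ from one witness per generator of $C$, whereas you observe that a single principal witness $y^{\downarrow}$ already suffices.
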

\begin{proof} Suppose first that $A \preceq B$.
Let $0 \neq x \leq b_{i}$.
Then $0 \neq x^{\downarrow} \leq B$.
By assumption there exists $0 \neq C \leq x^{\downarrow}, A$.
Let $0 \neq c \in C$.
Then $c \leq x, a_{j}$ for some $j$.
To prove the converse, suppose that
 $\{b_{1}, \ldots, b_{n} \} \rightarrow \{a_{1}, \ldots, a_{m}\}$.
Let $0 \neq C \leq B$ where $C = \{c_{1}, \ldots, c_{p} \}^{\downarrow}$.
By assumption, for each $k$ we may find $x_{k}$ such that $0 \neq x_{k} \leq c_{k}, a_{i_{k}}$.
Put $X = \{x_{1}, \ldots, x_{p} \}^{\downarrow}$.
Then $X \neq 0$, $X \leq C$ and $X \leq A$.
\end{proof}         
 
We now have the following.

\begin{proposition} Let $S$ be an inverse semigroup.
Let $\theta \colon S \rightarrow T$ be a tight homomorphism to a distributive inverse semigroup.
Then the unique morphism $\theta^{\ast} \colon \mathsf{D}(S) \rightarrow T$ 
such that $\theta^{\ast}\iota = \theta$ is a tight morphism.
\end{proposition}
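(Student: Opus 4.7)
The plan is to use the characterisation of tight morphisms between distributive inverse semigroups as essential morphisms (established in the lemma just before the proposition), together with the translation of the essential relation $A \preceq B$ in $\mathsf{D}(S)$ into the arrow relation between generators (the preceding lemma).

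Concretely, take $A = \{a_1,\ldots,a_m\}^{\downarrow} \preceq B = \{b_1,\ldots,b_n\}^{\downarrow}$ in $\mathsf{D}(S)$. Since $A \subseteq B$ we have $\theta^{\ast}(A) \leq \theta^{\ast}(B)$, so it suffices to prove the reverse inequality. Recall $\theta^{\ast}(\{x_1,\ldots,x_k\}^{\downarrow}) = \bigvee_i \theta(x_i)$ by the universal property; thus I must show $\bigvee_j \theta(b_j) \leq \bigvee_i \theta(a_i)$, and for this it is enough to show $\theta(b_j) \leq \bigvee_i \theta(a_i)$ for each $j$. The previous lemma gives $\{b_1,\ldots,b_n\} \rightarrow \{a_1,\ldots,a_m\}$, so in particular $b_j \rightarrow \{a_1,\ldots,a_m\}$.

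For each $j$ I construct a tight cover of $b_j$ lying under the $a_i$. Since $A \subseteq B$ and $B$ is a compatible order ideal, each $a_i$ is compatible with $b_j$, so $a_i b_j^{-1}$ is an idempotent. Set
$$c_{ij} = a_i b_j^{-1} b_j.$$
Then $c_{ij} \leq b_j$ (rewrite as $b_j \cdot b_j^{-1}a_i$ with $b_j^{-1}a_i$ idempotent) and $c_{ij} \leq a_i$. I claim $\{c_{1j},\ldots,c_{mj}\}$ is a tight cover of $b_j$: given $0 \neq x \leq b_j$, the arrow relation supplies $0 \neq z$ with $z \leq x$ and $z \leq a_i$ for some $i$; since $z \leq b_j$ too, we have $z = zb_j^{-1}b_j \leq a_i b_j^{-1}b_j = c_{ij}$, so $z$ witnesses $x^{\downarrow} \cap c_{ij}^{\downarrow} \neq 0$.

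Now apply tightness of $\theta$ at $b_j$ with cover $\{c_{ij}\}_i$:
$$\theta(b_j) = \bigvee_{i=1}^{m} \theta(c_{ij}) \leq \bigvee_{i=1}^{m} \theta(a_i),$$
the inequality because $c_{ij} \leq a_i$. Taking the join over $j$ gives $\theta^{\ast}(B) \leq \theta^{\ast}(A)$, completing the proof that $\theta^{\ast}$ is essential and hence tight.

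The only mildly delicate step is checking that the $c_{ij}$ really form a tight cover of $b_j$, i.e. reconstructing the arrow relation after passing from elements of $S$ to the "meet-like" expressions $a_i b_j^{-1} b_j$; everything else is a direct translation of definitions plus the two preceding lemmas.
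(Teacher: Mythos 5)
Your proof is correct and follows essentially the same route as the paper: reduce to essentiality, then tightly cover each generator of the larger ideal by its meets with the generators of the smaller one (your $c_{ij}=a_ib_j^{-1}b_j$ are exactly the elements $a_i\mathbf{d}(b_j)$ the paper uses, with the roles of the two ideals relabelled), and apply tightness of $\theta$. The verification that these meets form a tight cover via the arrow relation matches the paper's argument, so no further comment is needed.
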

\begin{proof} It is enough to prove that $\theta^{\ast}$ is an essential map.
Let $A = \{a_{1}, \ldots, a_{m} \}^{\downarrow}$ and $B = \{b_{1}, \ldots, b_{n} \}^{\downarrow}$ be two elements of $\mathsf{D}(S)$
such that $B \preceq A$ in $\mathsf{D}(S)$.
We shall prove that $\theta^{\ast}(A) = \theta^{\ast}(B)$.
By definition $\theta^{\ast}(B) = \bigvee_{i=1}^{n} \theta (b_{i})$ and $\theta^{\ast}(A) = \bigvee_{j=1}^{m} \theta (a_{j})$.
Clearly $\theta^{\ast}(B) \leq \theta^{\ast}(A)$.
Thus by definition, we have that
$\theta^{\ast}(B) = \theta^{\ast}(A)(\bigvee_{j=1}^{n} \theta (\mathbf{d}(b_{j})))$.
We shall prove that $\theta (a_{i}) = \theta (a_{i})(\bigvee_{j=1}^{n} \theta (\mathbf{d}(b_{j})))$ from which the result follows
and to do that it is enough to prove that $a_{i} \rightarrow \{a_{i} \mathbf{d}(b_{1}), \ldots, a_{i} \mathbf{d}(b_{n})$.
Let $0 \neq z \leq a_{i}$.
Then $0 \neq z^{\downarrow} \leq A$.
It follows that there is a non-zero $C \in \mathsf{D}(S)$ such that $C \leq z^{\downarrow},B$.
We may therefore find $0 \neq c \in C$ such that $c \leq z, b_{j}$ for some $j$.
It follows that $0 \neq c \leq a_{i} \mathbf{d}(b_{j}),b_{j}$.
\end{proof}

Denote by $\mathbf{S}$ the quotient of $S$ by $\equiv$, defined with respect to the tight coverage,
and the $\equiv$-congruence class containing $s$ by $\mathbf{s}$.
There is a homomorphism $\sigma \colon S \rightarrow \mathbf{S}$ given by $s \mapsto \mathbf{s}$.
The following is immediate from the definition.

\begin{lemma} 
The homomorphism $\sigma \colon S \rightarrow \mathbf{S}$ is $0$-restricted.
\end{lemma}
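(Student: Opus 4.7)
The plan is to unpack the definition of $0$-restricted: a $0$-restricted homomorphism is one whose kernel (in the sense of the preimage of $0$) is trivial, i.e., $\sigma(s) = \mathbf{0}$ forces $s = 0$. So I need to show that $s \equiv 0$ implies $s = 0$. Unpacking $\equiv$, this means: if there exists $X \in \mathcal{T}(s) \cap \mathcal{T}(0)$, then $s = 0$.

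First I would compute $\mathcal{T}(0)$ explicitly. Since $0^{\downarrow} = \{0\}$, any element of $\mathcal{T}(0)$ must be a subset of $\{0\}$, so either $\emptyset$ or $\{0\}$; either lies in $\mathcal{T}(0)$ because the arrow condition $0 \rightarrow B$ is vacuously satisfied (there is no non-zero element below $0$). Thus the hypothesis $\mathcal{T}(s) \cap \mathcal{T}(0) \neq \emptyset$ provides some $X \in \mathcal{T}(s)$ with $X \subseteq \{0\}$.

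Next I would exploit the fact that $X$ is a tight cover of $s$, i.e., $s \rightarrow X$. Suppose for contradiction that $s \neq 0$. Then $s$ itself is a non-zero element with $s \leq s$, so by the arrow relation there must exist $y \in X$ with $s^{\downarrow} \cap y^{\downarrow} \neq 0$, i.e., some non-zero element below both $s$ and $y$. But $y \in X \subseteq \{0\}$, so $y = 0$, and no non-zero element lies below $0$; alternatively if $X = \emptyset$ there is no candidate $y$ at all. Either way we reach a contradiction, forcing $s = 0$.

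The argument is essentially a direct unwinding, so I do not anticipate a genuine obstacle; the only subtle point is making sure the definition of $\mathcal{T}(0)$ and of the arrow relation $s \rightarrow X$ are handled correctly when $X \subseteq \{0\}$, in particular recognizing that both the empty case and the $\{0\}$ case rule out the existence of any non-zero element below $s$. This is exactly what $s = 0$ means.
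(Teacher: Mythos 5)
Your proof is correct and is exactly the unwinding the paper has in mind: the paper offers no argument beyond "immediate from the definition," and your computation of $\mathcal{T}(0)$ together with the observation that $s \rightarrow X$ with $X \subseteq \{0\}$ forces $s = 0$ is the intended reasoning, including the correct handling of the degenerate cases $X = \emptyset$ and $X = \{0\}$.
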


\begin{lemma}\label{le: tight_coverage_properties} Let $\mathcal{T}$ be the tight coverage on $S$.
\begin{enumerate}

\item Let $\mathbf{X} \in \mathcal{T}(\mathbf{a})$.
Then there exists $A \in \mathcal{T}(a)$ such that $\sigma (A) = \mathbf{X}$.

\item If $\mathcal{T}(\mathbf{a}) \cap \mathcal{T}(\mathbf{b}) \neq \emptyset$ 
then 
$\mathcal{T}(a) \cap \mathcal{T}(b) \neq \emptyset$.

\item Let $X \in \mathcal{T}(a)$. 
Then $\mathbf{X} \in \mathcal{T}(\mathbf{a})$.

\end{enumerate}
\end{lemma}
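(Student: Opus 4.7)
I will treat the three parts in the order (3), (1), (2). Throughout I use three facts: the homomorphism $\sigma$ preserves the natural partial order and $\mathbf{d}$, it is $0$-restricted by the preceding lemma, and any two elements $y,z \leq a$ in $S$ are compatible so that $z\mathbf{d}(y)$ is a common lower bound of $z$ and $y$. The central device, used in both (3) and (1), is that for any $\mathbf{z} \leq \mathbf{a}$ in $\mathbf{S}$ with representative $z \in S$, the element $a\mathbf{d}(z) \in S$ is a canonical lift: it lies below $a$, still projects to $\mathbf{z}$, and is nonzero precisely when $\mathbf{z}$ is.

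\textbf{Parts (3) and (1).} For (3), let $X = \{x_1,\ldots,x_n\} \in \mathcal{T}(a)$; then each $\mathbf{x}_i \leq \mathbf{a}$ is immediate. Given $0 \neq \mathbf{z} \leq \mathbf{a}$, pick any representative $z$ and set $z' := a\mathbf{d}(z)$, so $z' \leq a$, $\sigma(z') = \mathbf{z}$, and $z' \neq 0$. The hypothesis $a \to X$ yields some $x_i$ and $0 \neq w \leq z',x_i$, whence $0 \neq \mathbf{w} \leq \mathbf{z}, \mathbf{x}_i$, as required. For (1), let $\mathbf{X} = \{\mathbf{x}_1,\ldots,\mathbf{x}_n\} \in \mathcal{T}(\mathbf{a})$, fix arbitrary representatives $x_i$, and set $y_i := a\mathbf{d}(x_i)$; then $y_i \leq a$, and $\sigma(y_i) = \mathbf{a}\,\mathbf{d}(\mathbf{x}_i) = \mathbf{x}_i$ because $\mathbf{x}_i \leq \mathbf{a}$. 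To verify $\{y_1,\ldots,y_n\} \in \mathcal{T}(a)$, take $0 \neq z \leq a$; its image $\mathbf{z}$ is nonzero, so the tight cover $\mathbf{X}$ supplies an index $i$ and $0 \neq \mathbf{v} \leq \mathbf{z}, \mathbf{y}_i$. Setting $w := z\mathbf{d}(y_i)$, compatibility of $z$ and $y_i$ (both below $a$) yields $w \leq z, y_i$, while $\sigma(w) = \mathbf{z}\,\mathbf{d}(\mathbf{y}_i) \geq \mathbf{v}$ forces $w \neq 0$.

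\textbf{Part (2), and the main obstacle.} Given $\mathbf{X} \in \mathcal{T}(\mathbf{a}) \cap \mathcal{T}(\mathbf{b})$, applying (1) to both $a$ and $b$ yields $A = \{a\mathbf{d}(x_i)\}_i \in \mathcal{T}(a)$ and $B = \{b\mathbf{d}(x_i)\}_i \in \mathcal{T}(b)$, both projecting to $\mathbf{X}$. For each $i$ the elements $a\mathbf{d}(x_i)$ and $b\mathbf{d}(x_i)$ share the image $\mathbf{x}_i$ in $\mathbf{S}$, so by the very definition of $\equiv$ there is a common refinement $Y_i \in \mathcal{T}(a\mathbf{d}(x_i)) \cap \mathcal{T}(b\mathbf{d}(x_i))$. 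Applying axiom (T) to $A$ with refinements $\{Y_i\}$ produces $C := \bigcup_{i=1}^n Y_i \in \mathcal{T}(a)$, and applying it to $B$ with the same refinements gives $C \in \mathcal{T}(b)$. Hence $C \in \mathcal{T}(a) \cap \mathcal{T}(b)$. The main subtlety is concentrated in (1): an arbitrary preimage of $\mathbf{x}_i$ need not sit below $a$ in $S$, so one cannot directly invoke the arrow relation in $S$; the replacement $y_i := a\mathbf{d}(x_i)$ is precisely what puts the lifted cover inside $a^{\downarrow}$ without leaving the $\equiv$-class, and once that adjustment is made, parts (2) and (3) follow essentially by bookkeeping.
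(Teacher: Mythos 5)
Your proof is correct. Parts (1) and (2) are essentially the paper's own argument: your lift $y_i = a\mathbf{d}(x_i)$ is exactly the paper's $ae_i$ with $e_i$ an idempotent representing $\mathbf{x}_i^{-1}\mathbf{x}_i$, and your part (2) — pulling $\mathbf{X}$ back over both $a$ and $b$, noting $a\mathbf{d}(x_i) \equiv b\mathbf{d}(x_i)$, choosing common covers $Y_i$ and gluing with axiom (T) — is the paper's proof verbatim. Part (3) is where you genuinely diverge, and your route is simpler. The paper, given $0 \neq \sigma(b) \leq \sigma(a)$, derives $b \equiv ab^{-1}b$, extracts a common tight cover $Y \in \mathcal{T}(b) \cap \mathcal{T}(ab^{-1}b)$, forms the refinement $Y \wedge Xb^{-1}b$ via (MS) and the earlier lemma on coverage properties, and only then applies the arrow relation. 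You instead observe that the canonical lift $a\mathbf{d}(z)$ of a nonzero $\mathbf{z} \leq \mathbf{a}$ is already a nonzero element of $a^{\downarrow}$ (nonzero because $\sigma$ is $0$-restricted and $\sigma(a\mathbf{d}(z)) = \mathbf{z} \neq 0$), apply $a \rightarrow X$ to it directly, and push the resulting common lower bound through $\sigma$. This bypasses the meet-of-covers machinery entirely; the only ingredients are the homomorphism property and $0$-restrictedness, both already in place. The two arguments reach the same conclusion, but yours makes visible that (3) needs nothing from the congruence $\equiv$ beyond the fact that $\sigma$ does not collapse nonzero elements to zero.
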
 
\begin{proof} (1). Let $\mathbf{X} = \{\mathbf{x}_{1}, \ldots, \mathbf{x}_{m}\}$.
We have that $\{\mathbf{x}^{-1}\mathbf{x} \colon \mathbf{x} \in \mathbf{X} \} \in \mathcal{T}(\mathbf{a}^{-1}\mathbf{a})$.
For each $\mathbf{x}_{i} \in \mathbf{X}$, choose an idempotent $e_{i}$ such that $\sigma (e_{i}) = \mathbf{x}_{i}^{-1}\mathbf{x}_{i}$.
Put $A = \{ae_{i} \colon 1 \leq i \leq m \} \subseteq a^{\downarrow}$.
Observe that $\sigma (ae_{i}) = \mathbf{a}\mathbf{x}_{i}^{-1}\mathbf{x}_{i} = \mathbf{x}_{i}$.
Thus $\sigma (A) = \mathbf{X}$.
We prove that $a \rightarrow A$.
Let $0 \neq z \leq a$.
Then $z = ak$ for some idempotent $k$.
Thus $0 \neq \sigma (z) \leq \sigma (a)$ since $\sigma$ is $0$-restricted.
Observe that $\sigma (z) = \sigma (a)\sigma (k)$.
Thus there exists $0 \neq \mathbf{u}$ and $\mathbf{x}_{i} \in \mathbf{X}$ such that $\mathbf{u} \leq \mathbf{z}, \mathbf{x}_{i}$.
Choose any idempotent $f$ such that $\sigma (f) = \mathbf{u}^{-1}\mathbf{u}$ and put $u = ae_{i}fk$.
Then $\sigma (u) = \mathbf{u}$, using the fact that $\mathbf{u}\sigma (k) = \mathbf{u}$, and so in particular $u \neq 0$.
By construction $u \leq ae_{i}, z$.
We have therefore proved that $a \rightarrow A$.

(2). By definition, there exists $\mathbf{X} \in \mathcal{T}(\mathbf{a}) \cap \mathcal{T}(\mathbf{b})$.
By (1) above, we may $A \in \mathcal{T}(a)$ and $B \in \mathcal{T}(b)$ such that $\sigma (A) = \mathbf{X} = \sigma (B)$.
Each element of $A$ has the form $ae_{x}$ 
and each element of $B$ has the form $bf_{x}$
where $\sigma (ae_{x}) = \mathbf{x} = \sigma (be_{x})$.
Thus $ae_{x} \equiv be_{x}$.
Choose $C_{x} \in \mathcal{T}(ae_{x}) \cap \mathcal{T}(be_{x})$ and put $C = \bigcup_{x} C_{x}$.
Then by axiom (T), we have that $C \in \mathcal{T}(a) \cap \mathcal{T}(b)$.

(3). Let $0 \neq \sigma (b) \leq \sigma (a)$.
Then $\sigma (b) = \sigma (ab^{-1}b)$.
Thus $b \equiv ab^{-1}b$.
Therefore there exists $Y \in \mathcal{T}(b) \cap \mathcal{T}(ab^{-1}b)$.
But $X \in \mathcal{T}(a)$ implies by (MS) that $Xb^{-1}b \in \mathcal{T}(ab^{-1}b)$.
Thus by Lemma~\ref{le: coverage_properties}, 
we have that $Y \wedge Xb^{-1}b \in \mathcal{T}(ab^{-1}b)$.
Now $0 \neq ab^{-1}b$ and so there exists $0 \neq z$ such that $z \leq ab^{-1}b$ and $z \leq y \wedge xb^{-1}b$ for some
$y \wedge xb^{-1}b \in Y \wedge Xb^{-1}b$.
But then $0 \neq \sigma (z) \leq \sigma (b), \sigma (x)$, as required.
\end{proof}

The following is immediate by the above lemma.

\begin{corollary} Let $S$ be an inverse semigroup.
Then the semigroup $\mathbf{S} = S/\equiv$ is separative with respect to the tight coverage.
\end{corollary}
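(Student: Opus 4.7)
The claim to prove is that if $\mathcal{T}(\mathbf{a}) \cap \mathcal{T}(\mathbf{b}) \neq \emptyset$ in the quotient semigroup $\mathbf{S}$, then $\mathbf{a} = \mathbf{b}$. The plan is to read this off directly from part (2) of Lemma~\ref{le: tight_coverage_properties}, which is precisely the bridging lemma relating tight covers in $\mathbf{S}$ back to tight covers in $S$.

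First I would unpack what separativity means here. By the definition given just before the corollary (applied to the semigroup $\mathbf{S}$), I need to show that whenever $\mathbf{a}, \mathbf{b} \in \mathbf{S}$ admit a common tight cover, i.e.~$\mathcal{T}(\mathbf{a}) \cap \mathcal{T}(\mathbf{b}) \neq \emptyset$, then $\mathbf{a} = \mathbf{b}$. Since the equivalence classes of $\mathbf{S}$ are the $\equiv$-classes of $S$, this equality is literally the statement $a \equiv b$, and by the definition of $\equiv$ that is just the condition $\mathcal{T}(a) \cap \mathcal{T}(b) \neq \emptyset$ in the original semigroup $S$.

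Assume then that $\mathcal{T}(\mathbf{a}) \cap \mathcal{T}(\mathbf{b}) \neq \emptyset$. I would apply part (2) of Lemma~\ref{le: tight_coverage_properties} verbatim to conclude $\mathcal{T}(a) \cap \mathcal{T}(b) \neq \emptyset$. By the defining relation of $\equiv$, this yields $a \equiv b$, hence $\mathbf{a} = \mathbf{b}$ in $\mathbf{S}$, as required. There is no real obstacle: the content of the corollary is entirely packaged inside Lemma~\ref{le: tight_coverage_properties}(2), whose proof in turn does the real lifting work (choosing witnesses $A \in \mathcal{T}(a)$ and $B \in \mathcal{T}(b)$ with $\sigma(A) = \sigma(B)$, pairing elements $ae_x \equiv be_x$, picking a common refinement $C_x \in \mathcal{T}(ae_x) \cap \mathcal{T}(be_x)$, and gluing via axiom (T)). The corollary itself is a one-line unpacking of definitions on top of that lemma.
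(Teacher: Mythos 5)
Your proof is correct and is exactly the argument the paper intends: the paper dismisses the corollary as ``immediate by the above lemma,'' and your write-up simply makes explicit the application of part (2) of Lemma~\ref{le: tight_coverage_properties} together with the definitions of $\equiv$ and of separativity. No difference in approach.
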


The following result tells is that we may replace $S$ by $\mathbf{S}$.

\begin{proposition}\label{prop: separated_tight} Let $S$ be an inverse semigroup.
\begin{enumerate}

\item For each tight homomorphism 
$\theta \colon S \rightarrow T$
to a distributive inverse semigroup
there exists a unique tight homomorphism  
$\bar{\theta} \colon \mathbf{S} \rightarrow T$ 
such that $\bar{\theta} \sigma = \theta$.

\item The posets of tight filters on $S$ and those on $\mathbf{S}$ are order-isomorphic and this induces
a homeomorphism between the groupoids $\mathsf{G}_{\mathcal{T}}(S)$ and $\mathsf{G}_{\mathcal{T}}(\mathbf{S})$.

\end{enumerate}
\end{proposition}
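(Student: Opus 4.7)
Proof proposal for Proposition 6.11.

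For part (1), my plan is to factor $\theta$ through the quotient $\sigma$ by the standard route. The work is to verify that $\theta$ respects $\equiv$. If $a\equiv b$ then there exists $X\in\mathcal{T}(a)\cap\mathcal{T}(b)$ by definition, and tightness of $\theta$ applied to $X$ viewed as a cover of $a$ and as a cover of $b$ yields $\theta(a)=\bigvee_{x\in X}\theta(x)=\theta(b)$. Thus $\bar{\theta}(\mathbf{s}):=\theta(s)$ is a well-defined homomorphism, and $\bar{\theta}\sigma=\theta$ holds by construction. Uniqueness is immediate because $\sigma$ is surjective. To see $\bar{\theta}$ is tight, take any $\mathbf{X}\in\mathcal{T}(\mathbf{a})$; by Lemma~\ref{le: tight_coverage_properties}(1) lift this to some $A\in\mathcal{T}(a)$ with $\sigma(A)=\mathbf{X}$; then tightness of $\theta$ gives $\bar{\theta}(\mathbf{a})=\theta(a)=\bigvee_{x\in A}\theta(x)=\bigvee_{\mathbf{x}\in\mathbf{X}}\bar{\theta}(\mathbf{x})$.

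For part (2), the plan is to define mutually inverse maps. In one direction, send a tight filter $\mathbf{F}$ on $\mathbf{S}$ to $\sigma^{-1}(\mathbf{F})$; in the other, send a tight filter $F$ on $S$ to $F^{\sigma}:=\{\mathbf{t}\in\mathbf{S}\colon \sigma(s)\leq\mathbf{t}\text{ for some }s\in F\}$. The formal properties (upward closure, non-emptiness, and preservation of order under the two maps) are routine; the two genuine points to verify are (i) downward directedness of $\sigma^{-1}(\mathbf{F})$ and (ii) mutual inversion. For (i), given $s_1,s_2\in\sigma^{-1}(\mathbf{F})$ choose $\mathbf{w}\in\mathbf{F}$ below $\sigma(s_1),\sigma(s_2)$, lift $\mathbf{w}$ to some $w\in S$, and note that $w\equiv s_i w^{-1}w$; since $s_1 w^{-1}w\equiv s_2w^{-1}w$, pick a common cover $C\in\mathcal{T}(s_1w^{-1}w)\cap\mathcal{T}(s_2w^{-1}w)$, observe that $\sigma(C)$ is a tight cover of $\mathbf{w}$ by Lemma~\ref{le: tight_coverage_properties}(3), so some $c\in C$ has $\sigma(c)\in\mathbf{F}$; this $c$ lies below both $s_1$ and $s_2$. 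Tightness of $\sigma^{-1}(\mathbf{F})$ is simpler and uses Lemma~\ref{le: tight_coverage_properties}(3) directly. For tightness of $F^{\sigma}$, given $\mathbf{t}\in F^{\sigma}$ witnessed by $s\in F$ with $\sigma(s)\leq\mathbf{t}$, and given $\mathbf{X}\in\mathcal{T}(\mathbf{t})$, lift via Lemma~\ref{le: tight_coverage_properties}(1) to obtain a cover $A\in\mathcal{T}(t)$ for some $t$ with $\sigma(t)=\mathbf{t}$ and $\sigma(A)=\mathbf{X}$; combine $As^{-1}s\in\mathcal{T}(ts^{-1}s)$ with a common cover of $s\equiv ts^{-1}s$ via Lemma~\ref{le: coverage_properties}(4) to produce a tight cover of $s$, and read off an element of $F$ lying below some $a\in A$, yielding $\sigma(a)\in F^{\sigma}\cap\mathbf{X}$.

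For mutual inversion, the nontrivial inclusion is $\sigma^{-1}(F^{\sigma})\subseteq F$: if $\sigma(s)\in F^{\sigma}$ there is $r\in F$ with $\sigma(r)\leq\sigma(s)$, so $r\equiv sr^{-1}r$; a common cover $C\in\mathcal{T}(r)\cap\mathcal{T}(sr^{-1}r)$, together with tightness of $F$ at $r$, produces some $c\in F$ with $c\leq sr^{-1}r\leq s$, hence $s\in F$ by upward closure. The other direction and the identity $(\sigma^{-1}(\mathbf{F}))^{\sigma}=\mathbf{F}$ are straightforward. The bijection is clearly order-preserving in both directions, and one checks that it is a functor by verifying compatibility with the $\cdot$-operation (for $\sigma^{-1}$ this uses that $\sigma$ is a homomorphism, and for $(-)^{\sigma}$ one unpacks $\mathbf{d}$ and $\mathbf{r}$). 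Finally, to see it is a homeomorphism, observe that a tight filter $F$ on $S$ contains $s$ iff $F^{\sigma}$ contains $\mathbf{s}$ (the non-trivial direction again uses the $\equiv$-trick above), so the basic open set $Z_s$ on the $S$-side corresponds bijectively to the basic open set $Z_{\mathbf{s}}$ on the $\mathbf{S}$-side; since $\sigma$ is surjective every basic open set of $\mathsf{G}_{\mathcal{T}}(\mathbf{S})$ is hit, giving a homeomorphism.

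The main obstacle will be the bookkeeping around downward directedness of $\sigma^{-1}(\mathbf{F})$ and the identity $\sigma^{-1}(F^{\sigma})=F$: both rely on the same trick of using $\equiv$ and a common cover to pull back an inequality from $\mathbf{S}$ to $S$, rather than on anything genuinely new, but this is where Lemma~\ref{le: tight_coverage_properties} does the real work and where one must be careful to produce an element actually lying in the filter rather than merely $\equiv$-related to one.
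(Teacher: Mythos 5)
Your proposal is correct and follows essentially the same route as the paper: part (1) factors $\theta$ through $\sigma$ using a common cover to show $\equiv$ is respected and Lemma~\ref{le: tight_coverage_properties} to lift covers, and part (2) uses the map $F \mapsto \sigma(F)$ (your $F^{\sigma}$ coincides with $\sigma(F)$ since the latter is automatically upward closed) together with the same ``common cover plus tightness'' trick to transfer inequalities and directedness between $S$ and $\mathbf{S}$. The only cosmetic difference is that you exhibit the two maps and check mutual inversion directly, where the paper proves injectivity and surjectivity of $F \mapsto \sigma(F)$; the substance is identical.
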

\begin{proof} (1). Suppose that $a \equiv b$.
Then there exists $X \in \mathcal{T}(a) \cap \mathcal{T}(b)$.
But $\theta$ is a tight map and so 
$\theta (a) = \bigvee_{x \in X} \theta (x)$ 
and
$\theta (b) = \bigvee_{x \in X} \theta (x)$.
Thus $\theta (a) = \theta (b)$.
We may therefore define $\bar{\theta} (\mathbf{a}) = \theta (a)$.

Let $\mathbf{X} \in \mathcal{T}(\mathbf{a})$ in $\mathbf{S}$.
By Lemma~\ref{le: tight_coverage_properties}, 
there exists $A \in \mathcal{T}(a)$ such that $\sigma (A) = \mathbf{X}$.
By assumption, $\theta (a) = \bigvee_{x \in A} \theta (x)$.
But $\bar{\theta}(\sigma (a)) = \theta (a)$ and
$\bigvee_{x \in X} \bar{\theta} (\sigma(x)) = \bigvee_{x \in A} \theta (x)$ and the result follows.

(2) Let $A$ be a tight filter in $S$.
We shall prove that the map $A \mapsto \sigma (A)$ is a homeomorphism.

Observe first that if $A$ is a tight filter in $S$ then $\sigma (x) \in \sigma (A)$ if and only if $x \in A$.
Suppose that $\sigma (x) \in \sigma (A)$.
Then $\sigma (x) = \sigma (a)$ for some $a \in A$.
Thus $x \equiv a$.
It follows that there exists $X \in \mathcal{T}(x) \cap \mathcal{T}(a)$.
But $a \in A$ and $A$ is a tight filter thus there exists $y \in X \cap A$.
But $y \leq x$ and so $x \in A$, as required.

Let $A$ and $B$ be tight filters.
If $A \subseteq B$ then clearly $\sigma (A) \subseteq \sigma (B)$.
Conversely, suppose that $\sigma (A) \subseteq \sigma (B)$.
Let $a \in A$.
Then $\sigma (a) \in \sigma (A) =\sigma (B)$.
Thus $\sigma (a) \in \sigma (B)$.
It follows by our observation above that $a \in B$ and so $A \subseteq B$.

We prove that if $A$ is a tight filter then $\sigma (A)$ is a tight filter.
By the observation above, it is clear that $\sigma (A)$ is a directed set.
Suppose that $\sigma (a) \in \sigma (A)$ where $a \in A$ and $\sigma (a) \leq \sigma (b)$.
Then  $\sigma (a) = \sigma (ba^{-1}a)$.
Thus again by the observation above, we have that $ba^{-1}a \in A$ and so $b \in A$ giving $\sigma (b) \in \sigma (A)$.
Let $\mathbf{X} \in \mathcal{T}(\mathbf{a})$ where $\mathbf{a} \in \sigma (A)$.
By lemma~\ref{le: tight_coverage_properties}, 
there exists $X \in \mathcal{D}(a)$ such that $\sigma (X) = \mathbf{X}$.
But $a \in A$ and so there exists $b \in A \cap X$.
Thus $\sigma (b) \in \mathbf{X} \cap \sigma (A)$, as required.

The map $A \mapsto \mathbf{A}$ is a bijection.
Suppose that $A$ and $B$ are dense filters such that $\sigma (A) = \sigma (B)$.
Let $a \in A$.
Then $\sigma (a) \in \sigma (A)$ and so there exists $b \in B$ such that $\sigma (a) = \sigma (b)$.
It follows by the observation above that $b \in B$.
By symmetry it follows that $A = B$ and so the map is injective.
We now prove that this map is surjective.
Let $\mathbf{A}$ be a tight filter in $\mathbf{S}$.
Put $A = \sigma^{-1}(\mathbf{A})$.
It is clear that $A$ is closed upwards.
Let $a,b \in A$.
Then $\sigma (a), \sigma (b) \in \mathbf{A}$.
Thus there exists $\sigma (c) \in \mathbf{A}$ such that $\sigma (c) \leq \sigma (a), \sigma (b)$.
It follows that $\sigma (c) = \sigma (ac^{-1}c) = \sigma (bc^{-1}c)$.
Hence $ac^{-1}c \equiv bc^{-1}c$.
Thus there exists $X \in \mathcal{T}(ac^{-1}c) \cap \mathcal{T}(bc^{-1}c)$.
By Lemma~\ref{le: tight_coverage_properties}, $\sigma (X) \in \mathcal{T}(\sigma (c))$.
Thus $\sigma (x) \in \mathbf{A}$, for some $x \in X$, since the filter is dense.
But then $x \in A$ and $x \leq a,b$. as required.
Finally, let $X \in \mathcal{T}(a)$ where $a \in A$.
Then by Lemma~\ref{le: tight_coverage_properties}, we have that $\mathbf{X} \in \mathcal{T}(\mathbf{a})$.
Thus $\sigma (x) \in \mathbf{A}$ for some $x \in X$ and so $x \in A$, as required.
It follows that we have shown that the map is a bijection.

It remains to show that it induces a functor between the groupoids and that it is a homeomorphism.
Let $A$ be a tight filter.
We prove that $\sigma (A^{-1} \cdot A) = \sigma (A)^{-1} \cdot \sigma (A)$.
Let $\sigma (x) \in \sigma (A^{-1} \cdot A)$.
Then $\sigma (a^{-1}b) \leq \sigma (x)$ for some $a,b \in A$.
Thus $\sigma (a^{-1}b) = \sigma (x \mathbf{d}(a^{-1}b))$.
But $A^{-1} \cdot A$ is a tight filter and so $x \mathbf{d}(a^{-1}b)) \in A^{-1} \cdot A$.
It follows that $x \in A^{-1} \cdot A$ and so we may find $c,d \in A$ such that $c^{-1}d \leq x$.
But then $\sigma (c)^{-1} \sigma (d) \leq \sigma (x)$.
We have therefore proved that  $\sigma (A^{-1} \cdot A) \subseteq \sigma (A)^{-1} \cdot \sigma (A)$.
We now prove the reverse inclusion.
Let $\sigma (x) \in \sigma (A)^{-1} \cdot \sigma (A)$.
Then $\sigma (a)^{-1}\sigma (b) \leq \sigma (x)$ for some $a,b \in A$.
Thus $\sigma (a^{-1}b) = \sigma (x \mathbf{d}(a^{-1}b))$.
But $a^{-1}b \in A^{-1} \cdot A$ a tight filter and so $x \mathbf{d}(a^{-1}b)) \in A^{-1} \cdot A$ giving $x \in A^{-1} \cdot A$. 
Thus $\sigma (x) \in \sigma (A^{-1} \cdot A)$.

Suppose that $A$ and $B$ are tight filters such that $A \cdot B$ exists.
Then it follows by the above that $\sigma (A) \cdot \sigma (B)$ exists.
The proof that $\sigma (A \cdot B) = \sigma (A) \cdot \sigma (B)$ is similar to the above proof.

It remains to show that our bijection is a homeomorphism.
Let $s \in S$.
Then $Z_{s}$ consists of all tight filters that contain $s$.
The fact that $\sigma (Z_{s}) = Z_{\sigma (s)}$ is immediate in one direction,
and the converse follows since if $\sigma (A)$ contains $\sigma (s)$ then $s \in A$ by our
observation at the head of the proof.  
It follows that our map is an open map.
Finally, the inverse image of $Z_{\sigma (s)}$ under our map is precisely $Z_{s}$ and so our map is continuous.
\end{proof}

Let $S$ be a pseudogroup.
A function $\nu \colon S \rightarrow S$ is called a {\em nucleus} if it satisfies the following four conditions:

\begin{description}

\item[{\rm (N1)}] $a \leq \nu (a)$ for all $a \in S$.

\item[{\rm (N2)}] $a \leq b$ implies that $\nu (a) \leq \nu (b)$.

\item[{\rm (N3)}] $\nu^{2}(a) = \nu (a)$ for all $a \in S$.

\item[{\rm (N4)}] $\nu (a) \nu (b) \leq \nu (ab)$ for all $a,b \in S$.

\item[{\rm (N5)}] If $e$ is an idempotent then $\nu (e)$ is an idempotent.

\end{description}
 
\begin{lemma} 
Let $\nu$ be a nucleus on a pseudogroup.
If $e$ and $f$ are idempotents then $\nu (ef) = \nu (e) \nu (f)$.
\end{lemma}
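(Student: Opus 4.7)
The plan is to prove the two inequalities $\nu(e)\nu(f) \le \nu(ef)$ and $\nu(ef) \le \nu(e)\nu(f)$ separately, and the first is immediate from axiom (N4).

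For the reverse inequality, I would first note that $ef \le e$ and $ef \le f$ since $e$ and $f$ are idempotents in an inverse semigroup (so the product of two idempotents is their meet in the natural partial order, which commutes). Applying (N2) then yields $\nu(ef) \le \nu(e)$ and $\nu(ef) \le \nu(f)$.

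Now by (N5), both $\nu(e)$ and $\nu(f)$ are idempotents. Since idempotents in any inverse semigroup commute and their product coincides with their meet, we have $\nu(e)\nu(f) = \nu(e) \wedge \nu(f)$. Combining the two inequalities above with this identity gives $\nu(ef) \le \nu(e) \wedge \nu(f) = \nu(e)\nu(f)$, as required.

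There is essentially no obstacle here: the lemma is a formal consequence of (N2), (N4), (N5) together with the general fact that the product of idempotents in an inverse semigroup is their meet. The only point worth flagging is that the pseudogroup hypothesis is not actually needed beyond the inverse semigroup structure for this particular computation, although (N5) itself is the substantive condition that makes the argument go through.
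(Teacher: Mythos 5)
Your proof is correct and follows essentially the same route as the paper: (N4) for one inclusion, then $ef \leq e,f$ and (N2) for $\nu(ef) \leq \nu(e),\nu(f)$, and (N5) to close the gap. The only (immaterial) difference is in the last step: the paper applies (N5) to $ef$ and writes $\nu(ef) = \nu(ef)^2 \leq \nu(e)\nu(f)$, whereas you apply (N5) to $e$ and $f$ and use that the product of idempotents is their meet.
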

\begin{proof} By (N4), we have that $\nu (e) \nu (f) \leq \nu (ef)$.
But $ef \leq e,f$. Thus by (N2), we have that $\nu (ef) \leq \nu (e), \nu (f)$.
It follows that $\nu (ef)^{2} \leq \nu (e) \nu (f)$.
But $\nu (ef)$ is an idempotent by (N5) and the result follows.
\end{proof}

It follows by the above lemma, that $\nu$ restricted to $E(S)$ is a nucleus in the usual frame-theoretic sense.
The following is a routine derivation from the axioms.

\begin{lemma}\label{le: nuclear_lemma} Let $\nu$ be a nucleus on an inverse semigroup $S$.
Then
$$\nu (ab) = \nu (a \nu (b)) = \nu ( \nu (a) b  ) = \nu ( \nu (a) \nu (b) ).$$
\end{lemma}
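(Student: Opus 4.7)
The plan is to establish the chain of equalities by proving each inequality using only the axioms (N1)--(N4) together with the fact that multiplication in an inverse semigroup is monotone with respect to the natural partial order, i.e.\ $x \leq y$ and $u \leq v$ imply $xu \leq yv$. The axiom (N5) is not needed here.

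First I would prove $\nu(ab) = \nu(a\nu(b))$. The forward inequality is immediate: by (N1), $b \leq \nu(b)$, so monotonicity of multiplication gives $ab \leq a\nu(b)$, and (N2) yields $\nu(ab) \leq \nu(a\nu(b))$. For the reverse inequality, apply (N1) to $a$ to get $a \leq \nu(a)$, whence $a\nu(b) \leq \nu(a)\nu(b)$, and (N2) gives $\nu(a\nu(b)) \leq \nu(\nu(a)\nu(b))$. Now (N4) asserts $\nu(a)\nu(b) \leq \nu(ab)$, so applying (N2) and then (N3) we obtain $\nu(\nu(a)\nu(b)) \leq \nu(\nu(ab)) = \nu(ab)$, completing the equality.

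The equality $\nu(ab) = \nu(\nu(a)b)$ is proved by the symmetric argument, with the roles of $a$ and $b$ interchanged. Finally, $\nu(ab) = \nu(\nu(a)\nu(b))$ follows by the same pattern: applying (N1) twice and monotonicity gives $ab \leq \nu(a)\nu(b)$, so (N2) yields $\nu(ab) \leq \nu(\nu(a)\nu(b))$; for the reverse direction, (N4) gives $\nu(a)\nu(b) \leq \nu(ab)$, and (N2) together with (N3) yields $\nu(\nu(a)\nu(b)) \leq \nu(\nu(ab)) = \nu(ab)$. There is no real obstacle in this argument; each step is an immediate invocation of the nucleus axioms combined with the monotonicity of semigroup multiplication, which is the reason the authors describe the derivation as routine.
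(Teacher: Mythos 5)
Your proof is correct and is exactly the routine derivation from axioms (N1)--(N4) together with compatibility of the natural partial order with multiplication that the paper has in mind; the paper itself gives no explicit argument, simply labelling the lemma a routine consequence of the axioms. Your observation that (N5) is not needed is also accurate.
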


Let $S$ be an inverse semigroup equipped with a nucleus $\nu$.
Define
$$S_{\nu} = \{a \in S \colon \nu (a) = a    \},$$ 
the set of {\em $\nu$-closed} elements of $S$.
On the set $S_{\nu}$ define
$$a \cdot b = \nu (ab).$$
A homomorphism $\theta \colon S \rightarrow T$ is said to be {\em idempotent-pure} if $\theta (s)$ an idempotent implies that $s$ is an idempotent.

\begin{lemma} The structure $(S_{\nu},\cdot)$ is a pseudogroup and the map $S \rightarrow S_{\nu}$ given
by $a \mapsto \nu (a)$ is a surjective idempotent-pure semigroup morphism of pseudogroups.
The natural partial order in $S_{\nu}$ coincides with the one in $S$.
\end{lemma}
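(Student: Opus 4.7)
The plan is to verify the statement in four stages: first, endow $(S_\nu, \cdot)$ with the structure of an inverse semigroup; second, upgrade this to a pseudogroup by exhibiting compatible joins and infinite distributivity; third, confirm that the natural partial orders of $S$ and $S_\nu$ agree on $S_\nu$; and fourth, verify that $\nu \colon S \to S_\nu$ is a surjective, idempotent-pure morphism of pseudogroups.

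For the inverse semigroup structure, associativity $(a \cdot b) \cdot c = \nu(\nu(ab)c) = \nu(abc) = \nu(a\nu(bc)) = a \cdot (b \cdot c)$ is immediate from Lemma~\ref{le: nuclear_lemma}. For each $a \in S_\nu$, I propose $\nu(a^{-1})$ (which lies in $S_\nu$ by (N3)) as the inverse. The identity $a \cdot \nu(a^{-1}) \cdot a = \nu(a a^{-1} a) = \nu(a) = a$, and its dual, follow from two applications of Lemma~\ref{le: nuclear_lemma}, while $a \cdot \nu(a^{-1}) = \nu(aa^{-1})$ is an idempotent of $S$ by (N5). I then characterize $E(S_\nu) = E(S) \cap S_\nu$: one direction is immediate, and for the converse, if $u$ is idempotent in $S_\nu$ then it is its own inverse there, i.e.\ $u = \nu(u^{-1})$, so $u^{-1} \leq u$ in $S$; inverting gives $u \leq u^{-1}$, hence $u = u^{-1} \in E(S)$. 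Commutativity of $E(S_\nu)$ is inherited from $E(S)$, so $S_\nu$ is an inverse semigroup. For the pseudogroup structure, given a compatible family $\{a_i\} \subseteq S_\nu$, its join in $S_\nu$ is $\nu(\bigvee_S a_i)$: this is an upper bound since $a_i \leq \bigvee_S a_i \leq \nu(\bigvee_S a_i)$, and it is minimal because any $b \in S_\nu$ bounding all $a_i$ satisfies $\bigvee_S a_i \leq b$, whence by (N2), $\nu(\bigvee_S a_i) \leq \nu(b) = b$. Infinite distributivity in $S_\nu$ then reduces to that in $S$ via Lemma~\ref{le: nuclear_lemma}.

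For the agreement of natural orders, if $a \leq b$ in $S$ with $a,b \in S_\nu$, then the identity $a = ba^{-1}a$ gives $a = \nu(a) = \nu(ba^{-1}a) = b \cdot \nu(a^{-1}) \cdot a$, witnessing $a \leq b$ in $S_\nu$; conversely, $a \leq b$ in $S_\nu$ reads $a = \nu(ba^{-1}a)$, and since $ba^{-1}a \leq b$ in $S$, monotonicity (N2) yields $a \leq \nu(b) = b$ in $S$. Turning to $\nu$ itself: surjectivity is by definition, the homomorphism identity $\nu(a)\cdot\nu(b) = \nu(\nu(a)\nu(b)) = \nu(ab)$ is Lemma~\ref{le: nuclear_lemma}, and preservation of compatible joins was observed in the previous paragraph. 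For idempotent-purity, suppose $\nu(a)$ is an idempotent of $S_\nu$; then by the characterization above $\nu(a) \in E(S)$, and (N1) gives $a \leq \nu(a)$. A standard fact about inverse semigroups — that any element below an idempotent is itself idempotent, since $a = \nu(a)e$ for some idempotent $e$, a product of two commuting idempotents — forces $a \in E(S)$. The main obstacle I anticipate is the second-stage characterization $E(S_\nu) = E(S) \cap S_\nu$: the step $u = \nu(u^{-1}) \Rightarrow u \in E(S)$ is the one place where the nucleus axioms genuinely interact with the inversion structure of $S$, and it is what ultimately unlocks both the inverse semigroup structure on $S_\nu$ and idempotent-purity of $\nu$; the remaining verifications are essentially routine manipulations with Lemma~\ref{le: nuclear_lemma} and axioms (N1)--(N5).
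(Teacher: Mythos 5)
Your overall architecture is sound and most of the verifications (associativity, $\nu(a^{-1})$ being an inverse of $a$ in $(S_\nu,\cdot)$, the agreement of the two natural partial orders, the construction of joins as $\nu(\bigvee_S a_i)$, and the purity argument via ``an element below an idempotent is idempotent'') are correct and match or reasonably elaborate the paper's proof. But the step you yourself single out as the crux --- the inclusion $E(S_\nu)\subseteq E(S)$ --- is circular as written. You argue: ``if $u$ is idempotent in $S_\nu$ then it is its own inverse there, i.e.\ $u=\nu(u^{-1})$.'' At that stage you have only shown that $(S_\nu,\cdot)$ is a \emph{regular} semigroup in which $u$ is \emph{an} inverse of itself and $\nu(u^{-1})$ is \emph{another} inverse of $u$. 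Concluding that these two inverses coincide requires uniqueness of inverses in $S_\nu$, which is equivalent to commutativity of $E(S_\nu)$ --- precisely what this step is supposed to establish. So the argument assumes what it proves, and everything downstream of it (commutativity of $E(S_\nu)$, and hence the inverse semigroup structure, and your idempotent-purity argument, which also invokes the characterization) rests on this gap.

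The paper sidesteps the issue entirely: having checked via Lemma~\ref{le: nuclear_lemma} that $\nu\colon S\to(S_\nu,\cdot)$ is a surjective semigroup homomorphism, it invokes the standard fact that the homomorphic image of an inverse semigroup under a semigroup homomorphism is again an inverse semigroup. That single observation gives you $S_\nu$ inverse for free, and then $E(S_\nu)=\nu(E(S))\subseteq E(S)$ by (N5), at which point your characterization $E(S_\nu)=E(S)\cap S_\nu$ and your purity argument go through exactly as you wrote them. (The paper's own purity argument is different again: $\nu(s)=\nu(t)$ forces $s,t\le\nu(s)$, so the kernel of $\nu$ is contained in the compatibility relation, and such congruences are idempotent-pure; your route, once repaired, is a legitimate and more self-contained alternative.) So the fix is small, but as it stands the proposal does not prove the one nontrivial structural claim.
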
 
\begin{proof} The proof that the operation yields a semigroup follows from Lemma~\ref{le: nuclear_lemma} 
as does the proof that the map is a semigroup map.
The image of an inverse semigroup under a homomorphism is an inverse semigroup.  
Observe that if $\nu (s) = \nu (t)$ then $s$ and $t$ are bounded above and so are compatible.
Thus the kernel of $\nu$ is a subset of the compatibility relation and so idempotent-pure by \cite{Law1}.

We denote the natural partial order in $S_{\nu}$ temporarily by $\preceq$.
Let $a,b \in S_{\nu}$.
Suppose first that $a \preceq b$.
Then $a = b \cdot a^{-1} \cdot a$.
Thus $a = \nu (ba^{-1}a)$.
But $ba^{-1}a \leq b$ and so $\nu (ba^{-1}a) \leq \nu (b) = b$ by (N2).
It follows that $a \leq b$. 
On the other hand, if $a \leq b$ then $a = ba^{-1}a$.
Thus $a = \nu (a) = \nu (ba^{-1}a)$ and so $a \preceq b$, as required. 

From frame theory, we know that $E(S_{\nu})$ forms a frame.

Let $A = \{a_{i} \colon i \in I \}$ be a compatible subset of $S_{\nu}$.
Since the kernel of $\nu$ is contained in the compatibility relation, 
$A$ is obviously a compatible subset of $S$ and so by assumption has a join $a$ in $S$.
Put $a' = \nu (a) \in S_{\nu}$.
We claim that $a'$ is the join of the $a_{i}$ in $S_{\nu}$.
First $a_{i} \leq a \leq \nu (a) = a'$ and so it is an upper bound of the $a_{i}$.
Suppose that $b \in S_{\nu}$ and $a_{i} \leq b$ for all $i$.
Then $a \leq b$ and so $a' = \nu (a) \leq \nu (b) = b$.  
For clarity we shall denote the join operation on $S_{\nu}$ by $\bigsqcup$.


It remains to show that the map $S \rightarrow S_{\nu}$ given by $a \mapsto \nu (a)$ is a morphism.
Suppose that $a_{i}$ is a compatible set of elements in $S$.
We need to prove that $\nu (\bigvee a_{i} ) = \bigsqcup \nu (a_{i})$.
Since $a_{i} \leq \nu (a_{i})$ we have that $\bigvee a_{i} \leq \bigvee \nu (a_{i})$
and so $\nu (\bigvee a_{i} ) \leq \bigsqcup \nu (a_{i})$.
The proof of the reverse inequality starts with $a_{i} \leq \bigvee_{i} a_{i}$,
and the desired inequality then follows readily.
\end{proof}

The next result puts our nuclei into context.

\begin{proposition} 
Surjective idempotent-pure pseudogroup morphisms may be described by means of nuclei.
\end{proposition}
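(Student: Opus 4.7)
The preceding lemma already shows that every nucleus $\nu$ on $S$ gives rise to a surjective idempotent-pure pseudogroup morphism $S \rightarrow S_{\nu}$, $a \mapsto \nu(a)$. The plan is to prove the converse: given a surjective idempotent-pure pseudogroup morphism $\phi \colon S \rightarrow T$, construct a nucleus $\nu$ on $S$ whose associated quotient recovers $\phi$ up to isomorphism.

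The key preliminary observation is that the fibres of $\phi$ are compatible sets. Indeed, if $\phi(a) = \phi(b)$ then $\phi(a^{-1}b) = \phi(a)^{-1}\phi(a) = \mathbf{d}(\phi(a))$ is an idempotent, so by idempotent-pureness $a^{-1}b$ is an idempotent; similarly for $ab^{-1}$. Thus $a \sim b$ and, since $S$ is a pseudogroup, the fibre $\phi^{-1}(\phi(a))$ admits a join. Define
$$\nu(a) = \bigvee \phi^{-1}(\phi(a)).$$
By construction, $\nu(a)$ is the unique largest element of $S$ with image $\phi(a)$.

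I would then verify the five nucleus axioms in order. Axiom (N1) is immediate because $a$ lies in its own fibre. Axiom (N3) follows since $\phi(\nu(a)) = \phi(a)$ forces $\phi^{-1}(\phi(\nu(a))) = \phi^{-1}(\phi(a))$. Axiom (N4) holds because $\phi(\nu(a)\nu(b)) = \phi(a)\phi(b) = \phi(ab)$ puts $\nu(a)\nu(b)$ in the fibre over $\phi(ab)$, hence below $\nu(ab)$. Axiom (N5) is a direct application of idempotent-pureness to the element $\nu(e)$, whose image $\phi(e)$ is idempotent. The one step requiring more care is (N2): if $a \leq b$, then $\phi(\nu(a)) \leq \phi(\nu(b))$ in $T$, so $\nu(a)$ and $\nu(b)$ satisfy $\phi(\nu(a)^{-1}\nu(b)) = \mathbf{d}(\phi(a))$, an idempotent, forcing $\nu(a)^{-1}\nu(b) \in E(S)$ by idempotent-pureness, and similarly on the other side; hence $\nu(a) \sim \nu(b)$, the join $\nu(a) \vee \nu(b)$ exists, and its image is $\phi(b)$, so $\nu(a) \vee \nu(b) \leq \nu(b)$, giving $\nu(a) \leq \nu(b)$.

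Finally, to recover $\phi$ from $\nu$, I would check that $\phi$ restricted to $S_{\nu}$ is an isomorphism of pseudogroups onto $T$. Surjectivity follows from the surjectivity of $\phi$, and injectivity holds because two elements of $S_{\nu}$ with the same image lie in the same fibre and are therefore both equal to the join of that fibre. That this bijection is a pseudogroup morphism is a short check using the definition of the multiplication $a \cdot b = \nu(ab)$ in $S_{\nu}$ and the fact (established in the preceding lemma) that $\nu \colon S \rightarrow S_{\nu}$ preserves joins. The main obstacle is really the compatibility-of-fibres argument underlying the well-definedness of $\nu$ and axiom (N2); once that is in place, the remaining verifications are routine.
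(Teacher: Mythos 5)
Your proof is correct and takes essentially the same route as the paper: the paper defines $\nu = \theta_{\ast}\theta$ with $\theta_{\ast}(t) = \bigvee\{s \in S \colon \theta(s) \leq t\}$, which coincides with your join-of-the-fibre definition because the morphism preserves compatible joins (so the join of the down-set of a fibre already lies in that fibre). Your explicit verification of (N2) via the compatibility of $\nu(a)$ and $\nu(b)$ usefully fills in a step the paper dismisses as straightforward.
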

\begin{proof}
Let $\theta \colon S \rightarrow T$ be a surjective idempotent-pure pseudogroup morphism.
For each $t \in T$, the inverse image $\theta^{-1} (t)$ is a compatible non-empty subset of $S$ since $\theta$ is idempotent-pure.
Define $\theta_{\ast} \colon T \rightarrow S$ by 
$$\theta_{\ast} (t) = \bigvee \{ s \in S \colon  \theta (s) \leq t \}.$$
Define $\nu \colon S \rightarrow S$ by $\nu (s) = \theta_{\ast} (\theta (s))$.
Observe that $\theta_{\ast}$ is an order-preserving map and that
$s \leq \theta (\theta_{\ast} (s))$ for all $s \in S$ 
and
$\theta (\theta_{\ast}(t)) = t$ for all $t \in T$ since $\theta$ is assumed surjective. 
It therefore follows that $\theta = \theta \theta_{\ast} \theta$ 
and 
$\theta_{\ast} = \theta_{\ast} \theta \theta_{\ast}$.
We claim that $\nu$ is a nucleus on $S$.
The proofs that (N1), (N2), (N3) and (N5) hold are straightforward.
The proof of (N4) follows from the fact that multiplication distributes over compatible joins.

It remains to show that $(S_{\nu},\cdot)$ is isomorphic to $T$.
Let $s,t \in S_{\nu}$.
Then
$$\theta (s \cdot t) = \theta (\nu (st)) = \theta \theta_{\ast} \theta (st) = \theta (st) = \theta (s) \theta (t).$$
If $s,t \in S_{\nu}$ and $\theta (s) = \theta (t)$.
Then $\theta_{\ast}\theta (s) = \theta_{\ast} \theta (t)$ and so $\nu (s) = \nu (t)$ giving $s = t$.
Finally, let $t \in T$.
Then there exists $s \in S$ such that $\theta (s) = t$.
Then $\theta \theta_{\ast} \theta (s) = \theta \theta_{\ast} (t)$ giving $\theta (\nu (s)) = t$.
\end{proof}

Let $S$ be an inverse semigroup.
We recall that $\mathsf{C}(S)$ is the Schein completion of $S$.
We shall define a nucleus on $\mathsf{C}(S)$.

A subset $A$ of $S$ is said to be {\em tightly closed} if $X \in \mathcal{T}(x)$ and $X \subseteq A$ implies that $x \in A$.
Given a subset $A$ of $S$ define $\overline{A}$ to be the set of all elements $x$ such that there exists $X \in \mathcal{T}(x)$
where $X \subseteq A$.

\begin{lemma}\label{le: lemma} \mbox{} Let $S$ be a separative inverse semigroup.
\begin{enumerate}

\item Let $A$ be a compatible order ideal. Then  $\overline{A}$ is a tightly closed compatible order ideal.

\item If $A$ is a compatible order ideal then $\overline{A}$ is equal to the intersection of all tightly closed compatible order ideals that contain $A$.

\item If $E,F \subseteq E(S)$ are tightly closed order ideals of the semilattice of idempotents then so too is $EF$.

\item $\overline{s^{\downarrow}} = s^{\downarrow}$ for all $s \in S$.

\end{enumerate}
\end{lemma}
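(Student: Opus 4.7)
The plan is to attack each of the four parts in turn, using the coverage axioms (R), (I), (MS), (T) together with the separativity assumption on $S$.

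For part (1), I would verify the three required properties of $\overline{A}$ separately. \emph{Order ideal}: if $y \leq x \in \overline{A}$, pick $X \in \mathcal{T}(x)$ with $X \subseteq A$; then by Lemma~\ref{le: coverage_properties}, $X\mathbf{d}(y) \in \mathcal{T}(x\mathbf{d}(y)) = \mathcal{T}(y)$, and each element of $X\mathbf{d}(y)$ lies in $A$ because $A$ is an order ideal, showing $y \in \overline{A}$. \emph{Compatible}: for $x,y \in \overline{A}$ with witnesses $X \in \mathcal{T}(x)$ and $Y \in \mathcal{T}(y)$ contained in $A$, apply (I) and (MS) to get $X^{-1}Y \in \mathcal{T}(x^{-1}y)$; each $x_i^{-1}y_j$ is idempotent because $x_i$ and $y_j$ are compatible members of $A$, so $X^{-1}Y$ consists of idempotents, and since the coverage on a separative semigroup is idempotent-pure (proved earlier) we conclude $x^{-1}y$ is idempotent; by symmetry $xy^{-1}$ is idempotent, so $x \sim y$. \emph{Tightly closed}: given $Y \in \mathcal{T}(y)$ with $Y \subseteq \overline{A}$, choose for each $y_i \in Y$ a set $Y_i \in \mathcal{T}(y_i)$ with $Y_i \subseteq A$; then (T) gives $\bigcup_i Y_i \in \mathcal{T}(y)$ with $\bigcup_i Y_i \subseteq A$, so $y \in \overline{A}$.

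Part (2) is essentially formal once (1) is known. By (R), $A \subseteq \overline{A}$, and $\overline{A}$ is itself a tightly closed compatible order ideal, so $\overline{A}$ contains the intersection of all such ideals containing $A$. Conversely, if $B$ is any tightly closed compatible order ideal with $A \subseteq B$ and $x \in \overline{A}$, a witness $X \in \mathcal{T}(x)$ with $X \subseteq A \subseteq B$ forces $x \in B$; so $\overline{A} \subseteq B$, giving the reverse containment.

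For part (3), the key observation is that if $E, F$ are order ideals of $E(S)$ then $EF = E \cap F$ (since idempotents commute, $ef \leq e, f$ gives $EF \subseteq E \cap F$, while $x \in E \cap F$ yields $x = xx \in EF$). The intersection of two order ideals is an order ideal, and it is tightly closed: any $X \in \mathcal{T}(e)$ with $X \subseteq E \cap F$ satisfies $X \subseteq E$ and $X \subseteq F$ separately, forcing $e \in E$ and $e \in F$, hence $e \in E \cap F$.

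Part (4) is where separativity does the real work, and I expect it to be the main obstacle in that one has to spot the right element to compare. The containment $s^{\downarrow} \subseteq \overline{s^{\downarrow}}$ is immediate from (R). For the reverse, suppose $x \in \overline{s^{\downarrow}}$ with witness $X \in \mathcal{T}(x)$, so every $x_i \in X$ satisfies $x_i \leq x$ and $x_i \leq s$. Set $y = xs^{-1}s \leq x$; a short calculation (using $x_i = x \mathbf{d}(x_i)$ with $\mathbf{d}(x_i) \leq s^{-1}s$) shows $x_i \leq y$ for each $i$, so $X \subseteq y^{\downarrow}$. The arrow relation $x \to X$ restricts to $y \to X$ since $y \leq x$, so $X \in \mathcal{T}(y)$. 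Thus $X \in \mathcal{T}(x) \cap \mathcal{T}(y)$, and separativity of $S$ forces $x = y = xs^{-1}s$, i.e., $x \leq s$.
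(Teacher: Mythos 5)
Parts (1)--(3) of your proposal are correct and follow essentially the same route as the paper: tight-closedness of $\overline{A}$ via (R) and (T), the order-ideal property via (MS), compatibility via (I), (MS) and idempotent-purity of a separated coverage, and the identification $EF = E \cap F$ for order ideals of $E(S)$. (In part (3) you should also remark that if $X \in \mathcal{T}(x)$ with $X \subseteq E(S)$ then $x$ is itself an idempotent, which is exactly idempotent-purity; your notation $\mathcal{T}(e)$ quietly assumes this.)

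Part (4), however, contains a genuine error at the last step. You compare $x$ with $y = xs^{-1}s$ and correctly deduce, via separativity, that $x = xs^{-1}s$. But $x = xs^{-1}s$ is equivalent to $\mathbf{d}(x) \leq \mathbf{d}(s)$, \emph{not} to $x \leq s$: the element $xs^{-1}s$ is $x$ multiplied on the right by an idempotent, so it lies below $x$, not below $s$ (think of two distinct partial bijections with the same domain). The element you need to compare $x$ with is $s x^{-1}x$, which \emph{is} below $s$, so that separativity yields $x = sx^{-1}x \leq s$. Your "restriction of the arrow relation" trick then no longer applies, since $sx^{-1}x$ need not lie below $x$; instead one argues as the paper does: $\mathbf{d}(X) \in \mathcal{T}(x^{-1}x)$ by Lemma~\ref{le: coverage_properties}(1), hence $s\mathbf{d}(X) \in \mathcal{T}(sx^{-1}x)$ by (R) and (MS), and $s\mathbf{d}(X) = X$ because every element of $X$ lies below $s$. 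Thus $X \in \mathcal{T}(x) \cap \mathcal{T}(sx^{-1}x)$ and separativity finishes the proof.
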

\begin{proof} (1). We show first that $\overline{A}$ is tightly closed.
Let $X \subseteq \overline{A}$ be such that $X \in \mathcal{T}(a)$.
We need to prove that $a \in \overline{A}$.
Let $x \in X$.
Then either $x \in A$ or $x \in \overline{A} \setminus A$.
If the latter then there exists $A_{x} \subseteq A$ such that $A_{x} \in \mathcal{T}(x)$.
If the former put $A_{x} = \{x \} \in \mathcal{T}(x)$ by (R).
Put $B = \bigcup_{x \in X} A_{x} \subseteq A$.
By (T), we have that $B \in \mathcal{T}(a)$.
Thus $a \in \overline{A}$.

Next we show that $\overline{A}$ is an order ideal.
Let $x \in \overline{A}$ and $y \leq x$ so that $y = xy^{-1}y$.
Suppose that $X \in \mathcal{T}(x)$ where $X \subseteq A$.
By (MS), we have that $Xy^{-1}y \in \mathcal{T}(y)$.
But $A$ is an order ideal and so $Xy^{-1}y \subseteq A$.
Thus $y \in \overline{A}$, as required.

Finally, we show that $\overline{A}$ is a compatible subset.
Let $a,b \in \overline{A}$ where $X,Y \subseteq A$ are such that $X \in \mathcal{T}(a)$ and $Y \in \mathcal{T}(b)$.
Then by (I) and (MS), we have that $X^{-1}Y \in \mathcal{T}(a^{-1}b)$
But $A$ is a compatible set and so $X^{-1}Y$ consists entirely of idempotents.
It follows by separativity that $a^{-1}b$ is an idempotent.
Similarly $ab^{-1}$ is an idempotent.
Thus $a$ and $b$ are compatible, as required.

(2). This is immediate.

(3). Let $g \in \overline{EF}$. By assumption, $g$ is an idempotent.
There exists $X \subseteq EF$ such that $X \in \mathcal{C}(g)$.
But $EF \subseteq E,F$.
Thus $g \in E$ and $g \in F$ and so $g \in EF$, as required.

(4). Let $X \in \mathcal{T}(a)$ where $X \subseteq s^{\downarrow}$.
We prove that $\mathcal{T}(sa^{-1}a) \cap \mathcal{T}(a) \neq \emptyset$ from which we get $a = sa^{-1}a$ and so $a \leq s$.
Now $X \subseteq s^{\downarrow}$ implies that $X = s\mathbf{d}(X)$.
But $\mathbf{d}(X) \in \mathcal{T}(a^{-1}a)$ and so $s\mathbf{d}(X) \in \mathcal{T}(sa^{-1}a)$.
\end{proof}

Let $A$ and $B$ be subsets of $S$.
Define the following sets
$$A^{-1}B = \{s \in S \colon As \subseteq B \}
\text{ and }
BA^{-1} = \{s \in S \colon sA \subseteq B \}.$$

\begin{lemma}\label{le: quotients} Let $B$ be a tightly closed order ideal.
Then $A^{-1}B$ is a tightly closed order ideal for any $A$, and dually.
\end{lemma}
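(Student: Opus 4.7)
The plan is to verify the two closure properties separately, treating $A^{-1}B$ directly from its definition $A^{-1}B = \{s \in S \colon As \subseteq B\}$ and using only the hypothesis that $B$ is a tightly closed order ideal together with coverage axiom (MS).

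First I would check that $A^{-1}B$ is an order ideal. Suppose $s \in A^{-1}B$ and $t \leq s$. Then for each $a \in A$ we have $at \leq as$, and $as \in B$ by the definition of $A^{-1}B$. Since $B$ is an order ideal, $at \in B$. As this holds for every $a \in A$, we conclude $At \subseteq B$, i.e., $t \in A^{-1}B$.

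Next I would verify tight closure. Suppose $X \in \mathcal{T}(x)$ and $X \subseteq A^{-1}B$; we must show $x \in A^{-1}B$, that is, $ax \in B$ for every $a \in A$. Fix $a \in A$. By axiom (MS) applied to $\{a\} \in \mathcal{T}(a)$ and $X \in \mathcal{T}(x)$, we obtain $aX \in \mathcal{T}(ax)$. For each $y \in X$, the hypothesis $y \in A^{-1}B$ gives $ay \in B$, so $aX \subseteq B$. Since $B$ is tightly closed and $aX \in \mathcal{T}(ax)$, it follows that $ax \in B$. Hence $Ax \subseteq B$, so $x \in A^{-1}B$. The dual claim for $BA^{-1}$ is proved symmetrically, using the inverse version of axiom (MS) on the left.

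There is no real obstacle here: the argument is a one-line application of (MS) once the definitions are unpacked. The only mild point to keep in mind is that the order-ideal part of the conclusion relies on $B$ being an order ideal (so that $at \leq as \in B$ forces $at \in B$), and the tight-closure part relies on (MS) producing a tight cover of $ax$ from a tight cover of $x$.
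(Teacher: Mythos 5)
Your proof is correct and follows essentially the same route as the paper's: the order-ideal part via $at \leq as \in B$, and the tight-closure part by applying (MS) to $\{a\} \in \mathcal{T}(a)$ and $X \in \mathcal{T}(x)$ to get $aX \in \mathcal{T}(ax)$ with $aX \subseteq B$, then invoking tight closure of $B$. Your version is slightly more explicit than the paper's in naming the axioms (R) and (MS) at the relevant step, but the argument is the same.
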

\begin{proof} We show first that $A^{-1}B$ is an order ideal.
Let $s \in A^{-1}B$.
Then by definition $As \subseteq B$.
Let $t \leq s$ and let $a \in A$ be arbitrary.
Then $at \leq as$. 
But $as \in B$ and $B$ is an order ideal and so $at \in B$.
It follows that $t \in A^{-1}B$ and so $A^{-1}B$ is an order ideal.

It remains to prove that $A^{-1}B$ is tightly closed.
Let $X \subseteq A^{-1}B$ where  $X \in \mathcal{T}(x)$.
Then for each $x_{i} \in X$ we have that $Ax_{i} \subseteq B$.
It follows that for each $a \in A$, 
we have that 
$aX \in \mathcal{T}(ax)$ and $aX \subseteq B$.
But $B$ is tightly closed and so $ax \in B$ for every $a \in A$.
It follows by definition that $x \in A^{-1}B$, as required. 
\end{proof}

The following is the crux of our construction.

\begin{lemma}\label{le: it_is_a_nucleus} Let $S$ be a separative inverse semigroup.
Then the function defined on $\mathsf{C}(S)$ by $A \mapsto \overline{A}$
is a nucleus.
\end{lemma}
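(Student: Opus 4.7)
The plan is to verify the five nucleus axioms (N1)--(N5) in turn for the operator $A \mapsto \overline{A}$. Before doing so, I would note that Lemma~\ref{le: lemma}(1) already tells us that $\overline{A}$ is a tightly closed compatible order ideal whenever $A$ is a compatible order ideal, so the operator does at least map $\mathsf{C}(S)$ into itself. Axiom (N1), namely $A \subseteq \overline{A}$, is an immediate consequence of coverage axiom (R): for any $a \in A$ the singleton $\{a\} \in \mathcal{T}(a)$ is a subset of $A$, witnessing $a \in \overline{A}$. Axiom (N2) is equally direct from the definition of tight closure: any tight cover of $x$ contained in $A$ is automatically contained in any $B \supseteq A$.

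For (N3), the inclusion $\overline{A} \subseteq \overline{\overline{A}}$ follows from (N1) applied to $\overline{A}$. For the reverse inclusion, I would invoke the first clause of Lemma~\ref{le: lemma}: $\overline{A}$ is tightly closed, meaning that any $X \in \mathcal{T}(x)$ with $X \subseteq \overline{A}$ forces $x \in \overline{A}$; this is exactly the statement $\overline{\overline{A}} \subseteq \overline{A}$. Axiom (N4), namely $\overline{A} \cdot \overline{B} \subseteq \overline{A \cdot B}$ in $\mathsf{C}(S)$, is then a clean application of axiom (MS): given $a \in \overline{A}$ and $b \in \overline{B}$ with witnessing covers $X \in \mathcal{T}(a)$ with $X \subseteq A$ and $Y \in \mathcal{T}(b)$ with $Y \subseteq B$, the product $XY$ lies in $\mathcal{T}(ab)$ by (MS) and is a subset of $AB$, so $ab \in \overline{AB}$.

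The only step where the hypotheses are used in a non-formal way is (N5), and this is where I expect the main (mild) obstacle to lie: one has to recognize that the idempotents of $\mathsf{C}(S)$ are precisely the compatible order ideals $E$ with $E \subseteq E(S)$, and then show that tight closure preserves this property. Here I would invoke separativity: by the lemma immediately preceding the definition of $\equiv$, a separated coverage is idempotent-pure, so any $x$ admitting a tight cover $X \subseteq E \subseteq E(S)$ is itself an idempotent. Combined with Lemma~\ref{le: lemma}(1), which guarantees $\overline{E}$ is a compatible order ideal, this yields $\overline{E} \in E(\mathsf{C}(S))$, completing the verification. The overall proof is thus essentially a dictionary translation from the coverage axioms (R), (I), (MS), (T) and separativity into the nucleus axioms.
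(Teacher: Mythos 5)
Your proof is correct, but your treatment of (N4) takes a genuinely different route from the paper's. The paper proves $\overline{A}\,\overline{B} \subseteq \overline{AB}$ by the standard frame-theoretic residuation argument: it introduces the quotient $A^{-1}C = \{s \colon As \subseteq C\}$, shows in Lemma~\ref{le: quotients} that $A^{-1}C$ is tightly closed whenever $C$ is, and then argues that any tightly closed compatible order ideal $C \supseteq AB$ must contain $\overline{A}\,\overline{B}$, using the characterization of $\overline{(\cdot)}$ as an intersection of closed ideals (part (2) of Lemma~\ref{le: lemma}). You instead exploit the explicit one-step description of the closure: $a \in \overline{A}$ and $b \in \overline{B}$ come with witnessing covers $X \in \mathcal{T}(a)$, $X \subseteq A$ and $Y \in \mathcal{T}(b)$, $Y \subseteq B$, and axiom (MS) hands you $XY \in \mathcal{T}(ab)$ with $XY \subseteq AB$. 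This is valid precisely because the paper defines $\overline{A}$ as the single application of the covering operation and part (1) of Lemma~\ref{le: lemma} (via axiom (T)) shows this single step is already idempotent; your argument is shorter and makes Lemma~\ref{le: quotients} unnecessary for this purpose. What the paper's residuation argument buys is robustness: it would survive in a setting where the closure is only available as an intersection of closed sets (a transfinite or non-one-step closure), where witnessing covers for elements of $\overline{A}$ need not exist. Your handling of (N1)--(N3) and (N5) matches the paper's (which simply declares them clear, with (N5) attributed to idempotent-purity of separated coverages), and your identification of $E(\mathsf{C}(S))$ with the order ideals contained in $E(S)$ is the right supporting observation.
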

\begin{proof} It is clear that axioms (N1), (N2) and (N3) hold.
The axiom (N5) holds because a separated coverage is idempotent-pure.

It remains to show that (N4) holds.
Let $A,B \in \mathsf{C}(S)$.
We prove that $\overline{A} \, \overline{B} \subseteq \overline{AB}$.
Let $C$ be any  tightly compatible order ideal containing $AB$.
Thus $AB \subseteq C$.
It follows that $B \subseteq A^{-1}C$.
By Lemma~\ref{le: quotients}, $A^{-1}C$ is a tightly closed order ideal.
Now $B \subseteq \overline{B}$ and so $B = B \cap \overline{B} \subseteq A^{-1}C \cap \overline{B}$.
Observe that the intersection of a tightly compatible order ideal
and a tightly closed order ideal is a tightly closed compatible order ideal.
Thus $A^{-1}C \cap \overline{B}$ is a tightly closed compatible order ideal containing $B$.
It follows that it must contain $\overline{B}$.
Hence $\overline{B} \subseteq A^{-1}C$. 
Thus $A\overline{B} \subseteq C$.
A dual argument shows that $\overline{A} \, \overline{B} \subseteq C$, as required.
\end{proof}

We shall call the nucleus defined above the {\em tight nucleus} defined on $\mathsf{C}(S)$.

\begin{center}
{\bf Let $S$ be a separative inverse semigroup. }
\end{center}

Define $\mathsf{D}_{t}(S)$ to be the subset of $C(S)$ consisting of all tightly closed order ideals of the form $\overline{A}$
where $A$ is an element of $\mathsf{D}(S)$.
We define a multiplication in $\mathsf{D}_{t}(S)$ by $\overline{A} \cdot \overline{B} = \overline{AB}$.
The proof of the following is straightforward.

\begin{lemma} 
$\mathsf{D}_{t}(S)$  is a distributive inverse semigroup and inverse subsemigroup of the pseudogroup arising
from the tight nucleus on $\mathsf{C}(S)$.
\end{lemma}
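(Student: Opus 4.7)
The plan is to exploit the fact, established in Lemma~\ref{le: it_is_a_nucleus}, that $\overline{(\cdot)}$ is a nucleus on the pseudogroup $\mathsf{C}(S)$, so the fixed-point set $\mathsf{C}(S)_\nu$ is automatically a pseudogroup with multiplication $A \star B = \overline{AB}$, inverse $A \mapsto A^{-1}$, and compatible joins $\bigsqcup A_i = \overline{\bigcup A_i}$. The task then reduces to showing that $\mathsf{D}_t(S)$ is closed under these three operations inside $\mathsf{C}(S)_\nu$ and then reading off distributivity from the ambient pseudogroup.

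First I would verify closure under inverses. If $A \in \mathsf{D}(S)$ then $A^{-1} \in \mathsf{D}(S)$, and axiom (I) of the tight coverage gives $\overline{A}^{-1} = \overline{A^{-1}}$: the inclusion $\overline{A^{-1}} \subseteq \overline{A}^{-1}$ is immediate from $(X \in \mathcal T(a^{-1})$ iff $X^{-1} \in \mathcal T(a))$, and the reverse follows by symmetry. Next, closure under multiplication: given $\overline{A}, \overline{B}$ with $A, B \in \mathsf{D}(S)$, we have $AB \in \mathsf{D}(S)$ and, using Lemma~\ref{le: nuclear_lemma},
\[
\overline{A} \star \overline{B} \;=\; \overline{\overline{A}\,\overline{B}} \;=\; \overline{AB},
\]
so the product lies in $\mathsf{D}_t(S)$.

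For closure under compatible binary joins, suppose $\overline{A}, \overline{B} \in \mathsf{D}_t(S)$ are compatible in $\mathsf{C}(S)_\nu$; since the natural partial order on $\mathsf{C}(S)_\nu$ agrees with the one on $\mathsf{C}(S)$, elements of $A$ and $B$ are pairwise compatible in $S$, so $A \cup B$ is a compatible, finitely generated order ideal, hence an element of $\mathsf{D}(S)$. The join of $\overline{A}$ and $\overline{B}$ in $\mathsf{C}(S)_\nu$ is $\overline{A \cup B} \in \mathsf{D}_t(S)$. Distributivity then comes for free: for any $\overline{C} \in \mathsf{D}_t(S)$,
\[
\overline{C} \star (\overline{A} \sqcup \overline{B}) \;=\; \overline{C(A \cup B)} \;=\; \overline{CA \cup CB} \;=\; \overline{CA} \sqcup \overline{CB} \;=\; (\overline{C}\star\overline{A}) \sqcup (\overline{C}\star\overline{B}).
\]

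The only step requiring genuine care is identifying compatibility in $\mathsf{D}_t(S)$ with element-wise compatibility, since a priori compatibility is an assertion in the quotient pseudogroup $\mathsf{C}(S)_\nu$ rather than in $\mathsf{C}(S)$. The nucleus preserves the order (by (N2) and the general identification of the two partial orders), so $\overline{A}, \overline{B} \le \overline{A \cup B}$ forces $A \cup B$ to be compatible in $S$, which is what makes $A \cup B$ live in $\mathsf{D}(S)$ and delivers the inclusion $\mathsf{D}_t(S) \hookrightarrow \mathsf{C}(S)_\nu$ as an inverse subsemigroup stable under compatible joins.
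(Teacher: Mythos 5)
The paper records no proof of this lemma (it is dismissed as ``straightforward''), so there is nothing to compare against; your proposal supplies the intended argument and its overall structure is sound: work inside the pseudogroup $\mathsf{C}(S)_{\nu}$ of $\nu$-closed elements provided by Lemma~\ref{le: it_is_a_nucleus}, verify that $\mathsf{D}_{t}(S)$ is closed under the induced inversion, product and compatible binary joins, and then inherit distributivity from the ambient pseudogroup. The closure under inverses and under products is handled correctly: $\overline{A}\cdot\overline{B}=\overline{\overline{A}\,\overline{B}}=\overline{AB}$ by Lemma~\ref{le: nuclear_lemma}, which at the same time confirms that the multiplication the paper defines directly on $\mathsf{D}_{t}(S)$ is the restriction of the multiplication on $\mathsf{C}(S)_{\nu}$.

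The one step whose justification does not hold up is the pullback of compatibility. You argue that ``$\overline{A},\overline{B}\le\overline{A\cup B}$ forces $A\cup B$ to be compatible'', but to regard $\overline{A\cup B}$ as an element of $\mathsf{C}(S)_{\nu}$, so that $\le$ means anything, you must already know that $A\cup B$ is a compatible order ideal --- which is exactly what is to be proved; and the agreement of the two natural partial orders is not enough on its own, since compatibility is not an order-theoretic condition. The correct route is via idempotent-purity: the quotient map $A\mapsto\nu(A)$ is idempotent-pure (this is part of the paper's lemma asserting that $(S_{\nu},\cdot)$ is a pseudogroup, since the kernel of $\nu$ is contained in the compatibility relation). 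Hence if $\overline{A}\sim\overline{B}$ in $\mathsf{C}(S)_{\nu}$, then the fact that $\overline{A}^{-1}\cdot\overline{B}=\overline{A^{-1}B}$ and $\overline{A}\cdot\overline{B}^{-1}=\overline{AB^{-1}}$ are idempotents forces $A^{-1}B$ and $AB^{-1}$ to consist of idempotents of $S$, so that $A\cup B$ is a compatible finitely generated order ideal, i.e.\ an element of $\mathsf{D}(S)$, and the join $\overline{\overline{A}\cup\overline{B}}=\overline{A\cup B}$ lies in $\mathsf{D}_{t}(S)$. With that substitution your proof is complete.
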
 

Observe that $\mathsf{D}_{t}(S)$ is the image of the map from $\mathsf{D}(S)$ to $\mathsf{C}(S)$
given by $A \mapsto \overline{A}$.
Define $\kappa \colon S \rightarrow \mathsf{D}_{f}(S)$ by $\kappa (s) = s^{\downarrow}$,
which is well-defined by Lemma~\ref{le: lemma}.

\begin{proposition}\label{prop: separated_tight_completions} Let $S$ be a separative inverse semigroup.
Then $\mathsf{D}_{t}(S)$ is a distributive inverse semigroup and the map
$\kappa \colon S \rightarrow \mathsf{D}_{t}(S)$ is a tight map which is universal.
\end{proposition}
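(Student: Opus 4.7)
The plan is to check the two remaining assertions in turn: that $\kappa$ is a tight homomorphism, and that it is universal among tight homomorphisms from $S$ into distributive inverse semigroups.

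First I would verify that $\kappa$ is a semigroup homomorphism. Since $\iota \colon S \to \mathsf{C}(S)$ given by $s \mapsto s^{\downarrow}$ is a homomorphism, $s^{\downarrow} t^{\downarrow} = (st)^{\downarrow}$ in $\mathsf{C}(S)$, and hence $\kappa(s)\cdot\kappa(t) = \overline{s^{\downarrow}t^{\downarrow}} = \overline{(st)^{\downarrow}} = (st)^{\downarrow} = \kappa(st)$ by Lemma~\ref{le: lemma}(4). Next, for tightness, let $X = \{a_{1}, \ldots, a_{n}\} \in \mathcal{T}(a)$. Then in $\mathsf{D}_{t}(S)$ the compatible join $\bigvee_{i} \kappa(a_{i})$ is the tight closure of $\{a_{1},\ldots,a_{n}\}^{\downarrow}$. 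Since $X \in \mathcal{T}(a)$ gives $a \in \overline{\{a_{1},\ldots,a_{n}\}^{\downarrow}}$, we have $a^{\downarrow} \subseteq \overline{\{a_{1},\ldots,a_{n}\}^{\downarrow}}$, so $\kappa(a) = \overline{a^{\downarrow}} \leq \bigvee_{i} \kappa(a_{i})$; the reverse inequality is immediate since each $a_{i} \leq a$. Hence $\kappa(a) = \bigvee_{i} \kappa(a_{i})$.

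For universality, let $\theta \colon S \to T$ be a tight homomorphism into a distributive inverse semigroup. By Theorem~\ref{the: distributive_completions}(1) there is a unique morphism $\theta^{\ast} \colon \mathsf{D}(S) \to T$ of distributive inverse semigroups with $\theta^{\ast}\iota = \theta$. I would define
$$\bar{\theta}(\overline{A}) = \theta^{\ast}(A) \qquad (A \in \mathsf{D}(S)).$$
The main obstacle is well-definedness: if $A, B \in \mathsf{D}(S)$ satisfy $\overline{A} = \overline{B}$, one must check $\theta^{\ast}(A) = \theta^{\ast}(B)$. Writing $A = \{a_{1},\ldots,a_{m}\}^{\downarrow}$, each $a_{i} \in \overline{B}$ admits some tight cover $X_{a_{i}} \subseteq B$; applying the tightness of $\theta$ and monotonicity of $\theta^{\ast}$ gives $\theta(a_{i}) = \bigvee_{x \in X_{a_{i}}} \theta(x) \leq \theta^{\ast}(B)$, hence $\theta^{\ast}(A) \leq \theta^{\ast}(B)$, and symmetry closes the argument.

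It then remains to verify that $\bar{\theta}$ is a morphism of distributive inverse semigroups and is tight. These are routine: the map $A \mapsto \overline{A}$ from $\mathsf{D}(S)$ to $\mathsf{D}_{t}(S)$ is a surjective morphism that sends compatible joins to compatible joins, so $\bar{\theta}$ inherits the homomorphism and join-preservation properties from $\theta^{\ast}$; tightness of $\bar{\theta}$ follows from tightness of $\theta$ applied to preimages of tight covers. Finally, uniqueness of $\bar{\theta}$ follows from the observation, already made in establishing $\kappa$ to be tight, that in $\mathsf{D}_{t}(S)$ every element $\overline{A}$ with $A = \{a_{1},\ldots,a_{m}\}^{\downarrow}$ equals $\bigvee_{i} \kappa(a_{i})$; any morphism $\psi$ with $\psi\kappa = \theta$ is therefore forced to take the value $\bigvee_{i} \theta(a_{i}) = \bar{\theta}(\overline{A})$.
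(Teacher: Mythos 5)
Your proposal is correct and follows essentially the same route as the paper's proof: tightness of $\kappa$ via the identification of $\bigvee_{i}\kappa(a_{i})$ with $\overline{\{a_{1},\ldots,a_{n}\}^{\downarrow}}$, the definition $\bar{\theta}(\overline{A})=\bigvee_{i}\theta(a_{i})$ (your $\theta^{\ast}(A)$ is exactly this), well-definedness by extracting a tight cover of each $a_{i}$ inside $B$ and invoking tightness of $\theta$, and uniqueness from $\overline{A}=\bigsqcup_{i}\kappa(a_{i})$. The only difference is cosmetic: you factor the construction through the universal property of $\mathsf{D}(S)$ and explicitly check that $\kappa$ is a homomorphism, which the paper leaves implicit.
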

\begin{proof} We show first that $\kappa$ is a tight map.
Let $\{x_{1}, \ldots, x_{m} \}$ be a tight cover of $a$ in $S$.
Clearly, $\kappa (x_{i}) \leq \kappa (a)$ for $1 \leq i \leq m$.
Suppose that $\overline{A} \in \mathsf{D}_{t}(S)$ is such that $\kappa (x_{i}) \leq \overline{A}$ for $1 \leq i \leq m$.
Then $x_{1}, \ldots, x_{m} \in \overline{A}$, but the latter is tightly closed.
It follows that $a \in \overline{A}$.
It follows that $\kappa (a) \leq \overline{A}$.
We have therefore proved that $\kappa (a) = \bigvee_{i=1}^{m} \kappa (x_{i})$, as required.

Let $\theta \colon S \rightarrow T$ be any tight map to a distributive inverse semigroup.
Define $\bar{\theta} \colon \mathsf{D}_{t}(S) \rightarrow T$ by
$\bar{\theta}(\overline{\{a_{1}, \ldots, a_{m}\}^{\downarrow}}) = \bigvee_{i=1}^{m} \theta (a_{i})$.
We show first that this map is well-defined.
Suppose that $\overline{\{a_{1}, \ldots, a_{m}\}^{\downarrow}} = \overline{\{b_{1}, \ldots, b_{n}\}^{\downarrow}}$.
Now for each $i$, we have that either $a_{i}$ is an element of $\{b_{1}, \ldots, b_{n} \}^{\downarrow}$
in which case it is less than or equal to some $b_{j}$ or there is a cover $\{x_{1}, \ldots, x_{p} \}$
of $a_{i}$ which is a subset of $\{b_{1}, \ldots, b_{n} \}^{\downarrow}$.
Because $\theta$ is a tight map we have that $\theta (a_{i}) = \bigvee_{k=1}^{p} \theta (x_{k})$.
But in both cases we have that $\theta (a_{i}) \leq \bigvee_{j=1}^{n} \theta (b_{j})$.
It now readly follows that $\bar{\theta}$ is a well-defined map.
It is evident that $\bar{\theta} \kappa = \theta$ and it is routine to check that $\bar{\theta}$ is a morphism.
To prove uniqueness, simply observe that 
$$\overline{\{a_{1}, \ldots, a_{m}  \}^{\downarrow}} = \bigsqcup_{i=1}^{m} a_{i}^{\downarrow}.$$
\end{proof}

\begin{center}
{\bf We may now prove the analogue of the above theorem for arbitrary inverse semigroups.}
\end{center}

Let $S$ be an arbitrary inverse semigroup.
Define $\mathsf{D}_{t}(S) = \mathsf{D}_{t}(\mathbf{S})$ and define $\delta \colon S \rightarrow \mathsf{D}_{t}(S)$
by $\delta (s) = \mathbf{s}^{\downarrow}$.

\begin{theorem}[Tight completions]\label{the: tight_completions} Let $S$ be an inverse semigroup.
Then $\delta \colon S \rightarrow \mathsf{D}_{t}(S)$ is a tight map which is universal for tight maps from $S$ to distributive inverse semigroups.
\end{theorem}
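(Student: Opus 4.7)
The plan is to reduce the general case to the separative case that has already been handled in Proposition~\ref{prop: separated_tight_completions}, via the separative quotient machinery set up in Proposition~\ref{prop: separated_tight}. Concretely, by definition $\mathsf{D}_{t}(S) = \mathsf{D}_{t}(\mathbf{S})$, and the map $\delta$ factors as $\delta = \kappa \sigma$, where $\sigma \colon S \to \mathbf{S}$ is the canonical tight quotient (Lemma~\ref{le: tight_coverage_properties}) and $\kappa \colon \mathbf{S} \to \mathsf{D}_{t}(\mathbf{S})$ is the universal tight map of Proposition~\ref{prop: separated_tight_completions}. Since $\mathbf{S}$ is separative by the corollary following Lemma~\ref{le: tight_coverage_properties}, that proposition applies.

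First I would check that $\delta$ is itself a tight map. If $X = \{x_{1}, \ldots, x_{m}\}$ is a tight cover of $a$ in $S$, then part (3) of Lemma~\ref{le: tight_coverage_properties} gives $\sigma(X) \in \mathcal{T}(\mathbf{a})$, so applying the tightness of $\kappa$ yields $\kappa(\mathbf{a}) = \bigvee_{i} \kappa(\mathbf{x}_{i})$, which is $\delta(a) = \bigvee_{i} \delta(x_{i})$. Thus $\delta$ is tight.

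For universality, let $\theta \colon S \to T$ be any tight map to a distributive inverse semigroup. By part (1) of Proposition~\ref{prop: separated_tight} there is a unique tight homomorphism $\bar\theta \colon \mathbf{S} \to T$ with $\bar\theta \sigma = \theta$. Then, since $\bar\theta$ is a tight map from the separative semigroup $\mathbf{S}$, Proposition~\ref{prop: separated_tight_completions} yields a unique morphism of distributive inverse semigroups $\bar{\bar\theta} \colon \mathsf{D}_{t}(\mathbf{S}) \to T$ such that $\bar{\bar\theta} \kappa = \bar\theta$. Composing, $\bar{\bar\theta} \delta = \bar{\bar\theta} \kappa \sigma = \bar\theta \sigma = \theta$, giving existence of the required factorization through $\mathsf{D}_{t}(S)$.

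Uniqueness is a formal consequence of the uniqueness in the two propositions being invoked: any morphism $\psi \colon \mathsf{D}_{t}(S) \to T$ with $\psi\delta = \theta$ forces $\psi\kappa\sigma = \theta$, so $\psi\kappa$ must coincide with $\bar\theta$ by the uniqueness in Proposition~\ref{prop: separated_tight}, and then $\psi$ must coincide with $\bar{\bar\theta}$ by the uniqueness in Proposition~\ref{prop: separated_tight_completions}. I do not anticipate any real obstacle here; the only point that requires a moment's thought is ensuring that the factorization through $\sigma$ is legitimate in the first place (i.e.\ that $\theta$ really is constant on $\equiv$-classes), but this is precisely what the tightness hypothesis buys us, as established in the proof of Proposition~\ref{prop: separated_tight}(1).
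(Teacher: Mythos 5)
Your proposal is correct and follows essentially the same route as the paper: factor $\delta = \kappa\sigma$, use Lemma~\ref{le: tight_coverage_properties} to push tight covers through $\sigma$ and Proposition~\ref{prop: separated_tight_completions} for the tightness of $\kappa$, then obtain universality by chaining the universal properties of Proposition~\ref{prop: separated_tight}(1) and Proposition~\ref{prop: separated_tight_completions}. The paper's proof is just a terser version of exactly this argument.
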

\begin{proof} We prove first that $\delta$ is a tight map.
Observe that it is the composition of the maps $\sigma \colon S \rightarrow \mathbf{S}$ 
and $\kappa \colon \mathbf{S} \rightarrow \mathsf{D}_{t}(\mathbf{S})$.
By Proposition~\ref{prop: separated_tight_completions},
the latter map is tight.
If $X$ is a tight cover of $s$ in $S$ then $\sigma (X)$ is a tight cover of $\sigma (s)$ in $\mathbf{S}$
by Lemma~\ref{le: tight_coverage_properties}.
It now follows that $\delta$ is a tight map.
The universal property of $\delta$ follows by part (1) of Proposition~\ref{prop: separated_tight}
and Proposition~\ref{prop: separated_tight_completions}.
\end{proof}

\begin{theorem}\label{the: order_isomorphism} 
The poset of tight filters on the inverse semigroup $S$ is order isomorphic to the poset of  prime filters on $\mathsf{D}_{t}(S)$
and under this order isomorphism ultrafilters correspond to ultrafilters.
\end{theorem}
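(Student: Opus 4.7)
The plan is to model everything on Lemma~\ref{prime-vs-completelyprime}. First I would reduce to the separative case: part~(2) of Proposition~\ref{prop: separated_tight} gives an order-isomorphism $F \mapsto \sigma(F)$ between tight filters on $S$ and tight filters on the separative quotient $\mathbf{S}$, and since an ultrafilter is just a maximal proper filter, this restricts to an order-isomorphism on ultrafilters. So I may assume $S$ is separative, in which case $\mathsf{D}_{t}(S)$ consists of the tightly-closed finitely-generated compatible order ideals and $\kappa \colon s \mapsto s^{\downarrow}$ is the canonical embedding. Following Lemma~\ref{prime-vs-completelyprime}, I define $F^{u} = \{A \in \mathsf{D}_{t}(S) : A \cap F \neq \emptyset\}$ for a tight filter $F$ on $S$, and $P^{d} = \{s \in S : s^{\downarrow} \in P\}$ for a prime filter $P$ on $\mathsf{D}_{t}(S)$, and aim to show these are mutually inverse order-preserving bijections carrying ultrafilters to ultrafilters.

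The next step is to verify that $F^{u}$ is a prime filter and $P^{d}$ is a tight filter. For down-directedness of $F^{u}$, picking $a \in A \cap F$, $b \in B \cap F$, and $c \in F$ with $c \leq a,b$ gives $c^{\downarrow} \in F^{u}$ below both $A$ and $B$; for primality, if $s \in \overline{A \cup B} \cap F$ then some $X \in \mathcal{T}(s)$ sits inside $A \cup B$ and the tight-filter property of $F$ supplies $x \in X \cap F$ lying in $A$ or in $B$. For $P^{d}$, writing $A = \bigsqcup_{i=1}^{m} a_{i}^{\downarrow}$ for elements of $\mathsf{D}_{t}(S)$ and using primality of $P$ produces principal generators in $P$, giving non-emptiness and down-directedness. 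The tightness of $P^{d}$ is the technical step: for $s \in P^{d}$ and $X = \{x_{1}, \ldots, x_{m}\} \in \mathcal{T}(s)$, one has $s^{\downarrow} \subseteq \overline{X^{\downarrow}}$ because $X y^{-1} y \in \mathcal{T}(y)$ for each $y \leq s$ by axiom~(MS); so $\overline{X^{\downarrow}} = \bigsqcup_{i=1}^{m} x_{i}^{\downarrow} \in P$ and primality yields $x_{i}^{\downarrow} \in P$ for some $i$. That the two maps are mutually inverse and order-preserving then drops out from upward closure of $F$ and the decomposition $A = \bigsqcup_{i} a_{i}^{\downarrow}$.

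For the ultrafilter claim, both constructions are defined on ultrafilters because Lemma~\ref{le: ultrafilter_tight} makes ultrafilters in $S$ tight and Proposition~\ref{prop: semigroup_filters} makes ultrafilters in $\mathsf{D}_{t}(S)$ prime. If $F$ is an ultrafilter in $S$ and $F^{u} \subsetneq Q$ for some proper filter $Q$ on $\mathsf{D}_{t}(S)$, I would extend $Q$ to an ultrafilter $Q^{*}$, which by Proposition~\ref{prop: semigroup_filters} is prime. Then $F = (F^{u})^{d} \subseteq (Q^{*})^{d}$, and the containment must be strict, for otherwise applying $(-)^{u}$ would give $F^{u} = Q^{*}$, contradicting $F^{u} \subsetneq Q \subseteq Q^{*}$. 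Since $(Q^{*})^{d}$ is a proper tight filter, this contradicts maximality of $F$. The converse is entirely symmetric, with Lemma~\ref{le: ultrafilter_tight} ensuring that any ultrafilter extension of $P^{d}$ is a tight filter in the domain of $(-)^{u}$. The main subtlety throughout is keeping straight the difference between joins in $\mathsf{D}(S)$ and in $\mathsf{D}_{t}(S)$---concretely, tracking the tight-closure operator $\overline{\cdot}$---but the axioms in Lemma~\ref{le: coverage_properties} together with the identity $\overline{X^{\downarrow}} = \bigsqcup_{i} x_{i}^{\downarrow}$ make this bookkeeping routine.
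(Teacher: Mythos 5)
Your proposal is correct and follows essentially the same route as the paper: reduce to the separative quotient, set up the mutually inverse order-preserving maps $F \mapsto F^{u}$ and $P \mapsto P^{d}$ modeled on Lemma~\ref{prime-vs-completelyprime}, verify primality of $F^{u}$ and tightness of $P^{d}$, and match ultrafilters via maximality. The only differences are that you supply details the paper leaves implicit, namely the justification of the identity $s^{\downarrow} = \bigsqcup_{x \in X} x^{\downarrow}$ for a tight cover $X$ of $s$ via axiom (MS), and the explicit Zorn-type argument (using Lemma~\ref{le: ultrafilter_tight} and Proposition~\ref{prop: semigroup_filters}) showing that maximal tight, respectively prime, filters are exactly the ultrafilters.
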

\begin{proof} By Proposition~\ref{prop: separated_tight} we may assume that $S$ is separated.
Put $D = \mathsf{D}_{t}(S)$.

Let $F$ be a tight  filter in $S$.
Define 
$$F^{u} = \{A \in D \colon A \cap F \neq \emptyset \}.$$
We prove that $F^{u}$ is a prime filter in $D$.
Let $A,B \in F^{u}$.
Then $A \cap F \neq \emptyset$ and $B \cap F \neq \emptyset$.
Thus we may find elements $f_{1} \in F \cap A$ and $f_{2} \in F \cap B$.
But $f_{1},f_{2} \in F$ implies that there exists $f \in F$ such that $f \leq f_{1},f_{2}$.
Furthermore, $A$ and $B$ are order ideals.
Thus $f \in A \cap B$.
It follows that $f \in F \cap A \cap B$ and so $f^{\downarrow} \leq A,B$ and $f^{\downarrow} \in F^{u}$.
Now let $A \in F^{u}$ and $A \leq B$.
Then $A \cap F \neq \emptyset$ and $A \subseteq B$ and so $B \cap F \neq \emptyset$ giving $B \in F^{u}$.
Finally, let $\bigsqcup_{i=1}^{m} A_{i} \in F^{u}$.
Then $\overline{\bigcup_{i=1}^{m} A_{i} } \in F^{u}$.
Thus  $\overline{\bigcup_{i=1}^{m} A_{i} } \cap  F \neq \emptyset$.
It follows that there exists $a \in  \overline{\bigcup A_{i}}$ such that $a \in F$.
By definition, there exists $X \in \mathcal{T}(a)$ such that $X \subseteq \bigcup A_{i}$.
But $a \in F$ and  $X \in \mathcal{T}(a)$ implies that there exists $x \in F \cap X$ since $F$ is a tight filter.
But $x \in X \subseteq \bigcup_{i=1}^{m} A_{i}$ and so $x \in A_{i}$.
It follows that $A_{i} \in F^{u}$, as required.
Therefore $F \mapsto F^{u}$ is a well-defined map and it is clearly order-preserving.

Let $P$ be a prime filter in  $D$.
Define
$$P^{d} = \{s \in S \colon \overline{s^{\downarrow}} \in P \}.$$
We prove that $P$ is a tight filter in $S$.
Observe first that $P^{d}$ is non-empty.
Let $\overline{\{a_{1}, \ldots, a_{m}\}^{\downarrow}} \in P$.
Then $\sqcup_{i=1}^{m} a_{i}^{\downarrow} \in P$.
But $P$ is a prime filter and so $a_{i}^{\downarrow} \in P$ for some $i$ and so $a_{i} \in P^{d}$.
We show first that $P^{d}$ is a directed set. 
Let $s,t \in P^{d}$.
Then $s^{\downarrow}, t^{\downarrow} \in P$.
But $P$ is down-directed and so there exists $A \in P$ such that $A \leq s^{\downarrow}, t^{\downarrow}$.
Using the fact that $P$ is a prime filter, we may deduce as above that there is $a \in A$ such that
$a^{\downarrow} \in P$.
But $a \leq s,t$ and $a \in P^{d}$.
It is immediate that $P^{d}$ is closed upwards.
It remains to show that it is a tight filter. 
Let $x \in P^{d}$ where $X \in \mathcal{T}(x)$.
Then $x^{\downarrow} = \bigsqcup_{y \in X} y^{\downarrow} \in P$ and so $y^{\downarrow} \in P$
for some $y \in X$ since $P$ is prime, and so $y \in P^{d}$, as required.
Therefore $P \mapsto P^{d}$ is a well-defined map and it is clearlt order-preserving.

It remains to show that the above two operations are mutually inverse.
We begin by showing that $F = (F^{u})^{d}$.
If $a \in F$ then $a^{\downarrow} \in F^{u}$ and so $a \in (F^{u})^{d}$.
Thus $F \subseteq (F^{u})^{d}$.
Let $s \in (F^{u})^{d}$.
Then $s^{\downarrow} \in F^{u}$.
Thus $s^{\downarrow} \cap F \neq \emptyset$.
It follows that there is $X \in \mathcal{T}(x)$ such that $X \subseteq s^{\downarrow}$.
But $F$ is a tight filter and so there exists $y \in X$ such that $y \in F$.
But $y \leq s$ and so $s \in F$, as required.

Next we show that $P = (P^{d})^{u}$.
Let $A \in (P^{d})^{u}$.
Then $A \cap P^{d} \neq \emptyset$.
Let $a \in A \cap P^{d}$.
Then $a^{\downarrow} \in P$ and $a^{\downarrow} \leq A$.
Thus $A \in P$.
We have shown that $(P^{d})^{u} \subseteq P$.
To prove the reverse inclusion let $A \in P$.
We have that $A = \bigsqcup_{a \in A} a^{\downarrow}$.
But $P$ is prime and so $a^{\downarrow} \in P$ for some $a \in A$.
Thus $a \in P^{d}$ and $A \in (P^{d})^{u}$, as required.

It follows that we have defined the desired order-isomorphism.
We proved in Lemma~\ref{le: ultrafilter_tight} that every ultrafilter is a tight filter.
Thus the order isomorphism restricts to a bijection between the corresponding sets of
ultrafilters.
\end{proof}

We now apply the order-isomorphism above.

\begin{theorem} Let $S$ be an inverse semigroup.
The order isomorphism we established in the previous theorem
induces a homeomorphism between the groupoids 
$\mathsf{G}_{\mathcal{T}}(S)$ and $\mathsf{G}_{P}(\mathsf{D}_{t}(S))$.
\end{theorem}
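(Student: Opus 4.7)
The plan is to show that the order-isomorphism $\Phi \colon F \mapsto F^{u}$ of Theorem~\ref{the: order_isomorphism} upgrades to a homeomorphism of \'etale groupoids. Two things need to be verified: that $\Phi$ is a groupoid functor (preserves inverses, composability, and products), and that it is both continuous and open with respect to the bases $\{Z_{s}\}_{s \in S}$ and $\{X_{A}\}_{A \in \mathsf{D}_{t}(S)}$.

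I would dispose of the groupoid structure first. Preservation of inversion is immediate from $A \in (F^{-1})^{u} \Leftrightarrow A^{-1} \cap F \neq \emptyset \Leftrightarrow A^{-1} \in F^{u}$. Preservation of the domain map, $\mathbf{d}(F^{u}) = \mathbf{d}(F)^{u}$, is a short calculation using Lemma~\ref{le: idempotent_filters}: in one direction, pick $b \in B \cap F$ and observe $b^{-1}b \in A \cap \mathbf{d}(F)$; in the other, given $e \in A \cap \mathbf{d}(F)$, write $a^{-1}a \leq e$ with $a \in F$ and note that $B = a^{\downarrow}$, tightly closed by Lemma~\ref{le: lemma}(4), witnesses $A \in \mathbf{d}(F^{u})$. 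This forces composability to transfer across $\Phi$. The main step is then multiplicativity $(F \cdot G)^{u} = F^{u} \cdot G^{u}$: the inclusion $\supseteq$ uses witnesses $b \in B \cap F$ and $c \in C \cap G$, so that $bc \in BC \subseteq \overline{BC} \subseteq A$ lands in $F \cdot G \cap A$; the inclusion $\subseteq$ uses, for $x \in A \cap F \cdot G$ with $fg \leq x$, the principal order ideals $B = f^{\downarrow}$ and $C = g^{\downarrow}$, invoking the key facts that $f^{\downarrow} g^{\downarrow} \subseteq (fg)^{\downarrow}$ (the natural partial order is preserved by multiplication) and that $(fg)^{\downarrow}$ is already tightly closed.

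For the topology, a direct computation yields $\Phi(Z_{s}) = X_{s^{\downarrow}}$: indeed, $F^{u} \in X_{s^{\downarrow}} \Leftrightarrow s^{\downarrow} \cap F \neq \emptyset \Leftrightarrow s \in F$, the last step using that $F$ is upwardly closed. This establishes openness, since each basic set $X_{A}$ with $A = \overline{\{a_{1}, \ldots, a_{n}\}^{\downarrow}}$ decomposes as $\bigcup_{i} X_{a_{i}^{\downarrow}}$. For continuity, I would show $\Phi^{-1}(X_{A}) = \bigcup_{i} Z_{a_{i}}$: if $F \cap A \neq \emptyset$, pick $t \in F \cap A$; by definition of $A$ there is a tight cover $X \in \mathcal{T}(t)$ with $X \subseteq \{a_{1}, \ldots, a_{n}\}^{\downarrow}$, and tightness of $F$ yields some $x \in X \cap F$, whence $x \leq a_{i}$ gives $a_{i} \in F$. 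It is precisely here that the tight-filter hypothesis enters decisively.

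I expect the main obstacle to be the multiplicativity step, as it requires simultaneously handling subset multiplication in $S$, the tight-closure nucleus that defines multiplication in $\mathsf{D}_{t}(S)$, and the composability condition. The other verifications are routine consequences of the order-isomorphism, Lemma~\ref{le: lemma}, and the behaviour of tight filters under covers.
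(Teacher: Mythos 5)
Your proposal is correct and follows essentially the same route as the paper: functoriality is established via preservation of $\mathbf{d}$ and the identity $(F\cdot G)^{u}=F^{u}\cdot G^{u}$ using the facts that $f^{\downarrow}\cdot g^{\downarrow}$ sits inside the tightly closed ideal $(fg)^{\downarrow}$, and the homeomorphism via $\Phi(Z_{s})=X_{s^{\downarrow}}$ together with the decomposition of $X_{A}$ into the sets $X_{a_{i}^{\downarrow}}$. The only (harmless) variations are that you compute $\Phi^{-1}(X_{A})$ directly from tightness of $F$ rather than via primality of $F^{u}$, and you leave implicit the reduction to the separated case that the paper invokes via Proposition~\ref{prop: separated_tight}.
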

\begin{proof} We use the bijection established in Theorem~\ref{the: order_isomorphism}
and we may continue to assume that $S$ is separated by Proposition~\ref{prop: separated_tight}.
We have therefore set up a bijection between the two groupoids.
We show now that this bijection is a functor.

We show first that  $(F^{-1} \cdot F)^{u} = (F^{u})^{-1} \cdot F^{u}$.
Let $A \in (F^{-1} \cdot F)^{u}$.
Then there is $f \in F$ such that $f^{-1}f \in A$ and $f \in F$.
But $X = f^{\downarrow} \in F^{u}$ and $X^{-1}X \leq A$, as required.
The proof of the reverse inclusion is straightforward.
Let $F$ and $G$ be two tight filters in $S$ such that the product $F \cdot G$ is defined.
Let $A \in (F \cdot G)^{u}$.
Then $fg \in A$ for some $f \in F$ and $g \in G$.
Observe that $f^{\downarrow} \cdot g^{\downarrow} = (fg)^{\downarrow}$.
It follows that $A \in F^{u} \cdot G^{u}$.
To prove the reverse inclusion let $A \in F^{u} \cdot G^{u}$.
Then $XY \leq A$ where $X \in F^{u}$ and $Y \in G^{u}$.
Thus $X \cap F \neq \emptyset$ and $Y \cap G \neq \emptyset$.
Let $f \in X \cap F$ and $g \in Y \cap G$.
But then $fg \in A$ and $fg \in F \cdot G$.
Thus $A \in (F \cdot G)^{u}$.
It follows that the two groupoids are isomorphic.

It remains to show that this isomorphism induces a homeomorphism.
The basic open sets in $\mathsf{G}_{\mathcal{T}}(S)$ have the form $Z_{s}$ where $s \in S$.
We claim that the image of this set under the map $F \mapsto F^{u}$ is the set $X_{s^{\downarrow}}$.
Let $s \in F$ where $F$ is a tight filter.
Then $F^{u}$ is, as we have seen, a prime filter.
But $s^{\downarrow} \cap F \neq \emptyset$ and so $s^{\downarrow} \in F^{u}$.
Conversely, if $A \in X_{s^{\downarrow}}$ then $s \in A^{d}$ and $(A^{d})^{u} = A$.
Thus our isomorphism is an open map.
Finally, consider the open subset $X_{A}$ where $\mathsf{D}_{t}(S)$.
Now $X_{A} = \bigcup_{i=1}^{m} X_{s_{i}^{\downarrow}}$ where $s_{i} \in A$.
Thus we need only determine the inverse images of the sets $X_{s^{\downarrow}}$ for some $s \in S$.
But this is just the set $Z_{s}$.
It follows that our mapping is continuous and open and so it is a homeomorphism.
\end{proof}

\subsection{Exel's tight groupoid and other applications}

An inverse semigroup $S$ is said to be {\em pre-Boolean} if its tight completion is Boolean.

\begin{proposition} 
An inverse semigroup $S$ is pre-Boolean if and only if every (idempotent) tight filter of $S$ is an (idempotent) ultrafilter.
\end{proposition}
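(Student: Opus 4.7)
The plan is to deduce the proposition by chaining together two results already established in the paper: the order isomorphism between tight filters on $S$ and prime filters on the tight completion $\mathsf{D}_t(S)$ from Theorem~\ref{the: order_isomorphism}, and the characterization of Boolean distributive inverse semigroups via prime filters from part~(2) of Proposition~\ref{prop: semigroup_filters}.

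First, by definition $S$ is pre-Boolean precisely when $\mathsf{D}_t(S)$ is Boolean, and by Proposition~\ref{prop: semigroup_filters}(2) this happens if and only if every prime filter on $\mathsf{D}_t(S)$ is an ultrafilter. Next, Theorem~\ref{the: order_isomorphism} sets up an order isomorphism $F \mapsto F^u$ from the poset of tight filters on $S$ to the poset of prime filters on $\mathsf{D}_t(S)$, with inverse $P \mapsto P^d$; moreover this isomorphism restricts to a bijection between ultrafilters on $S$ and ultrafilters on $\mathsf{D}_t(S)$ (the latter being valid because by Lemma~\ref{le: ultrafilter_tight} every ultrafilter is a tight filter, so maximality within tight filters coincides with maximality among all proper filters). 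Translating the Boolean criterion across this isomorphism yields the desired equivalence: every prime filter on $\mathsf{D}_t(S)$ is an ultrafilter iff every tight filter on $S$ is an ultrafilter.

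For the parenthetical ``idempotent'' version one passes through the identity fibre. By Lemma~\ref{le: C-filters}, a filter $A$ is a tight filter iff $\mathbf{d}(A)$ is an idempotent tight filter, and by Lemma~\ref{le: filter} the filter $A$ is an ultrafilter iff $\mathbf{d}(A)$ is an ultrafilter (this was stated for distributive inverse semigroups but the same argument using only $A = (a\mathbf{d}(A))^\uparrow$ applies here). Hence the condition ``every tight filter is an ultrafilter'' is equivalent to ``every idempotent tight filter is an idempotent ultrafilter'', giving both formulations simultaneously.

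There is no real obstacle: the substantive content is packaged in Theorem~\ref{the: order_isomorphism} and Proposition~\ref{prop: semigroup_filters}, and the only delicate point to check when writing this out is that ``ultrafilter'' has the same meaning on both sides of the correspondence, which is precisely the last sentence of Theorem~\ref{the: order_isomorphism}.
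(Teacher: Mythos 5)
Your proof is correct and follows essentially the same route as the paper: both deduce the result by combining Proposition~\ref{prop: semigroup_filters}(2) with the order isomorphism of Theorem~\ref{the: order_isomorphism} (including its preservation of ultrafilters), and both handle the idempotent version via Lemma~\ref{le: C-filters}. Your remarks on why ultrafilters correspond on both sides are a slightly more explicit unpacking of what the paper leaves to the cited theorem, but the argument is the same.
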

\begin{proof}
The distributive inverse semigroup $\mathsf{D}_{t}(S)$ is Boolean
if and only if every prime filter in $\mathsf{D}_{t}(S)$ is an ultrafilter by Proposition~\ref{prop: semigroup_filters}.
But by Theorem~\ref{the: order_isomorphism} there is an order-isomorphism
between the poset of tight filters in $S$ and the poset of prime filters in 
$\mathsf{D}_{t}(S)$ under which ultrafilters correspond.
It follows that every tight filter in $S$ is an ultrafilter.
The restriction to the idempotent case follows by the restriction of Lemma~\ref{le: C-filters} 
to the case of tight coverages.
\end{proof}



Pre-Boolean semigroups arise naturally: the polycyclic inverse monoids are good examples \cite{Law11, Law3, Law4}.

Finally, Exel's {\em tight groupoid} \cite{Exel1, Exel2} 
may be identified with the groupoid $\mathsf{G}_{P}(\mathsf{D}_{t}(S))^{\dagger}$.


\end{document}